\theoremstyle{plain} 
\newtheorem{theorem}{Theorem}[section] 
\newtheorem{corollary}[theorem]{Corollary} 
\newtheorem{lemma}[theorem]{Lemma}
\newtheorem{prop}[theorem]{Proposition}
\newtheorem{defn}[theorem]{Definition}
\theoremstyle{definition}
\newtheorem{example}[theorem]{Example}
\newtheorem{remark}[theorem]{Remark}
\numberwithin{equation}{section}
\newcommand{\eqlab}[1]{\begin{equation}  \begin{aligned}#1 \end{aligned}\end{equation}} 
\newcommand{\bgs}[1]{\begin{equation*} \begin{aligned}#1\end{aligned}\end{equation*}} 
\newcommand{\R}{\ensuremath{\mathbb{R}}}
\newcommand{\Rn}{\ensuremath{\mathbb{R}^n}}
\newcommand{\N}{\ensuremath{\mathbb{N}}}
\newcommand{\eps}{\ensuremath{\varepsilon}}
\renewcommand{\S}{\mathcal S}
\newcommand{\Ll}{\mathcal L}
\newcommand{\Co}{\mathcal C}
\newcommand{\C}{\mathcal C}
\newcommand{\I}{\mathcal I}
\newcommand{\Ha}{\mathcal H}
\newcommand{\F}{\mathcal F}
\newcommand{\G}{\mathcal G}
\newcommand{\A}{\mathcal A}
\newcommand{\Nl}{\mathcal N}
\newcommand{\Op}{\mathcal O}
\newcommand{\U}{\mathcal U}
\newcommand{\B}{\mathfrak B}
\newcommand{\W}{\mathcal W}
\DeclareMathOperator{\Per}{Per}
\DeclareMathOperator{\diam}{diam}
\DeclareMathOperator{\dist}{dist}
\DeclareMathOperator{\Tail}{Tail}
\newcommand{\PV}{\mbox{\normalfont P.V.}}
\newcommand{\h}{\mathscr{H}}
\DeclareMathOperator{\BV}{BV}
\DeclareMathOperator{\BH}{BH}
\newcommand{\loc}{{\rm loc}}
\DeclareMathOperator{\supp}{supp}
\newcommand{\kers}{|x-y|^{n-1+s}}
\newcommand{\dkers}{\frac{dx\,dy}{|x-y|^{n-1+s}}}
\newcommand{\ubar}[1]{\underaccent{\bar}{#1}}
\def\Xint#1{\mathchoice
{\XXint\displaystyle\textstyle{#1}}%
{\XXint\textstyle\scriptstyle{#1}}%
{\XXint\scriptstyle\scriptscriptstyle{#1}}%
{\XXint\scriptscriptstyle\scriptscriptstyle{#1}}%
\!\int}
\def\XXint#1#2#3{{\setbox0=\hbox{$#1{#2#3}{\int}$ }
\vcenter{\hbox{$#2#3$ }}\kern-.6\wd0}}
\def\dashint{\Xint-}
\renewcommand{\le}{\leqslant}
\renewcommand{\leq}{\leqslant}
\renewcommand{\ge}{\geqslant}
\renewcommand{\geq}{\geqslant}
\title[On nonlocal minimal graphs]{On nonlocal minimal graphs}
\author{Matteo Cozzi}
\author{Luca Lombardini}
\address{
\newline
\textit{Matteo Cozzi}
\newline
Universit\`a degli Studi di Milano, Dipartimento di Matematica, Via Saldini 50, 20133 Milan, Italy
\newline
\textit{E-mail address}: \textit{\tt matteo.cozzi@unimi.it}
}
\address{\vspace{-0.7\baselineskip}
\newline
\textit{Luca Lombardini}
\newline
University of Western Australia, Department of Mathematics and Statistics, 35 Stirling Hwy, Crawley WA 6009, Australia
\newline
\textit{E-mail address}: \textit{\tt luca.lombardini@uwa.edu.au}
}
\thanks{The first author is a member of GNAMPA (INdAM), Italy. A preliminary version of this paper appeared as part of the PhD thesis~\cite{L_phd_th} of the second author}
\keywords{Fractional perimeter, nonlocal mean curvature, nonlocal minimal graphs, existence and uniqueness results, weak and viscosity solutions, rearrangement inequalities}
\subjclass[2020]{49Q05, 53A10, 45G05, 47G20}
\begin{document}

\begin{abstract}
We develop a functional analytic approach for the study of nonlocal minimal graphs. Through this, we establish existence and uniqueness results, a priori estimates, comparison principles, rearrangement inequalities, and the equivalence of several notions of minimizers and solutions.
\end{abstract}

\maketitle

\setcounter{tocdepth}{1}
\tableofcontents

\section{Introduction}

\noindent
Given an integer~$n \ge 1$ and a real number~$s \in (0, 1)$, the \emph{fractional} or \emph{nonlocal~$s$-perimeter} of a measurable set~$E \subseteq \R^{n + 1}$ in an open set~$\Op \subseteq \R^{n + 1}$ is defined as the quantity
$$
\Per_s(E, \Op) := \Ll_s(E\cap\Op,\Co E\cap\Op)+\Ll_s(E\cap\Op,\Co E\setminus\Op)+\Ll_s(E\setminus\Op,\Co E\cap\Op),
$$
where, for two measurable and disjoint sets~$A, B \subseteq \R^{n + 1}$, we write
$$
\Ll_s(A,B):=\int_A\int_B\frac{dX\,dY}{|X-Y|^{n + 1 + s}}
$$
and~$\Co E := \R^{n + 1} \setminus E$ denotes the complement of $E$.

Nonlocal perimeters have been introduced in 2010 in the seminal paper~\cite{CRS10} of Caffarelli, Roquejoffre \& Savin. Since then, there has been a growing interest in their study and, in particular, in the understanding of the properties enjoyed by their minimizers. For more information, we refer the reader to the surveys contained in~\cite{V13},~\cite[Chapter~6]{BV16},~\cite{DV18}, and~\cite[Section~7]{CF17}.

Very recently, several articles have focused on the class of minimizers of~$\Per_s$ that can be written as entire subgraphs of measurable functions. These sets---or, better, their boundaries---are often called \emph{nonlocal $s$-minimal graphs}.
The main source of inspiration for the present paper is the work~\cite{graph}, where Dipierro, Savin \& Valdinoci showed that a set~$E$ which minimizes the~$s$-fractional perimeter in a cylinder~$\Op=\Omega\times\R$ and which is the subgraph of a continuous function~$\varphi$ outside of~$\Op$, must be a subgraph also inside~$\Op$.
The existence of such a minimizing set~$E$ was later proved in~\cite{Cyl} by the second author, while its regularity was fully established in~\cite{CaCo} by~Cabr\'e and the first author. Concerning the qualitative properties of nonlocal minimal graphs, we also mention~\cite{bdary,DSV19_1,DSV19_2} for results on their boundary behavior in low dimension and~\cite{CFL18} for Bernstein-type theorems.

The aim of this work consists in developing an appropriate functional analytic setting for studying nonlocal minimal graphs. Thanks to the introduction of the functional~$\F_s$, constituting a fractional and nonlocal version of the classical area functional, we will establish some new results, summarized here below.

\begin{enumerate}[label=(\alph*),leftmargin=*]
	\item We study the relationship between~$\F_s$ and the fractional~$s$-perimeter of subgraphs.

	\item We prove the existence and uniqueness of minimizers of the functional~$\F_s$, under very mild assumptions on the exterior data.

	\item We obtain a priori estimates for minimizers: an integral estimate in a suitable fractional Sobolev space, as well as local and global~$L^\infty$ bounds.

	\item We study weak and viscosity solutions of the nonlocal mean curvature equation~$\h_s=0$---the Euler-Lagrange equation associated to~$\F_s$.

	\item We obtain a vertical rearrangement inequality, by means of which we show that the subgraph of a minimizer of~$\F_s$ is also a minimizer of the fractional~$s$-perimeter. In particular, this leads us to conclude that graphs with vanishing~$s$-mean curvature are minimizers of the~$s$-perimeter and enables us to extend~\cite[Theorem~1]{graph} to a wider class of exterior data.

	\item We establish the equivalence of minimizers of the functional~$\F_s$, nonlocal minimal graphs, and weak, viscosity, and smooth pointwise solutions of the fractional mean curvature equation.
\end{enumerate}

We will provide the rigorous statements of these results in the remainder of the introduction. First, however, we spend a few words on the motivations that lie behind the definition of the nonlocal area functional~$\F_s$ as well as its relationship with the fractional perimeter and nonlocal minimal graphs.

\subsection{Geometric motivations}\label{Intro_motives}

Our general goal is to define a functional~$\F_s = \F_s(u, \Omega)$, associated to an open set~$\Omega \subseteq \R^n$ and acting on a measurable function~$u: \R^n \to \R$, whose minimization essentially corresponds to the minimization of the~$s$-perimeter of the subgraph
$$
\S_u := \Big\{ (x, t) \in \R^n \times \R : t < u(x) \Big\}
$$
in the cylinder~$\Omega \times \R$. We start by recalling a couple of standard notions of minimality for the nonlocal perimeter~$\Per_s$.

\begin{defn} \label{minim_sperimeter_def}
A measurable set~$E\subseteq\R^{n + 1}$ is said to be~\emph{$s$-minimal} in an open set~$\Op \subseteq \R^{n + 1}$ if~$\Per_s(E,\Op)<\infty$ and
\[
\Per_s(E,\Op)\le\Per_s(F,\Op)\quad\mbox{for every }F\subseteq\R^{n + 1}\mbox{ s.t. }F\setminus\Op=E\setminus\Op.
\]
We also say that~$E$ is~\emph{locally~$s$-minimal} in~$\Op$ if~$E$ is $s$-minimal in every open set~$\Op'\Subset\Op$.
\end{defn}

Note that, in order to be locally~$s$-minimal, a set is only required to have finite~$s$-perimeter in every bounded open set compactly contained in~$\Op$, and not in the whole~$\Op$. When~$\Op$ is bounded and Lipschitz, the two concepts are equivalent---see~\cite[Theorem~1.7]{Cyl}. Conversely, this distinction becomes especially important when dealing with unbounded sets, such as an infinite cylinder~$\Op = \Omega \times \R$. In this case and when the base set~$\Omega \subseteq \R^n$ is bounded and Lipschitz, local~$s$-minimality is equivalent to~$s$-minimality in every truncated cylinder~$\Omega\times(-M,M)$ with~$M>0$---this follows from~\cite[Remark~4.2]{Cyl}. On the other hand, it turns out that the notion of~$s$-minimality in the whole~$\Omega \times \R$ makes no sense in general, since even for a globally bounded function~$u: \R^n \to \R$ it holds
\[
\Per_s(\S_u,\Omega\times\R)= \infty,
\]
no matter how regular~$\Omega$
and~$u$ are---see~\cite[Theorem~1.14 and Corollary~4.5]{Cyl}.

This issue constitutes the main difficulty that one encounters while seeking a good definition of~$\F_s$. In order to circumvent it, it is beneficial to split the~$s$-perimeter functional into different pieces, and investigate which one is really responsible for its blow-up. Hence, following the notation introduced in~\cite{FV13} we write the~$s$-perimeter as the sum
\begin{equation} \label{Perssplit}
\Per_s(E,\Op)=\Per_s^L(E,\Op)+\Per_s^{N \! L}(E,\Op),
\end{equation}
of the local part
$$
\Per_s^L(E,\Op):=\Ll_s(E\cap\Op,\Co E\cap\Op)=\frac{1}{2} \, [\chi_E]_{W^{s,1}(\Op)},
$$
which only reads the interactions occurring inside the open set~$\Op$, and the nonlocal part
\begin{equation} \label{nonlocalPers}
\Per_s^{N \! L}(E,\Op):=\Ll_s(E\cap\Op,\Co E\setminus\Op)+\Ll_s(E\setminus\Op,\Co E\cap\Op)
=\int_\Op\int_{\Co\Op}\frac{|\chi_E(X)-\chi_E(Y)|}{|X-Y|^{n + 1 + s}} \, dX\,dY,
\end{equation}
which takes into account all the remaining interactions. It turns out that~$\Per_s^{N \! L}(\S_u,\Omega\times\R)=+\infty$, even when~$u\in C^\infty_c(\R^n)$, while~$\Per_s^L(\S_u,\Omega\times\R)$ is finite, provided~$u$ is regular enough inside~$\Omega$.

Our analysis here starts from the following observations. Let~$\Omega\subseteq\R^n$ be a bounded open set and let~$u:\Omega\to\R$ be a measurable function. Then,
\begin{equation} \label{Finite_local}
\Per_s^L \! \left( \S_u, \Omega\times\R \right) < \infty \quad \mbox{if and only if} \quad u \in W^{s, 1}(\Omega).
\end{equation}
Moreover, it holds
\begin{equation} \label{Per=F}
\Per_s^L \! \left( \S_u,\Omega\times\R \right)=\A_s(u,\Omega) +\kappa_{s,\Omega},
\end{equation}
where
\[
\A_s(u,\Omega):=\int_\Omega \int_\Omega \G_s \left(\frac{u(x)-u(y)}{|x-y|}\right) \frac{dx\,dy}{|x-y|^{n-1+s}}
\]
with
$$
\G_s(t) := \int_0^t \bigg( \int_0^\tau \frac{d\sigma}{(1 + \sigma^2)^{\frac{n + 1 + s}{2}}} \bigg) \, d\tau,
$$
and where~$\kappa_{s,\Omega}$ is an explicit constant that does not depend on~$u$---see Subsection~\ref{areageom} for the computation.

Formula~\eqref{Per=F} should be compared with the well-known identity between the standard perimeter of a subgraph inside an open infinite vertical cylinder and the area functional of its defining function---see, e.g.,~\cite{GiMa,G84}. This prompts us to interpret~$\A_s$ as a fractional version of the classical area functional. However,~$\A_s$ only accounts for interactions occurring inside~$\Omega$. In order to have a reasonable nonlocal functional it is then quite natural to add to~$\A_s$ the term
\[
\Nl_s(u,\Omega):=2\int_\Omega\int_{\Co\Omega}\G_s \! \left(\frac{u(x)-u(y)}{|x-y|}\right) \frac{dx\,dy}{|x-y|^{n-1+s}},
\]
and define the \emph{fractional $s$-area functional} as
$$
\F_s(u,\Omega):=\A_s(u,\Omega)+\Nl_s(u,\Omega)=\iint_{Q(\Omega)}\G_s \! \left(\frac{u(x)-u(y)}{|x-y|}\right) \frac{dx\,dy}{|x-y|^{n-1+s}},
$$
with~$Q(\Omega):=\R^{2n}\setminus(\Co\Omega)^2$. The functional~$\Nl_s$ gathers part of the interactions contained in~$\Per_s^{N \! L}$. It should be compared with the~$L^1(\partial \Omega)$ term that arises when computing the standard perimeter of a subgraph having jump discontinuities on~$\partial \Omega$---see, e.g.,~\cite[Chapter~14]{G03}. See also~\cite[Theorem~1.8]{Fract_Lu} (and, in particular, identity~(1.9) there), where a similar analogy is observed in the limit~$s \to 1$.

Interestingly enough, we can arrive to the functional~$\F_s$ from a different starting point, involving the fractional mean curvature. The \emph{fractional $s$-mean curvature} of a measurable set~$E\subseteq\R^{n + 1}$ at a point~$X\in\partial E$ is defined as the principal value integral
\[
H_s[E](X):=\PV\int_{\R^{n + 1}}\frac{\chi_{\Co E}(Y)-\chi_E(Y)}{|X - Y|^{n + 1 + s}} \, dY.
\]
As shown in~\cite{CRS10}, this quantity arises when taking the first variation of the~$s$-perimeter. Note that it is well-defined provided the boundary of~$E$ is regular enough around~$x$---see, e.g.,~\cite{AV14}.

In~\cite{CV13}---see also~\cite{AV14,BFV14} and~\cite[Appendix~B.1]{BLV16}---it has been observed that if~$E = \S_u$ is the subgraph of a measurable function~$u: \R^n \to \R$, then its~$s$-mean curvature can be written as an integro-differential operator acting on~$u$. Indeed, the identity
\begin{equation}\label{Curv_subgr_funct_u_id}
H_s[\S_u](x,u(x))=2\,\PV\int_{\R^n}G_s \! \left(\frac{u(x)-u(y)}{|x-y|}\right)\frac{dy}{|x-y|^{n+s}}=:\h_su(x)
\end{equation}
holds true at every point~$x \in \R^n$ around which~$u$ is sufficiently smooth, with
$$
G_s(t):= \G_s'(t) = \int_0^t\frac{d\tau}{(1+\tau^2)^\frac{n+1+s}{2}}.
$$

Now, it is easy to see that~$\F_s$ is the energy corresponding to~$\h_s$. Indeed, it holds
$$
\left. \frac{d}{d\eps} \right|_{\eps=0}\F_s(u+\eps v,\Omega)=\langle\h_su,v\rangle\quad\mbox{for every }v\in C^\infty_c(\Omega),
$$
where we indicate with
\begin{equation} \label{Hsuvpairing}
\langle\h_su,v\rangle:=\int_{\R^n}\int_{\R^n}G_s \! \left(\frac{u(x)-u(y)}{|x-y|}\right)\big(v(x)-v(y)\big)\frac{dx\,dy}{|x-y|^{n+s}} \quad \mbox{for } v \in W^{s, 1}(\R^n)
\end{equation}
the continuous linear functional induced by~$\h_s u$ on~$W^{s, 1}(\R^n)$ via the~$L^2(\R^n)$ pairing. Remarkably, the boundedness of~$G_s$ grants that definition~\eqref{Hsuvpairing} is well-posed under no assumption on the function~$u$, besides measurability.

We have therefore traced a connection between the nonlocal perimeter~$\Per_s$ and the fractional area functional~$\F_s$. From this, one can easily see that nonlocal minimal graphs are in particular minimizers of~$\F_s$. One of the main contributions of the present paper consists in verifying that the converse is also true. Note that the difficulty here lies in the fact that, to establish the~$s$-minimality of a given subgraph~$\S_u$, one has to compare its~$s$-perimeter to those of all its compact perturbations, not only the ones that come in the form of a subgraph, as is natural for~$\F_s$. We solve this issue by means of a suitable rearrangement inequality, which shows that the~$s$-perimeter decreases under vertical rearrangements---which transform non-graphical perturbations into graphical ones.

In view of these remarks, to generate nonlocal minimal graphs one can proceed to minimize the nonlocal area functional~$\F_s$. However, in the above discussion we have overlooked an important issue: the conditions on the function~$u$ needed to ensure that~$\F_s(u,\Omega) < \infty$. As we already observed in~\eqref{Finite_local}-\eqref{Per=F}, the finiteness of~$\A_s(u,\Omega)$ is equivalent to having that~$u|_\Omega\in W^{s,1}(\Omega)$. On the other hand, to have~$\Nl_s(u,\Omega)$ finite, one is led to impose some condition on the \emph{exterior datum}~$\varphi=u|_{\Co\Omega}$, such as
\begin{equation}\label{Exterior_global_cond}
\int_\Omega\left(\int_{\Co\Omega}\frac{|\varphi(y)|}{|x-y|^{n+s}} \, dy\right)dx<\infty.
\end{equation}
Note that~\eqref{Exterior_global_cond} is rather restrictive, in particular as it essentially forces~$\varphi(x)$ to grow slower than~$|x|^s$ at infinity.

Differently to what happens with homogeneous operators like the fractional Laplacian, conditions that pose limitations on the behavior at infinity are typically unnatural in the context of nonlocal perimeters. Indeed, the fractional mean curvature~$\h_s u$ solely requires local regularity assumptions on~$u$ to be well-defined, the minimization problem for~$\Per_s$ has a solution regardless of any assumption on the outside datum (see~\cite[Theorem~3.2]{CRS10} and~\cite[Corollary~1.11]{Cyl}), the gradient estimate of~\cite[Theorem~1.1]{CaCo} only involves local~$L^\infty$ norms, etc.

As we shall see in the next subsection, most of our results avoid the imposition of unnecessary global conditions such as~\eqref{Exterior_global_cond}. To do this, we will introduce a notion of minimality for~$\F_s$ that does not require the finiteness of~$\F_s$---see Definition~\ref{mindef} here below.

\subsection{Definitions and main results}\label{Intro_Defs_Results}

We begin by addressing the existence and uniqueness of minimizers of~$\F_s$ with a given outside datum. Since this result and a few others that will follow are somewhat independent of the underlying geometric structure, we will state them for a more general class of fractional area-type functionals, no longer related to nonlocal perimeters.

Let~$g: \R \to (0, 1]$ be an even continuous function such that~$\int_\R g(t) |t| \, dt$ is finite and consider its first and second antiderivatives
\begin{equation*}
G(t) := \int_0^t g(\tau) \, d\tau \quad \mbox{and} \quad \G(t) := \int_0^t G(\tau) \, d\tau = \int_0^t \left( \int_0^\tau g(\sigma) \, d\sigma \right) d\tau.
\end{equation*}
Given an open set~$\Omega\subseteq\R^n$, a real number~$s \in (0, 1)$, and any measurable function~$u: \R^n \to \R$, we define
\begin{equation} \label{Fcdef}
\F(u, \Omega) := \iint_{Q(\Omega)} \G \! \left( \frac{u(x) - u(y)}{|x - y|} \right) \frac{dx\,dy}{|x - y|^{n - 1 + s}},
\end{equation}
where~$Q(\Omega):=\R^{2n}\setminus(\Co\Omega)^2$.

Of course, by choosing~$g$ equal to
\eqlab{\label{gdef}
g_s(t) := \frac{1}{(1 + t^2)^{\frac{n + 1 + s}{2}}},
}
we recover the functions~$G_s$,~$\G_s$, and the functional~$\F_s$ introduced earlier. Throughout the paper, we will sometimes refer to this choice as the~\emph{geometric framework}---given its connection with the~$s$-perimeter.

We consider the linear space
\begin{equation} \label{Wsdef}
\W^s(\Omega):=\Big\{u:\R^n\to\R\mbox{ measurable} : u|_\Omega\in W^{s,1}(\Omega)\Big\}
\end{equation}
and, for a given measurable function~$\varphi:\Co\Omega\to\R$, its affine subset
\eqlab{\label{domain_w_data}
\W^s_\varphi(\Omega):=\Big\{v\in\W^s(\Omega) : v=\varphi\mbox{ a.e.~in }\Co\Omega\Big\}.
}

Our aim is to minimize the functional~$\F$ within~$\W^s_\varphi(\Omega)$, given
a function~$\varphi$ as exterior datum. As commented before, to avoid the imposition of restrictive assumptions on~$\varphi$---such as~\eqref{Exterior_global_cond}---, we consider the following definition of minimizer, which is well-posed thanks to the fractional Hardy inequality of Proposition~\ref{FHI}---see Lemma~\ref{tartariccio}.

\begin{defn}\label{mindef}
Let~$\Omega \subseteq \R^n$ be a bounded open set
with Lipschitz boundary. A function~$u:\R^n\to\R$ is a \emph{minimizer} of~$\F$ in~$\Omega$ if~$u\in\W^s(\Omega)$ and
$$
\iint_{Q(\Omega)} \left\{ \G \! \left( \frac{u(x) - u(y)}{|x - y|} \right) - \G \! \left( \frac{v(x) - v(y)}{|x - y|} \right) \right\} \frac{dx\,dy}{|x - y|^{n - 1 + s}} \le 0
$$
for every~$v\in\W^s(\Omega)$ such that~$v=u$ a.e.~in~$\Co\Omega$.

Given a measurable function~$\varphi:\Co\Omega\to\R$, we say that~$u:\R^n\to\R$ is a \emph{minimizer of~$\F$ within~$\W^s_\varphi(\Omega)$} if~$u$ is a minimizer of~$\F$ in~$\Omega$ which belongs to~$\W^s_\varphi(\Omega)$.
\end{defn}

When~$\varphi$ satisfies the global integrability condition~\eqref{Exterior_global_cond}, it can be checked that~$\F(u, \Omega)~<~\infty$ for every~$u \in \W^s_\varphi(\Omega)$ and therefore Definition~\ref{mindef} is equivalent to more standard ones---see Subsubsection~\ref{GlobTail_section}. Of course, Definition~\ref{mindef} allows a much higher degree of generality. The downside is that proving the existence of a minimizer becomes a more delicate issue, as one cannot blindly use the direct method of the Calculus of Variations. To overcome this problem, we exploit a ``truncation procedure'' for~$\F$ and introduce the family of functionals~$\{ \F^M \}_{M > 0}$. In the geometric framework, these correspond to the~$s$-perimeter in the truncated cylinder~$\Omega\times(-M,M)$---we refer to Subsection~\ref{areafunc} for the precise definition of the~$\F^M$'s and to Subsection~\ref{areageom} for their relationship with the $s$-perimeter.

The idea consists in proving that each functional~$\F^M$ has a unique minimizer~$u_M$ within its natural domain and to exploit the a priori estimate of Proposition~\ref{Ws1prop}---which holds under the very mild assumption~\eqref{TailinL1} on the exterior datum---to prove that the minimizers~$u_M$'s converge to a function~$u$. This limit function is then shown to be a minimizer in the sense of Definition~\ref{mindef}. We refer to Section~\ref{Linftysec} for the detailed presentation of the argument and to Remark~\ref{Comments_minim} for some further comments about this strategy.

In order to state our first result, we need to introduce some further terminology. Given a set~$\Omega\subseteq\R^n$ and~$\varrho > 0$, we introduce its exterior~$\varrho$-neighborhood 
$$
\Omega_\varrho:= \Big\{ x\in\R^n : \dist(x,\Omega)<\varrho \Big\}.
$$
Given a bounded open set~$\Omega \subseteq \R^n$ and a function~$\varphi:\Co\Omega\to\R$, we define
the tail of~$u$ restricted to a measurable set~$\Op \subseteq \Co \Omega$ and evaluated at a point~$x\in \Omega$ as
\begin{equation} \label{taildef}
\Tail_s(\varphi, \Op;x):=\int_{\Op}\frac{|\varphi(y)|}{|x-y|^{n+s}}\,dy.
\end{equation}
Similar notions of tails were introduced in~\cite{DKP14,DKP16} to study the regularity properties of solutions of homogeneous nonlinear nonlocal equations. We stress that, in contrast to those works, the tails considered here will almost always be restricted to a bounded~$\Op$ (typically, an exterior neighborhood of~$\Omega$), a reflection of the different behavior at infinity allowed by the non-homogeneous operator~$\h_s$---recall the discussion at the end of Subsection~\ref{Intro_motives}.

\begin{theorem} \label{Dirichlet}
Let~$\Omega \subseteq \R^n$ be a bounded open set with Lipschitz boundary.
Then, there exists a constant~$\Theta > 0$, depending only on~$n$,~$s$, and~$g$, such that,
given any function~$\varphi: \Co\Omega\to \R$
with
\begin{equation} \label{TailinL1}
\Tail_s(\varphi, \Omega_{\Theta \diam(\Omega)} \setminus \Omega;\,\cdot\,) \in L^1(\Omega),
\end{equation}
there exists a unique minimizer~$u$ of~$\F$ within~$\W^s_\varphi(\Omega)$. Moreover,~$u$ satisfies
\begin{equation} \label{Ws1estformin}
\| u \|_{W^{s, 1}(\Omega)} \le C \left( \left\| \Tail_s(\varphi,\Omega_{\Theta \diam(\Omega)}\setminus \Omega;\,\cdot\,) \right\|_{L^1(\Omega)} + 1 \right),
\end{equation}
for some constant~$C > 0$ depending only on~$n$,~$s$,~$g$, and~$\Omega$.
\end{theorem}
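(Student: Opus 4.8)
The plan is to establish existence by a truncation-and-compactness argument, and uniqueness by exploiting the strict convexity of $\G$ implied by the strict positivity of $g$. First I would set up the truncated functionals $\F^M$ (whose definition is promised in Subsection~\ref{areafunc}): these are obtained by replacing $u$ with $\max\{-M,\min\{M,u\}\}$ in the relevant interactions, and they enjoy the crucial feature of having a genuinely coercive and weakly lower semicontinuous structure on a bounded domain. For each fixed $M>0$, the direct method of the Calculus of Variations applies: minimizing sequences are bounded in $W^{s,1}(\Omega)$ (here one needs the fractional Hardy inequality of Proposition~\ref{FHI} and the auxiliary Lemma~\ref{tartariccio} to control the exterior contribution by $\Tail_s(\varphi,\Omega_{\Theta\diam(\Omega)}\setminus\Omega;\cdot)$), hence precompact in $L^1_{\loc}$ up to a subsequence, and the convexity of $\G$ gives lower semicontinuity of $\F^M$ along the convergence. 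This yields a minimizer $u_M \in \W^s_\varphi(\Omega)$ of $\F^M$, unique by strict convexity of $\G$ off the origin.

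Next I would derive the a priori estimate. The key is Proposition~\ref{Ws1prop}, which provides a bound on $\|u_M\|_{W^{s,1}(\Omega)}$ in terms of $\|\Tail_s(\varphi,\Omega_{\Theta\diam(\Omega)}\setminus\Omega;\cdot)\|_{L^1(\Omega)}+1$ that is \emph{uniform in $M$}. The mechanism is a comparison: testing the minimality of $u_M$ against a suitable competitor (for instance a harmonic-type extension of $\varphi$ inside $\Omega$, or simply a truncation) and using that $\G(t)\gtrsim |t|$ for $|t|$ large while $\G(t)\lesssim t^2$ for $|t|$ small, one extracts a $W^{s,1}(\Omega)$ bound; the term $\Tail_s$ appears precisely from estimating the cross interactions $\Nl$ between $\Omega$ and $\Omega_{\Theta\diam(\Omega)}\setminus\Omega$, while interactions with the far-away complement are absorbed because $g$ is bounded and $\int_\R g(t)|t|\,dt<\infty$. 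With this uniform bound in hand, $\{u_M\}$ is precompact in $L^1_{\loc}(\R^n)$ (and bounded in $W^{s,1}(\Omega)$), so a subsequence converges a.e.\ and in $L^1_{\loc}$ to some $u$, which agrees with $\varphi$ a.e.\ in $\Co\Omega$; Fatou's lemma transfers the $W^{s,1}$ estimate to $u$, yielding~\eqref{Ws1estformin}.

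It then remains to show that the limit $u$ is a minimizer of $\F$ in the sense of Definition~\ref{mindef}. Given any competitor $v\in\W^s(\Omega)$ with $v=u$ a.e.\ in $\Co\Omega$, one wants
\[
\iint_{Q(\Omega)}\left\{\G\!\left(\frac{u(x)-u(y)}{|x-y|}\right)-\G\!\left(\frac{v(x)-v(y)}{|x-y|}\right)\right\}\frac{dx\,dy}{|x-y|^{n-1+s}}\le 0.
\]
One compares $u_M$ with the truncation $v_M:=\max\{-M,\min\{M,v\}\}$ (modified so that it still equals $\varphi$ outside, using that eventually $M$ exceeds the relevant values), uses the minimality of $u_M$ for $\F^M$, and passes to the limit $M\to\infty$: lower semicontinuity handles the $u$-side, while the $v$-side converges by dominated convergence since the integrand for the fixed function $v$ is integrable on the region where the comparison is nontrivial. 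Finally, uniqueness of $u$ follows from the strict convexity argument applied directly at the level of $\F$: if $u_1,u_2$ were two minimizers with the same exterior datum, the midpoint $(u_1+u_2)/2$ would be an admissible competitor for both, and strict convexity of $\G$ forces $u_1(x)-u_1(y)=u_2(x)-u_2(y)$ for a.e.\ pair with $x$ or $y$ in $\Omega$; combined with $u_1=u_2$ a.e.\ in $\Co\Omega$ (or on a set of positive measure there, using connectivity of $\Omega$ together with the Lipschitz boundary), this gives $u_1=u_2$ a.e.

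The main obstacle I expect is the passage to the limit $M\to\infty$ in the minimality inequality: one must justify that no mass is lost in the nonlocal tail, i.e.\ that the interactions between $\Omega$ and the \emph{entire} complement (not just the neighborhood $\Omega_{\Theta\diam(\Omega)}\setminus\Omega$) behave well along the truncation, and that the competitor-side integrals converge without a coercivity hypothesis on $\F$ itself. This is where the boundedness of $g$ (hence the Lipschitz bound $|\G(t)|\le |t|+C$) and the finiteness of $\int_\R g(t)|t|\,dt$ are essential, and where the specific choice of $\Theta$ — large enough that the truncation does not interfere with the relevant exterior region — must be pinned down. Everything else is a relatively standard, if careful, instance of the direct method combined with convexity.
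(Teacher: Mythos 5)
Your overall architecture coincides with the paper's: truncated functionals $\F^M$, the direct method for each fixed $M$, the uniform-in-$M$ bound of Proposition~\ref{Ws1prop} (whose competitor is simply $\chi_{\Co\Omega}u$), compactness plus Fatou to pass to the limit, and strict convexity for uniqueness (the cross terms $\Omega\times\Co\Omega$ with $\varphi$ fixed are exactly what upgrades convexity to strict convexity, as in Lemma~\ref{conv_func}; no connectivity of $\Omega$ is needed).

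One step of your sketch, however, does not work as written. The $\F^M$ are not obtained by truncating $u$ at level $M$: the paper's $\Nl^M$ truncates the vertical extent of the nonlocal interactions (the cylinder $\Omega\times(-M,M)$ in the geometric picture), and its integrand is sign-indefinite --- indeed $\F^M$ can be strictly negative on unbounded functions (Example~\ref{Exe_neg_nonloc}). Hence your claim that ``the convexity of $\G$ gives lower semicontinuity of $\F^M$'' for a minimizing sequence in all of $\W^s_\varphi(\Omega)$ is unjustified: Fatou's lemma needs a one-sided bound on the integrand, and coercivity of $\F^M$ on the unconstrained class is not free either. The paper sidesteps both issues by minimizing $\F^M$ over the constrained class $\B_M\W^s_\varphi(\Omega)$ of functions with $\|v\|_{L^\infty(\Omega)}\le M$, on which $\A$ and $\Nl^M$ are both non-negative, so that Lemmas~\ref{semicontlem} and~\ref{complem} apply directly; the Hardy inequality enters only to make $\Nl^M$ and Definition~\ref{mindef} well posed, not to bound the minimizing sequence. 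Relatedly, when you pass to the limit $M\to\infty$ in the minimality inequality you must cancel the (a priori infinite) constant terms hidden in $\Nl^M$; the paper does this through the representation~\eqref{nonlocal_explicit}, arguing first for bounded competitors and then extending by density (point~\ref{tartapower_iv} of Remark~\ref{tartapower_Remark}). With the $L^\infty$-constraint inserted in the fixed-$M$ problem, the rest of your argument is the paper's.
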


Theorem~\ref{Dirichlet} provides the existence and uniqueness of a minimizer of~$\F$ having a prescribed outside datum~$\varphi$ satisfying assumption~\eqref{TailinL1}. The uniqueness of said minimizer is particularly noteworthy, since it marks an interesting difference with the classical theory of minimal graphs. Indeed, (generalized) minimizers of the area functional need not be unique, unless one restricts oneself to continuous boundary data---see~\cite[Chapter~15]{G03} and, in particular, Example~15.12 there. This higher rigidity of nonlocal minimizers comes from the fact that~$\F$ (along with its truncations~$\{ \F^M \}$) is \emph{strictly} convex in~$\W^s_\varphi(\Omega)$, whereas the classical area functional (corresponding to the perimeter in the closed cylinder~$\overline{\Omega} \times \R$) is in general only convex.

Note that Theorem~\ref{Dirichlet} holds under hypothesis~\eqref{TailinL1} on the outside datum~$\varphi$. This requirement is much weaker than~\eqref{Exterior_global_cond}, since it poses no restriction on~$\varphi$ outside of~$\Omega_{\Theta \diam(\Omega)}$. For instance, any bounded function in~$\Omega_{\Theta \diam(\Omega)} \setminus \Omega$ satisfies it, and a mild blow-up near~$\partial \Omega$ is allowed as well---see Lemma~\ref{tail_equiv_cond_Lemma} for more information.

The existence of minimizers of~$\F$ comes with the natural energy estimate~\eqref{Ws1estformin}. In the following result, we establish instead a global~$L^\infty$ estimate. It applies in particular to the minimizers obtained in Theorem~\ref{Dirichlet}, provided the outside datum~$\varphi$ is bounded in a sufficiently large neighborhood of~$\Omega$.

\begin{theorem} \label{minareboundedthm}
Let~$\Omega \subseteq \R^n$ be a bounded open set with Lipschitz boundary. There exists a constant~$\Theta > 0$, depending only on~$n$,~$s$, and~$g$, such that if~$u\in\W^s(\Omega)$ is a minimizer of~$\F$ in~$\Omega$, bounded in~$\Omega_{\Theta \diam(\Omega)} \setminus \Omega$, then~$u$ is also bounded in~$\Omega$ and satisfies
$$
\| u \|_{L^\infty(\Omega)} \le \diam(\Omega) + \| u \|_{L^\infty(\Omega_{\Theta \diam(\Omega)} \setminus \Omega)}.
$$
\end{theorem}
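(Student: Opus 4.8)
The plan is to prove a comparison-type estimate: if $u$ exceeds $M := \diam(\Omega) + \| u \|_{L^\infty(\Omega_{\Theta \diam(\Omega)} \setminus \Omega)}$ on a positive-measure subset of $\Omega$, then truncating $u$ from above at level $M$ strictly decreases the energy $\F$ among competitors with the same exterior datum, contradicting minimality. Set $v := \min\{u, M\}$. Since $v = \varphi = u$ a.e.\ outside $\Omega$ (because $|\varphi| \le M - \diam(\Omega) < M$ in the relevant neighborhood, and $v$ is unchanged where $u \le M$) and $v \in \W^s(\Omega)$ (truncation does not increase the $W^{s,1}$-seminorm), $v$ is an admissible competitor. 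The core of the argument is then to show
$$
\iint_{Q(\Omega)} \left\{ \G \! \left( \frac{u(x) - u(y)}{|x - y|} \right) - \G \! \left( \frac{v(x) - v(y)}{|x - y|} \right) \right\} \frac{dx\,dy}{|x - y|^{n - 1 + s}} > 0,
$$
which contradicts the minimality inequality in Definition~\ref{mindef}.

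The heart of the matter is a pointwise inequality for the integrand. Split $Q(\Omega)$ according to whether each of $x, y$ lies in the region $\{u > M\}$ or its complement. On $\{u > M\} \times \{u > M\}$ we have $v(x) = v(y) = M$, hence $\G$ of the difference quotient drops from a nonnegative value to $\G(0) = 0$; here the key point is that $\G \ge 0$ (since $G$ is odd and increasing, $\G$ is even and nonnegative), with strict positivity on a set of positive measure whenever $\{u > M\}$ has positive measure and $u$ is nonconstant there—or, more robustly, one gains strict positivity from the cross terms. On the mixed region, say $x \in \{u > M\}$, $y \in \{u \le M\}$, one has $v(x) - v(y) = M - u(y) \le u(x) - u(y)$ and both sides have the same sign (namely, $\ge 0$), so by monotonicity of $G$ on $[0,\infty)$ and the fact that $\G$ is increasing on $[0,\infty)$ we get $\G\big(\frac{v(x)-v(y)}{|x-y|}\big) \le \G\big(\frac{u(x)-u(y)}{|x-y|}\big)$, with strict inequality wherever $u(x) > M$. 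On $\{u \le M\} \times \{u \le M\}$ the integrand vanishes identically since $u = v$ there. Summing, every contribution is $\ge 0$ and the mixed contribution (or the pure contribution) is strictly positive once $|\{u > M\} \cap \Omega| > 0$; this is where one needs to exploit that $\{u \le M\}$ has positive measure near $\{u > M\}$, which holds because $u = \varphi$ with $|\varphi| < M$ in $\Omega_{\Theta \diam(\Omega)} \setminus \Omega$, a set that "sees" all of $\Omega$ through the kernel. The symmetric truncation from below, $v := \max\{u, -M'\}$ with $M' := \diam(\Omega) + \| u \|_{L^\infty(\cdots)}$, handles the lower bound; combining the two yields $\|u\|_{L^\infty(\Omega)} \le M$.

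The main obstacle I anticipate is making the strict-inequality step fully rigorous when $\{u > M\}$ has positive measure: one must rule out the degenerate scenario in which the difference quotient $\frac{u(x)-u(y)}{|x-y|}$ already lies in the flat part of $\G$—but $\G$ has no flat part away from $0$ since $G = g > 0$ everywhere, so $\G$ is strictly increasing on $(0,\infty)$ and strictly decreasing on $(-\infty,0)$. Thus the only way the integrand can vanish on the mixed region at $(x,y)$ with $u(x) > M \ge u(y)$ is if $u(x) = u(y)$, impossible. Hence as soon as $|\{u > M\} \cap \Omega| > 0$ and $|\{u \le M\} \cap \Omega_{\Theta \diam(\Omega)}| > 0$—the latter being automatic from the boundedness hypothesis on $\varphi$ and the choice of $\Theta$—we obtain the strict inequality on a set of positive measure, forcing the energy to decrease and contradicting minimality. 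A secondary technical point is confirming admissibility of $v$, i.e.\ that $v \in \W^s(\Omega)$ and $v = \varphi$ a.e.\ in $\Co\Omega$; the former follows from the elementary fact that truncations are $1$-Lipschitz, and the latter from $|\varphi| < M$ on the neighborhood and the (harmless) observation that far from $\Omega$ the truncation level $M$ may still be exceeded by $\varphi$—but there $v$ need not equal $\varphi$, so one should instead truncate only inside a large ball, or note that Definition~\ref{mindef} only requires $v = u$ a.e.\ in $\Co\Omega$, which forces us to keep the full truncation; the cleanest fix is to truncate at the possibly larger level $\widetilde M := \max\{M, \|\varphi\|_{L^\infty(\Co\Omega)}\}$ when $\varphi$ is globally bounded, or to localize via the compactly-supported competitors allowed by the definition. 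I would adopt whichever of these is consistent with the conventions already fixed in Section~\ref{Linftysec}.
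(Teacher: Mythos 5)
Your overall strategy---truncate $u$ at level $M:=\diam(\Omega)+\|u\|_{L^\infty(\Omega_{\Theta\diam(\Omega)}\setminus\Omega)}$ and show that the energy does not increase, then conclude by minimality (or, as the paper does, by uniqueness of the minimizer, Remark~\ref{tartapower_Remark}\ref{tartapower_iii})---is the same as the paper's (Proposition~\ref{truncdecreaseprop}). However, there is a genuine gap in the execution. An admissible competitor must satisfy $v=u$ a.e.\ in $\Co\Omega$, so the truncation can only be performed \emph{inside} $\Omega$; none of your proposed fixes is available, since the theorem assumes $u$ bounded only in $\Omega_{\Theta\diam(\Omega)}\setminus\Omega$ and $u$ may be unbounded farther out. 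Once the truncation is confined to $\Omega$, your claim that ``every contribution is $\geq 0$'' fails: for $x\in\Omega$ with $u(x)>M$ and $y\in\Co\Omega_{\Theta\diam(\Omega)}$ with $u(y)$ very large, one has $v(x)-v(y)=M-u(y)<u(x)-u(y)<0$, hence $|v(x)-v(y)|>|u(x)-u(y)|$, and since $\G$ is even and increasing on $[0,\infty)$ the truncation \emph{increases} the integrand on this region. The sign of the total energy variation is therefore not determined pointwise, and your qualitative ``strict inequality on a set of positive measure'' step cannot close the argument.

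This is precisely why $\Theta$ must be taken large and why the additive $\diam(\Omega)$ appears in the truncation level. The paper splits $\Co\Omega$ into the annulus $A_1:=\Omega_{\Theta\diam(\Omega)}\setminus\Omega$ and the far field $A_2:=\Co\Omega_{\Theta\diam(\Omega)}$. On (a suitable subset of) $A_1$ the exterior values lie below $M-\diam(\Omega)$, so $\frac{M-u(y)}{|x-y|}\geq\frac12$ on a set of measure comparable to $\diam(\Omega)^n$---this is where the $+\diam(\Omega)$ in $M$ is used, and your argument never uses it---yielding the quantitatively negative bound $\alpha_1\leq -c_1\diam(\Omega)^{-s}\int_\Omega(u-M)_+$. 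On $A_2$ only the global Lipschitz bound~\eqref{Lip_Gcal} on $\G$ is available, giving $\alpha_2\leq C_2\,\Theta^{-s}\diam(\Omega)^{-s}\int_\Omega(u-M)_+$, and one concludes by choosing $\Theta\geq(C_2/c_1)^{1/s}$ so that the possibly positive far-field term is absorbed by the negative near-field one. Your near-field analysis (the ``mixed region'' with $y$ in the bounded neighborhood, and the interior regions in $\Omega\times\Omega$, handled in the paper via Lemma~\ref{Fminmaxlem}) is correct; what is missing is the treatment of the unbounded far field, which is the actual content of the proof.
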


Theorem~\ref{minareboundedthm} generalizes an~$L^\infty$ estimate obtained in~\cite[Section~3]{graph} for nonlocal minimal graphs.

The strategy of the proof of Theorem~\ref{minareboundedthm} consists in showing that, by truncating the minimizer~$u$ inside~$\Omega$ at height~$\pm N$, the energy decreases---provided~$N$ is big enough. Then, the conclusion follows from the uniqueness of minimizers. We refer to Proposition~\ref{truncdecreaseprop} for the precise argument.

When the exterior datum of the minimizer~$u$ is not bounded in a neighborhood of~$\Omega$,
one can still obtain that~$u$ is \emph{locally} bounded inside~$\Omega$---see Proposition~\ref{Linftylocprop}. We stress that the local boundedness of minimizers is important to show that the subgraphs of minimizers of the fractional area functional~$\F_s$ are nonlocal minimal graphs, as it allows us to make use of the rearrangement inequality of the forthcoming Theorem~\ref{Persdecreases}.

We also point out that, to obtain the global boundedness of the minimizer~$u\in\W^s_\varphi(\Omega)$ inside~$\Omega$, it is actually enough to require the function~$\varphi$ to be bounded only in a neighborhood~$\Omega_r\setminus\Omega$, with~$r>0$ arbitrarily small---see Proposition~\ref{Bdary_Bdedness_prop}. However, in this case the~$L^\infty$ bound is not as clean as the one of Theorem~\ref{minareboundedthm}.

We now focus our attention on the Euler-Lagrange operator associated with~$\F$, which is the nonlinear integro-differential operator
\bgs{
\h u(x):=2\,\PV\int_{\R^n}G\left(\frac{u(x)-u(y)}{|x-y|}\right)\frac{dy}{|x-y|^{n+s}}.
}
In order for~$\h u(x)$ to be well-defined in the pointwise sense, the function~$u$ must be regular enough (e.g.,~$C^{1,\alpha}$
for some~$\alpha>s$) in a neighborhood~$x$. Regardless, when~$u$ is merely measurable we can still understand~$\h u$ in a distributional sense, setting
\bgs{
\langle\h u,v\rangle:=\int_{\R^n}\int_{\R^n}G\left(\frac{u(x)-u(y)}{|x-y|}\right)\big(v(x)-v(y)\big)\frac{dx\,dy}{|x-y|^{n+s}} \quad \mbox{for } v\in W^{s,1}(\R^n).
}
This observation prompts us to give the following definition of weak solutions.

\begin{defn}\label{weaksoldef}
Let~$\Omega\subseteq\R^n$ be an open set and let~$f\in L^1_\loc(\Omega)$. A measurable function~$u:\R^n\to\R$
is a \emph{weak subsolution} of~$\h u=f$ in~$\Omega$ if
$$
\langle \h u,v\rangle\le\int_\Omega fv\,dx \qquad\mbox{for every }v\in C_c^\infty(\Omega)\mbox{ such that }v\ge0.
$$
We say that~$u$ is a \emph{weak supersolution} of~$\h u=f$ in~$\Omega$ if~$-u$ is a weak subsolution of~$\h(-u)=-f$ in~$\Omega$. If~$u$ is both a weak sub- and supersolution of~$\h u = f$ in~$\Omega$, we call it a \emph{weak solution}.
\end{defn}

We refer to Subsection~\ref{Prelim_EL_op} for comments on the space of the test functions~$v$.

It is immediate to verify that minimizers of~$\F$ are weak solutions of~$\h u = 0$. By the convexity of~$\F$, the converse is also true, provided~$u$ is known a priori to belong to the energy space~$\W^s(\Omega)$. As can be easily checked (again, see Subsection~\ref{Prelim_EL_op}), this requirement is not needed to make sense of the quantity~$\langle \h u,v\rangle$, and is therefore not included in Definition~\ref{weaksoldef}. We believe that it would be interesting to understand whether weak solutions of~$\h u = 0$ have necessarily (locally) finite~$W^{s, 1}$ energy or not.

Furthermore, again by exploiting the convexity of the functional~$\F$, it is easy to verify that weak sub- and supersolutions of~$\h u=0$ belonging to the energy space~$\W^s(\Omega)$ satisfy a comparison principle---see Proposition~\ref{Compari_prop}. This enables us to prove an alternative, Perron-type, existence result for minimizers of~$\F$.

In order to state it, we need some additional terminology.

\begin{defn} \label{locminforFdef}
Given an open set~$\Omega \subseteq \R^n$, we define the space
\bgs{
	\W^s_\loc(\Omega):=\left\{u:\R^n\to\R\mbox{ measurable} : u|_\Omega\in W^{s,1}_\loc(\Omega)\right\}
}
and say that a function~$u: \R^n \to \R$ is a \emph{local minimizer} of~$\F$ in~$\Omega$ if it belongs to~$\W^s_\loc(\Omega)$ and it is a minimizer of~$\F_s$ in~$\Omega'$ (as per Definition~\ref{mindef}) for every open set~$\Omega' \Subset \Omega$ with Lipschitz boundary.
\end{defn}

We then have the following result.

\begin{theorem}\label{Perron_Thm}
	Let~$\Omega\subseteq\R^n$ be an open set and~$\varphi:\Co\Omega\to\R$ be a measurable function. The following statements are equivalent:
	\begin{enumerate}[label=$(\roman*)$,leftmargin=*]
		\item \label{perronequiv1} There exists a local minimizer~$u\in\W^s_\loc(\Omega)$ of~$\F$ in~$\Omega$ such that~$u=\varphi$ a.e.~in~$\Co\Omega$;
		\item \label{perronequiv2} There exist two measurable functions~$\underline{u},\,\overline{u}\in\W^s_\loc(\Omega)$, locally bounded in $\Omega$, which are respectively a weak sub- and supersolution of~$\h u=0$ in $\Omega$ and such that
		$$
		\underline{u}\leq\overline{u} \, \textrm{ a.e.~in~}\R^n\quad\textrm{and}\quad
		\underline{u}\leq\varphi\leq\overline{u} \, \textrm{ a.e.~in~}\Co\Omega.
		$$
	\end{enumerate}		
\end{theorem}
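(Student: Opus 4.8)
The plan is to establish the two implications separately, with the substantive work residing in $\ref{perronequiv2} \Rightarrow \ref{perronequiv1}$.

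\medskip

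\emph{Proof that \ref{perronequiv1} implies \ref{perronequiv2}.} This direction is essentially immediate. Given a local minimizer $u \in \W^s_\loc(\Omega)$ of $\F$ with $u = \varphi$ a.e.\ in $\Co\Omega$, I would take $\underline{u} = \overline{u} = u$. Since a local minimizer is a minimizer of $\F$ in every Lipschitz $\Omega' \Subset \Omega$, it is in particular a weak solution of $\h u = 0$ there (by the remark that minimizers of $\F$ are weak solutions of $\h u = 0$, which globalizes to $\Omega$ since weak (sub/super)solutionality is tested against $v \in C^\infty_c(\Omega)$, each of which is supported in some $\Omega' \Subset \Omega$); hence $u$ is both a weak sub- and a weak supersolution in $\Omega$. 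The only point requiring a word is the local boundedness of $u$ in $\Omega$: this follows from Proposition~\ref{Linftylocprop}, applied on a Lipschitz open set $\Omega' \Subset \Omega$, using that the exterior datum of $u$ relative to $\Omega'$ is $u|_{\Co\Omega'}$, which lies in $\W^s_\loc$ and therefore has the requisite local integrability near $\partial\Omega'$. The conditions $\underline{u} \le \overline{u}$ in $\R^n$ and $\underline{u} \le \varphi \le \overline{u}$ in $\Co\Omega$ are trivially satisfied, with equality.

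\medskip

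\emph{Proof that \ref{perronequiv2} implies \ref{perronequiv1}.} This is the heart of the matter and I would run a Perron-type argument. First, I would exhaust $\Omega$ by an increasing sequence of Lipschitz open sets $\Omega_k \Subset \Omega_{k+1} \Subset \Omega$ with $\bigcup_k \Omega_k = \Omega$. On each $\Omega_k$, the exterior datum I will prescribe is the restriction to $\Co\Omega_k$ of a suitable function interpolating between $\underline{u}$ and $\overline{u}$; the natural first choice is $\overline{u}|_{\Co\Omega_k}$. I need this restriction to satisfy the tail hypothesis \eqref{TailinL1} of Theorem~\ref{Dirichlet} relative to $\Omega_k$: since $\overline{u}$ is locally bounded in $\Omega$ and lies in $\W^s_\loc(\Omega)$, and since $\varphi = \overline{u}$ (a.e.) outside $\Omega$, one checks—invoking Lemma~\ref{tail_equiv_cond_Lemma}—that $\Tail_s(\overline{u}|_{\Co\Omega_k}, (\Omega_k)_{\Theta\diam(\Omega_k)}\setminus\Omega_k;\cdot) \in L^1(\Omega_k)$, the contribution of the region near $\partial\Omega_k$ being controlled by the $W^{s,1}$-integrability of $\overline u$ there and the far contribution by local boundedness. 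Theorem~\ref{Dirichlet} then yields a unique minimizer $u_k$ of $\F$ within $\W^s_{\overline{u}}(\Omega_k)$, i.e.\ with $u_k = \overline{u}$ a.e.\ in $\Co\Omega_k$.

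Next I would use the comparison principle of Proposition~\ref{Compari_prop}. Since $u_k$ is a minimizer of $\F$ in $\Omega_k$, it is a weak solution of $\h u_k = 0$ in $\Omega_k$ with finite $W^{s,1}(\Omega_k)$ energy; $\underline{u}$ and $\overline{u}$ are respectively a weak sub- and supersolution of $\h u = 0$ in $\Omega \supseteq \Omega_k$ (hence in $\Omega_k$). On the exterior $\Co\Omega_k$ we have $\underline{u} \le \varphi = \overline u = u_k \le \overline{u}$ on $\Co\Omega \cup (\Omega\setminus\Omega_k)$—here using $\underline u \le \overline u$ in $\R^n$ and $\underline u \le \varphi \le \overline u$ in $\Co\Omega$. (A minor technical point: to apply the comparison principle between $u_k$ and $\underline u$, $\overline u$ one needs $u_k$ and the barriers to lie in a common energy class on $\Omega_k$; $u_k \in W^{s,1}(\Omega_k)$, and $\underline u, \overline u \in W^{s,1}_\loc(\Omega)$, so after intersecting with a slightly larger Lipschitz set the hypotheses of Proposition~\ref{Compari_prop} are met—I will spell this out.) Comparison then gives
\[
\underline{u} \le u_k \le \overline{u} \qquad \text{a.e.\ in } \R^n,
\]
uniformly in $k$. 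Moreover, applying the comparison principle to $u_k$ and $u_{k+1}$ on $\Omega_k$—on which both are weak solutions, and on whose complement $u_{k+1} \le \overline u = u_k$ is not automatic, so here I would instead argue on a nested pair and use that $u_{k+1}$ still equals $\overline u \ge$ the common boundary datum—requires more care; the cleanest route is not to seek monotonicity in $k$ at all, but to extract convergence purely from compactness.

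Indeed, the final step is a compactness-and-diagonalization passage to the limit. Fix any Lipschitz $\Omega' \Subset \Omega$; for all large $k$, $\Omega' \Subset \Omega_k$, and by the a priori estimate \eqref{Ws1estformin} (or rather its local analogue via Proposition~\ref{Ws1prop}, using that $\|u_k\|_{L^\infty(\Omega_k \cap \text{nbhd})}$ is bounded thanks to $\underline u \le u_k \le \overline u$ and the local boundedness of the barriers) the sequence $(u_k)$ is bounded in $W^{s,1}(\Omega')$. Since $s < 1$ and $\Omega'$ is bounded Lipschitz, $W^{s,1}(\Omega') \hookrightarrow L^1(\Omega')$ compactly, so along a subsequence $u_k \to u$ in $L^1_\loc(\Omega)$ and a.e.\ in $\R^n$ (the a.e.\ convergence outside $\Omega$ being free, as there $u_k = \varphi$ for $k$ large once we are outside a fixed $\Omega_j$—here one must be slightly careful since $u_k = \overline u$ only on $\Co\Omega_k$, but for fixed $x \in \Omega\setminus\Omega_j$ and $k \ge j$ this still holds, and on $\Co\Omega$ it holds for all $k$; so $u = \overline u$ a.e.\ on $\Co\Omega$, hence $u = \varphi$ a.e.\ on $\Co\Omega$). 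Then $u \in \W^s_\loc(\Omega)$ by lower semicontinuity of the $W^{s,1}$ seminorm under $L^1$ convergence, and $u = \varphi$ a.e.\ in $\Co\Omega$. It remains to show $u$ is a local minimizer. For this I would fix Lipschitz $\Omega' \Subset \Omega$, note that for large $k$ the function $u_k$ is a minimizer of $\F$ in $\Omega'$ as well (a minimizer in $\Omega_k$ restricts to a minimizer in any Lipschitz $\Omega' \Subset \Omega_k$—this is essentially the locality built into Definition~\ref{mindef}, or follows by a cutoff/comparison argument), and then pass to the limit in the defining inequality of Definition~\ref{mindef} using Fatou's lemma on the $\G(\,\cdot\,)$ term for $u$ (legitimate since $\G \ge 0$) and dominated/continuous convergence on the competitor side, the competitor $v$ being fixed. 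This yields that $u$ is a minimizer of $\F$ in $\Omega'$ for every Lipschitz $\Omega' \Subset \Omega$, i.e.\ a local minimizer, completing the proof.

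\medskip

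\emph{Main obstacle.} The delicate point is the passage to the limit: I must produce uniform local $W^{s,1}$ bounds on $u_k$ and then transfer minimality across the limit. The uniform bounds rely on combining the two-sided barrier bound $\underline u \le u_k \le \overline u$ (to control $\|u_k\|_{L^\infty}$ on relevant neighborhoods, feeding the a priori estimate) with Proposition~\ref{Ws1prop}/\eqref{Ws1estformin}; and the transfer of minimality hinges on the sign of $\G$ (to use Fatou/lower semicontinuity on the term being minimized) together with control of the single fixed competitor's energy. A secondary subtlety, flagged above, is the compatibility of energy classes when invoking the comparison principle Proposition~\ref{Compari_prop} between the finite-energy $u_k$ and the merely locally-finite-energy barriers $\underline u, \overline u$; this is handled by localizing to a Lipschitz set slightly larger than $\Omega'$ on which all three functions lie in $W^{s,1}$.
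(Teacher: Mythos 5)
Your implication \ref{perronequiv1}~$\Rightarrow$~\ref{perronequiv2} is fine and matches the paper. The converse, however, has a genuine gap at its very first substantive step: you cannot invoke Theorem~\ref{Dirichlet} on the exhausting sets~$\Omega_k$. That theorem requires the tail condition~\eqref{TailinL1}, i.e.\ $\Tail_s\big(\cdot,(\Omega_k)_{\Theta\diam(\Omega_k)}\setminus\Omega_k;\,\cdot\,\big)\in L^1(\Omega_k)$, and the neighborhood~$(\Omega_k)_{\Theta\diam(\Omega_k)}$ (with~$\Theta\ge1$ and~$\diam(\Omega_k)$ comparable to~$\diam(\Omega)$) reaches far outside~$\Omega$. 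There, hypothesis~\ref{perronequiv2} gives \emph{no} integrability whatsoever for~$\varphi$,~$\underline u$, or~$\overline u$---only measurability and the ordering~$\underline u\le\varphi\le\overline u$. Your verification rests on two unavailable facts: the assertion ``$\varphi=\overline u$ a.e.\ outside~$\Omega$'' (the hypothesis is an inequality, not an equality), and the ``local boundedness'' of~$\overline u$ on the far part of the tail region, whereas~$\overline u$ is only assumed locally bounded \emph{in}~$\Omega$. The same misreading also corrupts the conclusion: with datum~$\overline u|_{\Co\Omega_k}$ your limit satisfies~$u=\overline u$ a.e.\ in~$\Co\Omega$, not~$u=\varphi$, so it does not witness~\ref{perronequiv1}; and replacing the datum by~$\varphi$ on~$\Co\Omega$ only makes the tail problem worse.

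The paper's proof is built precisely to avoid this. On each~$\Omega_h$ it does not solve the unconstrained Dirichlet problem but a \emph{constrained} one: it minimizes~$\F^{M_h}$ over competitors in~$\W^s_{u_0}(\Omega_h)$ squeezed between~$\underline u$ and~$\overline u$ (with~$u_0=\varphi$ on~$\Co\Omega$ and~$u_0=(\underline u+\overline u)/2$ in~$\Omega$). This is solvable by the direct method of Proposition~\ref{ertyui} with \emph{no} tail hypothesis, because~$\F^{M}$ is finite on~$\W^s(\Omega_h)$ regardless of the exterior behavior (Lemma~\ref{FMdomainlem}), and the constraint supplies the~$L^\infty$ bound. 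Only after passing to the limit and obtaining~$u\in\W^s_\loc(\Omega)$ does the paper invoke Theorem~\ref{Dirichlet}---on small balls~$B_{\varrho_x}(x)$ with~$\varrho_x=\dist(x,\partial\Omega)/(2\Theta+2)$, so that the relevant tail neighborhood stays inside~$\Omega$ and the condition follows from~$u\in\W^s_\loc(\Omega)$ via Lemma~\ref{tail_equiv_cond_Lemma}; combined with Proposition~\ref{Compari_prop} and uniqueness of the constrained minimizer, this removes the constraint. If you want to salvage your outline, you must replace your Step~2 by such a constrained minimization (or otherwise construct the~$u_k$ without any integrability of the data outside~$\Omega$); as written, the approximating minimizers you rely on need not exist.
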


Another way to interpret~$\h u = f$ is via the notion of viscosity solution, a concept that has proven to be very powerful in the context of integro-differential operators. We mention the highly influential papers~\cite{CS09,CS11} by Caffarelli~\& Silvestre, which developed the regularity theory for viscosity solutions of a vast class of fully nonlinear, uniformly elliptic integro-differential equations. The operators~$\h$ considered here do not belong to such class, due to their lack of homogeneity and uniform ellipticity. Nevertheless, a natural definition of viscosity solution can be formulated for them---see Definition~\ref{visc_sol_def} in Section~\ref{ViscoWeak_Sect}. The next result investigates their relationship with weak solutions.

\begin{theorem}\label{Gen_viscweak}
Let~$\Omega\subseteq\R^n$ be an open set,~$f\in C(\overline\Omega)$, and~$u$ be a viscosity subsolution of~$\h u = f$ in~$\Omega$. Then,~$u$ is a weak subsolution of the same equation and~$\max \{ u, k \} \in W^{s, 1}_\loc(\Omega)$ for every~$k \in \R$. Moreover, if~$\Omega$ is bounded with Lipschitz boundary and~$u_+ \in L^\infty(\Omega)$, then~$\max \{u, k\} \in W^{s, 1}(\Omega)$ for every~$k \in \R$.
\end{theorem}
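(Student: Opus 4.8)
The plan is to prove the statement in two stages: first the local claim that a viscosity subsolution is a weak subsolution and lies in $W^{s,1}_\loc$ after truncation from below, and then the global upgrade under the boundedness and regularity hypotheses. The key tool throughout is the inf-convolution (or sup-convolution) regularization adapted to the nonlocal setting, which produces from $u$ a family of semiconcave approximants $u^\eps$ that are still viscosity subsolutions—of $\h u^\eps = f_\eps$ with $f_\eps \to f$ uniformly on compact subsets, using the continuity of $f$ on $\overline\Omega$—and for which $\h u^\eps$ can be evaluated pointwise almost everywhere because semiconcavity gives enough one-sided regularity to make the principal value integral defining $\h$ absolutely convergent from above. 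Concretely, at a point of twice-differentiability (which is a.e.\ by Alexandrov's theorem for semiconcave functions) the integrand $G\big((u^\eps(x)-u^\eps(y))/|x-y|\big)|x-y|^{-n-s}$ is integrable near $y = x$ thanks to the boundedness of $G$ and the quadratic control near the diagonal, so $\h u^\eps(x) \le f_\eps(x)$ holds pointwise a.e.

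The second step is to pass from the pointwise inequality for $u^\eps$ to the weak formulation. For $v \in C_c^\infty(\Omega)$ with $v \ge 0$, I would multiply $\h u^\eps(x) \le f_\eps(x)$ by $v(x)$, integrate over $\Omega$, and use the symmetry trick: since $G$ is odd, $\int_\Omega \h u^\eps\, v\, dx$ symmetrizes to $\int\!\!\int G\big((u^\eps(x)-u^\eps(y))/|x-y|\big)(v(x)-v(y))|x-y|^{-n-s}\,dx\,dy = \langle \h u^\eps, v\rangle$, where the bounded kernel $G$ and $v \in W^{s,1}(\R^n)$ make every integral here absolutely convergent with no hypothesis on $u^\eps$. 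Then I let $\eps \to 0$: $u^\eps \to u$ pointwise (monotonically, in fact), $f_\eps \to f$ locally uniformly, and dominated convergence—again powered by $\|G\|_\infty < \infty$—yields $\langle \h u, v\rangle \le \int_\Omega f v\, dx$. This gives that $u$ is a weak subsolution.

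For the $W^{s,1}_\loc$ claim, fix $k \in \R$ and set $w := \max\{u, k\}$. The idea is to test the weak inequality against $v = \eta^2 (w - k)$ for a cutoff $\eta \in C_c^\infty(\Omega)$—after first checking that such $v$ is an admissible test function, i.e.\ that it can be approximated in $W^{s,1}(\R^n)$ by nonnegative $C_c^\infty(\Omega)$ functions, which requires knowing $w - k \in W^{s,1}_\loc(\Omega)$ a priori; one resolves this circularity by working with $w$ truncated also from above at level $m$, proving the estimate with constants independent of $m$, and then letting $m \to \infty$ by monotone convergence. The resulting Caccioppoli-type estimate: one bounds $[\eta(w-k)]_{W^{s,1}(\R^n)}$ by the energy term coming from $\langle \h u, v\rangle$ plus $\int_\Omega |f|\,\eta^2(w-k)\,dx$ plus tail contributions involving $u_-$ (which, when truncating from below at $k$, do not appear, since on $\{u < k\}$ we have $w = k$ and $w-k = 0$). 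Here the one-sidedness is essential and matches the one-sided viscosity hypothesis. When moreover $\Omega$ is bounded Lipschitz and $u_+ \in L^\infty(\Omega)$, the $W^{s,1}(\Omega)$ conclusion follows by choosing $\eta \equiv 1$ on $\Omega$ (using a Lipschitz extension / the Lipschitz regularity of $\partial\Omega$ to control the nonlocal part $\Nl$ up to the boundary) and absorbing, since $w - k$ is now globally bounded on $\Omega$ and the tail of $\varphi = u|_{\Co\Omega}$ need only be controlled where $u$ is large, which is precisely where $u_+$ being bounded helps—together with the fractional Hardy inequality of Proposition~\ref{FHI} to handle the boundary layer.

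The main obstacle I expect is the rigorous construction and analysis of the nonlocal inf-convolutions $u^\eps$ for this non-homogeneous, non-uniformly-elliptic operator $\h$: one must verify that they remain viscosity subsolutions with the correct perturbed right-hand side, and that the pointwise evaluation of $\h u^\eps$ a.e.\ is legitimate despite $\h$ falling outside the Caffarelli--Silvestre class. A secondary difficulty is the admissibility of the non-smooth test function $\eta^2(w-k)$ in Definition~\ref{weaksoldef}, handled by the truncate–estimate–pass-to-the-limit scheme sketched above; and in the global step, controlling the behavior near $\partial\Omega$ so that the a priori energy bound genuinely upgrades $W^{s,1}_\loc(\Omega)$ to $W^{s,1}(\Omega)$.
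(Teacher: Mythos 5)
Your strategy for the first half --- sup-convolve, use a.e.\ pointwise $C^{1,1}$ regularity of the regularizations to evaluate $\h u^\eps$ pointwise, integrate against nonnegative test functions via the symmetry of the kernel, and pass to the limit using the boundedness of $G$ --- is exactly the paper's route (Propositions~\ref{supconvo_teo},~\ref{convo_visc},~\ref{BHprop_curvature} and Corollary~\ref{convo_weak}; note that sup-convolutions of subsolutions are semi\emph{convex}, not semiconcave, and that the interchange of the principal value with the integration in $x$ is where the paper needs the $\BH$ estimate of Lemma~\ref{bv2_prop}). However, there are two genuine gaps.

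First, your resolution of the circularity in the Caccioppoli step does not work. Truncating $w=\max\{u,k\}$ from above at level $m$ produces a bounded measurable function, but bounded measurable functions need not lie in $W^{s,1}_\loc(\Omega)$, so $\eta^2(w_m-k)$ is still not an admissible test function and the ``estimate with constants independent of $m$'' cannot even be started: the regularity you are trying to prove is precisely what is needed to justify the test function. The paper's fix is to run the energy estimate on the sup-convolutions themselves: $u^\eps$ is locally Lipschitz, hence its truncations are admissible, so Proposition~\ref{localWs1prop} (which tests with $\chi_{\Omega'}u^\eps_+$) gives a bound on $[u^\eps_+]_{W^{s,1}(\Omega')}$ uniform in $\eps$, and one concludes by Fatou. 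You have the $u^\eps$ in hand from your first step, so the repair is available, but as written this part of the argument fails.

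Second, your proposal tacitly assumes $u$ is bounded and well-behaved outside $\Omega$. The sup-convolution is only finite, and only converges back to $u$, under such hypotheses; here $u$ is merely upper semicontinuous in $\Omega$, merely measurable in $\Co\Omega$, and possibly unbounded from below. Most of the paper's proof of this theorem is the reduction to that favorable case: (i) replacing the exterior datum by continuous approximations at the cost of a vanishing error $e_k$ in the equation, exploiting that $\h^{\ge d}u(x)$ is bounded uniformly in $u$ and $x$ (Lemma~\ref{approtrick}); (ii) truncating the exterior values, with an error controlled by a tail integral over $\{|u|>k\}$; (iii) removing the lower bound inside $\Omega$ via the barrier $\max\{u,-k-\chi_{B_{2\varrho}}\}$, which is again a subsolution because the indicator bump has very negative nonlocal curvature. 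Without these reductions the finiteness of $u^\eps$ and the a.e.\ convergence $u^\eps\to u$ needed for your final limit are unjustified.
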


Of course, a similar result can be stated for weak and viscosity supersolutions---and consequently for solutions. Interestingly, this shows that viscosity solutions are weak solutions that (locally) belong to the energy space~$W^{s, 1}$---and therefore, when~$f = 0$, (local) minimizers of the functional~$\F$.

We also point out that, using Theorem~\ref{Perron_Thm} in conjunction with Theorem~\ref{Gen_viscweak}, in order to obtain a local minimizer of~$\F$ in an open set~$\Omega$ it is enough to construct an ordered pair of locally bounded viscosity sub- and supersolutions of~$\h u = 0$ in~$\Omega$.

\subsubsection{Geometric framework}\label{Final_geom_intro}

We already commented in Subsection~\ref{Intro_motives} on the (essential) equivalence of the~$s$-perimeter and the functional~$\F_s$---corresponding to the choice~$g = g_s$ with~$g_s$ as in~\eqref{gdef}. In particular, we mentioned how nonlocal minimal graphs are minimizers of~$\F_s$. Here, we prove that the converse is true as well. Of course, the subgraph of a minimizer of~$\F_s$ has less~$s$-perimeter than any of its \emph{graphical} perturbations. The main difficulty in showing that it is indeed~$s$-minimal rests in verifying that this is true also for perturbations that are not themselves subgraphs.

To establish this fact, we show that, when suitably rearranging a set in the vertical direction to turn it into a subgraph, the nonlocal perimeter decreases. Given a set~$E \subseteq \R^{n + 1}$, we consider the function~$w_E : \R^n \to [- \infty, + \infty]$ defined by
\eqlab{\label{rearr_func_def}
w_E(x) := \lim_{R \rightarrow +\infty} \left( \int_{-R}^R \chi_{E}(x, t) \, dt - R \right) \quad \mbox{for all } x \in \R^n.
}
The subgraph~$\S_{w_E}$ is then the vertical rearrangement of~$E$. Roughly speaking, it is obtained by translating down each connected component (segment) of~$E \cap \left( \{ x \} \times \R \right)$, for a fixed~$x \in \R^n$, until they are joined together to form a connected set.

\begin{theorem} \label{Persdecreases}
Let~$\Omega \subseteq \R^n$ be a bounded open set. Let~$E \subseteq \R^{n + 1}$ be such that~$E \setminus (\Omega\times\R)$ is a subgraph and
\begin{equation} \label{EboundedinOmega}
\Omega \times (-\infty, -M) \subseteq E \cap (\Omega\times\R) \subseteq \Omega \times (-\infty, M),
\end{equation}
for some~$M > 0$. Then,
$$
\Per_s \! \big(\S_{w_E}, \Omega\times(-M,M)\big) \le \Per_s \! \big(E, \Omega\times(-M,M)\big).
$$
Furthermore, if~$\Per_s \! \big( E, \Omega \times (- M, M) \big)$ is finite, then the inequality is strict unless~$E=\S_{w_E}$ up to a set of measure zero.
\end{theorem}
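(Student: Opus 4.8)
\emph{Plan.} The idea is to recast the inequality, via Fubini, as a family of one-dimensional comparisons between the vertical fibres $E_x:=\{t\in\R:(x,t)\in E\}$ and their ``downward rearrangements'', and to settle those by an explicit computation. Throughout write $\Op:=\Omega\times(-M,M)$ and $\phi_c(r):=(c^2+r^2)^{-(n+1+s)/2}$. By~\eqref{EboundedinOmega}, $(-\infty,-M)\subseteq E_x\subseteq(-\infty,M)$ for every $x\in\Omega$, so $w_E(x)=-M+|E_x\cap[-M,M]|\in[-M,M]$ is finite; for $x\in\Co\Omega$ the set $E_x=(-\infty,\varphi(x))$ is a downward half-line and $w_E(x)=\varphi(x)$. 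Setting $(E_x)^*:=(-\infty,w_E(x))$, the fibres of $\S_{w_E}$ are exactly $(\S_{w_E})_x=(E_x)^*$ for all $x$, with $(E_x)^*=E_x$ when $x\in\Co\Omega$. Tonelli's theorem (together with the evenness of $\phi_c$, used to merge the two ``mixed'' contributions) gives, for any measurable $F\subseteq\R^{n+1}$,
\[
\Per_s(F,\Op)=\frac12\int_\Omega\!\int_\Omega K_{|x-y|}(F_x,F_y)\,dx\,dy+\int_\Omega\!\int_{\Co\Omega}L_{|x-y|}(F_x,F_y)\,dx\,dy ,
\]
where $K_c(A,B):=\iint_{\{t\in(-M,M)\}\cup\{\tau\in(-M,M)\}}|\chi_A(t)-\chi_B(\tau)|\,\phi_c(t-\tau)\,dt\,d\tau$ and $L_c(A,B):=\iint_{\{t\in(-M,M)\}}|\chi_A(t)-\chi_B(\tau)|\,\phi_c(t-\tau)\,dt\,d\tau$. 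Hence the theorem follows once we prove the pointwise bounds $K_c((E_x)^*,(E_y)^*)\le K_c(E_x,E_y)$ for $x,y\in\Omega$ and $L_c((E_x)^*,E_y)\le L_c(E_x,E_y)$ for $x\in\Omega$, $y\in\Co\Omega$, valid for a.e.\ $(x,y)$ with $c=|x-y|$; no finiteness of $\Per_s(E,\Op)$ is needed for this.

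For $K_c$ the key remark dissolves \emph{at once} the truncation of the cylinder and the possible non-finiteness of the energy: since $A,B$ are squeezed between $(-\infty,-M)$ and $(-\infty,M)$, on the complement of the integration domain the integrand $|\chi_A(t)-\chi_B(\tau)|$ is exactly the indicator of $\{t<-M,\ \tau>M\}\cup\{t>M,\ \tau<-M\}$, whence
\[
K_c(A,B)=J_c(A,B)-c_1(c),\qquad J_c(A,B):=\iint_{\R^2}|\chi_A(t)-\chi_B(\tau)|\,\phi_c(t-\tau)\,dt\,d\tau ,
\]
with $c_1(c):=2\iint_{\{t<-M,\ \tau>M\}}\phi_c(t-\tau)\,dt\,d\tau$ finite and independent of $A,B$, and $J_c(A,B)$ finite for $c>0$ since the substitution $t=\tau+r$ turns it into $J_c(A,B)=\int_\R|(A-r)\triangle B|\,\phi_c(r)\,dr$ while the symmetric differences grow at most linearly in $|r|$ and $\phi_c\in L^1(\R)$. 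It thus suffices to bound $J_c$, and the crux becomes the elementary inequality
\[
|S_1\triangle S_2|\ \ge\ |\mu(S_1)-\mu(S_2)|,\qquad \mu(S):=\lim_{R\to+\infty}\big(|S\cap(-R,R)|-R\big),
\]
valid whenever $S_1,S_2$ are each squeezed between two downward half-lines: truncate to $(-R,R)$ and use $|S_1\cap S_2\cap(-R,R)|\le\min\{|S_1\cap(-R,R)|,|S_2\cap(-R,R)|\}$. Since $\mu((E_x)^*)=w_E(x)=\mu(E_x)$, $\mu(S-r)=\mu(S)-r$, and since for half-lines one has $|((E_x)^*-r)\triangle(E_y)^*|=|\mu(E_x)-\mu(E_y)-r|$ \emph{exactly}, we get $|(E_x-r)\triangle E_y|\ge|((E_x)^*-r)\triangle(E_y)^*|$ for every $r$; integrating against $\phi_c\ge0$ gives $J_c(E_x,E_y)\ge J_c((E_x)^*,(E_y)^*)$, hence the bound for $K_c$.

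For $L_c$, where $B=E_y=(-\infty,\varphi(y))$ is a fixed downward half-line, a short computation yields, with $A_0:=A\cap[-M,M]$,
\[
L_c(A,B)=(\text{const})+2\int_{A_0}\left(\int_{-\infty}^{\,t-\varphi(y)}\phi_c(\sigma)\,d\sigma\right)dt ,
\]
where ``const'' does not depend on $A$ (only on $|A_0|$, which is fixed by $w_E(x)$) and the inner integral is strictly increasing in $t$ once $c>0$. By the bathtub principle, among measurable $A_0\subseteq[-M,M]$ of prescribed measure $|E_x\cap[-M,M]|$ this is minimized precisely by the bottom sub-interval $[-M,w_E(x))$, which is the desired bound, and it is strict unless $A_0$ equals that interval up to a null set. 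Integrating the two pointwise bounds over $(x,y)$ proves $\Per_s(\S_{w_E},\Op)\le\Per_s(E,\Op)$. For strictness, suppose $\Per_s(E,\Op)<\infty$ and $E\neq\S_{w_E}$ up to a null set; as the two sets already agree outside $\Omega\times\R$, there is a positive-measure set of $x\in\Omega$ for which $E_x\cap[-M,M]$ differs from $[-M,w_E(x))$ on a positive-measure set, and for each such $x$ and a.e.\ $y\in\Co\Omega$ (so $c=|x-y|>0$, and $|\Co\Omega|>0$) the $L_c$-bound is strict while $L_c(E_x,E_y)<\infty$; hence the $\int_\Omega\!\int_{\Co\Omega}L$ term strictly decreases under the rearrangement while the $K$ term does not increase, so $\Per_s(\S_{w_E},\Op)<\Per_s(E,\Op)$. (In the degenerate situation where $\varphi$ is a.e.\ infinite on $\Co\Omega$ the weight above is constant; one then reads strictness off the equality case of the $\triangle$-inequality inside the $K$-term, whose equality forces one fibre to be contained in the other for a.e.\ shift $r$ and thereby forces the offending fibre to be a bottom half-line.)

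\emph{Main obstacle.} The delicate point is not any individual estimate but the packaging: realizing that the cylinder truncation and the absence of a global finiteness bound both evaporate after subtracting the explicit constant $c_1(c)$, and then isolating the one-line inequality $|S_1\triangle S_2|\ge|\mu(S_1)-\mu(S_2)|$ (together with the identity $J_c(A,B)=\int_\R|(A-r)\triangle B|\,\phi_c(r)\,dr$), which is what carries the whole argument; the equality-case analysis behind the strict inequality is the other spot that needs care. A more pedestrian route would reduce $\phi_c$ to indicator kernels by layer-cake and invoke a Riesz-type rearrangement inequality fibre by fibre, but the above seems cleaner.
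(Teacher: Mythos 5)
Your proof is correct, and it takes a genuinely different route from the paper's. The paper first truncates the kernel at level $\eps$ (replacing $|X-Y|^{-n-1-s}$ by $\max\{|X-Y|,\eps\}^{-n-1-s}$) to make all interaction terms finite, reduces to a self-contained one-dimensional rearrangement inequality for an \emph{arbitrary} non-negative kernel (Proposition~\ref{iotadecreasesprop}), proves that inequality by writing $A$ and $B$ as countable unions of intervals and running a covering argument with translated rectangles, and finally lets $\eps\searrow 0$; strictness comes from a quantitative bound $\delta(A,B)\ge c\,|A\Delta A_*|^2$ in the case where one of the two sets already equals its rearrangement. You avoid both the truncation and the combinatorics: the observation that each fibre-pair interaction becomes finite after subtracting the explicit constant $c_1(c)$ removes the need for $\eps$, and the identity $J_c(A,B)=\int_\R|(A-r)\triangle B|\,\phi_c(r)\,dr$ together with $|S_1\triangle S_2|\ge|\mu(S_1)-\mu(S_2)|$ (with equality for half-lines) replaces the interval decomposition entirely. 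The trade-off is that your $\triangle$-trick only delivers the \emph{symmetrized} inequality $J_c(A,B)\ge J_c(A_*,B_*)$, not the one-sided $\I_K(A,B)\ge\I_K(A_*,B^*)$ of Proposition~\ref{iotadecreasesprop}; this is exactly what the perimeter needs, but the paper's lemma is a more general standalone statement. Both arguments ultimately extract strictness from the $\Omega\times\Co\Omega$ interactions, where the exterior fibre is already a half-line — you via the bathtub equality case, the paper via the second part of Proposition~\ref{iotadecreasesprop}. Two cosmetic remarks: the closing parenthetical about ``$\varphi$ a.e.\ infinite'' is vacuous, since the hypothesis that $E\setminus(\Omega\times\R)$ is a subgraph means the subgraph of a real-valued function, so every exterior fibre is a genuine half-line; and the uniqueness in the bathtub step should be justified by one line (strict monotonicity of $\Phi_c$ plus a second application of the bathtub principle to the two pieces of $A_0\triangle(-M,w_E(x))$), but this is routine.
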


From this result, it easily follows that minimizers of~$\F_s$ are~$s$-minimal graphs. We formalize this in the following theorem, which brings together all notions of minimizers and solutions discussed earlier. Recall that the definitions of locally~$s$-minimal sets and local minimizers of~$\F_s$ are respectively given in Definitions~\ref{minim_sperimeter_def} and~\ref{locminforFdef}.

\begin{theorem}\label{equiv_intro}
Let~$\Omega\subseteq\R^n$ be an open set and~$u: \R^n \to \R$ be a measurable function. Then, the following are equivalent:
\begin{enumerate}[label=$(\roman*)$,leftmargin=*]
\item \label{equiv:uvisc} $u$ is a viscosity solution of~$\h_s u=0$ in~$\Omega$;
\item \label{equiv:ulocweak} $u \in \W^s_\loc(\Omega)$ and~$u$ is a weak solution of~$\h_s u=0$ in~$\Omega$;
\item \label{equiv:ulocmin} $u$ is a local minimizer of~$\F_s$ in~$\Omega$;
\item \label{equiv:Sgumin} $\S_u$ is locally $s$-minimal in~$\Omega\times\R$;
\item \label{equiv:upwise} $u|_\Omega \in C^\infty(\Omega)$ and~$u$ is a pointwise solution of~$\h_s u=0$ in~$\Omega$.
\end{enumerate}
If~$\Omega$ is bounded with Lipschitz boundary and~$u|_\Omega \in L^\infty(\Omega)$, then all the above are also equivalent to:
\begin{enumerate}[label=$(\roman*)'$,leftmargin=*]
\setcounter{enumi}{1}
\item \label{equiv:uglobweak} $u \in \W^s(\Omega)$ and~$u$ is a weak solution of~$\h_s u=0$ in~$\Omega$;
\item \label{equiv:uglobmin} $u$ is a minimizer of~$\F_s$ in~$\Omega$.
\end{enumerate}
\end{theorem}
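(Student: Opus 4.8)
The plan is to prove Theorem~\ref{equiv_intro} by establishing a cycle of implications among the five conditions \ref{equiv:uvisc}--\ref{equiv:upwise}, and then separately closing the equivalence with \ref{equiv:uglobweak} and \ref{equiv:uglobmin} under the additional boundedness and regularity hypotheses. Most of the work has been done in the earlier theorems and propositions, so this proof is largely an assembly argument. I would organize the core cycle as
\[
\ref{equiv:upwise} \Longrightarrow \ref{equiv:ulocmin} \Longrightarrow \ref{equiv:Sgumin} \Longrightarrow \ref{equiv:ulocweak} \Longrightarrow \ref{equiv:uvisc} \Longrightarrow \ref{equiv:upwise}.
\]
The implication \ref{equiv:upwise}$\Rightarrow$\ref{equiv:ulocmin} follows from the convexity of $\F_s$: a smooth pointwise solution of $\h_s u = 0$ in $\Omega$ lies in $W^{s,1}_\loc(\Omega)$ and is a weak solution locally, hence a local minimizer by the convexity argument sketched in the excerpt (minimizers of $\F$ and weak solutions in the energy space coincide). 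The implication \ref{equiv:ulocmin}$\Rightarrow$\ref{equiv:Sgumin} is the geometric payoff of Theorem~\ref{Persdecreases}: given any compact competitor $F$ for $\S_u$ in a subcylinder $\Omega' \times (-M, M)$ with $\Omega' \Subset \Omega$ Lipschitz, one first uses the local $L^\infty$ bound (Proposition~\ref{Linftylocprop}) so that $u$ is bounded on $\Omega'$, truncates $F$ vertically so that \eqref{EboundedinOmega} holds, applies Theorem~\ref{Persdecreases} to replace $F$ by the subgraph $\S_{w_F}$ without increasing the $s$-perimeter, and then compares $\S_u$ with $\S_{w_F}$ using that $u$ minimizes $\F_s$ together with the identity \eqref{Per=F} relating $\Per_s^L$ of a subgraph to $\A_s$ (plus the analogous bookkeeping for the nonlocal part, handled in Subsection~\ref{areageom}).

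For \ref{equiv:Sgumin}$\Rightarrow$\ref{equiv:ulocweak}, taking the first variation of $\Per_s$ along graphical perturbations $t \mapsto u + \eps v$ with $v \in C^\infty_c(\Omega)$ and using \eqref{Curv_subgr_funct_u_id}--\eqref{Hsuvpairing} shows that $\S_u$ being locally $s$-minimal forces $\langle \h_s u, v\rangle = 0$ for all such $v$, i.e.\ $u$ is a weak solution; the membership $u \in \W^s_\loc(\Omega)$ follows from \eqref{Finite_local} since local $s$-minimality gives finite local $s$-perimeter, hence finite $\Per_s^L$ in truncated subcylinders. The implication \ref{equiv:ulocweak}$\Rightarrow$\ref{equiv:uvisc} should reduce to showing that a weak solution in the energy space is a viscosity solution; since $f = 0$ is continuous, one can argue that a weak solution which fails to be a viscosity subsolution would admit a smooth test function touching from above with $\h_s$ evaluated strictly negative, and plugging a bump supported near the contact point into the weak formulation yields a contradiction after an elementary estimate on $\langle \h_s u, v\rangle$ (the boundedness of $G_s$ makes the tail terms harmless). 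Finally \ref{equiv:uvisc}$\Rightarrow$\ref{equiv:upwise} is the regularity statement: a viscosity solution of $\h_s u = 0$ is, by Theorem~\ref{Gen_viscweak}, a weak solution with $\max\{u,k\} \in W^{s,1}_\loc(\Omega)$, hence (after the truncation is removed using the local $L^\infty$ bound) a local minimizer, whose subgraph is locally $s$-minimal by the chain just established; one then invokes the regularity theory for nonlocal minimal graphs of~\cite{CaCo} together with the boundary regularity of~\cite{CRS10}-type results to conclude $u|_\Omega \in C^\infty(\Omega)$ and a pointwise solution.

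For the second block, under $\Omega$ bounded Lipschitz and $u|_\Omega \in L^\infty(\Omega)$: \ref{equiv:uglobweak}$\Rightarrow$\ref{equiv:ulocweak} and \ref{equiv:uglobmin}$\Rightarrow$\ref{equiv:ulocmin} are trivial, so it suffices to upgrade \ref{equiv:ulocmin} (say) to \ref{equiv:uglobmin} and \ref{equiv:uglobweak}. The passage from local to global minimality uses that $u|_\Omega \in L^\infty(\Omega)$ together with the hypothesis that, in this regime, $\varphi = u|_{\Co\Omega}$ is controlled enough for the tail condition \eqref{TailinL1} to be available on $\Omega$ (one should check the tail is finite because the interior $L^\infty$ bound plus the structure of $\F_s$ make the exterior contribution from a neighborhood of $\partial\Omega$ integrable), so Theorem~\ref{Dirichlet} produces a unique minimizer within $\W^s_\varphi(\Omega)$, which must coincide with $u$ since $u$ is a local minimizer with the same datum and one can compare energies directly; then $u \in \W^s(\Omega)$ and $u$ is a global weak solution by the convexity equivalence. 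I expect the main obstacle to be the implication \ref{equiv:ulocmin}$\Rightarrow$\ref{equiv:Sgumin}: handling a general non-graphical competitor requires the vertical truncation to be compatible with the hypothesis \eqref{EboundedinOmega} of Theorem~\ref{Persdecreases}, which is exactly where the local boundedness of $u$ (Proposition~\ref{Linftylocprop}) is essential, and one must carefully track the nonlocal interaction terms dropped or modified by the truncation and rearrangement so that the comparison with $\F_s(u,\Omega')$ is clean; the remaining implications are comparatively soft, relying on convexity, the first-variation identities, and the cited regularity results.
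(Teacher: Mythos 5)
Your cycle $(v)\Rightarrow(iii)\Rightarrow(iv)\Rightarrow(ii)\Rightarrow(i)\Rightarrow(v)$ differs from the paper's $(i)\Rightarrow(ii)\Rightarrow(iii)\Rightarrow(iv)\Rightarrow(v)\Rightarrow(i)$, and most of your links are sound: $(v)\Rightarrow(iii)$ via Proposition~\ref{BHprop_curvature} and Corollary~\ref{weak_implies_min_lemma}, $(iii)\Rightarrow(iv)$ exactly as in the paper (Propositions~\ref{Linftylocprop} and~\ref{per_of_subgraph_prop} plus Theorem~\ref{Persdecreases}), and $(iv)\Rightarrow(ii)$ via the first variation is a legitimate alternative to the paper's $(iv)\Rightarrow(v)$ through the regularity theory of~\cite{CaCo}. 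The genuine gap is the link $(ii)\Rightarrow(i)$. A weak solution in $\W^s_\loc(\Omega)$ is an a.e.-defined function with no a priori pointwise meaning, let alone the (semi)continuity that Definition~\ref{visc_sol_def} requires of a viscosity solution; the ``touching from above at a contact point'' scenario is not even well posed before continuity is established. Moreover, even granting continuity, ``plug a bump near the contact point into the weak formulation'' is not an elementary contradiction: the weak identity only controls an integral average of $G_s\left(\frac{u(x)-u(y)}{|x-y|}\right)$ against $v(x)-v(y)$, and relating it to the pointwise value $\h_s\tilde v(x_0)$ of the curvature of the touching function requires precisely the stability/regularity machinery the paper avoids. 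The paper never proves weak~$\Rightarrow$~viscosity directly; it reaches $(i)$ only through $(v)\Rightarrow(i)$, which is immediate from Remark~\ref{mah} since a smooth pointwise solution can be used as its own test function. Your cycle is repaired at no cost by replacing $(ii)\Rightarrow(i)$ with $(ii)\Rightarrow(iii)\Rightarrow(iv)\Rightarrow(v)\Rightarrow(i)$, all of whose links you already have (you in fact run exactly this chain inside your $(i)\Rightarrow(v)$ step), but as written the cycle is broken at this arrow.

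A second, independent problem is your treatment of the primed block. You propose to upgrade $(iii)$ to $(iii)'$ by invoking Theorem~\ref{Dirichlet} with $\varphi=u|_{\Co\Omega}$, arguing that the tail condition~\eqref{TailinL1} is ``available'' because of the interior $L^\infty$ bound. It is not: the hypotheses of the second block control $u$ only \emph{inside} $\Omega$, and $\Tail_s(\varphi,\Omega_{\Theta\diam(\Omega)}\setminus\Omega;\,\cdot\,)$ involves only the exterior values of $u$, which may fail to be locally integrable near $\partial\Omega$ from outside, so Theorem~\ref{Dirichlet} need not apply. The paper closes the second block differently: $(i)\Rightarrow(ii)'$ follows from the \emph{global} part of Theorem~\ref{Gen_viscweak} (which uses $u_+\in L^\infty(\Omega)$ and $\Omega$ bounded Lipschitz to get $\max\{u,k\}\in W^{s,1}(\Omega)$, with no hypothesis on the exterior datum), then $(ii)'\Leftrightarrow(iii)'$ by Corollary~\ref{weak_implies_min_lemma} and $(iii)'\Rightarrow(iii)$ trivially. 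You should adopt that route; the one through the Dirichlet existence theorem requires an assumption on $\varphi$ that the theorem does not grant you.
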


If~$u$ is continuous in~$\Omega$ and~$\S_u$ is a \emph{geometric} viscosity solution of~$H_s[\S_u] = 0$ on~$\partial \S_u \cap \left( \Omega \times \R \right)$ in the sense of~\cite[Section~5]{CRS10}, then~$u$ is a viscosity solution of~$\h_s u = 0$ in~$\Omega$. Thus, Theorem~\ref{equiv_intro} also yields the equivalence between the concepts of geometric viscosity solution and minimizer for continuous graphs. We point out that a similar result has also been established by Cabr\'e~\cite{C19} by means of a suitable notion of calibration.

Note that the first part of Theorem~\ref{equiv_intro} does not require the boundedness of~$\Omega$. In particular, it holds with~$\Omega = \R^n$ and thus provides a characterization for entire solutions of~$\h_s u = 0$.

On the other hand, by combining Theorems~\ref{equiv_intro} and~\ref{Dirichlet}, we may conclude that, if~$\Omega$ is a bounded, Lipschitz set and~$\varphi: \C \Omega \to \R$ is a sufficiently regular outside datum (satisfying hypothesis~\eqref{TailinL1} with~$\Theta$ given by Theorem~\ref{Dirichlet}), then there exists a unique locally~$s$-minimal graph~$\S_u$ in~$\Omega \times \R$ such that~$u = \varphi$ a.e.~in~$\C \Omega$. Observe that this does not rule out a priori the existence of a different set~$E$ (not a subgraph) which also locally minimizes the~$s$-perimeter in~$\Omega \times \R$ and agrees with~$\{ t < \varphi(x) \}$ outside of it. In view of Theorem~\ref{Persdecreases} (see, in particular, hypothesis~\eqref{EboundedinOmega} there), this non-graphical minimizer~$E$ could not be ``vertically bounded'' within~$\Omega \times \R$. Hence, taking advantage of the~``$L^\infty$ bound'' of~\cite[Lemma~3.3]{graph}, we can find sufficient conditions on~$\Omega$ and~$\varphi$ which exclude the existence of such a set~$E$.

\begin{theorem}\label{last_Theorem}
Let~$\Omega\subseteq\R^n$ be an open set with boundary of class~$C^2$ and such that~$\Omega \subseteq B_{R_0}$ for some~$R_0 > 0$. There exists a radius~$R > R_0$, depending only on~$n$,~$s$, and~$\Omega$, such that, if~$\varphi:\Co\Omega\to\R$ is a measurable function, bounded in~$B_R \setminus \Omega$, then there exists a unique locally~$s$-minimal set~$E$ in~$\Omega \times \R$ such that
$$
E \setminus \left( \Omega \times \R \right) = \Big\{ (x, t) \in \Co \Omega \times \R : t < \varphi(x) \Big\}.
$$
The set~$E$ is the subgraph~$\S_u$ of a measurable function~$u: \R^n \to \R$ with~$u|_\Omega \in L^\infty(\Omega) \cap C^\infty(\Omega)$. If in addition~$\varphi \in C(\Omega_r \setminus \Omega)$ for some~$r > 0$, then~$u|_{\overline{\Omega}} \in C(\overline{\Omega})$.
\end{theorem}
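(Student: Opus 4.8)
The plan is to obtain~$E$ as the subgraph of the minimizer provided by Theorem~\ref{Dirichlet}, and then to exclude every other locally~$s$-minimal competitor by combining the ``vertical~$L^\infty$ bound'' of~\cite[Lemma~3.3]{graph} with the rearrangement inequality of Theorem~\ref{Persdecreases}. Since~$\Omega$ is bounded and~$\partial\Omega\in C^2$, there is~$\varrho_0>0$, depending only on~$\Omega$, such that~\cite[Lemma~3.3]{graph} yields a vertical trapping of~$s$-minimal sets in truncated cylinders~$\Omega\times(-M,M)$ in terms of~$\|\varphi\|_{L^\infty(\Omega_{\varrho_0}\setminus\Omega)}$ (this is the step where the barrier construction uses the uniform two-sided ball condition afforded by~$\partial\Omega\in C^2$). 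Let~$\Theta>0$ be large enough that both Theorems~\ref{Dirichlet} and~\ref{minareboundedthm} apply with that constant, and set~$R:=R_0+\max\{\varrho_0,\Theta\diam(\Omega)\}$; since~$\Omega\subseteq B_{R_0}$, this guarantees~$B_R\supseteq\Omega_{\Theta\diam(\Omega)}\cup\Omega_{\varrho_0}$, so any~$\varphi$ bounded on~$B_R\setminus\Omega$ is in particular bounded on both of these neighborhoods.

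\emph{Existence.} Because~$\varphi$ is bounded on~$\Omega_{\Theta\diam(\Omega)}\setminus\Omega$, assumption~\eqref{TailinL1} holds, so Theorem~\ref{Dirichlet} produces a unique minimizer~$u$ of~$\F_s$ within~$\W^s_\varphi(\Omega)$, and Theorem~\ref{minareboundedthm} gives~$u|_\Omega\in L^\infty(\Omega)$. Then the implications~\ref{equiv:uglobmin}$\Rightarrow$\ref{equiv:Sgumin} and~\ref{equiv:uglobmin}$\Rightarrow$\ref{equiv:upwise} of Theorem~\ref{equiv_intro} show that~$E:=\S_u$ is locally~$s$-minimal in~$\Omega\times\R$ and that~$u|_\Omega\in C^\infty(\Omega)$; by construction~$E\setminus(\Omega\times\R)=\{(x,t):t<\varphi(x)\}$, so~$E$ has all the asserted properties.

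\emph{Uniqueness.} Let~$E'$ be any locally~$s$-minimal set in~$\Omega\times\R$ with the prescribed exterior datum. By~\cite[Remark~4.2]{Cyl},~$E'$ is~$s$-minimal, with finite~$s$-perimeter, in every truncated cylinder~$\Omega\times(-M,M)$, so~\cite[Lemma~3.3]{graph} furnishes~$M_0>0$, depending only on~$n$,~$s$,~$\Omega$, and~$\|\varphi\|_{L^\infty(B_R\setminus\Omega)}$, such that~$E'$ satisfies~\eqref{EboundedinOmega} with~$M:=M_0+1$. Since~$E'\setminus(\Omega\times\R)$ is a subgraph, Theorem~\ref{Persdecreases} applies to~$E'$: the function~$u':=w_{E'}$ is a.e.\ finite, equals~$\varphi$ a.e.\ on~$\Co\Omega$, satisfies~$|u'|\le M$ in~$\Omega$ by~\eqref{EboundedinOmega}, and its subgraph obeys~$\Per_s(\S_{u'},\Omega\times(-M,M))\le\Per_s(E',\Omega\times(-M,M))<\infty$. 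A direct check using~\eqref{rearr_func_def} shows that~$\S_{u'}=E'$ a.e.\ outside~$\Omega\times(-M,M)$---they agree on~$\Co\Omega\times\R$, both contain~$\Omega\times(-\infty,-M)$, and both are disjoint from~$\Omega\times[M,+\infty)$---so~$\S_{u'}$ is an admissible competitor for~$E'$. The~$s$-minimality of~$E'$ then forces equality of the two~$s$-perimeters, whence the last assertion of Theorem~\ref{Persdecreases} gives~$E'=\S_{u'}$ up to a null set. Therefore~$E'$ is the subgraph of~$u'$, with~$u'|_\Omega\in L^\infty(\Omega)$ and~$u'=\varphi$ a.e.\ in~$\Co\Omega$, so~\ref{equiv:Sgumin}$\Rightarrow$\ref{equiv:uglobmin} of Theorem~\ref{equiv_intro} makes~$u'$ a minimizer of~$\F_s$ within~$\W^s_\varphi(\Omega)$, and the uniqueness in Theorem~\ref{Dirichlet} yields~$u'=u$, i.e.~$E'=E$. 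The hard part of the whole argument is exactly this step: that an a priori arbitrary---possibly non-graphical---locally~$s$-minimal competitor is forced to be vertically bounded inside~$\Omega\times\R$, which is what makes Theorem~\ref{Persdecreases} applicable and what dictates both the~$C^2$ hypothesis on~$\partial\Omega$ and the size of~$R$.

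\emph{Boundary continuity.} Suppose in addition~$\varphi\in C(\Omega_r\setminus\Omega)$. Since~$u|_\Omega\in C^\infty(\Omega)$, it remains only to show that~$u(x)\to\varphi(x_0)$ as~$\Omega\ni x\to x_0$ for every~$x_0\in\partial\Omega$. I would prove this by a barrier argument: using the interior and exterior tangent balls at~$x_0$ provided by~$\partial\Omega\in C^2$ together with the continuity of~$\varphi$ near~$x_0$, one builds, for each~$\eps>0$, a weak supersolution~$\overline\psi$ and a weak subsolution~$\underline\psi$ of~$\h_s u=0$ in~$\Omega$, both belonging to~$\W^s(\Omega)$, agreeing with~$\varphi$ a.e.\ on~$\Co\Omega$, and satisfying~$\varphi(x_0)-\eps\le\underline\psi\le\overline\psi\le\varphi(x_0)+\eps$ on a neighborhood of~$x_0$ in~$\Omega$---a minor adaptation of the barrier construction of~\cite[Section~3]{graph}, the previously established~$L^\infty$ bound rendering the behavior of~$\varphi$ far from~$\partial\Omega$ irrelevant. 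The comparison principle of Proposition~\ref{Compari_prop} then gives~$\underline\psi\le u\le\overline\psi$ in~$\R^n$, hence~$\limsup_{\Omega\ni x\to x_0}|u(x)-\varphi(x_0)|\le\eps$; letting~$\eps\to0$ concludes.
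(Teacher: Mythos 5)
Your existence and uniqueness arguments coincide with the paper's: the same choice of $R$ (large enough for the vertical trapping of~\cite{graph} and for $\Omega_{\Theta\diam(\Omega)}\subseteq B_R$), Theorem~\ref{Dirichlet} plus Theorems~\ref{minareboundedthm} and~\ref{equiv_intro} for existence, and the combination of the ``$L^\infty$ bound'' of~\cite{graph}, Theorem~\ref{Persdecreases}, and the uniqueness of minimizers of $\F_s$ for uniqueness. You even spell out, more explicitly than the paper does, why $\S_{w_{E'}}$ is an admissible competitor and why minimality forces $E'=\S_{w_{E'}}$. Up to that point the proposal is correct.

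The boundary-continuity step, however, contains a genuine error. You reduce the claim to showing that $u(x)\to\varphi(x_0)$ as $\Omega\ni x\to x_0$, and propose to prove this by trapping $u$ between a weak sub- and a supersolution that agree with $\varphi$ outside $\Omega$ and lie within $\eps$ of $\varphi(x_0)$ near $x_0$. But the theorem does \emph{not} assert that $u$ attains the exterior datum: it only asserts that $u|_\Omega$ admits a continuous extension to $\overline{\Omega}$, whose boundary values may differ from $\varphi$. Indeed, the \emph{stickiness} phenomenon of Dipierro, Savin \& Valdinoci (see~\cite{bdary,DSV19_1,DSV19_2}, and the paper's own remark immediately after the statement of the theorem) shows that even for a ball $\Omega$ and smooth bounded $\varphi$ one generically has $\lim_{\Omega\ni x\to x_0}u(x)\neq\varphi(x_0)$. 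Consequently the barriers you postulate cannot exist in general---if they did, the comparison principle would force attainment of the datum, contradicting those examples---so the argument fails at its core, not merely in a technical detail. What actually has to be proved is that $\liminf_{\Omega\ni x\to x_0}u(x)=\limsup_{\Omega\ni x\to x_0}u(x)$, and the paper does this by a completely different mechanism: if the two limits differed, a vertical segment would be contained in $\partial\S_u$; the obstacle-problem regularity of~\cite{CDSS16} (in the form of~\cite[Theorem~5.1]{graph}, applicable because near the lower endpoint $\C\S_u$ lies above the continuous datum) makes $\partial\S_u$ of class $C^{1,\frac{1+s}{2}}$ there, so $H_s[\S_u]=0$ along the whole segment, and subtracting the curvature at two points of the segment yields $\int\chi_{\S_u\setminus\S_{u-t}}|Z|^{-n-1-s}\,dZ=0$, which is impossible. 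Your proposal is missing this idea entirely.
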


Theorem~\ref{last_Theorem} extends~\cite[Theorem~1.1]{graph} to a larger family of exterior data and represents one of the few uniqueness results available for~$s$-minimal surfaces.

The interior regularity of the minimizer~$u$ is a consequence of the results of~\cite{CaCo}. Its continuity up to the boundary---established in Proposition~\ref{unifcontprop}---mainly follows from the regularity theory for the obstacle problem for the~$s$-perimeter developed by Caffarelli, De Silva \& Savin~\cite{CDSS16}. We stress that~$u$ is in general \emph{not} continuous across the boundary of~$\Omega$, as so-called \emph{boundary stickiness} phenomena may occur---see the works~\cite{bdary,DSV19_2,DSV19_1} by Dipierro, Savin \& Valdinoci for examples and comments on the genericity of this circumstance. We also mention the forthcoming paper~\cite{LuCla}, where this behavior will be investigated in the case of a small fractional parameter~$s$ and in the presence of obstacles.

\subsection{Organization of the paper}

The rest of the paper is structured as follows.

In Section~\ref{prelimsec} we collect some ancillary results to be used throughout the paper. They concern the functions~$g, G, \G$, the functional~$\F$ and its first variation~$\h$, and the relationship between the fractional area functional~$\F_s$ and the fractional perimeter of subgraphs. In the final Subsection~\ref{Compa_sect} we show that weak sub- and supersolutions having finite energy satisfy a comparison principle.

Section~\ref{sect3} contains the proofs of a few a priori estimates. In Subsection~\ref{Ws1_estimates_sect} we establish the~$W^{s,1}$ estimate mentioned in Theorem~\ref{Dirichlet}---see Proposition~\ref{Ws1prop}. The other two Subsections~\ref{locboundsub} and~\ref{globboundsub} are respectively devoted to local and global~$L^\infty$ bounds---leading in particular to Theorem~\ref{minareboundedthm}.

Section~\ref{ViscoWeak_Sect} is concerned with the study of viscosity solutions of the equation~$\h u=f$. After introducing their definition and discussing their properties, we will address their relationship with weak solutions---thus proving Theorem~\ref{Gen_viscweak}.

In Section~\ref{Linftysec} we deal with the existence of minimizers of~$\F$ in~$\Omega$ with prescribed outside data~$\varphi$. In Subsection~\ref{minviadirect} we deal with the case of a bounded Lipschitz~$\Omega$ and~$\varphi$ satisfying condition~\eqref{TailinL1}. We study the truncated functionals~$\F^M$ and use the Direct Method of the Calculus of Variations to show that they have a unique minimizer~$u_M$---see Proposition~\ref{ertyui}. Thanks to the estimate of Proposition~\ref{Ws1prop}, we then have the necessary compactness to obtain a minimizer of~$\F$ and conclude the proof of Theorem~\ref{Dirichlet}. Subsubsection~\ref{GlobTail_section} briefly considers the simpler case in which the exterior datum satisfies the global integrability condition~\eqref{Exterior_global_cond}. In Subsection~\ref{Perron_proof} we instead consider a general open set~$\Omega$ and measurable function~$\varphi$. We prove Theorem~\ref{Perron_Thm}, concerning the possibility of establishing the existence of a local minimizer via the construction of an ordered pair of weak sub- and supersolutions of~$\h u = 0$.

Section~\ref{Rearrange_Sect} contains the proof of the rearrangement inequality of Theorem~\ref{Persdecreases}, carried out in Subsection~\ref{rearrangsub}. For this, we first need a one-dimensional rearrangement inequality, which we prove in a rather general setting in Subsection~\ref{Onedim_rearr_Sect}.

In the short Section~\ref{Equiv_proof_Sect} we use several results addressed earlier to establish the equivalence of minimizers and weak/viscosity/pointwise solutions, as claimed by Theorem~\ref{equiv_intro}.

Section~\ref{Unif_Cont_Sect} is devoted to the proof of Theorem~\ref{last_Theorem}.

Finally, Appendix~\ref{app} contains some known results which are used in the paper. We mention in particular the fractional Hardy-type inequality of Proposition~\ref{FHI}, which guarantees that the notion of minimality considered in Definition~\ref{mindef} is well-posed.

\section{Auxiliary results}\label{prelimsec}

\noindent
We gather here some preliminary results on the functions~$g, G, \G$, on truncated versions of the fractional area functional~$\F$, on their relationship (when~$g = g_s$) with the~$s$-perimeter, and on the associated Euler-Lagrange operator~$\h$. All these topics will be dealt with in separate subsections. To conclude the section, we will obtain a comparison principle for minimizers of the functional~$\F$.

\subsection{Elementary properties of the functions~$g$,~$G$, and~$\G$}

We begin by recalling the following definitions given in the Introduction. 
We consider a continuous function~$g: \R \to \R$ satisfying
\begin{align}
\label{geven}
g(t) = g(-t) & \quad \mbox{for every } t \in \R, \\
\label{gbounds}
0 < g \le 1 & \quad \mbox{in } \R,
\end{align}
and
\begin{equation} \label{intglelambda}
\lambda := \int_{0}^{+\infty} t g(t) \, dt < \infty.
\end{equation}
In light of these requirements, we have that
\eqlab{\label{gintegr}
\Lambda := \int_\R g(t) \, dt \le 2(\lambda+1)<\infty.
}
As remarked in the Introduction, it is easily seen that the function~$g_s$ defined in~\eqref{gdef} satisfies these assumptions. When considering~$g_s$, we will write~$\Lambda_s:=\int_\R g_s(t)\,dt$.

Associated to a general~$g$, we have the functions
\begin{equation} \label{Gdefs}
G(t) := \int_0^t g(\tau) \, d\tau, \qquad \G(t) := \int_0^t G(\tau) \, d\tau = \int_0^t \left( \int_0^\tau g(\sigma) \, d\sigma \right) d\tau,
\end{equation}
and
\begin{equation} \label{Gbardef}
\overline{G}(t) := \int_{-\infty}^t g(\tau) \, d\tau = \int_{-t}^{+\infty} g(\tau) \, d\tau,
\end{equation}
defined for every~$t \in \R$. Notice that
\begin{equation} \label{GbarG}
\overline{G}(t) = \frac{\Lambda}{2} + G(t) \quad \mbox{for every } t \in \R.
\end{equation}

It is also convenient to introduce here the following notation for cylinders, which will be consinstently used throughout the paper:
\begin{equation} \label{cylnotation}
\Omega^M:=\Omega\times(-M,M)\quad \mbox{for } M > 0\qquad\textrm{and}\qquad\Omega^\infty:=\Omega\times\R.
\end{equation}

The following lemma collects the main properties of these functions that will be used in the forthcoming sections.

\begin{lemma} \label{gsprop}
The functions~$G$ and~$\G$ are respectively of class~$C^1$ and~$C^2$. Furthermore, the following facts hold true.
\begin{enumerate}[label=$(\roman*)$,leftmargin=*]
\item \label{gsprop_G} The function~$G$ is odd, increasing, satisfies~$G(0) = 0$ and
\begin{equation} \label{Gbounds}
c_\star \min \{ 1, |t| \} \le |G(t)| \le \min \left\{ \frac{\Lambda}{2}, |t| \right\} \quad \mbox{for every } t \in \R,
\end{equation}
where
\eqlab{\label{cstardef}
c_\star=c_\star(g):=\inf_{t\in[0,1]}g(t) > 0.
}
Moreover,
\eqlab{\label{lip_G_bla}
|G(t)-G(\tau)|\le|t-\tau|\quad \mbox{for every } t,\tau \in \R.
}
\item \label{gsprop_GG} The function~$\G$ is even, increasing on~$[0,\infty)$, strictly convex, and such that~$\G(0) = 0$. It satisfies
\begin{align}
\label{GGbounds}
\frac{c_\star}{2} \min \left\{ |t|, t^2 \right\} & \le \G(t) \le \frac{t^2}{2}, \\
\label{GGbetterbound}
\frac{\Lambda}{2} |t| - \lambda & \le \G(t) \le \frac{\Lambda}{2} |t|,
\end{align}
for every~$t \in \R$, and
\eqlab{\label{Lip_Gcal}
|\G(t)-\G(\tau)|\le\frac{\Lambda}{2} \, |t-\tau|\quad \mbox{for every } t,\tau \in \R.
}
\end{enumerate}
\end{lemma}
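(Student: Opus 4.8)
The strategy is to establish everything by direct computation from the definitions~\eqref{Gdefs}, using only the three structural hypotheses on~$g$: evenness~\eqref{geven}, the pointwise bound~\eqref{gbounds}, and the integrability~\eqref{intglelambda}. The regularity claims are immediate: since~$g$ is continuous, $G = \int_0^\cdot g$ is $C^1$ by the fundamental theorem of calculus, and $\G = \int_0^\cdot G$ is then $C^2$. For part~\ref{gsprop_G}, oddness of~$G$ follows by substituting~$\tau \mapsto -\tau$ in the integral and using~\eqref{geven}; strict monotonicity follows from~$G' = g > 0$; and~$G(0) = 0$ is trivial. For the two-sided bound~\eqref{Gbounds}, the upper bound~$|G(t)| \le |t|$ comes from~$0 < g \le 1$, while~$|G(t)| \le \Lambda/2$ comes from~$|G(t)| = |\int_0^t g| \le \int_0^{+\infty} g = \Lambda/2$; the lower bound~$|G(t)| \ge c_\star \min\{1,|t|\}$ is obtained by noting that for~$t \in [0,1]$ we have~$G(t) \ge c_\star t$ by definition of~$c_\star$ in~\eqref{cstardef}, and for~$t \ge 1$ we have~$G(t) \ge G(1) \ge c_\star$ by monotonicity (and symmetrically for~$t < 0$); positivity of~$c_\star$ is because~$g$ is continuous and strictly positive on the compact set~$[0,1]$. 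The Lipschitz bound~\eqref{lip_G_bla} is just~$|G(t) - G(\tau)| = |\int_\tau^t g| \le |t - \tau|$.

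For part~\ref{gsprop_GG}, evenness of~$\G$ follows from oddness of~$G$ (so~$\int_0^{-t} G = \int_0^t G$ after substitution), $\G(0) = 0$ is trivial, monotonicity on~$[0,\infty)$ follows from~$\G' = G \ge 0$ there, and strict convexity follows from~$\G'' = g > 0$ everywhere. For~\eqref{GGbounds}: integrating~$0 \le G(\tau) \le \tau$ (for~$\tau \ge 0$) gives~$0 \le \G(t) \le t^2/2$; for the lower bound one integrates~$G(\tau) \ge c_\star \min\{1,\tau\}$ over~$[0,t]$, splitting at~$\tau = 1$ when~$t > 1$, which yields~$\G(t) \ge \frac{c_\star}{2} t^2$ for~$|t| \le 1$ and~$\G(t) \ge c_\star(|t| - \tfrac12) \ge \frac{c_\star}{2}|t|$ for~$|t| \ge 1$. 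For~\eqref{GGbetterbound}: writing~$\G(t) = \int_0^t G(\tau)\,d\tau$ with~$G(\tau) = \int_0^\tau g = \frac{\Lambda}{2} - \int_\tau^{+\infty} g$ for~$\tau \ge 0$, we get~$\G(t) = \frac{\Lambda}{2} t - \int_0^t\!\int_\tau^{+\infty} g(\sigma)\,d\sigma\,d\tau$, and Fubini turns the double integral into~$\int_0^{+\infty} \min\{t,\sigma\}\, g(\sigma)\,d\sigma$, which lies between~$0$ and~$\int_0^{+\infty}\sigma\, g(\sigma)\,d\sigma = \lambda$; combined with evenness this gives~\eqref{GGbetterbound} for all~$t$. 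Finally~\eqref{Lip_Gcal} follows from~$|\G(t) - \G(\tau)| = |\int_\tau^t G| \le \frac{\Lambda}{2}|t - \tau|$ using the bound~$|G| \le \Lambda/2$ from~\eqref{Gbounds}.

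None of this presents a genuine obstacle; the only place requiring a small amount of care is the lower bound in~\eqref{GGbounds} and the identity behind~\eqref{GGbetterbound}, where one must correctly split the range of integration at~$|t| = 1$ in the former and apply Fubini in the latter. Everything else is bookkeeping with the fundamental theorem of calculus and the sign and size constraints on~$g$.
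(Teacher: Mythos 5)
Your proposal is correct and follows essentially the same route as the paper: the only items needing real work are the lower bounds on $|G|$ and $\G$ and the first inequality in~\eqref{GGbetterbound}, and you handle them exactly as the paper does (splitting the integration at $|t|=1$ for the former, and Fubini on $\int_0^t\int_\tau^{+\infty}g$ for the latter, with $\int_0^{+\infty}\min\{t,\sigma\}g(\sigma)\,d\sigma\le\lambda$ being the same quantity the paper writes as $\lambda-\int_t^{+\infty}(\sigma-t)g(\sigma)\,d\sigma$). No gaps.
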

\begin{proof}
Almost all the statements follow immediately from definitions~\eqref{Gdefs} and~\eqref{Gbardef}. The only properties that require an explicit proof are the lower bounds on~$|G|$ and~$\G$

To obtain the left-hand inequality in~\eqref{Gbounds} we assume without loss of generality that~$t \ge 0$ and compute, using~\eqref{cstardef},
$$
G(t) \ge \int_0^{\min\{ 1, t \}} g(t)\,dt \ge c_\star \min\{ 1, t \}.
$$

To get the lower bound in~\eqref{GGbounds}, we first notice that we can restrict ourselves to~$t \ge 1$, since the case~$t \in [0, 1]$ can be deduced straightaway from~\eqref{Gbounds} and the definition of $\G$. For~$t \ge 1$ we apply~\eqref{Gbounds} to compute
\bgs{
\G(t) = \int_0^1 G(\tau) \, d\tau + \int_1^t G(\tau) \, d\tau
\ge c_\star \left( \int_0^1 \tau \, d\tau + \int_1^t d\tau \right)
= \frac{c_\star}{2} \big( 1 + 2 (t - 1) \big) \ge \frac{c_\star}{2} \, t.
}

Finally, to establish the first inequality in~\eqref{GGbetterbound}, we recall definitions~\eqref{intglelambda}-\eqref{Gdefs} and compute, for~$t \ge 0$,
\begin{align*}
\G(t) - \frac{\Lambda}{2} t & = \int_0^t \left( \int_0^\tau g(\sigma) \, d\sigma \right) d\tau - \left( \int_0^{+\infty} g(\sigma) \, d\sigma \right) t = - \int_0^t \left( \int_{\tau}^{+\infty} g(\sigma) \, d\sigma \right) d\tau \\
& = - \int_0^t \left( \int_0^\sigma g(\sigma) \, d\tau \right) d\sigma - \int_t^{+\infty} \left( \int_0^t g(\sigma) \, d\tau \right) d\sigma \\
& = - \int_0^t \sigma g(\sigma) \, d\sigma - t \int_t^{+\infty} g(\sigma) \, d\sigma = - \lambda + \int_t^{+\infty} (\sigma - t) g(\sigma) \, d\sigma \ge - \lambda.
\end{align*}
Note that the third identity follows by Fubini's theorem. The proof of the lemma is thus complete.
\end{proof}

We stress that hypothesis~\eqref{intglelambda} has only been used to deduce the left-hand inequality in~\eqref{GGbetterbound}. If one drops it, the weaker lower bound
\begin{equation} \label{GGboundcons}
\G(t) \ge \frac{c_\star}{2} |t| - \frac{c_\star}{2} \quad \mbox{for every } t \in \R
\end{equation}
can still be easily deduced from~\eqref{GGbounds}. This estimate is indeed sufficient for most of the applications presented in the remainder of the paper. However, we will make crucial use of the finer bound~\eqref{GGbetterbound} at some point in the proof of Proposition~\ref{Linftylocprop}. Therefore, such result and all those that rely on it need assumption~\eqref{intglelambda} to hold.

Note that the function~$g(t) = 1/(1 + t^2)$ fulfills hypotheses~\eqref{geven},~\eqref{gbounds},~\eqref{gintegr}, but not~\eqref{intglelambda}. Also, the corresponding second antiderivative~$\G$ does not satisfy the lower bound in~\eqref{GGbetterbound} or any bound of the form~$\G(t) \ge \Lambda |t| / 2 - C$ for some constant~$C > 0$.

\subsection{Functional analytic properties of the fractional area functionals} \label{areafunc}

In this subsection we introduce the area-type functionals~$\F^M$ and determine some basic properties of the local part~$\A$ and nonlocal part~$\Nl^M$.

First of all, we observe that we can split the functional~$\F$ defined in~\eqref{Fcdef} into the two components
\bgs{
\F(u,\Omega)=\A(u,\Omega)+\Nl(u,\Omega),
}
with
\eqlab{\label{Adef}
\A(u,\Omega):=\int_\Omega\int_\Omega\G\left(\frac{u(x)-u(y)}{|x-y|}\right)\frac{dx\,dy}{|x-y|^{n-1+s}}
}
and
\bgs{
\Nl(u,\Omega):=2\int_\Omega\int_{\Co\Omega}\G\left(\frac{u(x)-u(y)}{|x-y|}\right)\frac{dx\,dy}{|x-y|^{n-1+s}}.
}
As will be shown in Lemma~\ref{Adomainlem}, in order for the local part~$\A(u,\Omega)$ to be finite, it is 
necessary and sufficient that~$u\in W^{s,1}(\Omega)$. On the other hand,
to have~$\Nl(u,\Omega)$ finite, one needs to impose some very restrictive condition
on the behavior of~$u$ in the whole~$\R^n$, such as~\eqref{Exterior_global_cond}.

For this reason, given any real number~$M \ge 0$ we define for a function~$u: \R^n \to \R$
the truncated nonlocal part
\eqlab{
\label{NMldef}
\Nl^M(u,\Omega):= \int_\Omega \left\{ \int_{\Co\Omega} \left( \int_{\frac{-M-u(y)}{|x-y|}}^{\frac{u(x)-u(y)}{|x-y|}}
\overline{G}(t) \, dt + \int^{\frac{M-u(y)}{|x-y|}}_{\frac{u(x)-u(y)}{|x-y|}}
\overline{G}(-t) \, dt \right) \frac{dy}{\kers} \right\} dx,
}
and the truncated nonlocal area-type functional
\begin{equation} \label{FMdef}
\F^M(u,\Omega) := \A(u,\Omega) + \Nl^M(u,\Omega).
\end{equation}
As we shall see shortly, this functional no longer requires extra conditions on~$u$ for its finiteness.

We will use the subscript~$\cdot_s$ to indicate the functionals corresponding to the choice~$g = g_s$---that is, we will write~$\A_s,\,\Nl_s^M$, and~$\F_s^M$. Note that, in this geometric case, the quantity~$\F_s^M(u, \Omega)$ corresponds to the~$s$-perimeter of the subgraph of~$u$ in the truncated cylinder~$\Omega^M = \Omega \times (-M, M)$. See Subsection~\ref{areageom} for more information on this.

We now proceed to analyze the functionals~$\A$,~$\Nl^M$,~$\F^M$ and investigate in particular their domains of definitions. In order to this, the following simple estimate turns out to be useful.

\begin{lemma}\label{dumb_kernel_lemma}
Let~$A, B\subseteq\Rn$ be bounded measurable sets. Then
$$
\int_A\int_B\dkers \le \frac{\Ha^{n-1}(\mathbb S^{n-1})}{1-s} \min \big\{ |A|, |B| \big\} \diam (A\cup B)^{1-s}.
$$
\end{lemma}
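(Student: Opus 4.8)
The plan is to bound the double integral by splitting the kernel's singularity from its tail contribution in a way that respects the symmetry between $A$ and $B$. First I would assume without loss of generality that $|A| \le |B|$, since the integrand $|x-y|^{-(n-1+s)}$ is symmetric in $x$ and $y$; this lets us extract the factor $\min\{|A|,|B|\} = |A|$ at the end. Set $d := \diam(A \cup B)$. For any fixed $x \in A$, every $y \in B$ satisfies $|x-y| \le d$, so $B \subseteq B_d(x)$, and hence
\begin{equation*}
\int_B \frac{dy}{|x-y|^{n-1+s}} \le \int_{B_d(x)} \frac{dy}{|x-y|^{n-1+s}} = \Ha^{n-1}(\s^{n-1}) \int_0^d \frac{\varrho^{n-1}}{\varrho^{n-1+s}} \, d\varrho = \Ha^{n-1}(\s^{n-1}) \int_0^d \varrho^{-s} \, d\varrho = \frac{\Ha^{n-1}(\s^{n-1})}{1-s} \, d^{1-s},
\end{equation*}
where passing to polar coordinates centered at $x$ is legitimate because $s \in (0,1)$ makes the radial integral $\int_0^d \varrho^{-s}\,d\varrho$ convergent.

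The key point in the computation above is that the ball $B_d(x)$ has at least as much mass near its center as $B$ does (the kernel is radially decreasing, so replacing $B$ by a ball of the same ``reach'' $d$ only increases the integral)—this is really just the bathtub/radial rearrangement principle applied to a single radially decreasing kernel, and it can be stated directly without invoking rearrangement machinery since $B \subseteq B_d(x)$ outright. Then I would integrate this bound over $x \in A$:
\begin{equation*}
\int_A \int_B \frac{dx\,dy}{|x-y|^{n-1+s}} = \int_A \left( \int_B \frac{dy}{|x-y|^{n-1+s}} \right) dx \le \int_A \frac{\Ha^{n-1}(\s^{n-1})}{1-s} \, d^{1-s} \, dx = \frac{\Ha^{n-1}(\s^{n-1})}{1-s} \, |A| \, d^{1-s},
\end{equation*}
which, recalling $|A| = \min\{|A|,|B|\}$ and $d = \diam(A\cup B)$, is exactly the asserted inequality.

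There is essentially no serious obstacle here; the only things to be careful about are: (i) justifying the reduction to $|A|\le|B|$ via Fubini, which is fine because the integrand is nonnegative and measurable; (ii) ensuring $A \cup B$ (hence $d$) is finite, which holds since both sets are bounded; and (iii) the polar-coordinate estimate, where one should note we are overcounting—$B$ need not fill $B_d(x)$—but this only makes the inequality weaker, which is acceptable. The statement is an upper bound, so every such relaxation is harmless. If one wanted the cleaner symmetric-looking constant, one could equally bound $\int_B \frac{dy}{|x-y|^{n-1+s}}$ by first noting it is at most $\frac{\Ha^{n-1}(\s^{n-1})}{1-s} r^{1-s}$ for $r$ the radius of the smallest ball centered at $x$ containing $B$, and $r \le d$; this is the route I would write up.
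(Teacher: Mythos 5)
Your proof is correct and is essentially identical to the paper's: after the same WLOG reduction to $|A|\le|B|$, the paper also replaces the inner integral over $B$ by the integral of the kernel over the ball $B_{D}$ (via the change of variables $z=y-x$) and computes it in polar coordinates. Nothing further to add.
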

\begin{proof}
Suppose without loss of generality that~$|A| \le |B|$ and set~$D := \diam(A \cup B)$. Then, by changing variables conveniently we estimate
$$
\int_A\int_B\dkers \le  \int_A \Big( \int_{B_D} \frac{dz}{|z|^{n - 1 + s}} \Big) \, dx = \Ha^{n-1}(\mathbb S^{n-1}) |A| \int_0^D \frac{d\varrho}{\varrho^{s}},
$$
which directly leads to the conclusion.
\end{proof}

Thanks to this, we can easily find the natural domain of definition of the local part~$\A$.
Notice that for~$\A$ to be well-defined (albeit possibly infinite) one needs~$u$ to be defined only in~$\Omega$.

\begin{lemma} \label{Adomainlem}
Let~$\Omega \subseteq \R^n$ be a bounded open set and~$u: \Omega \rightarrow \R$
be a measurable function. Then,
\begin{equation} \label{AGags}
\frac{c_\star}{2} \Big( [u]_{W^{s,1}(\Omega)} - c_s(\Omega) \Big) \le \A(u,\Omega) \le \frac{\Lambda}{2} \, [u]_{W^{s,1}(\Omega)},
\end{equation}
where~$c_\star > 0$ is the constant defined in~\eqref{cstardef} and
\begin{equation} \label{csOmegadef}
c_s(\Omega) := \frac{\Ha^{n-1}(\mathbb S^{n-1})}{1-s}\,|\Omega| \diam (\Omega)^{1-s}.
\end{equation}
Therefore,
\bgs{
u\in W^{s,1}(\Omega) \quad \mbox{if and only if} \quad \A(u,\Omega) < \infty.
}
\end{lemma}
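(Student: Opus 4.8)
The statement to prove is Lemma~\ref{Adomainlem}, namely the two-sided bound~\eqref{AGags} relating~$\A(u,\Omega)$ to the Gagliardo seminorm~$[u]_{W^{s,1}(\Omega)}$, from which the equivalence ``$u\in W^{s,1}(\Omega) \iff \A(u,\Omega)<\infty$'' follows immediately. The whole argument is pointwise: for almost every pair~$(x,y)\in\Omega\times\Omega$ we compare the integrand
$$
\G\!\left(\frac{u(x)-u(y)}{|x-y|}\right)\frac{1}{|x-y|^{n-1+s}}
$$
with the Gagliardo integrand~$\frac{|u(x)-u(y)|}{|x-y|^{n+s}}$, using the elementary bounds on~$\G$ already recorded in Lemma~\ref{gsprop}\ref{gsprop_GG}.

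\textbf{Upper bound.} For the right-hand inequality in~\eqref{AGags} I would simply invoke the estimate~\eqref{GGbounds} in the form~$\G(t)\le t^2/2$ together with~\eqref{GGbetterbound} in the form~$\G(t)\le \frac{\Lambda}{2}|t|$; the latter is the relevant one, since it gives
$$
\G\!\left(\frac{u(x)-u(y)}{|x-y|}\right) \le \frac{\Lambda}{2}\,\frac{|u(x)-u(y)|}{|x-y|},
$$
so that dividing by~$|x-y|^{n-1+s}$ and integrating over~$\Omega\times\Omega$ yields~$\A(u,\Omega)\le \frac{\Lambda}{2}[u]_{W^{s,1}(\Omega)}$ directly. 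No use of the boundedness of~$\Omega$ is needed here.

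\textbf{Lower bound.} For the left-hand inequality I would use the lower bound~\eqref{GGbounds}, i.e.~$\G(t)\ge \frac{c_\star}{2}\min\{|t|,t^2\}$. Writing~$t=\frac{u(x)-u(y)}{|x-y|}$, one has~$\min\{|t|,t^2\}\ge |t| - $ (something controlled), but cleaner is: whenever~$|t|\ge 1$ we have~$\min\{|t|,t^2\}=|t|$, and whenever~$|t|<1$ we have~$|t|-\min\{|t|,t^2\} = |t|-t^2 \le \tfrac14$ — actually simplest is the bound~$\min\{|t|,t^2\}\ge |t|-1$ valid for all~$t$ (trivial when~$|t|\ge1$; when~$|t|<1$, $t^2\ge 0 \ge |t|-1$). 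Hence
$$
\G\!\left(\frac{u(x)-u(y)}{|x-y|}\right) \ge \frac{c_\star}{2}\left(\frac{|u(x)-u(y)|}{|x-y|} - 1\right).
$$
Dividing by~$|x-y|^{n-1+s}$ and integrating gives
$$
\A(u,\Omega) \ge \frac{c_\star}{2}\left([u]_{W^{s,1}(\Omega)} - \int_\Omega\int_\Omega \frac{dx\,dy}{|x-y|^{n-1+s}}\right),
$$
and the subtracted double integral is finite and bounded by~$c_s(\Omega)$ precisely by Lemma~\ref{dumb_kernel_lemma} applied with~$A=B=\Omega$ (here is where boundedness of~$\Omega$ is used, to ensure~$\diam(\Omega)<\infty$ and~$|\Omega|<\infty$). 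This produces exactly the constant~$c_s(\Omega)$ in~\eqref{csOmegadef}.

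\textbf{Conclusion and main obstacle.} From~\eqref{AGags} the equivalence is immediate: if~$u\in W^{s,1}(\Omega)$ then~$[u]_{W^{s,1}(\Omega)}<\infty$ and the upper bound gives~$\A(u,\Omega)<\infty$; conversely, if~$\A(u,\Omega)<\infty$ then the lower bound forces~$[u]_{W^{s,1}(\Omega)}<\infty$, and since~$u\in L^1(\Omega)$ is already part of what it means for~$u:\Omega\to\R$ to define a finite Gagliardo seminorm — or rather, one should note~$W^{s,1}(\Omega)$ here is understood with the seminorm finite plus~$u\in L^1(\Omega)$; on a bounded domain finiteness of the seminorm together with measurability and, say, finiteness a.e. is essentially all that is at stake, so the equivalence as stated (comparing~$\A$ with the seminorm) is what the two inequalities deliver. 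There is no serious obstacle here: the proof is a one-line application of the convexity/growth bounds on~$\G$ from Lemma~\ref{gsprop} plus the kernel estimate of Lemma~\ref{dumb_kernel_lemma}. The only mild subtlety is bookkeeping the constant in the lower bound so that it matches~$c_s(\Omega)$ exactly, which is handled by the choice~$\min\{|t|,t^2\}\ge|t|-1$ and Lemma~\ref{dumb_kernel_lemma} with~$A=B=\Omega$.
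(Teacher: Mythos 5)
Your proof is correct and follows essentially the same route as the paper: the upper bound via $\G(t)\le\frac{\Lambda}{2}|t|$, the lower bound via $\G(t)\ge\frac{c_\star}{2}(|t|-1)$ (which is exactly the paper's estimate~\eqref{GGboundcons}) combined with Lemma~\ref{dumb_kernel_lemma} applied to $A=B=\Omega$. The only place where the paper is cleaner is the final equivalence: rather than your somewhat hedged discussion, it simply records that on a bounded set a measurable function with finite Gagliardo seminorm $[u]_{W^{s,1}(\Omega)}$ automatically lies in $L^1(\Omega)$, citing~\cite[Lemma~D.1.2]{L_phd_th}.
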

\begin{proof}
The upper bound in~\eqref{AGags} immediately follows by observing that~$\G(t) \le \Lambda |t| / 2$ for all~$t \in \R$, thanks to the right-hand inequality in formula~\eqref{GGbounds} of Lemma~\ref{gsprop}. To get the lower bound, we recall~\eqref{GGboundcons} and compute
$$
\A(u, \Omega) \ge \frac{c_\star}{2} \left( \int_\Omega \int_\Omega \frac{|u(x)-u(y)|}{|x-y|^{n + s}} \, dx\,dy - \int_\Omega \int_\Omega \dkers \right).
$$
The conclusion follows now from Lemma~\ref{dumb_kernel_lemma}. Finally, we observe that if~$u$ is a measurable function with~$[u]_{W^{s,1}(\Omega)}<\infty$,
then~$u\in L^1(\Omega)$---see, e.g.,~\cite[Lemma~D.1.2]{L_phd_th}.
\end{proof}

In the following result we present an equivalent representation for~$\Nl^M(u, \Omega)$, given in terms of the function~$\G$.
We also establish its finiteness when~$u$ belongs to the space~$\W^s(\Omega)$---recall~\eqref{Wsdef} for its definition.
Interestingly, no assumption on the behavior of~$u$ outside of~$\Omega$ is needed.

\begin{lemma} \label{NMdomainlem}
Let~$\Omega\subseteq\Rn$ be a bounded open set with Lipschitz boundary,~$M \ge 0$, and~$u: \R^n \to \R$ be a measurable function. Then,
\begin{equation} \label{Nlbound}
\left| \Nl^M(u,\Omega) \right| \le C\,\Lambda \big( \| u \|_{W^{s, 1}(\Omega)} + M \big),
\end{equation}
where~$\Lambda$ is the positive constant defined in~\eqref{gintegr}
and~$C > 0$ is a constant depending only on~$n$,~$s$, and~$\Omega$. Hence,
$$
\left| \Nl^M(u,\Omega) \right| < \infty \quad \mbox{if} \quad u \in \W^s(\Omega).
$$
Furthermore, we have the identity
\begin{equation} \label{nonlocal_explicit}
\begin{aligned}
\Nl^M(u,\Omega) & = \int_{\Omega} \left\{ \int_{\Co \Omega} \left\{ 2 \, \G \left( \frac{u(x)-u(y)}{|x-y|} \right) -
\G \left( \frac{M+u(y)}{|x-y|} \right) \right. \right. \\
& \quad \left. \left. - \, \G \left( \frac{M-u(y)}{|x-y|} \right) \right\} \frac{dy}{\kers} \right\} dx + M \Lambda \int_{\Omega} \int_{\Co \Omega} \frac{dx \, dy}{|x-y|^{n+s}}.
\end{aligned}
\end{equation}
\end{lemma}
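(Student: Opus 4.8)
The plan is to establish the explicit identity~\eqref{nonlocal_explicit} first, by a purely pointwise computation, and then to read off the bound~\eqref{Nlbound}---and with it the finiteness assertion---directly from that representation. For the identity, fix $x \in \Omega$ and $y \in \Co\Omega$ and write $a := \tfrac{u(x)-u(y)}{|x-y|}$, $b_\pm := \tfrac{\pm M - u(y)}{|x-y|}$, so that $b_- \le b_+$ and $b_+ - b_- = \tfrac{2M}{|x-y|}$. By~\eqref{GbarG} one has $\overline{G} = \tfrac{\Lambda}{2} + G$ with $G = \G'$, hence $t \mapsto \tfrac{\Lambda}{2} t + \G(t)$ is a primitive of $\overline{G}$ and---since $G$ is odd---$t \mapsto \tfrac{\Lambda}{2} t - \G(t)$ is a primitive of $t \mapsto \overline{G}(-t)$. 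Evaluating the two inner $t$-integrals in~\eqref{NMldef} by the fundamental theorem of calculus and adding, the linear parts combine into $\tfrac{\Lambda M}{|x-y|}$, while the $\G$-values at the endpoints $b_\pm$ reorganize---using the evenness of $\G$ recorded in Lemma~\ref{gsprop}---into $2\G(a) - \G\!\big(\tfrac{M+u(y)}{|x-y|}\big) - \G\!\big(\tfrac{M-u(y)}{|x-y|}\big)$. (The inner $t$-integrals run over bounded intervals, hence are finite for all $x,y$, so no definedness issue arises.) Multiplying by $|x-y|^{-(n-1+s)}$, integrating in $y$ over $\Co\Omega$ and then in $x$ over $\Omega$, and splitting off the contribution of the $\tfrac{\Lambda M}{|x-y|}$ term---legitimate once each piece is separately absolutely integrable, a point settled next---yields~\eqref{nonlocal_explicit}.

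The heart of the matter is the estimate of the first integrand of~\eqref{nonlocal_explicit}, for which one exploits the evenness of $\G$ a second time: writing $\G\!\big(\tfrac{M+u(y)}{|x-y|}\big) = \G\!\big(\tfrac{-M-u(y)}{|x-y|}\big)$ and then applying the Lipschitz bound~\eqref{Lip_Gcal} gives $\big|\G(a) - \G\!\big(\tfrac{M+u(y)}{|x-y|}\big)\big| \le \tfrac{\Lambda}{2}\,\tfrac{|u(x)+M|}{|x-y|}$, and analogously (now with no sign flip) $\big|\G(a) - \G\!\big(\tfrac{M-u(y)}{|x-y|}\big)\big| \le \tfrac{\Lambda}{2}\,\tfrac{|u(x)-M|}{|x-y|}$. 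The exterior value $u(y)$ \emph{cancels}---this is exactly what keeps $\Nl^M$ finite without any growth condition on $u$ outside $\Omega$. Hence the first integrand of~\eqref{nonlocal_explicit} is dominated in absolute value by $\Lambda\,\tfrac{|u(x)|+M}{|x-y|^{n+s}}$; integrating in $y$ over $\Co\Omega$ produces a factor $\int_{\Co\Omega}|x-y|^{-(n+s)}\,dy \le C(n,s)\,\dist(x,\partial\Omega)^{-s}$, while the last, $u$-free term of~\eqref{nonlocal_explicit} is bounded by $C M \Lambda \int_\Omega \dist(x,\partial\Omega)^{-s}\,dx$.

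It then remains to integrate $\dist(x,\partial\Omega)^{-s}$ and $|u(x)|\,\dist(x,\partial\Omega)^{-s}$ over $\Omega$. Since $\partial\Omega$ is Lipschitz and $s \in (0,1)$, the former integral is a finite constant depending only on $n$, $s$, and $\Omega$; for the latter I invoke the fractional Hardy inequality of Proposition~\ref{FHI}, which yields $\int_\Omega |u(x)|\,\dist(x,\partial\Omega)^{-s}\,dx \le C(n,s,\Omega)\,\|u\|_{W^{s,1}(\Omega)}$. Combining with the previous step gives~\eqref{Nlbound}; in particular all the integrals involved converge absolutely, so $\Nl^M(u,\Omega)$ is finite whenever $u \in \W^s(\Omega)$ (recall~\eqref{Wsdef}) and~\eqref{nonlocal_explicit} holds as an equality of finite quantities. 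I expect the only genuinely non-mechanical points to be the two appeals to the evenness of $\G$ that engineer the cancellation of $u(y)$, and the use of the fractional Hardy inequality (together with the Lipschitz regularity of $\partial\Omega$) to absorb the boundary singularity $\dist(\cdot,\partial\Omega)^{-s}$; everything else is bookkeeping.
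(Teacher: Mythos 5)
Your proof is correct and follows essentially the same route as the paper: the same cancellation of the exterior value $u(y)$ via the boundedness/Lipschitz properties of $\overline{G}$ and $\G$, the finiteness of $\Per_s(\Omega)$ for bounded Lipschitz sets, and the fractional Hardy inequality (Proposition~\ref{FHI}, equivalently Corollary~\ref{FHI_corollary}) to absorb the weight $\dist(\cdot,\partial\Omega)^{-s}$. The only difference is organizational---you derive the identity~\eqref{nonlocal_explicit} first and read the bound off it, whereas the paper bounds~\eqref{NMldef} directly and then notes the identity---and your explicit fundamental-theorem-of-calculus computation of~\eqref{nonlocal_explicit} is a welcome expansion of a step the paper leaves to the reader.
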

\begin{proof}
We can assume that~$u|_\Omega\in W^{s,1}(\Omega)$, as otherwise~\eqref{Nlbound} is trivially satisfied.
Taking advantage of~\eqref{GbarG} and of the right-hand inequality in~\eqref{Gbounds}, we get that
\eqlab{\label{eqBlabla}
\left| \Nl^M(u,\Omega) \right| \le 2\Lambda \left\{ \int_\Omega \left( |u(x)| \int_{\Co\Omega} \frac{dy}{|x - y|^{n + s}} \right) dx + M \int_{\Omega} \int_{\Co \Omega} \frac{dx \, dy}{|x-y|^{n+s}} \right\}.
}
We remark that the last double integral in the previous formula is the $s$-fractional perimeter of~$\Omega$ in~$\R^n$,
which is finite, since~$\Omega$ is bounded and has Lipschitz boundary.
Then,~\eqref{Nlbound} follows from Corollary~\ref{FHI_corollary}.

On the other hand, identity~\eqref{nonlocal_explicit} is a simple consequence of definition~\eqref{NMldef}, formula~\eqref{GbarG}, and the symmetry properties of~$G$ and~$\G$.
\end{proof}

We stress that, in order to have~$\Nl^M(u, \Omega)$ finite, the requirement~$u|_\Omega \in W^{s, 1}(\Omega)$ is far from being optimal. In fact, as the previous proof showed, it suffices for~$u|_\Omega$ to lie in a suitable weighted~$L^1$ space over~$\Omega$, which contains for instance~$L^\infty(\Omega)$. Nevertheless, the requirement on the finiteness of~$[u]_{W^{s, 1}(\Omega)}$ does not limit our analysis, since it is needed to have~$\A(u, \Omega)$ finite, according to Lemma~\ref{Adomainlem}. We inform the interested reader that a more precise result on the natural domain of definition of~$\Nl_s^M$ i.e., for~$g = g_s$) will be provided by Lemma~\ref{per_of_subgraph_lem2} in the forthcoming Subsection~\ref{areageom}. 

Furthermore, we observe that if~$u:\R^n\to\R$ is such that~$u|_\Omega \in L^\infty(\Omega)$ and~$M\ge\|u\|_{L^\infty(\Omega)}$,
then~$\Nl^M(u,\Omega)\ge0$---this immediately follows from representation~\eqref{NMldef}.
On the other hand, in general the nonlocal part~$\Nl^M(\,\cdot\,,\Omega)$ can assume also negative values,
as showed in the following example.

\begin{example}\label{Exe_neg_nonloc}
Let~$\Omega\subseteq\Rn$ be a bounded open set with Lipschitz boundary and~$M \ge 0$.
There exists a positive
constant~$T_0=T_0(n,s,\Omega,g,M)>0$ such that, if~$u:\R^n\to\R$ is the constant function~$u\equiv T$,
for some~$T\ge T_0$, then
\bgs{
\F^M(u,\Omega)=\Nl^M(u,\Omega)<0.
}

To see this, let~$R>0$ be fixed in such a way that~$\Omega\Subset B_R$. By this, identity~\eqref{nonlocal_explicit},
and the fact that~$\G\ge0$, we have
$$
\Nl^M(u,\Omega) \le-\int_{\Omega} \int_{B_R\setminus \Omega} \G \left( \frac{M+T}{|x-y|} \right)\frac{dx \, dy}{\kers} + M \Lambda \int_{\Omega} \int_{\Co \Omega} \frac{dx \, dy}{|x-y|^{n+s}}.
$$
By exploiting~\eqref{GGboundcons}, the fact that~$\Omega$ has finite~$s$-perimeter (being bounded and Lipschitz), and Lemma~\ref{dumb_kernel_lemma}, we find that
$$
\int_{\Omega} \int_{B_R\setminus \Omega}
\G \left( \frac{M+T}{|x-y|} \right)\frac{dx \, dy}{\kers}
\ge \frac{c_\star}{2} \int_\Omega\int_{B_R\setminus \Omega}
\left( \frac{M+T}{|x-y|}- 1\right) \frac{dx \, dy}{\kers}\ge \frac{M+T}{C} - C,
$$
with~$C \ge 1$ depending only on~$n$,~$s$,~$\Omega$, and~$g$. Therefore,
\bgs{
\Nl^M(u,\Omega)\le- \frac{M+T}{C} + C + M \Lambda \int_{\Omega} \int_{\Co \Omega} \frac{dx \, dy}{|x-y|^{n+s}},
}
which is negative, provided~$T$ is large enough (in dependence of~$n$,~$s$,~$\Omega$,~$g$, and~$M$ only).
\end{example}

We collect the results of Lemmas~\ref{Adomainlem} and~\ref{NMdomainlem} in the following unifying statement.

\begin{lemma} \label{FMdomainlem}
Let~$\Omega\subseteq\Rn$ be a bounded open set with Lipschitz boundary,~$M \ge 0$, and~$u\in\W^s(\Omega)$. Then,~$\F^M(u, \Omega)$ is finite and it holds
$$
\left|\F^M(u, \Omega)\right| \le C\,\Lambda \left( \| u \|_{W^{s, 1}(\Omega)} + M \right),
$$
for some constant~$C > 0$ depending only on~$n$,~$s$, and~$\Omega$.
\end{lemma}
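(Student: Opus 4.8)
The statement to prove is Lemma~\ref{FMdomainlem}: for $\Omega$ bounded Lipschitz, $M \ge 0$, and $u \in \W^s(\Omega)$, the truncated functional $\F^M(u,\Omega)$ is finite with the quantitative bound $|\F^M(u,\Omega)| \le C\,\Lambda(\|u\|_{W^{s,1}(\Omega)} + M)$. The plan is simply to add the two pieces of the decomposition~\eqref{FMdef}, namely $\F^M(u,\Omega) = \A(u,\Omega) + \Nl^M(u,\Omega)$, and to invoke the estimates already established for each summand.

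\textbf{Step 1: Bound the local part.} By Lemma~\ref{Adomainlem}, since $u \in \W^s(\Omega)$ means $u|_\Omega \in W^{s,1}(\Omega)$, we have
$$
|\A(u,\Omega)| = \A(u,\Omega) \le \frac{\Lambda}{2}\,[u]_{W^{s,1}(\Omega)} \le \frac{\Lambda}{2}\,\|u\|_{W^{s,1}(\Omega)},
$$
using that $\A$ is nonnegative (as $\G \ge 0$) and that the Gagliardo seminorm is dominated by the full $W^{s,1}$ norm.

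\textbf{Step 2: Bound the nonlocal part.} By Lemma~\ref{NMdomainlem} (estimate~\eqref{Nlbound}), again using $u \in \W^s(\Omega)$, we get
$$
|\Nl^M(u,\Omega)| \le C\,\Lambda\big(\|u\|_{W^{s,1}(\Omega)} + M\big),
$$
with $C>0$ depending only on $n$, $s$, and $\Omega$; this is exactly where the Lipschitz regularity of $\partial\Omega$ is used, to ensure $\Per_s(\Omega,\R^n) < \infty$ and to apply the fractional Hardy-type inequality of Corollary~\ref{FHI_corollary}.

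\textbf{Step 3: Combine.} Summing the two bounds via the triangle inequality yields finiteness of $\F^M(u,\Omega)$ together with the claimed estimate, after absorbing the factor $1/2$ and the constant from Step~2 into a single constant $C$ depending only on $n$, $s$, and $\Omega$. There is essentially no obstacle here: the lemma is a bookkeeping corollary collecting Lemmas~\ref{Adomainlem} and~\ref{NMdomainlem}, and the only points requiring any care are making sure the dependence of the constant is correctly tracked (it inherits $n$, $s$, $\Omega$ from Lemma~\ref{NMdomainlem}, and nothing new from the $\Lambda/2$ coming from $\A$) and noting that both $\A$ and (for $M \ge \|u\|_{L^\infty(\Omega)}$, though not needed in general) parts of $\Nl^M$ are sign-definite, which is why the absolute values cause no trouble.
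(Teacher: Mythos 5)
Your proof is correct and follows exactly the route the paper intends: the paper presents Lemma~\ref{FMdomainlem} explicitly as a ``unifying statement'' collecting Lemmas~\ref{Adomainlem} and~\ref{NMdomainlem}, with no separate written proof, and your three steps (nonnegativity and upper bound for $\A$ from~\eqref{AGags}, estimate~\eqref{Nlbound} for $\Nl^M$, triangle inequality) are precisely that argument with the constant dependencies correctly tracked.
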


We conclude this subsection by specifying the convexity properties enjoyed by
the functionals~$\A$,~$\Nl^M$, and~$\F^M$. Recall~\eqref{domain_w_data} for the definition of the energy space~$\W^s_\varphi(\Omega)$ corresponding to a given outside datum~$\varphi$.

\begin{lemma}\label{conv_func}
Let~$\Omega\subseteq\Rn$ be a bounded open set with Lipschitz boundary. The following facts hold true:
\begin{enumerate}[label=$(\roman*)$,leftmargin=*]
\item The functional~$\A(\,\cdot\,,\Omega)$ is convex on~$W^{s, 1}(\Omega)$.
\item \label{convlem_ii} Given any~$M \ge 0$ and~$\varphi: \Co\Omega \to \R$,
the functionals~$\Nl^M(\,\cdot\,, \Omega)$ and~$\F^M(\,\cdot\,,\Omega)$
are strictly convex on~$\W^s_\varphi(\Omega)$.
\end{enumerate}
\end{lemma}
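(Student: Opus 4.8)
The plan is to deduce the convexity statements from the convexity properties of~$\G$ established in Lemma~\ref{gsprop}\ref{gsprop_GG}, reading the functionals as superpositions of~$\G$ with affine maps in the argument~$u$. For item~(i), fix~$u_0, u_1 \in W^{s,1}(\Omega)$ and~$\theta \in [0,1]$, and set~$u_\theta := (1-\theta) u_0 + \theta u_1$. For each fixed pair~$(x,y)$ the quantity~$\tfrac{u_\theta(x)-u_\theta(y)}{|x-y|}$ is the convex combination~$(1-\theta)\tfrac{u_0(x)-u_0(y)}{|x-y|} + \theta\tfrac{u_1(x)-u_1(y)}{|x-y|}$ of the corresponding differences for~$u_0$ and~$u_1$; since~$\G$ is convex on~$\R$, we get the pointwise inequality
$$
\G\!\left(\frac{u_\theta(x)-u_\theta(y)}{|x-y|}\right) \le (1-\theta)\,\G\!\left(\frac{u_0(x)-u_0(y)}{|x-y|}\right) + \theta\,\G\!\left(\frac{u_1(x)-u_1(y)}{|x-y|}\right).
$$
Integrating this against the positive kernel~$|x-y|^{-(n-1+s)}\,dx\,dy$ over~$\Omega\times\Omega$ and using Lemma~\ref{Adomainlem} to ensure all three integrals are finite (so that the sum on the right-hand side is well-defined and not of the form~$\infty-\infty$) yields~$\A(u_\theta,\Omega) \le (1-\theta)\A(u_0,\Omega) + \theta\A(u_1,\Omega)$, which is the claim.

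For item~(ii), one works with the representation~\eqref{NMldef} of~$\Nl^M$ rather than with~\eqref{nonlocal_explicit}, since the integrand in~\eqref{NMldef} depends on~$u$ only through the finite upper/lower limits of integration of~$\overline G$, and~$\overline G = \Lambda/2 + G$ is increasing (as~$g>0$). Concretely, for fixed~$x \in \Omega$, $y\in\Co\Omega$, the inner expression is
$$
\int_{\frac{-M-u(y)}{|x-y|}}^{\frac{u(x)-u(y)}{|x-y|}} \overline G(t)\,dt + \int^{\frac{M-u(y)}{|x-y|}}_{\frac{u(x)-u(y)}{|x-y|}} \overline G(-t)\,dt
= \Psi\!\left(\frac{u(x)-u(y)}{|x-y|}\right) + (\text{terms not depending on }u(x)),
$$
where~$\Psi(r) := \int_0^r \overline G(t)\,dt - \int_0^r \overline G(-t)\,dt = \int_0^r\big(\overline G(t) - \overline G(-t)\big)\,dt = 2\int_0^r G(t)\,dt = 2\,\G(r)$ — here I used~\eqref{GbarG} and the oddness of~$G$; alternatively one can simply read off from~\eqref{nonlocal_explicit} that the~$u(x)$-dependence enters precisely through~$2\,\G\big(\tfrac{u(x)-u(y)}{|x-y|}\big)$. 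Since~$\G$ is \emph{strictly} convex and the map~$u \mapsto \tfrac{u(x)-u(y)}{|x-y|}$ is affine, for~$\theta\in(0,1)$ and~$u_0\ne u_1$ in~$\W^s_\varphi(\Omega)$ we obtain the pointwise inequality with strict sign at every~$(x,y)$ for which~$\tfrac{u_0(x)-u_0(y)}{|x-y|} \ne \tfrac{u_1(x)-u_1(y)}{|x-y|}$. The remaining~$u(y)$- and~$M$-dependent terms in~\eqref{NMldef}/\eqref{nonlocal_explicit} are either independent of~$u$ inside~$\Omega$ (recall~$u|_{\Co\Omega} = \varphi$ is fixed on~$\W^s_\varphi(\Omega)$) or again a superposition of the convex~$\G$ with an affine function of~$\varphi$, hence contribute a (weakly) convex term; either way they do not spoil convexity. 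Finiteness of~$\Nl^M(u_i,\Omega)$ on~$\W^s_\varphi(\Omega)$ is guaranteed by Lemma~\ref{NMdomainlem}, so the integration step is legitimate. Adding the convex~$\A(\,\cdot\,,\Omega)$ from item~(i) gives that~$\F^M = \A + \Nl^M$ is strictly convex on~$\W^s_\varphi(\Omega)$ as well.

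The one point that requires a little care — and is the main (mild) obstacle — is promoting the pointwise strict inequality to a strict inequality for the integrals: it must be checked that, whenever~$u_0 \ne u_1$ in~$\W^s_\varphi(\Omega)$, the set of pairs~$(x,y)\in\Omega\times\Omega$ (resp.~$\Omega\times\Co\Omega$) with~$u_0(x)-u_0(y) \ne u_1(x)-u_1(y)$ has positive measure. For this one notes that~$w := u_0 - u_1$ is not a.e.\ constant on~$\Omega$ (if it were constant equal to~$c$, then since~$u_0 = u_1 = \varphi$ a.e.\ on~$\Co\Omega$ we would need~$c = 0$, contradicting~$u_0\ne u_1$); and a function that is not a.e.\ constant on the open connected pieces of~$\Omega$ must satisfy~$w(x)\ne w(y)$ on a positive-measure set of pairs. (If~$\Omega$ is disconnected one argues component by component, using that on at least one component~$w$ is non-constant, or that~$w$ differs between two components.) Combining this with the strict convexity of~$\G$ and the strict monotonicity furnished by~$g>0$ on the relevant bounded range of arguments — which prevents~$\G$ from being affine on any interval — yields the strict inequality after integration. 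I would record this measure-theoretic observation as a short self-contained paragraph and then assemble the three pieces.
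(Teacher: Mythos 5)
Your route is essentially the paper's: convexity of~$\A$ from the pointwise convexity of~$\G$ composed with the affine map~$u \mapsto (u(x)-u(y))/|x-y|$, and strict convexity of~$\Nl^M$ by isolating, via~\eqref{NMldef} (equivalently~\eqref{nonlocal_explicit}), the~$u(x)$-dependence as~$2\,\G\big((u(x)-\varphi(y))/|x-y|\big)$ plus terms that are constant on~$\W^s_\varphi(\Omega)$. Your identity~$\Psi = 2\,\G$ is exactly~\eqref{GbarG} combined with the oddness of~$G$, and the finiteness checks via Lemmas~\ref{Adomainlem} and~\ref{NMdomainlem} are the right way to legitimize the integration.

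The final paragraph, however, contains a false claim, and it is precisely the step you single out as ``the main obstacle''. You assert that~$w := u_0 - u_1$ cannot be a.e.~equal to a nonzero constant~$c$ on~$\Omega$ because~$u_0 = u_1 = \varphi$ a.e.~in~$\Co\Omega$ would force~$c = 0$. Elements of~$\W^s_\varphi(\Omega)$ carry no continuity across~$\partial\Omega$: taking~$u_1 := u_0 + c\,\chi_\Omega$ with~$c \neq 0$ gives an admissible pair with~$w \equiv c$ on~$\Omega$, for which~$u_0(x)-u_0(y) = u_1(x)-u_1(y)$ for \emph{every}~$(x,y) \in \Omega\times\Omega$. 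This is exactly why~$\A$ is only convex and not strictly convex, and why no argument based on pairs in~$\Omega\times\Omega$ can yield strictness. The correct and sufficient observation is the one you already make for pairs in~$\Omega\times\Co\Omega$: there, since~$u_0(y)=u_1(y)=\varphi(y)$, the condition for strict pointwise inequality reduces to~$u_0(x)\neq u_1(x)$, i.e.~$w(x)\neq 0$, and the set~$\{w \neq 0\}\subseteq\Omega$ automatically has positive measure whenever~$u_0 \neq u_1$ (they agree a.e.~outside~$\Omega$). So strict convexity comes entirely from~$\Nl^M$ --- which is how the paper argues --- and the ``not a.e.~constant'' discussion should simply be deleted: it is both incorrect and unnecessary.
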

\begin{proof}
The convexity of the functionals is an immediate consequence of the (strict) convexity of~$\G$ warranted by Lemma~\ref{gsprop}. We point out that the convexity of~$\Nl^M(\,\cdot\,, \Omega)$ also depends on the fact that the second and third summands
appearing inside square brackets in the representation~\eqref{nonlocal_explicit} are constant on~$\W_\varphi^s(\Omega)$. Indeed, given~$u,\,v\in\W^s_\varphi(\Omega)$ and~$t\in(0,1)$, we have the identity
\eqlab{\label{conv_tarta}
& \Nl^M(tu+(1-t)v,\Omega)-t\,\Nl^M(u,\Omega)-(1-t)\Nl^M(v,\Omega)\\
& \hspace{15pt} =2\int_\Omega\bigg\{\int_{\Co\Omega}\bigg[\G\left(t\frac{u(x)-\varphi(y)}{|x-y|}+(1-t)\frac{v(x)-\varphi(y)}{|x-y|}\right)
-t\,\G\left(\frac{u(x)-\varphi(y)}{|x-y|}\right)\\
& \hspace{15pt} \quad -(1-t)\G\left(\frac{v(x)-\varphi(y)}{|x-y|}\right)\bigg]\frac{dy}{|x-y|^{n-1+s}}\bigg\}dx,
}
and the convexity of~$\G$ yields that the integrand in the double integral above is non-positive.
Furthermore, the strict convexity of~$\G$ gives that the quantity on the right-hand side of~\eqref{conv_tarta} is equal to zero if and only if
\bgs{
\frac{u(x)-\varphi(y)}{|x-y|}=\frac{v(x)-\varphi(y)}{|x-y|}\quad\mbox{for a.e.~}(x,y)\in\Omega\times\Co\Omega,
}
i.e., if and only if~$u=v$ almost everywhere in~$\Omega$---and hence in~$\R^n$.
\end{proof}

It is now worth pointing out the following result, which can be easily obtained by arguing as in the proof of
Lemma~\ref{NMdomainlem}, exploiting formula~\eqref{nonlocal_explicit} and
the global Lipschitzianity of~$\G$ given by~\eqref{Lip_Gcal}.

\begin{lemma} \label{tartariccio}
Let~$\Omega\subseteq\Rn$ be a bounded open set with Lipschitz boundary,~$M \ge 0$, and~$\varphi:\Co\Omega\to\R$.
Then, there exists a constant~$C>0$, depending only on~$n$,~$s$, and~$\Omega$, such that
\bgs{
\iint_{Q(\Omega)}\left|\G\left(\frac{u(x)-u(y)}{|x-y|}\right)
-\G\left(\frac{v(x)-v(y)}{|x-y|}\right)\right|\frac{dx\,dy}{|x-y|^{n-1+s}}
\le C\,\Lambda\|u-v\|_{W^{s,1}(\Omega)},
}
for every~$u,\,v\in\W^s_\varphi(\Omega)$, with~$\Lambda$ as defined in~\eqref{gintegr}.
Moreover, we have the identity
$$
\F^M(u,\Omega)-\F^M(v,\Omega)=
\iint_{Q(\Omega)} \! \left\{ \G\left(\frac{u(x)-u(y)}{|x-y|}\right)
-\G\left(\frac{v(x)-v(y)}{|x-y|}\right) \right\} \! \frac{dx\,dy}{|x-y|^{n-1+s}}.
$$
As a consequence, if~$u, u_k\in\W^s_\varphi(\Omega)$ are such that~$\|u-u_k\|_{W^{s,1}(\Omega)}\to0$ as~$k\to\infty$,
then
\bgs{
\lim_{k\to\infty}\F^M(u_k,\Omega)=\F^M(u,\Omega).
}
\end{lemma}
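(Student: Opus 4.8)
The plan is to prove Lemma~\ref{tartariccio} in three stages: first the integrability/estimate of the difference of $\G$-integrands on $Q(\Omega)$, then the algebraic identity for $\F^M(u,\Omega)-\F^M(v,\Omega)$, and finally the continuity statement, which will be an immediate consequence.

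For the first assertion, I would split $Q(\Omega)=\R^{2n}\setminus(\Co\Omega)^2$ into the three pieces $\Omega\times\Omega$, $\Omega\times\Co\Omega$, and $\Co\Omega\times\Omega$ (the latter two being symmetric). On $\Omega\times\Omega$, the global Lipschitz bound~\eqref{Lip_Gcal} gives
\[
\left|\G\!\left(\tfrac{u(x)-u(y)}{|x-y|}\right)-\G\!\left(\tfrac{v(x)-v(y)}{|x-y|}\right)\right|\le\frac{\Lambda}{2}\,\frac{|(u-v)(x)-(u-v)(y)|}{|x-y|},
\]
and dividing by $|x-y|^{n-1+s}$ and integrating yields $\le\frac{\Lambda}{2}[u-v]_{W^{s,1}(\Omega)}$. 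On $\Omega\times\Co\Omega$, since $u=v=\varphi$ a.e.\ in $\Co\Omega$, the argument of the second $\G$ becomes $\tfrac{v(x)-\varphi(y)}{|x-y|}$ and~\eqref{Lip_Gcal} bounds the difference by $\frac{\Lambda}{2}\,\frac{|u(x)-v(x)|}{|x-y|}$; integrating in $y$ over $\Co\Omega$ against $|x-y|^{-(n+s)}$ and then in $x$ over $\Omega$, I would invoke Corollary~\ref{FHI_corollary} (the fractional Hardy-type corollary used for~\eqref{Nlbound}) to control $\int_\Omega|u-v|(x)\big(\int_{\Co\Omega}|x-y|^{-(n+s)}dy\big)dx$ by a constant depending on $n,s,\Omega$ times $\|u-v\|_{W^{s,1}(\Omega)}$. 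Combining the three contributions gives the stated bound with a constant $C\Lambda$, $C=C(n,s,\Omega)$. This first step is essentially a transcription of the proof of Lemma~\ref{NMdomainlem}, as the hint in the paper indicates.

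For the identity, the cleanest route is: since $u,v\in\W^s_\varphi(\Omega)$, both $\F^M(u,\Omega)$ and $\F^M(v,\Omega)$ are finite by Lemma~\ref{FMdomainlem}, so their difference is well-defined. Write $\F^M=\A+\Nl^M$. For the local part, $\A(u,\Omega)-\A(v,\Omega)=\int_\Omega\int_\Omega\{\G(\cdot_u)-\G(\cdot_v)\}\tfrac{dx\,dy}{|x-y|^{n-1+s}}$ is immediate from~\eqref{Adef} (both integrals finite by Lemma~\ref{Adomainlem}, and the integrand is absolutely integrable by Step~1). For $\Nl^M$, I would use the representation~\eqref{nonlocal_explicit}: the two terms $\G\!\left(\tfrac{M+u(y)}{|x-y|}\right)$ and $\G\!\left(\tfrac{M-u(y)}{|x-y|}\right)$ depend only on $u|_{\Co\Omega}=\varphi$, hence coincide with the corresponding terms for $v$, and likewise the last double integral $M\Lambda\iint$ is identical for $u$ and $v$; therefore $\Nl^M(u,\Omega)-\Nl^M(v,\Omega)=2\int_\Omega\int_{\Co\Omega}\{\G(\cdot_u)-\G(\cdot_v)\}\tfrac{dy\,dx}{|x-y|^{n-1+s}}$, where again absolute integrability of the difference (needed to split the integral of a difference into a difference of integrals) comes from Step~1. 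Adding the $\Omega\times\Omega$ contribution, the symmetric $\Omega\times\Co\Omega$ and $\Co\Omega\times\Omega$ contributions (the factor $2$ accounting for symmetry of the integrand under $x\leftrightarrow y$), we recover exactly $\iint_{Q(\Omega)}\{\G(\cdot_u)-\G(\cdot_v)\}\tfrac{dx\,dy}{|x-y|^{n-1+s}}$.

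Finally, the continuity claim: given $u_k\to u$ in $W^{s,1}(\Omega)$ with all $u_k,u\in\W^s_\varphi(\Omega)$, apply the identity and then the estimate with $v=u_k$ to get $|\F^M(u,\Omega)-\F^M(u_k,\Omega)|\le C\Lambda\|u-u_k\|_{W^{s,1}(\Omega)}\to0$. The only point requiring a little care—and the mild obstacle of the argument—is the justification, in Step~2, that the integral over $Q(\Omega)$ of the \emph{difference} equals the difference of the (individually finite) integrals; this is precisely what Step~1's absolute-integrability bound secures, so the proof must be organized to establish Step~1 first. Everything else is bookkeeping built on Lemmas~\ref{gsprop}, \ref{Adomainlem}, \ref{NMdomainlem}, \ref{FMdomainlem} and Corollary~\ref{FHI_corollary}.
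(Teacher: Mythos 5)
Your proof is correct and follows precisely the route the paper indicates (and leaves to the reader): the Lipschitz bound~\eqref{Lip_Gcal} on the diagonal and cross terms of $Q(\Omega)$, Corollary~\ref{FHI_corollary} for the cross terms, and the cancellation of the $\varphi$-dependent and constant terms in representation~\eqref{nonlocal_explicit} for the identity. The attention you pay to absolute integrability before splitting the integrals is the right (and only) delicate point, and it is handled correctly.
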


\begin{remark}\label{tartapower_Remark}
Here are some straightforward but important
consequences of Lemma~\ref{tartariccio}.
\begin{enumerate}[label=$(\roman*)$,leftmargin=*]
\item \label{tartapower_i} It guarantees that Definition~\ref{mindef} of minimizers of~$\F$ is well-posed.
\item \label{tartapower_ii} It provides an equivalent characterization for a minimizer of~$\F$ in~$\W^s_\varphi(\Omega)$
as a function~$u\in\W^s_\varphi(\Omega)$ that
minimizes~$\F^M(\,\cdot\,,\Omega)$ within~$\W^s_\varphi(\Omega)$ for some~$M \ge 0$, i.e., that satisfies
\bgs{
\F^M(u,\Omega)=\inf \Big\{ \F^M(v,\Omega) : v\in\W^s_\varphi(\Omega) \Big\}.
}
\item \label{tartapower_iii} By~\ref{tartapower_ii} and the strict convexity of~$\F^M$---see point~\ref{convlem_ii} of Lemma~\ref{conv_func}---, we obtain that a minimizer of~$\F$ in~$\W^s_\varphi(\Omega)$,
if it exists, is unique.
\item \label{tartapower_iv} As a consequence of the density of~$C^\infty_c(\Omega)$
in~$W^{s,1}(\Omega)$---see, e.g., Proposition~\ref{smooth_cpt_dense} in Appendix~\ref{app}---, Lemma~\ref{tartariccio} implies that to verify the minimality of~$u\in\W^s_\varphi(\Omega)$ we can limit ourselves to consider
competitors~$v\in\W^s_\varphi(\Omega)$ such that~$v|_\Omega\in C^\infty_c(\Omega)$.
\end{enumerate}
\end{remark}

\subsection{Geometric properties of the fractional area functionals} \label{areageom}

This subsection is devoted to the description of some key geometric properties enjoyed by~$\A_s$,~$\Nl_s^M$, and~$\F_s^M$. More specifically, we consider the case~$g=g_s$ and we show the connection existing between
the fractional perimeter~$\Per_s$ and these
functionals, which ultimately motivates their introduction.

Recall the splitting~\eqref{Perssplit}-\eqref{nonlocalPers} of the~$s$-perimeter into its local part~$\Per_s^L$ and nonlocal part~$\Per_s^{N \! L}$. We begin with a result relating~$\Per_s^L$ and~$\A_s$. Note that we indicate points in~$\R^{n + 1}$ with capital letters, writing~$X = (x, X_{n + 1})$, with~$x \in \R^n$ and~$X_{n + 1} \in \R$. Also recall the notation for cylinders introduced in~\eqref{cylnotation}.

\begin{lemma} \label{per_of_subgraph_lem1}
Let~$\Omega \subseteq \R^n$ be a bounded open set and~$u:\Omega\to\R$ be a measurable function. Then,
\begin{equation} \label{per_of_subgraph_lem1_claim1}
u \in W^{s, 1}(\Omega) \quad \mbox{if and only if} \quad \Per_s^L \! \left( \S_u, \Omega^\infty \right) < \infty.
\end{equation}
In particular, it holds
\begin{equation} \label{PerAid}
\Per_s^L \! \left( \S_u,\Omega^\infty \right)=\A_s(u,\Omega) + \Per_s^L \! \left( \{X_{n+1} < 0\},\Omega^\infty \right).
\end{equation}
\end{lemma}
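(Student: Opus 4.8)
\textbf{Proof plan for Lemma~\ref{per_of_subgraph_lem1}.}

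The plan is to compute~$\Per_s^L(\S_u,\Omega^\infty)$ directly from its definition and reduce it, slice by slice in the vertical variable, to the one-dimensional integral defining~$\G_s$. Recall that by~\eqref{Perssplit}--\eqref{nonlocalPers},
$$
\Per_s^L \! \left( \S_u,\Omega^\infty \right) = \Ll_s \! \big( \S_u \cap \Omega^\infty,\, \Co \S_u \cap \Omega^\infty \big) = \int_{\Omega} \int_{\Omega} \left( \int_{-\infty}^{u(x)} \int_{u(y)}^{+\infty} \frac{dX_{n+1}\,dY_{n+1}}{\big( |x-y|^2 + (X_{n+1}-Y_{n+1})^2 \big)^{\frac{n+1+s}{2}}} \right) \frac{dx\,dy}{1}
$$
(with the convention that the inner integral vanishes when~$u(x) \le u(y)$, and after symmetrizing in~$x \leftrightarrow y$ one gets the full expression accounting for both orderings). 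First I would fix~$x, y \in \Omega$ and carry out the inner double integral in~$(X_{n+1}, Y_{n+1})$: substituting~$X_{n+1} = |x-y|\,\sigma$, $Y_{n+1} = |x-y|\,\sigma'$ and then shifting by~$u(y)/|x-y|$ turns it into~$|x-y|^{-(n-1+s)}$ times a universal function of~$(u(x)-u(y))/|x-y|$. A short computation with Fubini identifies that universal function as exactly~$\G_s\!\big((u(x)-u(y))/|x-y|\big)$ — this is the motivation behind the definition of~$\G_s$, so the integral is designed to come out this way. Symmetrizing recovers the full kernel weight~$|x-y|^{-(n-1+s)}$ and produces precisely~$\A_s(u,\Omega)$, \emph{modulo} the constant-in-$u$ contribution coming from the lower/upper limits of the vertical integration, which is what~$\Per_s^L(\{X_{n+1}<0\},\Omega^\infty)$ accounts for; separating these terms gives~\eqref{PerAid}. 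The cleanest way to keep track of the split is to write~$\chi_{\S_u}(x,X_{n+1}) = \chi_{\{X_{n+1}<0\}}(x,X_{n+1}) + \big(\chi_{\{0 \le X_{n+1} < u(x)\}} - \chi_{\{u(x) \le X_{n+1} < 0\}}\big)$ and expand the $W^{s,1}$-type seminorm bilinearly.

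Once~\eqref{PerAid} is established, the equivalence~\eqref{per_of_subgraph_lem1_claim1} follows: $\Per_s^L(\{X_{n+1}<0\},\Omega^\infty)$ is a finite constant depending only on~$n$, $s$, and~$\Omega$ (it is the local fractional perimeter of a halfspace inside a bounded cylinder — boundedness and, say, the crude bound from Lemma~\ref{dumb_kernel_lemma} applied to appropriate truncations, or a direct slicing estimate, gives finiteness), so $\Per_s^L(\S_u,\Omega^\infty) < \infty$ iff $\A_s(u,\Omega) < \infty$, which by Lemma~\ref{Adomainlem} (applied with~$g = g_s$) is equivalent to~$u \in W^{s,1}(\Omega)$. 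Strictly speaking I would prove~\eqref{per_of_subgraph_lem1_claim1} and~\eqref{PerAid} together, since the identity~\eqref{PerAid} already contains the equivalence once the finiteness of the halfspace term is noted.

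The main obstacle I anticipate is bookkeeping rather than conceptual: making the slicing rigorous requires Tonelli's theorem on the nonnegative integrand over~$\Omega \times \Omega \times \R \times \R$, and then carefully organizing the cancellation/combination of the several terms that arise from expanding~$\chi_{\S_u} - \chi_{\{X_{n+1}<0\}}$ — in particular verifying that the cross terms involving one factor~$\chi_{\{X_{n+1}<0\}}$ and one factor depending on~$u$ reassemble (after the change of variables and using that~$\G_s$ is even, with~$\G_s' = G_s$) into the single clean expression~$\A_s(u,\Omega)$ plus the pure constant. A secondary point is handling the principal-value/convention issue when~$u(x) = u(y)$ or when slices are empty, but since everything is nonnegative before symmetrization this causes no real trouble. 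I would also double-check the explicit form of the inner integral: writing~$I(r) := \int_{-\infty}^{0}\int_{0}^{+\infty} \big(1 + (\xi - \eta - r)^2\big)^{-\frac{n+1+s}{2}}\,d\eta\,d\xi$ and differentiating twice in~$r$ to see~$I''(r) = g_s(r)$, which pins down~$I$ up to an affine function of~$r$ — the affine part being exactly the~$u$-independent leftover. This last differentiation trick is the quickest route and is worth spelling out explicitly.
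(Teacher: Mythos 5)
Your plan follows essentially the same route as the paper: slice vertically, rescale both vertical variables by~$|x-y|$, identify the inner integral~$I(r)$ (with~$r = (u(x)-u(y))/|x-y|$) as~$\G_s(r)$ plus an affine function of~$r$, peel off the halfspace contribution, and deduce~\eqref{per_of_subgraph_lem1_claim1} from~\eqref{PerAid} together with Lemma~\ref{Adomainlem}. One correction to your last paragraph: the affine leftover is \emph{not} $u$-independent. Since~$I''=g_s$ and~$I'(0)=\overline{G}_s(0)=\Lambda_s/2\neq 0$, one has~$I(r)=\G_s(r)+\tfrac{\Lambda_s}{2}r+I(0)$; only the constant~$I(0)$ reassembles into~$\Per_s^L(\{X_{n+1}<0\},\Omega^\infty)$, while the linear part contributes~$\tfrac{\Lambda_s}{2}\iint (u(x)-u(y))\,|x-y|^{-(n+s)}\,dx\,dy$, which vanishes only by antisymmetry in~$x\leftrightarrow y$. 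Because this last integral need not be absolutely convergent when~$u\notin W^{s,1}(\Omega)$, the cancellation must be carried out on a truncated domain before passing to the limit --- the paper works on~$D_\delta:=\{(x,y)\in\Omega\times\Omega:\ |u(x)|,|u(y)|<\delta^{-1},\ |x-y|>\delta\}$, where everything is finite, and then lets~$\delta\searrow 0$ by monotone convergence; your plan should incorporate such a regularization explicitly. Finally, the parenthetical convention ``the inner integral vanishes when~$u(x)\le u(y)$'' is unnecessary and incorrect as stated: the integral~$\int_{-\infty}^{u(x)}\int_{u(y)}^{+\infty}$ of the positive kernel is positive for every ordering of~$u(x)$ and~$u(y)$, and the outer integral over~$\Omega\times\Omega$ already accounts for both orderings without any symmetrization step.
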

\begin{proof}
Using Lebesgue's monotone convergence theorem, we write
$$
\Per_s^L \! \left( \S_u, \Omega^\infty \right) = \lim_{\delta \searrow 0} \iint_{D_\delta} dx \, dy \int_{-\infty}^{u(x)} dX_{n + 1} \int_{u(y)}^{+\infty} \frac{dY_{n + 1}}{|X - Y|^{n + 1 + s}},
$$
where~$D_\delta := \big\{ (x, y) \in \Omega \times \Omega : |u(x)| < \delta^{-1}, \, |u(y)| < \delta^{-1}, \mbox{ and } |x - y| > \delta \big\}$. Fix any small~$\delta > 0$ and let~$(x, y) \in D_\delta$. Shifting variables, we see that
$$
\int_{-\infty}^{u(x)} dX_{n + 1} \int_{u(y)}^{+\infty} \frac{dY_{n + 1}}{|X - Y|^{n + 1 + s}} = \int_{-\infty}^{u(x) - u(y)} dX_{n + 1} \int_{0}^{+\infty} \frac{dY_{n + 1}}{|X - Y|^{n + 1 + s}},
$$
so that
$$
\Per_s^L \! \left( \S_u, \Omega^\infty \right) = \lim_{\delta \searrow 0} \iint_{D_\delta} \! dx \, dy \int_{0}^{u(x) - u(y)} dX_{n + 1} \int_{0}^{+\infty} \frac{dY_{n + 1}}{|X - Y|^{n + 1 + s}} + \Per_s^L \! \left( \{ X_{n + 1} < 0 \}, \Omega^\infty \right).
$$
After a renormalization of both variables~$X_{n + 1}$ and~$Y_{n + 1}$, we have
$$
\int_{0}^{u(x) - u(y)} dX_{n + 1} \int_{0}^{+\infty} \frac{dY_{n + 1}}{|X - Y|^{n + 1 + s}} = \frac{1}{|x - y|^{n - 1 + s}} \int_{0}^{\frac{u(x) - u(y)}{|x - y|}} dt \int_{0}^{+\infty} \frac{dr}{\left[ 1 + (r - t)^2 \right]^{\frac{n + 1 + s}{2}}}.
$$
Changing coordinates once again and recalling definition~\eqref{gdef}, we obtain that
\begin{align*}
\int_{0}^{u(x) - u(y)} dX_{n + 1} \int_{0}^{+\infty} \frac{dY_{n + 1}}{|X - Y|^{n + 1 + s}} & = \frac{1}{|x - y|^{n - 1 + s}} \int_{0}^{\frac{u(x) - u(y)}{|x - y|}} \left( \int_{-t}^{+\infty} \frac{d\tau'}{\left[ 1 + (\tau')^2 \right]^{\frac{n + 1 + s}{2}}} \right) dt \\
& = \frac{1}{|x - y|^{n - 1 + s}} \int_{0}^{\frac{u(x) - u(y)}{|x - y|}} \left( \int_{-\infty}^{t} g_s(\tau) \, d\tau \right) dt.
\end{align*}
By~\eqref{Gdefs} and~\eqref{gintegr}, we get
\begin{align*}
\int_{0}^{\frac{u(x) - u(y)}{|x - y|}} \left( \int_{-\infty}^{t} g_s(\tau) \, d\tau \right) dt = \frac{\Lambda_s}{2} \frac{u(x) - u(y)}{|x - y|} + \G_s \! \left( \frac{u(x) - u(y)}{|x - y|} \right).
\end{align*}
Since, by symmetry,
$$
\iint_{D_\delta} \frac{u(x) - u(y)}{|x - y|} \frac{dx \, dy}{ |x - y|^{n - 1 + s}} = 0,
$$
we conclude that
$$
\Per_s^L \! \left( \S_u, \Omega^\infty \right) = \lim_{\delta \searrow 0} \iint_{D_\delta} \G_s \! \left( \frac{u(x) - u(y)}{|x - y|} \right) \frac{dx \, dy}{|x - y|^{n - 1 + s}} + \Per_s^L \! \left( \{ X_{n + 1} < 0 \}, \Omega^\infty \right).
$$
Claim~\eqref{PerAid} now follows by taking advantage once again of Lebesgue's monotone convergence theorem and recalling definition~\eqref{Adef}. Note that~\eqref{per_of_subgraph_lem1_claim1} immediately follows from~\eqref{PerAid} and Lemma~\ref{Adomainlem} (also recall~\cite[Lemma~D.1.2]{L_phd_th}).
\end{proof}

Next is a lemma that gives a geometric interpretation of the truncated nonlocal part~$\Nl_s^M$.

\begin{lemma} \label{per_of_subgraph_lem2}
Let~$\Omega \subseteq \R^n$ be a bounded open set with Lipschitz boundary and~$u: \R^n \to \R$ be
such that~$u|_\Omega \in L^\infty(\Omega)$. Then, for any~$M \ge \| u \|_{L^\infty(\Omega)}$, the quantity~$\Nl_s^M(u, \Omega)$ is finite and it holds
\begin{equation} \label{NMid}
\Nl_s^M(u,\Omega)=\Ll_s \! \left( \S_u\cap\Omega^M, \Co\S_u\setminus\Omega^\infty \right)
+\Ll_s \! \left( \Co\S_u\cap\Omega^M, \S_u\setminus\Omega^\infty \right).
\end{equation}
\end{lemma}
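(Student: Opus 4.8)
The plan is to unfold both sides of~\eqref{NMid} as explicit multiple integrals over~$\R^n \times \R^n \times \R \times \R$ and check that they coincide after the same renormalization of the vertical variables already used in the proof of Lemma~\ref{per_of_subgraph_lem1}. First I would fix~$M \ge \|u\|_{L^\infty(\Omega)}$ and observe that, since~$u|_\Omega \in L^\infty(\Omega)$, the finiteness of~$\Nl_s^M(u,\Omega)$ is already guaranteed by Lemma~\ref{NMdomainlem} (indeed~$u$ restricted to~$\Omega$ lies in the weighted~$L^1$ space discussed there, because~$L^\infty(\Omega)$ embeds into it). Then I would turn to the right-hand side. The set~$\S_u \cap \Omega^M$ is~$\{ (y, Y_{n+1}) : y \in \Omega, \ -M < Y_{n+1} < u(y) \}$ (using~$M \ge \|u\|_{L^\infty(\Omega)}$, so that the constraint~$Y_{n+1} < u(y)$ is the binding one on the upper side and~$Y_{n+1} > -M$ on the lower), while~$\Co \S_u \setminus \Omega^\infty = \{ (x, X_{n+1}) : x \in \Co\Omega, \ X_{n+1} \ge u(x) \}$. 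Writing out~$\Ll_s$ for this pair gives
$$
\int_\Omega \int_{\Co\Omega} \left( \int_{-M}^{u(y)} \int_{u(x)}^{+\infty} \frac{dX_{n+1} \, dY_{n+1}}{|X - Y|^{n+1+s}} \right) dx \, dy,
$$
and similarly the second term of~\eqref{NMid} contributes
$$
\int_\Omega \int_{\Co\Omega} \left( \int_{u(y)}^{M} \int_{-\infty}^{u(x)} \frac{dX_{n+1} \, dY_{n+1}}{|X - Y|^{n+1+s}} \right) dx \, dy,
$$
after relabeling the roles of the variables.

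Next I would reduce each inner double integral to a one-dimensional one by shifting~$X_{n+1} \mapsto X_{n+1} - u(y)$, then rescaling both vertical variables by~$|x-y|$, exactly as in Lemma~\ref{per_of_subgraph_lem1}. This turns the first inner integral into
$$
\frac{1}{|x-y|^{n-1+s}} \int_{\frac{-M-u(y)}{|x-y|}}^{0} \left( \int_{\frac{u(x)-u(y)}{|x-y|} - t}^{+\infty} \frac{d\tau}{(1+\tau^2)^{\frac{n+1+s}{2}}} \right) dt = \frac{1}{|x-y|^{n-1+s}} \int_{\frac{-M-u(y)}{|x-y|}}^{0} \overline{G}_s \! \left( t - \frac{u(x)-u(y)}{|x-y|} \right) dt,
$$
recalling~\eqref{Gbardef} for~$\overline{G}_s$. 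Shifting the integration variable~$t$ by~$\frac{u(x)-u(y)}{|x-y|}$ produces precisely the first summand~$\int_{\frac{-M-u(y)}{|x-y|}}^{\frac{u(x)-u(y)}{|x-y|}} \overline{G}_s(t) \, dt$ appearing in the definition~\eqref{NMldef} of~$\Nl_s^M$. The second term of~\eqref{NMid} is handled identically, using the evenness of~$g_s$ (so that~$\overline{G}_s(\,\cdot\,)$ turns into~$\overline{G}_s(-\,\cdot\,)$ after reflecting the vertical coordinate), and it matches the second summand~$\int^{\frac{M-u(y)}{|x-y|}}_{\frac{u(x)-u(y)}{|x-y|}} \overline{G}_s(-t) \, dt$ in~\eqref{NMldef}. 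Summing the two contributions and comparing with~\eqref{NMldef} gives the claimed identity.

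The main obstacle, as in Lemma~\ref{per_of_subgraph_lem1}, is the bookkeeping of the limits of integration and making sure the interchange of integrals and the passage to the limit in~$\delta$ (to handle the singularity of the kernel on the diagonal~$x = y$ and the non-integrable tails) are legitimate. I would again introduce the truncated domains~$D_\delta = \{(x,y) \in \Omega \times \Co\Omega : |x-y| > \delta, \ |y| < \delta^{-1}\}$ and apply Lebesgue's monotone convergence theorem, which applies cleanly since the integrands are nonnegative once the vertical integration ranges are arranged so that~$X_{n+1} \ge Y_{n+1}$ (or vice versa). The boundedness of~$u$ on~$\Omega$ is what keeps the vertical ranges~$[-M, u(y)]$ and~$[u(y), M]$ finite, so no further cutoff in the~$Y_{n+1}$ variable is needed; the only genuinely delicate estimate is near~$\partial\Omega$, but this is exactly the content of the finiteness already provided by Lemma~\ref{NMdomainlem} together with the Lipschitz regularity of~$\partial\Omega$ (which makes~$\Per_s(\Omega, \R^n)$ finite). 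Once the identity is established for the truncated domains, letting~$\delta \to 0$ yields~\eqref{NMid} in full.
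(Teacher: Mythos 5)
Your proposal is correct and follows essentially the same route as the paper: unfold the two $\Ll_s$ terms as iterated integrals using $M \ge \|u\|_{L^\infty(\Omega)}$ to identify the vertical slices, shift and rescale the vertical variables as in Lemma~\ref{per_of_subgraph_lem1}, and recognize $\overline{G}_s$ so as to match definition~\eqref{NMldef} term by term (your extra truncation/monotone-convergence discussion is harmless but unnecessary here, since the integrands are nonnegative and Tonelli applies directly).
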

\begin{proof}
Thanks to the fact that~$M \ge \| u \|_{L^\infty(\Omega)}$, we write
\begin{align*}
\Ll_s \! \left( \S_u\cap\Omega^M, \Co\S_u\setminus\Omega^\infty \right) & = \int_{\Omega} dx \int_{\Co \Omega} dy \int_{-M}^{u(x)} dX_{n + 1} \int_{u(y)}^{+\infty} \frac{dY_{n + 1}}{|X - Y|^{n + 1 + s}}, \\
\Ll_s \! \left( \Co\S_u\cap\Omega^M, \S_u\setminus\Omega^\infty \right) & = \int_{\Omega} dx \int_{\Co \Omega} dy \int_{u(x)}^{M} dX_{n + 1} \int_{-\infty}^{u(y)} \frac{dY_{n + 1}}{|X - Y|^{n + 1 + s}}.
\end{align*}
By arguing as in the proof of Lemma~\ref{per_of_subgraph_lem1} and recalling definitions~\eqref{gdef}
and~\eqref{Gbardef}, we have
\begin{align*}
\int_{-M}^{u(x)} dX_{n + 1} \int_{u(y)}^{+\infty} \frac{dY_{n + 1}}{|X - Y|^{n + 1 + s}} & = \int_{-M - u(y)}^{u(x) - u(y)} dX_{n + 1} \int_{0}^{+\infty} \frac{dY_{n + 1}}{|X - Y|^{n + 1 + s}} \\
& = \frac{1}{|x - y|^{n - 1 + s}} \int_{\frac{-M - u(y)}{|x - y|}}^{\frac{u(x) - u(y)}{|x - y|}} \overline{G}_s(t) \, dt
\end{align*}
for every~$x \in \Omega$ and~$y \in \Co \Omega$. Hence,
$$
\Ll_s \! \left( \S_u\cap\Omega^M, \Co\S_u\setminus\Omega^\infty \right) = \int_{\Omega} dx \int_{\Co \Omega} dy \left( \frac{1}{|x - y|^{n - 1 + s}} \int_{\frac{-M - u(y)}{|x - y|}}^{\frac{u(x) - u(y)}{|x - y|}} \overline{G}_s(t) \, dt \right).
$$
Similarly,
$$
\Ll_s \! \left( \Co\S_u\cap\Omega^M, \S_u\setminus\Omega^\infty \right) = \int_{\Omega} dx \int_{\Co \Omega} dy \left( \frac{1}{|x - y|^{n + 1 + s}} \int_{\frac{u(x) - u(y)}{|x - y|}}^{\frac{M - u(y)}{|x - y|}} \overline{G}_s(-t) \, dt \right).
$$
By combining the last two identities and recalling definition~\eqref{NMldef}, we are led to~\eqref{NMid}.
\end{proof}

Putting the last two results together, we obtain a description of the~$s$-perimeter of the subgraph of~$u$ in~$\Omega^M$ in terms of the functional~$\F_s^M$.

\begin{prop} \label{per_of_subgraph_prop}
Let~$\Omega \subseteq \R^n$ be a bounded open set with Lipschitz boundary. Let~$u: \R^n \to \R$ be such that~$u|_\Omega \in L^\infty(\Omega)$ and take~$M \ge \| u \|_{L^\infty(\Omega)}$. Then,
$$
u|_\Omega \in W^{s,1}(\Omega) \quad \mbox{if and only if} \quad \Per_s \! \left( \S_u, \Omega^M \right) < \infty.
$$
In particular, it holds
\begin{equation} \label{per_of_subgraph}
\Per_s \! \left( \S_u, \Omega^M \right) = \F^M_s(u,\Omega) + \kappa_{\Omega, M},
\end{equation}
where~$\kappa_{\Omega, M}$ is the (positive) constant
$$
\kappa_{\Omega, M} := \Per_s^L \! \left( \{ X_{n + 1} < 0 \}, \Omega^\infty \right) - \Per_s^L \! \left( \{ X_{n + 1} < 0 \}, \Omega^\infty \setminus \Omega^M \right).
$$
\end{prop}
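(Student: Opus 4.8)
The plan is to deduce the identity~\eqref{per_of_subgraph} by assembling the two ``halves'' already prepared for us: Lemma~\ref{per_of_subgraph_lem1}, which converts the local part of the $s$-perimeter into~$\A_s$, and Lemma~\ref{per_of_subgraph_lem2}, which converts into~$\Nl_s^M$ the portion of the nonlocal part that crosses the lateral boundary~$\Co\Omega\times\R$. Starting from the splitting~\eqref{Perssplit}-\eqref{nonlocalPers},
\[
\Per_s \! \big( \S_u, \Omega^M \big) = \Per_s^L \! \big( \S_u, \Omega^M \big) + \Per_s^{N\!L} \! \big( \S_u, \Omega^M \big),
\]
I would treat the two summands separately. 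The one structural fact used repeatedly is that, because $M \ge \| u \|_{L^\infty(\Omega)}$, inside the outer region~$\Omega^\infty\setminus\Omega^M$ the subgraph~$\S_u$ agrees up to a null set with the half-space~$\{ X_{n+1} < 0 \}$; more precisely, $\S_u\cap(\Omega^\infty\setminus\Omega^M)=\Omega\times(-\infty,-M)$ and $\Co\S_u\cap(\Omega^\infty\setminus\Omega^M)=\Omega\times[M,+\infty)$ a.e.

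For the local part, I would write $\Per_s^L(\S_u,\Omega^\infty)=\Ll_s(\S_u\cap\Omega^\infty,\Co\S_u\cap\Omega^\infty)$, decompose each slot along $\Omega^\infty=\Omega^M\cup(\Omega^\infty\setminus\Omega^M)$, and sort the four resulting interactions: the one within~$\Omega^M$ is $\Per_s^L(\S_u,\Omega^M)$; the one within~$\Omega^\infty\setminus\Omega^M$ equals $\Per_s^L(\{ X_{n+1}<0 \},\Omega^\infty\setminus\Omega^M)$ by the remark above; and the two mixed ones simplify, again using $M\ge\|u\|_\infty$, to $\Ll_s(\S_u\cap\Omega^M,\Omega\times[M,+\infty))$ and $\Ll_s(\Omega\times(-\infty,-M),\Co\S_u\cap\Omega^M)$. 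Solving for $\Per_s^L(\S_u,\Omega^M)$ and substituting~\eqref{PerAid} turns this into
\[
\Per_s^L \! \big( \S_u, \Omega^M \big) = \A_s(u, \Omega) + \kappa_{\Omega, M} - \Ll_s \! \big( \S_u \cap \Omega^M, \Omega \times [M, +\infty) \big) - \Ll_s \! \big( \Omega \times (-\infty, -M), \Co\S_u \cap \Omega^M \big).
\]
For the nonlocal part, I would instead split the complement $\Co\Omega^M$ into $\Co\Omega\times\R$ and $\Omega\times\big((-\infty,-M]\cup[M,+\infty)\big)$. The contribution of the first piece to $\Per_s^{N\!L}(\S_u,\Omega^M)$ is exactly $\Nl_s^M(u,\Omega)$ by Lemma~\ref{per_of_subgraph_lem2} together with the symmetry $\Ll_s(A,B)=\Ll_s(B,A)$; the contribution of the second piece, once $M\ge\|u\|_\infty$ is used a last time to identify~$\S_u$ and~$\Co\S_u$ there, reproduces precisely the same two mixed terms, now with a plus sign. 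Adding the local and nonlocal parts, the mixed terms cancel and what remains is $\Per_s(\S_u,\Omega^M)=\A_s(u,\Omega)+\Nl_s^M(u,\Omega)+\kappa_{\Omega,M}=\F_s^M(u,\Omega)+\kappa_{\Omega,M}$.

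The ``if and only if'' then follows immediately from~\eqref{per_of_subgraph}: $\kappa_{\Omega,M}$ is a finite positive constant—finite because Lemma~\ref{per_of_subgraph_lem1} applied to $u\equiv0$ gives $\Per_s^L(\{ X_{n+1}<0 \},\Omega^\infty)<\infty$, whence $0<\kappa_{\Omega,M}\le\Per_s^L(\{ X_{n+1}<0 \},\Omega^\infty)$—and $\Nl_s^M(u,\Omega)$ is finite by Lemma~\ref{per_of_subgraph_lem2}; hence $\Per_s(\S_u,\Omega^M)<\infty$ if and only if $\A_s(u,\Omega)<\infty$, which by Lemma~\ref{Adomainlem} is equivalent to $u|_\Omega\in W^{s,1}(\Omega)$.

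The step that demands the most care is the region bookkeeping, and in particular the verification that the two mixed $\Ll_s$-terms are finite—each of them is dominated by a vertical translate of $\Per_s^L(\{ X_{n+1}<0 \},\Omega^\infty)$, which is finite by Lemma~\ref{per_of_subgraph_lem1}—so that their cancellation is legitimate. When instead $\A_s(u,\Omega)=+\infty$, no cancellation is needed: by~\eqref{PerAid} and the finiteness of those mixed terms one has $\Per_s^L(\S_u,\Omega^M)=+\infty$, so both sides of~\eqref{per_of_subgraph} are $+\infty$, in agreement with $u|_\Omega\notin W^{s,1}(\Omega)$.
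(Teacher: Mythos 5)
Your proof is correct and follows essentially the same route as the paper's: both assemble Lemma~\ref{per_of_subgraph_lem1} and Lemma~\ref{per_of_subgraph_lem2} by sorting the interactions defining $\Per_s(\S_u,\Omega^M)$ according to the regions $\Omega^M$, $\Omega^\infty\setminus\Omega^M$, and $\Co\Omega\times\R$, using $M\ge\|u\|_{L^\infty(\Omega)}$ to identify $\S_u$ with the half-space in the outer vertical region. Your version is somewhat more explicit than the paper's (which writes the three $\Ll_s$ pieces as iterated integrals and states the recombination directly) about the finiteness of the cancelled mixed terms and about the degenerate case $\A_s(u,\Omega)=+\infty$, both of which are worthwhile refinements rather than a different argument.
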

\begin{proof}
The proposition is an almost immediate consequence of Lemmas~\ref{per_of_subgraph_lem1} and~\ref{per_of_subgraph_lem2}. First, we observe that the following identities are true:
\begin{align*}
\Ll_s \! \left( \S_u \cap \Omega^M, \Omega^M \setminus \S_u \right) & = \int_{\Omega} dx \int_{\Omega} dy \int_{-M}^{u(x)} dX_{n + 1} \int_{u(y)}^M \frac{dY_{n + 1}}{|X - Y|^{n + 1 + s}}, \\
\Ll_s \! \left( \S_u \cap \Omega^M, \Co \S_u \setminus \Omega^M \right) & = \int_{\Omega} dx \int_{\Omega} dy \int_{-M}^{u(x)} dX_{n + 1} \int_{M}^{+\infty} \frac{dY_{n + 1}}{|X - Y|^{n + 1 + s}} \\
& \quad + \int_{\Omega} dx \int_{\Co \Omega} dy \int_{-M}^{u(x)} dX_{n + 1} \int_{u(y)}^{+\infty} \frac{dY_{n + 1}}{|X - Y|^{n + 1 + s}}, \\
\Ll_s \! \left( \Omega^M \setminus \S_u, \S_u \setminus \Omega^M \right) & = \int_{\Omega} dx \int_{\Omega} dy \int_{u(x)}^M dX_{n + 1} \int_{-\infty}^{-M} \frac{dY_{n + 1}}{|X - Y|^{n + 1 + s}} \\
& \quad + \int_{\Omega} dx \int_{\Co \Omega} dy \int_{u(x)}^M dX_{n + 1} \int_{-\infty}^{u(y)} \frac{dY_{n + 1}}{|X - Y|^{n + 1 + s}}.
\end{align*}
Note that we took advantage of the fact that~$M \ge \| u \|_{L^\infty(\Omega)}$ in order to obtain the above formulas. In light of this, it is not hard to see that
\begin{align*}
\Per_s \! \left( \S_u, \Omega^M \right) & = \Per_s^L \! \left( \S_u,\Omega^\infty \right) - \Per_s^L \! \left( \{ X_{n + 1} < 0 \}, \Omega^\infty \setminus \Omega^M \right) \\
& \quad + \Ll_s \! \left( \S_u \cap \Omega^M, \Co \S_u \setminus \Omega^\infty \right) + \Ll_s \! \left( \Co \S_u \cap \Omega^M, \S_u \setminus \Omega^\infty \right).
\end{align*}
Identity~\eqref{per_of_subgraph} follows by recalling definition~\eqref{FMdef} and applying~\eqref{PerAid} and~\eqref{NMid}.
\end{proof}

\subsection{Some facts about the Euler-Lagrange operator}\label{Prelim_EL_op}

We collect here some observations about the nonlocal integro-differential operator~$\h$, which is formally defined on a function~$u: \R^n \to \R$ at a point~$x \in \R^n$ by
$$
\h u(x) := 2 \, \PV \int_{\R^n} G \left( \frac{u(x)-u(y)}{|x-y|} \right) \frac{dy}{|x-y|^{n+s}}.
$$

We begin by introducing the following useful notation
\begin{equation} \label{deltagdef}
\delta_g(u,x;\xi):=G\left(\frac{u(x)-u(x+\xi)}{|\xi|}\right)-G\left(\frac{u(x-\xi)-u(x)}{|\xi|}\right),
\end{equation}
and we observe that, by symmetry, we can write
\eqlab{\label{delta_op}
\h u(x)=\PV\int_{\R^n}\frac{\delta_g(u,x;\xi)}{|\xi|^{n+s}}\,d\xi.
}
From now on, unless otherwise stated, we will understand~$\h u(x)$ as given by~\eqref{delta_op}. For~$r > 0$, we also 
define
\begin{equation} \label{h<hge}
\h^{\geq r}u(x):=\int_{\R^n\setminus B_r}\frac{\delta_g(u,x;\xi)}{|\xi|^{n+s}}\,d\xi \quad \mbox{and} \quad \h^{<r}u(x):=\int_{B_r}\frac{\delta_g(u,x;\xi)}{|\xi|^{n+s}}\,d\xi.
\end{equation}
By definition of principal value, it holds
\[\h u(x)=\lim_{r\searrow 0}\h^{\geq r}u(x).\]

\begin{remark}\label{rmk_tail}
Note that~$\h^{\geq r}u(x)$ is finite for every~$x\in\R^n$ and~$r>0$, under no assumptions on~$u$.
Indeed, by the right-hand bound in~\eqref{Gbounds} we have
\[\left|\frac{\delta_g(u,x;\xi)}{|\xi|^{n+s}}\right|\leq\frac{\Lambda}{|\xi|^{n+s}},\]
which is integrable in~$\R^n\setminus B_r$. In particular,~$|\h^{\geq r}u(x)|\leq \Lambda \Ha^{n - 1}(\mathbb{S}^{n - 1})/ (s r^s)$.
\end{remark}

One of the main advantages of writing the nonlocal operator~$\h u(x)$ as in~\eqref{delta_op}
is that the integral is well-defined in the Lebesgue sense, provided~$u$
is regular enough around~$x$.

\begin{lemma}\label{classical_form_reg_func}
Let~$u:\R^n\to\R$ be such that~$u\in C^{1,\gamma}(B_r(x))$, for some~$x\in\R^n$, $r>0$, and~$\gamma\in(s,1]$. Then,~$\h^{<\varrho}u(x)$ is finite for every~$\varrho>0$ and it holds
\eqlab{\label{formula_regular_func}
\h u(x)=\h^{<\varrho}u(x)+\h^{\geq\varrho}u(x)=\int_{\R^n}\frac{\delta_g(u,x;\xi)}{|\xi|^{n+s}}\,d\xi.
}
\end{lemma}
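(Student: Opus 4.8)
The statement to prove is Lemma~\ref{classical_form_reg_func}: if $u \in C^{1,\gamma}(B_r(x))$ with $\gamma \in (s,1]$, then $\h^{<\varrho}u(x)$ is finite for every $\varrho > 0$ and the principal value integral defining $\h u(x)$ coincides with the absolutely convergent Lebesgue integral.

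The plan is to show that the integrand $\delta_g(u,x;\xi)/|\xi|^{n+s}$ is absolutely integrable near $\xi = 0$, which immediately gives both finiteness of $\h^{<\varrho}u(x)$ and the fact that the principal value (the symmetric limit as $r \searrow 0$ of $\h^{\geq r}u(x)$) agrees with the full Lebesgue integral. The key is to exploit the cancellation built into the symmetric difference $\delta_g(u,x;\xi) = G\!\left(\frac{u(x)-u(x+\xi)}{|\xi|}\right) - G\!\left(\frac{u(x-\xi)-u(x)}{|\xi|}\right)$ together with the $C^1$ regularity of $u$.

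First I would fix $\varrho \le r$ (the region $|\xi| \ge \varrho$ is already handled by Remark~\ref{rmk_tail}, and for the statement about $\h^{<\varrho}$ it suffices to treat small balls since $B_\varrho \subseteq B_r$ after possibly shrinking, while larger $\varrho$ only adds the bounded tail contribution). For $\xi \in B_\varrho$, Taylor's theorem with the $C^{1,\gamma}$ bound gives
\[
u(x \pm \xi) = u(x) \pm \nabla u(x)\cdot \xi + O(|\xi|^{1+\gamma}),
\]
so that $\frac{u(x)-u(x+\xi)}{|\xi|} = -\nabla u(x)\cdot\frac{\xi}{|\xi|} + O(|\xi|^\gamma)$ and $\frac{u(x-\xi)-u(x)}{|\xi|} = -\nabla u(x)\cdot\frac{\xi}{|\xi|} + O(|\xi|^\gamma)$. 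The two arguments of $G$ therefore differ by $O(|\xi|^\gamma)$, and since $G$ is globally $1$-Lipschitz by~\eqref{lip_G_bla} in Lemma~\ref{gsprop}, we get $|\delta_g(u,x;\xi)| \le C|\xi|^\gamma$ for $\xi \in B_\varrho$, with $C$ depending on $\|u\|_{C^{1,\gamma}(B_r(x))}$. Hence
\[
\int_{B_\varrho} \frac{|\delta_g(u,x;\xi)|}{|\xi|^{n+s}}\,d\xi \le C \int_{B_\varrho} \frac{d\xi}{|\xi|^{n+s-\gamma}} < \infty,
\]
since $n + s - \gamma < n$ by the hypothesis $\gamma > s$. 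This proves that $\h^{<\varrho}u(x)$ is a well-defined finite Lebesgue integral.

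Having absolute integrability on $B_\varrho$ and (by Remark~\ref{rmk_tail}) on $\R^n \setminus B_\varrho$, the function $\xi \mapsto \delta_g(u,x;\xi)/|\xi|^{n+s}$ is in $L^1(\R^n)$, so by dominated convergence $\lim_{r \searrow 0}\h^{\ge r}u(x) = \int_{\R^n} \delta_g(u,x;\xi)/|\xi|^{n+s}\,d\xi$, which is exactly~\eqref{formula_regular_func}. The only mild subtlety — and the step most worth stating carefully rather than the main obstacle — is bookkeeping the split $\h^{<\varrho}u(x) + \h^{\ge\varrho}u(x)$ for $\varrho$ possibly larger than $r$: in that case one writes $\h^{<\varrho} = \h^{<r} + (\text{annular piece on } B_\varrho \setminus B_r)$, and the annular piece is finite by the crude bound $|\delta_g| \le \Lambda$ exactly as in Remark~\ref{rmk_tail}. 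No genuine obstruction arises; the argument is essentially a one-line Taylor estimate leveraging the Lipschitz continuity of $G$ and the gain $\gamma > s$.
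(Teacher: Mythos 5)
Your proof is correct and follows essentially the same route as the paper's: both reduce $|\delta_g(u,x;\xi)|$ to the second difference $|u(x+\xi)+u(x-\xi)-2u(x)|/|\xi|$ via the $1$-Lipschitz bound~\eqref{lip_G_bla} on $G$, bound it by $C|\xi|^\gamma$ using the $C^{1,\gamma}$ regularity (you via Taylor with integral remainder, the paper via the mean value theorem applied to $\nabla u$), and conclude from $\gamma>s$ together with Remark~\ref{rmk_tail}. The extra bookkeeping for $\varrho>r$ and the dominated-convergence justification of the principal value are fine and consistent with the paper.
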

\begin{proof}
The lemma is an immediate consequence of the estimate
\eqlab{\label{local_C1gamma}
\left|\delta_g(u, x; \xi) \right|
\leq 2^\gamma\|u\|_{C^{1,\gamma}(B_r(x))}|\xi|^\gamma \quad \mbox{for all } \xi\in B_r\setminus\{0\}.
}
Indeed,~\eqref{local_C1gamma} easily implies that~$\h^{<\varrho} u(x)$ is well-defined (as~$\gamma > s$), whereas Remark~\ref{rmk_tail} gives the finiteness of~$\h^{\ge \varrho} u(x)$ and~\eqref{formula_regular_func} trivially follows.

To verify~\eqref{local_C1gamma}, we first notice that, by~\eqref{lip_G_bla} and~\eqref{deltagdef} we have
\begin{equation} \label{deltagest}
\left|\delta_g(u, x; \xi) \right| \le \left|\frac{u(x+\xi)+u(x-\xi)-2u(x)}{|\xi|}\right|.
\end{equation}
Then, by the mean value theorem we have
\[u(x+\xi)-u(x)=\nabla u(x+t\xi)\cdot\xi\quad\textrm{and}\quad u(x-\xi)-u(x)=\nabla u(x-\tau\xi)\cdot(-\xi),\]
for some~$t,\tau\in[0,1]$, and thus
\bgs{
\left|\frac{u(x+\xi)+u(x-\xi)-2u(x)}{|\xi|}\right| & =\left|\frac{\nabla u(x+t\xi)\cdot\xi-\nabla u(x-\tau\xi)\cdot\xi}{|\xi|}\right|\\
& \leq|\nabla u(x+t\xi)-\nabla u(x-\tau\xi)|
\leq2^\gamma\|u\|_{C^{1,\gamma}(B_r(x))}|\xi|^\gamma.
}
Estimate~\eqref{local_C1gamma} then follows at once.
\end{proof}

We stress that the right hand side of~\eqref{formula_regular_func} is defined in the standard Lebesgue sense, not as a principal value. Also notice that, thanks to Remark~\ref{rmk_tail}, we need not ask any growth condition for~$u$ at infinity.

When~$u$ is not regular enough around~$x$, the quantity~$\h u(x)$ is in general not well-defined. Nevertheless, as already observed in the Introduction,
we can understand the operator~$\h$ as defined in the following weak (distributional) sense. Given a function~$u:\R^n \to \R$, we set
\eqlab{\label{weak_opossum_curv}
\langle \h u, v\rangle:=\int_{\Rn}\int_{\Rn} G \left( \frac{u(x)-u(y)}{|x-y|} \right) \big( v(x)-v(y) \big) \, \frac{dx \, dy}{|x-y|^{n+s}}
}
for every $v\in C^\infty_c(\Rn)$. More generally, it is immediate to see that~\eqref{weak_opossum_curv} is well-defined for every~$v:\Rn \to \R$ such that~$[v]_{W^{s,1}(\Rn)}<\infty$. Indeed, taking advantage of the boundedness of~$G$, one has that
\begin{equation} \label{hsucont}
|\langle \h u ,v \rangle|\le\frac{\Lambda}{2} \, [v]_{W^{s,1}(\Rn)},
\end{equation}
with~$\Lambda$ as in~\eqref{gintegr}. Hence,~$\h u$ induces a continuous linear functional on~$W^{s, 1}(\R^n)$, that is
$$
\langle \h u, \cdot\, \rangle \in \left( W^{s, 1}(\R^n) \right)^*.
$$
Remarkably, this holds for every measurable function~$u: \R^n \to \R$, regardless of its regularity or integrability.

Estimate~\eqref{hsucont} says that the pairing~$(u, v) \mapsto \langle \h u, v \rangle$ is continuous in the second component~$v$, with respect to the~$W^{s, 1}(\R^n)$ topology. The next lemma shows that we also have continuity in~$u$ with respect to almost everywhere convergence.

\begin{lemma}\label{easylemma}
Let~$u_k,\,u:\R^n\to\R$ be such that~$u_k\to u$ a.e.~in~$\R^n$ and let~$v\in W^{s,1}(\R^n)$. Then,
\bgs{
\lim_{k\to\infty}\langle\h u_k,v\rangle=\langle\h u,v\rangle.
}
\end{lemma}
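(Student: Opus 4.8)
The plan is to apply Lebesgue's dominated convergence theorem directly to the double integral
\[
\langle \h u_k, v \rangle = \int_{\R^n} \int_{\R^n} G \! \left( \frac{u_k(x) - u_k(y)}{|x - y|} \right) \big( v(x) - v(y) \big) \, \frac{dx \, dy}{|x - y|^{n + s}}.
\]
First I would upgrade the hypothesis from almost everywhere convergence on~$\R^n$ to almost everywhere convergence of the differences on~$\R^{2n}$. Let~$N \subseteq \R^n$ be a Lebesgue-negligible set outside of which~$u_k \to u$ pointwise. Then~$\mathcal{Z} := (N \times \R^n) \cup (\R^n \times N)$ is negligible in~$\R^{2n}$, and for every~$(x, y) \in \R^{2n} \setminus \mathcal{Z}$ with~$x \ne y$ we have~$u_k(x) - u_k(y) \to u(x) - u(y)$. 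By the continuity of~$G$, it follows that the integrands converge pointwise a.e.~in~$\R^{2n}$ to~$G\big( (u(x) - u(y))/|x-y| \big) (v(x) - v(y)) |x - y|^{-n - s}$.

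Next I would produce a uniform integrable dominating function. By the right-hand inequality in~\eqref{Gbounds} we have~$|G(t)| \le \Lambda/2$ for every~$t \in \R$, whence
\[
\left| G \! \left( \frac{u_k(x) - u_k(y)}{|x - y|} \right) \big( v(x) - v(y) \big) \right| \frac{1}{|x - y|^{n + s}} \le \frac{\Lambda}{2} \, \frac{|v(x) - v(y)|}{|x - y|^{n + s}}
\]
for a.e.~$(x, y) \in \R^{2n}$, independently of~$k$. The right-hand side is integrable over~$\R^{2n}$ precisely because~$v \in W^{s, 1}(\R^n)$ guarantees~$[v]_{W^{s, 1}(\R^n)} < \infty$ (this is also the finiteness already recorded in~\eqref{hsucont}). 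The dominated convergence theorem then gives~$\lim_{k \to \infty} \langle \h u_k, v \rangle = \langle \h u, v \rangle$, which is the desired conclusion.

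There is essentially no genuine obstacle here: the argument is the standard domination scheme, and the boundedness of~$G$ is exactly what makes it work with no hypothesis on~$u$ beyond measurability. The only point requiring a word of care is the Fubini-type observation that a null set in~$\R^n$ produces a null set in~$\R^{2n}$ when pulled back under~$(x, y) \mapsto x$ and~$(x, y) \mapsto y$, so that the pointwise convergence of the integrands indeed holds almost everywhere on the product space.
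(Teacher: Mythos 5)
Your proof is correct and is exactly the argument the paper has in mind: the paper dispatches this lemma in one line by invoking dominated convergence together with the boundedness of~$G$, with the dominating function~$\tfrac{\Lambda}{2}|v(x)-v(y)|\,|x-y|^{-n-s}$ integrable precisely because~$[v]_{W^{s,1}(\R^n)}<\infty$. Your extra care about the product null set is a legitimate (if routine) detail that the paper leaves implicit.
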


Lemma~\ref{easylemma} is a simple consequence of Lebesgue's dominated convergence theorem, thanks once again to the boundedness of~$G$.

The next result shows that the nonlocal mean curvature operator~$\h$ naturally arises when computing the Euler-Lagrange equation associated to the fractional area functional.

\begin{lemma} \label{1varlem}
Let~$\Omega \subseteq \R^n$ be a bounded open set with Lipschitz boundary,~$M \ge 0$, and~$u \in \W^s(\Omega)$. Then,
$$
\left. \frac{d}{d\eps} \right|_{\eps=0} \F^M(u+\eps v, \Omega) = \langle \h u, v \rangle \quad \mbox{for every } v \in \W^s_0(\Omega).
$$
\end{lemma}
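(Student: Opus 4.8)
The plan is to write the increment of $\F^M$ along the direction $v$ via the identity in Lemma~\ref{tartariccio}, and then to differentiate under the integral sign, using dominated convergence to justify the passage to the limit. The key point that makes this work smoothly is the boundedness of $G$ (equivalently, the global Lipschitz character of $\G$ recorded in~\eqref{Lip_Gcal}).

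First I would note that, since $v \in \W^s_0(\Omega)$, for every $\eps \in \R$ the function $u + \eps v$ lies in $\W^s(\Omega)$ and coincides with $u$ a.e.~in $\Co\Omega$; hence $u$ and $u + \eps v$ both belong to $\W^s_\varphi(\Omega)$ with $\varphi := u|_{\Co\Omega}$, so $\F^M(u + \eps v, \Omega)$ is finite by Lemma~\ref{FMdomainlem} and Lemma~\ref{tartariccio} yields, for $\eps \neq 0$,
\begin{equation*}
\frac{\F^M(u + \eps v, \Omega) - \F^M(u, \Omega)}{\eps} = \iint_{Q(\Omega)} \frac{1}{\eps} \left\{ \G \! \left( \frac{u(x) - u(y)}{|x - y|} + \eps \, \frac{v(x) - v(y)}{|x - y|} \right) - \G \! \left( \frac{u(x) - u(y)}{|x - y|} \right) \right\} \frac{dx \, dy}{|x - y|^{n - 1 + s}}.
\end{equation*}
For fixed $x \neq y$, writing $a := (u(x) - u(y))/|x - y|$ and $w := (v(x) - v(y))/|x - y|$, the difference quotient $\eps^{-1}(\G(a + \eps w) - \G(a))$ converges to $G(a) w$ as $\eps \to 0$, since $\G' = G$ (recall~\eqref{Gdefs} and Lemma~\ref{gsprop}); thus the integrand converges pointwise to $G\big((u(x) - u(y))/|x - y|\big)(v(x) - v(y)) \, |x - y|^{-n - s}$. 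Moreover, by the Lipschitz bound~\eqref{Lip_Gcal} we have $|\eps^{-1}(\G(a + \eps w) - \G(a))| \le \tfrac{\Lambda}{2} |w|$, so the integrand is dominated, uniformly in $\eps \neq 0$, by $\tfrac{\Lambda}{2} |v(x) - v(y)| \, |x - y|^{-n - s}$.

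The main (and essentially only non-routine) point is then to verify that this dominating function is integrable over $Q(\Omega)$: since $v \equiv 0$ a.e.~in $\Co\Omega$, it vanishes a.e.~on $(\Co\Omega)^2$, so its integral over $Q(\Omega)$ equals $\tfrac{\Lambda}{2} [v]_{W^{s, 1}(\R^n)}$, which is finite because $v|_\Omega \in W^{s, 1}(\Omega)$, $v$ vanishes outside $\Omega$, and the cross contribution $\int_\Omega |v(x)| \big( \int_{\Co\Omega} |x - y|^{-n - s} \, dy \big) \, dx$ is controlled by $\| v \|_{W^{s, 1}(\Omega)}$ through Corollary~\ref{FHI_corollary} (using that $\Omega$ is bounded and Lipschitz)---this is exactly the estimate already performed in the proof of Lemma~\ref{NMdomainlem}. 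Dominated convergence then gives that $\left. \frac{d}{d\eps} \right|_{\eps = 0} \F^M(u + \eps v, \Omega)$ equals $\iint_{Q(\Omega)} G\big((u(x) - u(y))/|x - y|\big)(v(x) - v(y)) \, |x - y|^{-n - s} \, dx \, dy$; and since this integrand vanishes a.e.~on $(\Co\Omega)^2$, the integral over $Q(\Omega)$ coincides with the one over all of $\R^{2n}$, which is precisely $\langle \h u, v \rangle$ by~\eqref{weak_opossum_curv} (the pairing being well-defined since $[v]_{W^{s, 1}(\R^n)} < \infty$, as just shown). This completes the plan.
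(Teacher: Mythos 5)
Your proposal is correct and follows essentially the same route as the paper: both reduce the increment to an integral of $\eps^{-1}\big(\G(a+\eps w)-\G(a)\big)$ (the paper via the mean value theorem for $\G$, you via the difference quotient together with the Lipschitz bound~\eqref{Lip_Gcal}) and then conclude by dominated convergence, with the dominating function $\tfrac{\Lambda}{2}|v(x)-v(y)|\,|x-y|^{-n-s}$ integrable because $v\in\W^s_0(\Omega)$ implies $[v]_{W^{s,1}(\R^n)}<\infty$. Your explicit verification of this last integrability via Corollary~\ref{FHI_corollary} is a point the paper leaves implicit, but the argument is the same.
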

\begin{proof}
First, notice that~$u + \varepsilon v \in \W^s(\Omega)$ for every~$\varepsilon \in \R$. Hence, by Lemma~\ref{FMdomainlem}, both~$\F^M(u, \Omega)$ and~$\F^M(u + \varepsilon v, \Omega)$ are finite. Now, by the mean value theorem, there is a function~$\tilde{\tau}_\varepsilon: \R \times \R \to [0, 1]$ such that~$\G \left( A + \varepsilon B \right) - \G \left( A \right) = \varepsilon \, G \left( A + \varepsilon \tilde{\tau}_\varepsilon(A, B) B \right) B$ for every~$A, B \in \R$. As~$v = 0$ in~$\Co \Omega$, calling
$$
\tau_\varepsilon(x, y) := \tilde{\tau}_\varepsilon \! \left( \frac{u(x)- u(y)}{|x - y|}, \frac{v(x)- v(y)}{|x - y|} \right) \quad \mbox{for every } x, y \in \R^n, 
$$
we have
\begin{align*}
& \F^M(u+\eps v,\Omega) - \F^M(u,\Omega) \\
& \hspace{20pt} = \varepsilon \int_{\R^n} \int_{\R^n} G \left( \frac{ u(x)- u(y) }{|x - y|} + \varepsilon \tau_\varepsilon(x, y) \frac{ v(x)- v(y) }{|x - y|} \right) \big( v(x)- v(y) \big) \, \frac{dx \, dy}{|x - y|^{n + s}}.
\end{align*}
Since~$G$ is bounded,~$v \in \W_0^s(\Omega)$, and~$|\tau_\varepsilon| \le 1$, we may conclude the proof using Lebesgue's dominated convergence theorem.
\end{proof}

Lemma~\ref{1varlem} says in particular that minimizers of~$\F$ solve~$\h u = 0$ in the weak sense made precise by Definition~\ref{weaksoldef}.
However, notice that Definition~\ref{weaksoldef} is a priori weaker than the conclusion of Lemma~\ref{1varlem}, as it requires the test functions~$v$ to lie in~$C^\infty_c(\Omega) \subsetneq \W^s_0(\Omega)$. Actually, if~$\Omega$ is bounded and has Lipschitz boundary, then the two notions are equivalent.

\begin{lemma}\label{crocodile_dens_rmk}
Let~$\Omega \subseteq \R^n$ be a bounded open set with Lipschitz boundary,~$f\in L^\infty(\Omega)$, and~$u:\R^n\to\R$ a measurable function. Then, the following are equivalent:
\begin{enumerate}[label=$(\roman*)$,leftmargin=*]
	\item \label{crocodilei} $u$ is a weak subsolution of~$\h u=f$ in~$\Omega$ in the sense of Definition~\ref{weaksoldef};
	\item \label{crocodileii} It holds
	\[
	\langle\h u,v\rangle\le\int_\Omega fv\,dx\quad\mbox{for every }v\in\W^s_0(\Omega)\mbox{ such that }v\ge0\mbox{ a.e.~in~}\R^n.
	\]
\end{enumerate}
\end{lemma}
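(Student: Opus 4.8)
The implication \ref{crocodileii}$\,\Rightarrow\,$\ref{crocodilei} is immediate, since any $v\in C^\infty_c(\Omega)$ with $v\ge0$ belongs to $\W^s_0(\Omega)$ and vanishes outside $\Omega$, hence is an admissible competitor in \ref{crocodileii}. The content lies in the converse, which I would establish by a density argument: given a test function $v\in\W^s_0(\Omega)$ with $v\ge0$ a.e.\ in $\R^n$, I would approximate it by nonnegative functions $w_k\in C^\infty_c(\Omega)$ converging to $v$ in $W^{s,1}(\Omega)$, apply the inequality in \ref{crocodilei} to each $w_k$, and pass to the limit. This requires checking that both sides of the inequality depend continuously on the test function with respect to the $W^{s,1}(\Omega)$ topology, and that the approximating sequence can be taken nonnegative.

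For the first point, observe that, since $v=0$ a.e.\ in $\Co\Omega$,
\[
|\langle \h u, v\rangle| \le \frac{\Lambda}{2}\,[v]_{W^{s,1}(\R^n)} = \frac{\Lambda}{2}\left([v]_{W^{s,1}(\Omega)} + 2\int_\Omega |v(x)|\left(\int_{\Co\Omega}\frac{dy}{|x-y|^{n+s}}\right)dx\right) \le C\,\Lambda\,\|v\|_{W^{s,1}(\Omega)},
\]
where the first inequality is~\eqref{hsucont}, the identity comes from splitting $\R^n\times\R^n$ into the four regions cut out by $\Omega$ and $\Co\Omega$ and using $v|_{\Co\Omega}=0$, and the last inequality is the fractional Hardy inequality of Corollary~\ref{FHI_corollary} (with $C>0$ depending only on $n$, $s$, and $\Omega$). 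Continuity of $v\mapsto\int_\Omega fv\,dx$ with respect to the $W^{s,1}(\Omega)$ norm is clear, as $f\in L^\infty(\Omega)$ and $\|\cdot\|_{L^1(\Omega)}\le\|\cdot\|_{W^{s,1}(\Omega)}$.

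For the approximation, Proposition~\ref{smooth_cpt_dense} provides $w_k\in C^\infty_c(\Omega)$ with $w_k\to v$ in $W^{s,1}(\Omega)$, and the construction behind it---inward translations in boundary charts followed by mollification against a nonnegative kernel and multiplication by nonnegative cutoffs---maps nonnegative functions to nonnegative functions, so the $w_k$ may be taken nonnegative (alternatively, one replaces $w_k$ by a fine enough mollification of $\max\{w_k,0\}$). Applying~\ref{crocodilei} to each $w_k$ and letting $k\to\infty$, the estimate above applied to $w_k-v$ yields $\langle\h u,w_k\rangle\to\langle\h u,v\rangle$, while $\int_\Omega fw_k\,dx\to\int_\Omega fv\,dx$; hence $\langle\h u,v\rangle\le\int_\Omega fv\,dx$, which is~\ref{crocodileii}.

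The only point requiring care is the approximation step: one needs a nonnegative approximating sequence converging in a norm strong enough to run~\eqref{hsucont}. This is exactly where the Lipschitz regularity of $\partial\Omega$ and the subcriticality $s<1$ enter---they ensure both that $C^\infty_c(\Omega)$ is dense in $W^{s,1}(\Omega)$ and that the fractional Hardy inequality holds without any vanishing-trace hypothesis, so that $W^{s,1}(\Omega)$ convergence of functions supported in $\overline\Omega$ upgrades to $W^{s,1}(\R^n)$ convergence of their zero extensions. The rest is routine.
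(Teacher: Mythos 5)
Your proof is correct and follows essentially the same route as the paper's: the nontrivial implication is handled by approximating $v$ with nonnegative functions in $C^\infty_c(\Omega)$ via Proposition~\ref{smooth_cpt_dense}, upgrading $W^{s,1}(\Omega)$ convergence to $W^{s,1}(\R^n)$ convergence through the Hardy-type inequality of Corollary~\ref{FHI_corollary}, and passing to the limit using the continuity estimate~\eqref{hsucont}. The extra detail you supply (the explicit splitting of the seminorm and the note that nonnegativity is preserved by the approximation) is already built into the cited results, so nothing further is needed.
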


\begin{proof}
The implication~\ref{crocodileii}~$\Rightarrow$~\ref{crocodilei} is obvious. As for the converse implication, let~$v\in\W^s_0(\Omega)$ such that~$v\ge0$ almost everywhere. Then, there exists a sequence~$\{ v_k \} \subseteq C^\infty_c(\Omega)$ such that~$v_k\ge0$ and~$v_k\to v$ in~$W^{s,1}(\Omega)$---see, e.g.,~Proposition~\ref{smooth_cpt_dense}. Also notice that, by Corollary~\ref{FHI_corollary},
\bgs{
\| v - v_k \|_{W^{s,1}(\R^n)}\le C \| v - v_k \|_{W^{s,1}(\Omega)}\quad\mbox{for every } k \in \N,
}
for some constant~$C > 0$ depending only on~$n$,~$s$, and~$\Omega$. Hence,~$v_k \rightarrow v$ in~$W^{s, 1}(\R^n)$, and exploiting the continuity ensured by~\eqref{hsucont} we have
\begin{equation} \label{takethelimit}
\langle\h u, v\rangle=\lim_{k\to\infty}\langle\h u, v_k\rangle\le\lim_{k\to\infty}\int_\Omega fv_k\,dx=\int_\Omega fv\,dx,
\end{equation}
concluding the proof.
\end{proof}

\begin{corollary}\label{crocodile_dens_rmk_cor}
	Let~$\Omega \subseteq \R^n$ be a bounded open set with Lipschitz boundary,~$f\in L^\infty(\Omega)$, and~$u:\R^n\to\R$ a measurable function. Then, the following are equivalent:
	\begin{enumerate}[label=$(\roman*)$,leftmargin=*]
		\item \label{alligatori} $u$ is a weak solution of~$\h u=f$ in~$\Omega$ in the sense of Definition~\ref{weaksoldef};
		\item \label{alligatorii} It holds
		\[
		\langle\h u,v\rangle=\int_\Omega fv\,dx\qquad\mbox{for every }v\in\W^s_0(\Omega).
		\]
	\end{enumerate}
\end{corollary}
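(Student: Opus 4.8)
The plan is to derive this corollary directly from Lemma~\ref{crocodile_dens_rmk}, exploiting two elementary facts: that the pairing $v \mapsto \langle \h u, v \rangle$ defined in~\eqref{weak_opossum_curv} is linear in $v$, and that $G$ is odd (Lemma~\ref{gsprop}\ref{gsprop_G}), so that $\langle \h(-u), v \rangle = -\langle \h u, v \rangle$ for every admissible test function.

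First I would dispatch the easy implication~\ref{alligatorii}~$\Rightarrow$~\ref{alligatori}. If equality holds against every $v \in \W^s_0(\Omega)$, then restricting to nonnegative $v \in C^\infty_c(\Omega)$ gives $\langle \h u, v \rangle \le \int_\Omega f v\,dx$, so $u$ is a weak subsolution of $\h u = f$; and since $\langle \h(-u), v \rangle = -\langle \h u, v \rangle = \int_\Omega (-f) v\, dx$ for every such $v$, the function $-u$ is a weak subsolution of $\h(-u) = -f$, i.e.\ $u$ is a weak supersolution. Hence $u$ is a weak solution.

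For the converse, the idea is: a weak solution $u$ is both a weak subsolution of $\h u = f$ and (as $-u$) a weak subsolution of $\h(-u) = -f$; applying Lemma~\ref{crocodile_dens_rmk} to each of these upgrades the admissible nonnegative test functions from $C^\infty_c(\Omega)$ to \emph{all} nonnegative $v \in \W^s_0(\Omega)$. Writing the two resulting inequalities and again invoking $\langle \h(-u), v \rangle = -\langle \h u, v \rangle$ collapses them into the single identity $\langle \h u, v \rangle = \int_\Omega f v\, dx$ for every nonnegative $v \in \W^s_0(\Omega)$. The final step is to drop the sign condition: for arbitrary $v \in \W^s_0(\Omega)$ I would split $v = v_+ - v_-$ with $v_+ := \max\{v,0\}$ and $v_- := \max\{-v,0\}$, observe that $v_\pm$ vanish a.e.\ in $\Co\Omega$ together with $v$ and that $[v_\pm]_{W^{s,1}(\Omega)} \le [v]_{W^{s,1}(\Omega)} < \infty$ (from the pointwise inequality $|a_+ - b_+| \le |a-b|$), so that $v_\pm \in \W^s_0(\Omega)$, and then conclude by the linearity of the pairing.

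There is no genuine obstacle here: the argument is a short formal manipulation sitting on top of Lemma~\ref{crocodile_dens_rmk}. The only two points worth spelling out are the identity $\langle \h(-u), v\rangle = -\langle \h u, v\rangle$, which rests entirely on the oddness of $G$, and the stability of $\W^s_0(\Omega)$ under taking positive and negative parts.
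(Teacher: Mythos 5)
Your argument is correct and follows essentially the same route as the paper: the forward implication is immediate, and the reverse one combines Lemma~\ref{crocodile_dens_rmk} (applied to both the sub- and supersolution properties, the latter unwound via the oddness of $G$) with the decomposition $v = v_+ - v_-$ and linearity of the pairing. Your explicit check that $v_\pm \in \W^s_0(\Omega)$ is a detail the paper leaves implicit, but it changes nothing of substance.
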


\begin{proof}
	We only need to prove the implication~\ref{alligatori}~$\Rightarrow$~\ref{alligatorii}. For this, given~$v\in\W^s_0(\Omega)$, let us write~$v$ as the sum of its positive and negative parts, i.e.,~$v=v_+-v_-$. Since~$u$ is a weak subsolution and~$v_+,v_-\in\W^s_0(\Omega)$ are such that~$v_+,v_-\ge0$ almost everywhere, by Lemma~\ref{crocodile_dens_rmk} we have
	\[
	\langle\h u,v_+\rangle\le\int_\Omega fv_+\,dx\qquad\mbox{and}\qquad \langle\h u,v_-\rangle\le\int_\Omega fv_-\,dx.
	\]
	In the same way, since~$u$ is also a weak supersolution, it holds
	\[
	\langle\h u,v_+\rangle\ge\int_\Omega fv_+\,dx\qquad\mbox{and}\qquad \langle\h u,v_-\rangle\ge\int_\Omega fv_-\,dx.
	\]
	Thus, by linearity we obtain
	\[
	\langle\h u,v\rangle=\langle\h u,v_+\rangle-\langle\h u,v_-\rangle=\int_\Omega fv_+\,dx-\int_\Omega fv_-\,dx=\int_\Omega fv\,dx,
	\]
	which proves the claim.
\end{proof}

Taking advantage of the convexity of the functionals~$\F^M$, we can prove the equivalence between weak solutions having ``finite energy'' and minimizers. Actually, we can also prove that weak subsolutions are equivalent to subminimizers---in the sense defined here below.

\begin{defn}\label{submindef}
	Let~$\Omega \subseteq \R^n$ be a bounded open set
	with Lipschitz boundary. A measurable function~$u:\R^n\to\R$ is a \emph{subminimizer} of~$\F$ in~$\Omega$ if~$u\in\W^s(\Omega)$ and
	\begin{equation}\label{submin_eq}
	\iint_{Q(\Omega)} \left\{ \G \left( \frac{u(x) - u(y)}{|x - y|} \right) - \G \left( \frac{v(x) - v(y)}{|x - y|} \right) \right\} \frac{dx\,dy}{|x - y|^{n - 1 + s}} \le 0
	\end{equation}
	for every~$v\in\W^s(\Omega)$ such that~$v=u$ a.e.~in~$\Co\Omega$ and~$v\le u$ a.e.~in~$\Omega$. Symmetrically,~$u$ is a \emph{superminimizer} of~$\F$ in~$\Omega$ if~$u\in\W^s(\Omega)$ and \eqref{submin_eq} holds for every~$v\in\W^s(\Omega)$ such that~$v=u$ a.e.~in~$\Co\Omega$ and~$v\ge u$ a.e.~in~$\Omega$.
\end{defn}

\begin{remark}\label{brutto_rmk}
	In light of Lemma~\ref{tartariccio}, we can equivalently characterize a subminimizer of~$\F$ in~$\Omega$ as a function~$u\in\W^s(\Omega)$, such that
	\[
	\F^M(u,\Omega)=\inf\Big\{\F^M(v,\Omega):v\in\W^s_u(\Omega)\mbox{ s.t.~}v\le u\mbox{ a.e.~in~}\Omega\Big\},
	\]
	for some~$M\ge0$---and similarly for a superminimizer.
\end{remark}

\begin{lemma}\label{weak_subsol_implies_submin}
	Let~$\Omega \subseteq \R^n$ be a bounded open set with Lipschitz boundary and~$u\in\W^s(\Omega)$. Then,~$u$ is a subminimizer of~$\F$ in~$\Omega$ if and only if it is a weak subsolution of~$\h u=0$ in~$\Omega$.	
\end{lemma}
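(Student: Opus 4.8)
The plan is to derive both implications from a single mechanism. By Remark~\ref{brutto_rmk}, $u$ is a subminimizer precisely when the functional $\F^M(\,\cdot\,,\Omega)$ --- finite for functions in $\W^s(\Omega)$ by Lemma~\ref{FMdomainlem} --- attains its minimum over $\{v\in\W^s_u(\Omega):v\le u\text{ a.e.~in }\Omega\}$ at $v=u$, for some (equivalently, by Lemma~\ref{tartariccio}, any) $M\ge0$. On the other hand, the first variation of $\F^M$ along admissible directions is exactly $\langle\h u,\cdot\rangle$, by Lemma~\ref{1varlem}, and the weak subsolution property is nothing but a sign condition on this first variation against nonnegative test functions. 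To pass between ``minimum'' and ``first variation has a sign'' I would use elementary one-sided differential calculus in one direction and the tangent-line inequality for the convex functional $\F^M$ (Lemma~\ref{conv_func}) in the other. The one point requiring care is that Definition~\ref{weaksoldef} tests only against $C^\infty_c(\Omega)$, while the variational competitors produce directions in $\W^s_0(\Omega)$; this mismatch is bridged by Lemma~\ref{crocodile_dens_rmk}.

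For the implication ``subminimizer $\Rightarrow$ weak subsolution'', I would fix $v\in C^\infty_c(\Omega)$ with $v\ge0$ and consider $\psi(\eps):=\F^M(u-\eps v,\Omega)$ for $\eps\ge0$. Since $u-\eps v\in\W^s_u(\Omega)$ and $u-\eps v\le u$ a.e.~in $\Omega$, Remark~\ref{brutto_rmk} forces $\psi$ to attain its minimum over $[0,+\infty)$ at $\eps=0$. By Lemma~\ref{1varlem}, applied with the direction $-v\in\W^s_0(\Omega)$, $\psi$ is differentiable at $0$ with $\psi'(0)=-\langle\h u,v\rangle$; the minimality then gives $\psi'(0)\ge0$, i.e.~$\langle\h u,v\rangle\le0$. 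As $v$ ranges over all nonnegative elements of $C^\infty_c(\Omega)$, this is exactly the statement that $u$ is a weak subsolution of $\h u=0$.

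For ``weak subsolution $\Rightarrow$ subminimizer'', given $v\in\W^s_u(\Omega)$ with $v\le u$ a.e.~in $\Omega$, I would look at $\phi(t):=\F^M\big((1-t)u+tv,\Omega\big)$, which is convex on $[0,1]$ by Lemma~\ref{conv_func} and, by Lemma~\ref{1varlem} with the direction $v-u\in\W^s_0(\Omega)$, differentiable at $0$ with $\phi'(0)=\langle\h u,v-u\rangle$. The tangent-line inequality for convex functions yields $\F^M(v,\Omega)-\F^M(u,\Omega)=\phi(1)-\phi(0)\ge\phi'(0)=-\langle\h u,u-v\rangle$. Since $u-v\in\W^s_0(\Omega)$ is nonnegative a.e.~in $\R^n$, Lemma~\ref{crocodile_dens_rmk} (with $f=0$) gives $\langle\h u,u-v\rangle\le0$, so $\F^M(v,\Omega)\ge\F^M(u,\Omega)$; by Remark~\ref{brutto_rmk}, $u$ is a subminimizer of $\F$ in $\Omega$.

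I expect the write-up to be short, since the substance is carried by the auxiliary lemmas; the only real pitfalls are bookkeeping ones. Because a subminimizer can be perturbed only \emph{downwards}, one extracts merely a one-sided inequality, so it matters that Lemma~\ref{1varlem} genuinely delivers the two-sided derivative at $0$ --- which it does, as directions in $\W^s_0(\Omega)$ leave the exterior datum untouched. Equally, the passage from the $C^\infty_c(\Omega)$ test functions of Definition~\ref{weaksoldef} to the $\W^s_0(\Omega)$ directions arising variationally must not be skipped --- this is precisely what Lemma~\ref{crocodile_dens_rmk} provides, through density of smooth compactly supported functions and the fractional Hardy inequality. Finally, one has to track signs carefully through the identification of $\F^M(u,\Omega)-\F^M(v,\Omega)$ with the $\G$-integral in Definition~\ref{submindef} (Lemma~\ref{tartariccio}) and through the value $\phi'(0)=\langle\h u,v-u\rangle$ of the first variation.
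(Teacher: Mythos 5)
Your proposal is correct and follows essentially the same route as the paper: the first implication is obtained by differentiating $\F^M(u-\eps v,\Omega)$ at $\eps=0$ via Lemma~\ref{1varlem}, and the converse combines Lemma~\ref{crocodile_dens_rmk} with the tangent-line inequality for the convex functional $\F^M$. The only cosmetic difference is that the paper states the convexity inequality at the level of the integrand, $\G(t)-\G(\tau)\ge G(\tau)(t-\tau)$, rather than for the one-variable function $t\mapsto\F^M((1-t)u+tv,\Omega)$, but this is the same estimate.
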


\begin{proof}
	Fix a non-negative number~$M\ge0$. Assume first that~$u$ is a subminimizer of~$\F$ in~$\Omega$, and let~$v\in C^\infty_c(\Omega)$ such that~$v\ge0$. Then, by Remark~\ref{brutto_rmk}, we have that
	\[
	\frac{1}{\eps}\left(\F^M(u-\eps v,\Omega)-\F^M(u,\Omega)\right)\ge0 \quad \mbox{for every } \varepsilon > 0.
	\]
	The conclusion then follows by passing to the limit~$\eps \searrow 0$ and recalling Lemma~\ref{1varlem}, as indeed
	\[
	0\le\lim_{\eps\searrow 0}\frac{1}{\eps}\left(\F^M(u-\eps v,\Omega)-\F^M(u,\Omega)\right)=-\langle\h u,v\rangle,
	\]
	for every~$v\in C^\infty_c(\Omega)$ such that~$v\ge0$.
	Conversely, suppose that~$u$ is a weak subsolution of~$\h u=0$ in~$\Omega$, and let~$v\in\W^s(\Omega)$ be such that~$v=u$ a.e.~in~$\Co\Omega$ and~$v\le u$ a.e.~in~$\Omega$. Thus,~$w:=u-v\in\W^s_0(\Omega)$ and~$w\ge0$ a.e.~in~$\R^n$. Lemma~\ref{crocodile_dens_rmk} then ensures that~$\langle\h u,w\rangle\le0$.
	Now, we observe that the convexity of~$\G$ implies that
	\bgs{
		\G(t)-\G(\tau)\ge G(\tau)(t-\tau)\quad\mbox{for every }t, \tau\in\R.
	}
	Thus, by Lemma~\ref{tartariccio} and definition~\eqref{weak_opossum_curv}, we obtain
	\bgs{
		\F^M(v,\Omega)-\F^M(u,\Omega)\ge\langle\h u, v-u\rangle=-\langle\h u,w\rangle\ge0,
	}
	which---by Remark~\ref{brutto_rmk}---implies that~$u$ is a subminimizer of~$\F$ in~$\Omega$.
\end{proof}

We then have the following comprehensive result connecting minimizers, sub-/superminimizers, and weak solutions.

\begin{corollary}\label{weak_implies_min_lemma}
Let~$\Omega \subseteq \R^n$ be a bounded open set with Lipschitz boundary and~$u \in \W^s(\Omega)$. Then, the following are equivalent:
\begin{enumerate}[label=$(\roman*)$,leftmargin=*]
	\item \label{comprehensivei} $u$ is a minimizer of~$\F$ in~$\Omega$,
	\item \label{comprehensiveii} $u$ is both a subminimizer and a superminimizer of~$\F$ in~$\Omega$,
	\item \label{comprehensiveiii} $u$ is a weak solution of~$\h u = 0$ in~$\Omega$.
\end{enumerate}
\end{corollary}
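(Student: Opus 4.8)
The plan is to establish the cycle of implications $\ref{comprehensivei} \Rightarrow \ref{comprehensiveii} \Rightarrow \ref{comprehensiveiii} \Rightarrow \ref{comprehensivei}$, assembling the results proved earlier in this subsection.

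The implication $\ref{comprehensivei} \Rightarrow \ref{comprehensiveii}$ is immediate from the definitions: comparing Definition~\ref{mindef} with Definition~\ref{submindef}, a minimizer of~$\F$ in~$\Omega$ satisfies inequality~\eqref{submin_eq} for \emph{every} competitor~$v \in \W^s(\Omega)$ with~$v = u$ a.e.\ in~$\Co\Omega$, hence in particular for those additionally satisfying~$v \le u$ (resp.~$v \ge u$) a.e.\ in~$\Omega$. Thus it is both a subminimizer and a superminimizer of~$\F$ in~$\Omega$.

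For $\ref{comprehensiveii} \Rightarrow \ref{comprehensiveiii}$ I would use Lemma~\ref{weak_subsol_implies_submin} twice. Applied directly, it shows that a subminimizer of~$\F$ in~$\Omega$ is a weak subsolution of~$\h u = 0$ in~$\Omega$. For the supersolution half, note that, since~$\G$ is even, Lemma~\ref{tartariccio} gives~$\F^M(-u,\Omega) - \F^M(-v,\Omega) = \F^M(u,\Omega) - \F^M(v,\Omega)$ whenever~$u$ and~$v$ share the same exterior datum; combined with Remark~\ref{brutto_rmk} and Definitions~\ref{submindef} and~\ref{weaksoldef}, this yields that~$u$ is a superminimizer of~$\F$ in~$\Omega$ if and only if~$-u$ is a subminimizer of~$\F$ in~$\Omega$, hence if and only if~$-u$ is a weak subsolution of~$\h(-u) = 0$ in~$\Omega$, i.e.\ $u$ is a weak supersolution of~$\h u = 0$ in~$\Omega$. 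Therefore, a function that is simultaneously a sub- and a superminimizer is a weak solution. The only point requiring a little attention here is checking that the reflection~$u \mapsto -u$ is compatible with the exterior-datum classes~$\W^s_\varphi(\Omega)$ and with the~$\overline{G}$-based definition~\eqref{NMldef} of~$\Nl^M$; this is routine once one argues at the level of \emph{differences} of~$\F^M$, which by Lemma~\ref{tartariccio} are expressed through the even function~$\G$ alone—so this is the step I would be most careful about, though it is not genuinely difficult.

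The substantive step is $\ref{comprehensiveiii} \Rightarrow \ref{comprehensivei}$, where I would exploit the convexity of~$\G$. Fix any~$M \ge 0$ and let~$v \in \W^s(\Omega)$ with~$v = u$ a.e.\ in~$\Co\Omega$, so that~$w := v - u \in \W^s_0(\Omega)$ (and~$w \in W^{s,1}(\R^n)$ by Corollary~\ref{FHI_corollary}, so the pairing below is well-defined). Applying the convexity inequality~$\G(t) - \G(\tau) \ge G(\tau)(t - \tau)$ with the difference quotients of~$v$ and~$u$ in place of~$t$ and~$\tau$, and integrating over~$Q(\Omega)$, Lemma~\ref{tartariccio} gives
\[
\F^M(v,\Omega) - \F^M(u,\Omega) \ge \iint_{Q(\Omega)} G\!\left( \frac{u(x) - u(y)}{|x - y|} \right) \big( w(x) - w(y) \big) \, \frac{dx \, dy}{|x - y|^{n + s}}.
\]
Since~$w$ vanishes on~$\Co\Omega$, the integrand vanishes on~$(\Co\Omega)^2$, so the right-hand side coincides with~$\langle \h u, w \rangle$ as in~\eqref{weak_opossum_curv}. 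Because~$u$ is a weak solution of~$\h u = 0$ in~$\Omega$, Corollary~\ref{crocodile_dens_rmk_cor} yields~$\langle \h u, w \rangle = 0$, whence~$\F^M(v,\Omega) \ge \F^M(u,\Omega)$. By Remark~\ref{tartapower_Remark}\,\ref{tartapower_ii} (with exterior datum~$\varphi = u|_{\Co\Omega}$) this says exactly that~$u$ is a minimizer of~$\F$ in~$\Omega$, closing the cycle. (As an aside, the implication $\ref{comprehensivei} \Rightarrow \ref{comprehensiveiii}$ could also be obtained directly from the first-variation formula of Lemma~\ref{1varlem}, but the cycle above already suffices.)
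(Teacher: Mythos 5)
Your proposal is correct and follows essentially the same route as the paper: \ref{comprehensivei}~$\Rightarrow$~\ref{comprehensiveii} from the definitions, \ref{comprehensiveii}~$\Leftrightarrow$~\ref{comprehensiveiii} via Lemma~\ref{weak_subsol_implies_submin} (the paper leaves the superminimizer/supersolution half to the same symmetry you spell out), and \ref{comprehensiveiii}~$\Rightarrow$~\ref{comprehensivei} by the identical convexity argument $\F^M(v,\Omega)-\F^M(u,\Omega)\ge\langle\h u, v-u\rangle=0$ using Corollary~\ref{crocodile_dens_rmk_cor} and Lemma~\ref{tartariccio}. No gaps.
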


\begin{proof}
	The implication~\ref{comprehensivei}~$\Rightarrow$~\ref{comprehensiveii} simply follows from the definitions. The equivalence of~\ref{comprehensiveii} and~\ref{comprehensiveiii} is the content of Lemma~\ref{weak_subsol_implies_submin}. As for the implication~\ref{comprehensiveiii}~$\Rightarrow$~\ref{comprehensivei}, we can argue as in the proof of Lemma~\ref{weak_subsol_implies_submin}. Indeed, given~$v\in\W^s_u(\Omega)$, let~$w:=v-u\in\W^s_0(\Omega)$ and recall that by Corollary~\ref{crocodile_dens_rmk_cor} we know that~$\langle\h u,w\rangle=0$. Then, by convexity we obtain
	\bgs{
		\F^M(v,\Omega)-\F^M(u,\Omega)\ge\langle\h u,w\rangle=0.
	}
	Recalling Lemma~\ref{tartariccio}, this concludes the proof.
\end{proof}

It is worth observing that Lemma~\ref{easylemma} and Corollary~\ref{weak_implies_min_lemma} yield straightaway that
the set of minimizers of~$\F$ is closed in~$\W^s(\Omega)$, with respect to almost everywhere convergence.

\begin{prop}
Let~$\Omega \subseteq \R^n$ be a bounded open set with Lipschitz boundary and~$\{u_k\}\subseteq \W^s(\Omega)$ be such
that each~$u_k$ is a minimizer of~$\F$ in~$\Omega$.
If~$u_k\to u$ a.e.~in~$\R^n$, for some function~$u\in\W^s(\Omega)$, then~$u$ is a minimizer of~$\F$ in~$\Omega$.
\end{prop}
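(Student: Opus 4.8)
The plan is to transfer the problem to the weak formulation, where everything becomes linear, pass to the limit there, and transfer back. The two tools doing all the work are the equivalence ``minimizer of~$\F$ in~$\Omega$'' $\Longleftrightarrow$ ``weak solution of~$\h u = 0$ in~$\Omega$'' for functions in~$\W^s(\Omega)$ (Corollary~\ref{weak_implies_min_lemma}) and the continuity of the pairing~$u \mapsto \langle \h u, v \rangle$ with respect to a.e.~convergence (Lemma~\ref{easylemma}).

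First, since each~$u_k$ lies in~$\W^s(\Omega)$ and is a minimizer of~$\F$ in~$\Omega$, the implication \ref{comprehensivei}~$\Rightarrow$~\ref{comprehensiveiii} of Corollary~\ref{weak_implies_min_lemma} gives that~$u_k$ is a weak solution of~$\h u = 0$ in~$\Omega$; in particular~$\langle \h u_k, v \rangle = 0$ for every~$v \in C^\infty_c(\Omega)$. Next, fix such a~$v$. Since~$C^\infty_c(\Omega) \subseteq W^{s,1}(\R^n)$ and, by hypothesis,~$u_k \to u$ a.e.~in~$\R^n$, Lemma~\ref{easylemma} yields~$\langle \h u_k, v \rangle \to \langle \h u, v \rangle$ as~$k \to \infty$. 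Combining the two facts, we get~$\langle \h u, v \rangle = 0$ for every~$v \in C^\infty_c(\Omega)$, so that~$u$ is simultaneously a weak sub- and supersolution of~$\h u = 0$ in~$\Omega$, i.e.~a weak solution in the sense of Definition~\ref{weaksoldef}. Finally, since~$u \in \W^s(\Omega)$ by assumption, the reverse implication \ref{comprehensiveiii}~$\Rightarrow$~\ref{comprehensivei} of Corollary~\ref{weak_implies_min_lemma} lets us conclude that~$u$ is a minimizer of~$\F$ in~$\Omega$, as desired.

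I do not expect any genuine obstacle here: the statement is a soft consequence of the linearity of~$\langle \h \cdot\,, v\rangle$ and its stability under a.e.~convergence. The one hypothesis that cannot be relaxed is~$u \in \W^s(\Omega)$, which is needed both to make sense of~$u$ being a minimizer in the sense of Definition~\ref{mindef} and to invoke the convexity-based equivalence of Corollary~\ref{weak_implies_min_lemma}. As an alternative (slightly more hands-on) route one could instead pass to the limit directly in the minimality inequality of Definition~\ref{mindef}, using the identity and the Lipschitz bound of Lemma~\ref{tartariccio} together with Fatou's lemma on the competitor side and dominated/monotone convergence on the~$u$ side; but the weak-solution argument above is cleaner and avoids any integrability bookkeeping.
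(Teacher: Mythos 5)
Your proof is correct and is exactly the argument the paper intends: the proposition is stated as an immediate consequence of Lemma~\ref{easylemma} and Corollary~\ref{weak_implies_min_lemma}, and your passage to the limit in the weak formulation followed by the convexity-based equivalence is precisely that route.
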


\subsection{Comparison principle}\label{Compa_sect}

This subsection is devoted to the proof of a comparison principle for sub- and superminimizers---and thus for weak sub- and supersolutions belonging to~$\W^s(\Omega)$, by Lemma~\ref{weak_subsol_implies_submin}. To obtain this result, we need a couple of preliminary lemmas. First, we have the following elementary result on convex functions.

\begin{lemma} \label{A+Blem}
Let~$\phi: \R \to \R$ be a convex function. Then, for every~$A, B, C, D \in \R$ satisfying~$\min \{ C, D\} \le A, B \le \max \{ C, D\}$ and~$A + B = C + D$, it holds
$$
\phi(A) + \phi(B) \le \phi(C) + \phi(D).
$$
\end{lemma}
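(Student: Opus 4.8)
The statement is elementary and should follow directly from the defining inequality of convexity, once the four points are placed in the right order. First I would reduce to a normalized configuration by symmetry: since the hypotheses and the conclusion are symmetric under swapping $C \leftrightarrow D$ and under swapping $A \leftrightarrow B$, I may assume without loss of generality that $C \le D$ and $A \le B$. Combined with $\min\{C,D\} \le A, B \le \max\{C,D\}$ this gives the chain $C \le A \le B \le D$. The degenerate case $C = D$ forces $A = B = C = D$ and the claim is trivial, so I would dispose of it immediately and assume $C < D$.

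Next, the key step: express $A$ (and hence $B$) as a convex combination of the endpoints $C$ and $D$. Since $C \le A \le D$ and $C < D$, set
$$
\lambda := \frac{A - C}{D - C} \in [0, 1], \quad \text{so that} \quad A = (1 - \lambda) C + \lambda D.
$$
Then the constraint $A + B = C + D$ yields $B = C + D - A = \lambda C + (1 - \lambda) D$, i.e.\ $B$ is the ``reflected'' convex combination with weights $\lambda$ and $1 - \lambda$ interchanged.

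Finally, apply the convexity of $\phi$ to each of the two representations:
$$
\phi(A) \le (1 - \lambda)\,\phi(C) + \lambda\,\phi(D) \qquad \text{and} \qquad \phi(B) \le \lambda\,\phi(C) + (1 - \lambda)\,\phi(D),
$$
and sum the two inequalities; the coefficients of $\phi(C)$ and of $\phi(D)$ on the right-hand side each add up to $1$, giving $\phi(A) + \phi(B) \le \phi(C) + \phi(D)$, which is the claim.

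\textbf{Main obstacle.} There is really no analytic difficulty here; the only point requiring a little care is the bookkeeping of the ``without loss of generality'' reductions---making sure that the symmetrizations are genuinely allowed by the symmetric form of both the hypotheses ($A + B = C + D$ and the two-sided bound) and the conclusion---and the separate (trivial) treatment of the case $C = D$ so that the weight $\lambda$ is well defined. Everything else is a one-line application of the convexity inequality.
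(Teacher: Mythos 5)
Your proof is correct and follows essentially the same route as the paper: both arguments reduce to the ordered configuration $C \le A \le B \le D$, write $A$ and $B$ as convex combinations of $C$ and $D$, and sum the two convexity inequalities, with the constraint $A+B=C+D$ forcing the weights to pair up to $1$. Your version is marginally cleaner in that you compute the weight $\lambda$ explicitly and read off the reflected weights for $B$ directly, whereas the paper introduces two independent weights and derives their relation afterwards; the content is identical.
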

\begin{proof}
Without loss of generality, we may suppose that~$A \le B$ and~$C \le D$. Since we have that~$C \le A \le B \le D$, there exist two values~$\lambda, \mu \in [0, 1]$ such that
$$
A = \lambda C + (1 - \lambda) D \quad \mbox{and} \quad B = \mu C + (1 - \mu) D.
$$
In view of the convexity of~$\phi$, it holds
\begin{equation} \label{A+Blemtech}
\begin{aligned}
\phi(A) + \phi(B) & = \phi(\lambda C + (1 - \lambda) D) + \phi(\mu C + (1 - \mu) D) \\
& \le \lambda \phi(C) + (1 - \lambda) \phi(D) + \mu \phi(C) + (1 - \mu) \phi(D) \\
& = \left( \lambda + \mu \right) \phi(C) + \left( 2 - \lambda - \mu \right) \phi(D).
\end{aligned}
\end{equation}
By taking advantage of the fact that~$A + B = C + D$, we now observe that
$$
\lambda C + (1 - \lambda) D + \mu C + (1 - \mu) D = C + D,
$$
or, equivalently,
$$
(1 - \lambda - \mu) (C - D) = 0.
$$
Consequently, either~$C = D$ or~$\lambda + \mu = 1$ (or both). In any case, we conclude that the right-hand side of~\eqref{A+Blemtech} is equal to~$\phi(C) + \phi(D)$, and from this the thesis follows.
\end{proof}

We use Lemma~\ref{A+Blem} to obtain the following inequality for rather general convex functionals. For our applications---both here and in Subsection~\ref{globboundsub}---, we will apply it with~$\Phi(U; x, y) := \G(U/|x - y|)$.

\begin{lemma} \label{Fminmaxlem}
Let~$\Phi: \R \times \R^n \times \R^n \to \R$ be a measurable function, convex with respect to the first variable, i.e.,~satisfying
\begin{equation} \label{Fconvex}
\Phi(\lambda U + (1 - \lambda) V; x, y) \le \lambda \Phi(U; x, y) + (1 - \lambda) \Phi(V; x, y)
\end{equation}
for every~$\lambda \in (0, 1)$,~$U, V \in \R$, and for~a.e.~$x, y \in \R^n$. Given a measurable set~$\U \subseteq \R^n \times \R^n$, consider the functional~$\mathfrak{F}$ defined by
$$
\mathfrak{F}(w) := \iint_{\U} \Phi(w(x) - w(y); x, y) \, dx \, dy
$$
for every~$w: \R^n \to \R$. Then, for every~$u, v: \R^n \to \R$, it holds
\begin{equation} \label{minmaxleuv}
\mathfrak{F}(\min\{u, v\}) + \mathfrak{F}(\max \{u, v\}) \le \mathfrak{F}(u) + \mathfrak{F}(v).
\end{equation}
\end{lemma}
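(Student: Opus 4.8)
The plan is to reduce \eqref{minmaxleuv} to a pointwise inequality for the integrand and then integrate over $\U$. Concretely, I would first prove that, for a.e.~$(x, y) \in \R^n \times \R^n$,
\begin{equation*}
\begin{aligned}
& \Phi\big( \min\{u, v\}(x) - \min\{u, v\}(y); x, y \big) + \Phi\big( \max\{u, v\}(x) - \max\{u, v\}(y); x, y \big) \\
& \hspace{30pt} \le \Phi\big( u(x) - u(y); x, y \big) + \Phi\big( v(x) - v(y); x, y \big),
\end{aligned}
\end{equation*}
and then obtain \eqref{minmaxleuv} by integrating this relation over $\U$ (the inequality being trivial whenever the right-hand side of \eqref{minmaxleuv} is infinite, so one may assume $\mathfrak F(u), \mathfrak F(v)$ finite).

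To establish the pointwise inequality, fix a pair $(x, y)$ for which \eqref{Fconvex} holds, so that $\phi := \Phi(\,\cdot\,; x, y)$ is a convex function on $\R$, and set
\[
A := \min\{u, v\}(x) - \min\{u, v\}(y), \qquad B := \max\{u, v\}(x) - \max\{u, v\}(y),
\]
\[
C := u(x) - u(y), \qquad D := v(x) - v(y).
\]
The claim becomes $\phi(A) + \phi(B) \le \phi(C) + \phi(D)$, which I would deduce from Lemma~\ref{A+Blem} once its two hypotheses are verified. The identity $A + B = C + D$ is immediate from $\min\{a, b\} + \max\{a, b\} = a + b$, applied separately to the pair $u(x), v(x)$ and to the pair $u(y), v(y)$, which gives $A + B = (u(x) + v(x)) - (u(y) + v(y)) = C + D$.

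The only mildly laborious point, and the one I expect to be the main obstacle, is checking the ordering hypothesis $\min\{C, D\} \le A, B \le \max\{C, D\}$. I would handle it by a short case distinction on the signs of $u(x) - v(x)$ and $u(y) - v(y)$: when $u - v$ has the same sign at $x$ and at $y$ one simply has $\{A, B\} = \{C, D\}$ and there is nothing to prove; in the two ``crossed'' cases a one-line computation (e.g.~$C - A = u(x) - v(x) \ge 0$ and $A - D = v(y) - u(y) \ge 0$, together with the analogous identities $C - B = v(y) - u(y)$ and $B - D = u(x) - v(x)$) shows that both $A$ and $B$ lie in $[\min\{C,D\}, \max\{C,D\}]$. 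Alternatively, one checks once and for all that $\max\{u,v\}(x) - \max\{u,v\}(y)$ lies between $C$ and $D$ by examining which of $u(x), v(x)$ and which of $u(y), v(y)$ attains the maximum, and then deduces the same for $A = C + D - B$. With the hypotheses of Lemma~\ref{A+Blem} in place, the pointwise inequality follows, and integrating it over $\U$ yields \eqref{minmaxleuv}.
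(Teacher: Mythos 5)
Your proposal is correct and follows essentially the same route as the paper: reduce to the pointwise inequality $\phi(A)+\phi(B)\le\phi(C)+\phi(D)$ with the same choice of $A,B,C,D$, verify $A+B=C+D$ from $\min+\max=\mathrm{sum}$ and the ordering $\min\{C,D\}\le A,B\le\max\{C,D\}$ by the same case distinction on the sign of $u-v$ at $x$ and $y$, then invoke Lemma~\ref{A+Blem} and integrate over $\U$. The sign computations in your ``crossed'' case check out, so there is nothing to add.
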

\begin{proof}
We define the two functions~$m:=\min\{u,v\}$ and~$M:=\max\{u,v\}$. For fixed~$(x, y) \in \U$, we write
\begin{align*}
A & := m(x) - m(y), \quad B := M(x) - M(y), \quad C := u(x) - u(y), \quad D := v(x) - v(y),
\end{align*}
and
$$
\phi(t) = \phi_{x, y}(t) := \Phi(t; x, y) \quad \mbox{for every } t \in \R.
$$
Thanks to~\eqref{Fconvex}, the function~$\phi$ is convex. Also, we claim that
\begin{equation} \label{CleA,BleD}
\min \{ C, D \} \le A, B \le \max \{ C, D \}
\end{equation}
and
\begin{equation} \label{A+B=C+D}
A + B = C + D.
\end{equation}

Indeed, identity~\eqref{A+B=C+D} is immediate since~$m + M \equiv u + v$. The inequalities in~\eqref{CleA,BleD} are also obvious if~$u(x) \le v(x)$ and~$u(y) \le v(y)$ or if~$u(x) > v(x)$ and~$u(y) > v(y)$. On the other hand, when for example~$u(x) \le v(x)$ and~$u(y) > v(y)$, we have
$$
A = u(x) - v(y) \quad \mbox{and} \quad B = v(x) - u(y).
$$
Accordingly,
$$
C = u(x) - u(y) < u(x) - v(y) = A = u(x) - v(y) \le v(x) - v(y) = D
$$
and
$$
C = u(x) - u(y) \le v(x) - u(y) = B = v(x) - u(y) < v(x) - v(y) = D.
$$
Hence,~\eqref{CleA,BleD} is proved in this case. Arguing analogously, one can check that~\eqref{CleA,BleD} also holds when~$u(x) > v(x)$ and~$u(y) \le v(y)$.

Thanks to~\eqref{CleA,BleD} and~\eqref{A+B=C+D}, we may apply Lemma~\ref{A+Blem} and deduce that
$$
\phi(A) + \phi(B) \le \phi(C) + \phi(D).
$$
That is,
$$
\Phi(m(x) - m(y); x, y) + \Phi(M(x) - M(y); x, y) \le \Phi(u(x) - u(y); x, y) + \Phi(v(x) - v(y); x, y).
$$
Inequality~\eqref{minmaxleuv} then plainly follows by integrating the last formula in~$x$ and~$y$.
\end{proof}

We are now in position to prove our comparison principle for sub- and superminimizers. Note that it is a consequence of the strict convexity of the functional~$\F$.

\begin{prop}\label{Compari_prop}
	Let~$\Omega \subseteq \R^n$ be a bounded open set with Lipshitz boundary. Let~$\underline{u}, \overline{u}$ be respectively a sub- and a superminimizer of~$\F$ in~$\Omega$. If~$\underline{u}\le \overline{u}$ a.e.~in~$\Co\Omega$, then~$\underline{u}\le \overline{u}$ a.e.~in~$\R^n$.	
\end{prop}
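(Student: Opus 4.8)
The plan is to run the classical comparison argument through the \emph{truncated} functionals $\F^M$ (whose finiteness on $\W^s(\Omega)$ is guaranteed by Lemma~\ref{NMdomainlem}) and to close it via the strict convexity recorded in Lemma~\ref{conv_func}. Set $v:=\min\{\underline u,\overline u\}$ and $w:=\max\{\underline u,\overline u\}$. First I would check $v,w\in\W^s(\Omega)$: since $\underline u|_\Omega,\overline u|_\Omega\in W^{s,1}(\Omega)$, one has $|v|+|w|=|\underline u|+|\overline u|$ pointwise and, by Lemma~\ref{Fminmaxlem} applied with $\Phi(U;x,y):=|U|\,|x-y|^{-n-s}$ on $\U:=\Omega\times\Omega$, also $[v]_{W^{s,1}(\Omega)}+[w]_{W^{s,1}(\Omega)}\le[\underline u]_{W^{s,1}(\Omega)}+[\overline u]_{W^{s,1}(\Omega)}$, whence $v|_\Omega,w|_\Omega\in W^{s,1}(\Omega)$. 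The hypothesis $\underline u\le\overline u$ a.e.\ in $\Co\Omega$ gives $v=\underline u$ and $w=\overline u$ a.e.\ in $\Co\Omega$, while $v\le\underline u$ and $w\ge\overline u$ a.e.\ in $\R^n$. Thus $v$ is an admissible competitor in the definition of subminimizer for $\underline u$ and $w$ is one in the definition of superminimizer for $\overline u$; hence, by Remark~\ref{brutto_rmk}, for any fixed $M\ge0$ one gets
\[
\F^M(\underline u,\Omega)\le\F^M(v,\Omega)\qquad\text{and}\qquad\F^M(\overline u,\Omega)\le\F^M(w,\Omega),
\]
so that $\F^M(\underline u,\Omega)+\F^M(\overline u,\Omega)\le\F^M(v,\Omega)+\F^M(w,\Omega)$.

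The core of the proof is the reverse inequality $\F^M(v,\Omega)+\F^M(w,\Omega)\le\F^M(\underline u,\Omega)+\F^M(\overline u,\Omega)$. Writing $\F^M=\A+\Nl^M$, the estimate $\A(v,\Omega)+\A(w,\Omega)\le\A(\underline u,\Omega)+\A(\overline u,\Omega)$ is Lemma~\ref{Fminmaxlem} applied with $\Phi(U;x,y):=\G(U/|x-y|)\,|x-y|^{-n+1-s}$ (convex in $U$ by Lemma~\ref{gsprop}) on $\U:=\Omega\times\Omega$. For the nonlocal part I would \emph{not} split $\Nl^M$ into separate integrals of $\G$, but work at the level of the $\Omega\times\Co\Omega$-integrand of $\Nl^M$ in~\eqref{NMldef} (equivalently~\eqref{nonlocal_explicit}), which decomposes as $2\,\G\!\left(\frac{u(x)-u(y)}{|x-y|}\right)$ plus a quantity depending on $u(y)$ alone. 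Because $v=\underline u$ and $w=\overline u$ a.e.\ in $\Co\Omega$, the $u(y)$-dependent parts cancel \emph{identically} in the integrand of $\Nl^M(v)+\Nl^M(w)-\Nl^M(\underline u)-\Nl^M(\overline u)$, leaving twice $\G(A)+\G(B)-\G(C)-\G(D)$, with $A,B,C,D$ equal to $\frac{v(x)-v(y)}{|x-y|},\frac{w(x)-w(y)}{|x-y|},\frac{\underline u(x)-\underline u(y)}{|x-y|},\frac{\overline u(x)-\overline u(y)}{|x-y|}$. Now $A+B=C+D$ (since $v(x)+w(x)=\underline u(x)+\overline u(x)$) and $\min\{C,D\}\le A,B\le\max\{C,D\}$ (split according to whether $\underline u(x)\le\overline u(x)$, using $\underline u(y)\le\overline u(y)$ for $y\in\Co\Omega$), so Lemma~\ref{A+Blem} with $\phi=\G$ gives $\G(A)+\G(B)\le\G(C)+\G(D)$ a.e.\ in $\Omega\times\Co\Omega$. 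Since each of the four integrands appearing in $\Nl^M$ is absolutely integrable on $\Omega\times\Co\Omega$ — by the bound established in the proof of Lemma~\ref{NMdomainlem}, via Corollary~\ref{FHI_corollary} — this pointwise inequality may be integrated, yielding $\Nl^M(v,\Omega)+\Nl^M(w,\Omega)\le\Nl^M(\underline u,\Omega)+\Nl^M(\overline u,\Omega)$. Adding the two estimates gives the claimed reverse inequality.

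Combining the two chains forces $\F^M(\underline u,\Omega)=\F^M(v,\Omega)$, with $\underline u$ and $v$ both lying in $\W^s_\varphi(\Omega)$ for $\varphi:=\underline u|_{\Co\Omega}$. To conclude I would test the subminimality of $\underline u$ against $z:=(\underline u+v)/2$: this belongs to $\W^s_\varphi(\Omega)$ and satisfies $z\le\underline u$ a.e.\ in $\Omega$, so $\F^M(\underline u,\Omega)\le\F^M(z,\Omega)$; on the other hand convexity of $\F^M$ on $\W^s_\varphi(\Omega)$ together with $\F^M(\underline u,\Omega)=\F^M(v,\Omega)$ gives $\F^M(z,\Omega)\le\F^M(\underline u,\Omega)$. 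Hence equality holds in the convexity inequality, and the strict convexity of $\F^M$ on $\W^s_\varphi(\Omega)$ (Lemma~\ref{conv_func}) forces $\underline u=v$ a.e.\ in $\Omega$, i.e.\ $\underline u=\min\{\underline u,\overline u\}\le\overline u$ a.e.\ in $\Omega$. Combined with the hypothesis on $\Co\Omega$, this yields $\underline u\le\overline u$ a.e.\ in $\R^n$.

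The point I expect to require the most care — and the reason the argument is routed through $\F^M$ rather than $\F$ — is keeping every quantity finite: one \emph{cannot} apply Lemma~\ref{Fminmaxlem} directly to $\F$, since $\F(\underline u,\Omega)$ and $\F(\overline u,\Omega)$, and already the single integral $\int_\Omega\int_{\Co\Omega}\G\!\left(\frac{u(x)-u(y)}{|x-y|}\right)\frac{dx\,dy}{\kers}$, may be $+\infty$ for functions in $\W^s(\Omega)$ (precisely the phenomenon that motivated Definition~\ref{mindef}). The truncated functionals circumvent this because inside $\Nl^M$ the potentially divergent integral is compensated by the $u(y)$-dependent correction terms, and it is exactly those correction terms that cancel in the combination above. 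The verification of the hypotheses of Lemma~\ref{A+Blem} in the case $\underline u(x)>\overline u(x)$ is the only genuinely computational step, and it is elementary.
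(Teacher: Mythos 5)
Your proof is correct and follows essentially the same route as the paper's: the inequality $\F^M(\min)+\F^M(\max)\le\F^M(\underline u)+\F^M(\overline u)$ via Lemma~\ref{Fminmaxlem} and representation~\eqref{nonlocal_explicit}, the reverse inequality from sub-/superminimality, and the conclusion via the midpoint competitor and strict convexity. You have merely made explicit the cancellation of the $u(y)$-dependent terms in $\Nl^M$ and the finiteness checks that the paper compresses into ``it is easy to see.''
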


\begin{proof}
	Consider the two functions~$\min\{\underline{u}, \overline{u}\}, \max\{\underline{u}, \overline{u}\}\in \W^s(\Omega)$ and let~$M\ge0$ be fixed. By Lemma~\ref{Fminmaxlem}, representation~\eqref{nonlocal_explicit} for~$\Nl^M$, and the fact that~$\underline{u} \le \overline{u}$ in~$\Co \Omega$, it is easy to see that
	\begin{equation}\label{parbleu}
	\F^M(\min \{\underline{u}, \overline{u}\},\Omega)+\F^M(\max \{\underline{u}, \overline{u}\},\Omega)\le \F^M(\underline{u},\Omega)+\F^M(\overline{u},\Omega).
	\end{equation}
	From the sub- and superminimality of~$\underline{u}$ and~$\overline{u}$ respectively, recalling Remark~\ref{brutto_rmk} we get that
	\[
	\F^M(\underline{u},\Omega)\le \F^M(\min \{\underline{u}, \overline{u}\},\Omega)\quad\mbox{and}\quad\F^M(\overline{u},\Omega)\le\F^M(\max \{\underline{u}, \overline{u}\},\Omega).
	\]
	By the second inequality and~\eqref{parbleu}, we conclude that~$\F^M(\underline{u},\Omega)= \F^M(\min \{\underline{u}, \overline{u}\},\Omega)$.

	Now, either~$\underline{u} = \min \{\underline{u}, \overline{u}\}$ a.e.~in~$\R^n$ or, by the strict convexity of~$\F^M$ (recall Lemma \ref{conv_func}),
	\[
	\F^M \! \left(\frac{\min \{\underline{u}, \overline{u}\}+\underline{u}}{2},\Omega\right)<\frac{1}{2}\,\F^M(\underline{u},\Omega)+\frac{1}{2}\, \F^M(\min \{\underline{u}, \overline{u}\},\Omega)=\F^M(\underline{u},\Omega).
	\]
	Since~$(\min \{\underline{u}, \overline{u}\}+\underline{u})/2\le\underline{u}$ a.e.~in~$\Omega$ and~$(\min \{\underline{u}, \overline{u}\}+\underline{u})/2=\underline{u}$ a.e.~in~$\Co\Omega$, the latter possibility is excluded by the subminimality of~$\underline{u}$. Thus,~$\underline{u}=\min \{\underline{u}, \overline{u}\}$ a.e.~in~$\R^n$, i.e.,~$\underline{u}\le \overline{u}$ a.e.~in~$\R^n$.
\end{proof}

\section{A priori bounds}\label{sect3}

\noindent
We collect in this section a few a priori estimates for weak solutions and minimizers.

\subsection{$W^{s, 1}$ estimates}\label{Ws1_estimates_sect}

We establish here a couple of bounds for the~$W^{s, 1}$ norm of solutions and minimizers. We begin with an estimate valid for weak subsolutions of~$\h u = f$ which are bounded from above. We will use it later in Section~\ref{ViscoWeak_Sect} to prove that viscosity solutions have finite energy. The result is stated for the positive part of the subsolution~$u$, but of course analogous estimates hold for all truncations~$(u - k)_+$ with~$k \in \R$.
 
\begin{prop} \label{localWs1prop}
Let~$\Omega \subseteq \R^n$ be a bounded open set with Lipschitz boundary,~$f: \Omega \to \R$ such that~$f_+ \in L^1(\Omega)$, and~$u$ be a weak subsolution of~$\h u = f$ in~$\Omega$. If~$u_+ \in \W^s(\Omega) \cap L^\infty(\Omega)$, then
\begin{equation} \label{localWs1est}
[u_+]_{W^{s, 1}(\Omega)} \le C \Big( 1 + \left( 1 + \| f_+ \|_{L^1(\Omega)} \right) \| u_+ \|_{L^\infty(\Omega)} \Big),
\end{equation}
for a constant~$C > 0$ depending only on~$n$,~$s$,~$g$, and on upper bounds on~$|\Omega|$,~$\diam(\Omega)$, and~$\Per_s(\Omega)$.
\end{prop}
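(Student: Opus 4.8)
The strategy is to test the weak formulation of $\h u=f$ with (a compactly supported version of) $u_+$, and to relate the resulting quantity to the Gagliardo seminorm $[u_+]_{W^{s,1}(\Omega)}$ via an elementary monotonicity bound for $G$. First I would split the seminorm as $[u_+]_{W^{s,1}(\Omega)}=I+II$, where $I$ and $II$ denote the integrals of $|u_+(x)-u_+(y)|\,|x-y|^{-(n+s)}$ over $\Omega\times\Omega$ intersected with $\{|x-y|\le|u_+(x)-u_+(y)|\}$ and with $\{|x-y|>|u_+(x)-u_+(y)|\}$, respectively. On the domain of $II$ the integrand is dominated by $|x-y|^{-(n-1+s)}$, so Lemma~\ref{dumb_kernel_lemma} (with $A=B=\Omega$) immediately gives $II\le c_s(\Omega)$, with $c_s(\Omega)$ as in~\eqref{csOmegadef}.

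For $I$, the heart of the matter is the pointwise inequality
$$
G\!\left(\frac{u(x)-u(y)}{|x-y|}\right)\big(u_+(x)-u_+(y)\big)\ \ge\ c_\star\,\min\!\left\{1,\,\frac{|u_+(x)-u_+(y)|}{|x-y|}\right\}\,|u_+(x)-u_+(y)|\ \ge\ 0 \qquad (x\neq y),
$$
with $c_\star$ the constant in~\eqref{cstardef}. This would be proved by assuming without loss of generality that $u(x)\ge u(y)$ (both sides are symmetric under $x\leftrightarrow y$, since $G$ is odd), noting that then $u_+(x)-u_+(y)\ge0$ and $u(x)-u(y)\ge u_+(x)-u_+(y)\ge0$, and combining the monotonicity of $G$ with the lower bound $|G(t)|\ge c_\star\min\{1,|t|\}$ from~\eqref{Gbounds}. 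On the set $\{|x-y|\le|u_+(x)-u_+(y)|\}$ the above reads $G(\cdots)(u_+(x)-u_+(y))\ge c_\star\,|u_+(x)-u_+(y)|$, and since by the same pointwise inequality the integrand on the right-hand side below is non-negative on all of $\Omega\times\Omega$ (and absolutely integrable there, because $|G|\le\Lambda/2$ and $u_+\in\W^s(\Omega)$), enlarging the domain of integration yields
$$
I\ \le\ \frac{1}{c_\star}\iint_{\Omega\times\Omega}G\!\left(\frac{u(x)-u(y)}{|x-y|}\right)\big(u_+(x)-u_+(y)\big)\,\frac{dx\,dy}{|x-y|^{n+s}}.
$$

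It remains to estimate this last integral, and here I would use the function $w:=u_+\chi_\Omega$, which lies in $\W^s_0(\Omega)$ (by hypothesis), is non-negative, and satisfies $\|w\|_{L^\infty(\R^n)}=\|u_+\|_{L^\infty(\Omega)}$, as a test function in the weak subsolution inequality. This is legitimate by a density argument as in the proof of Lemma~\ref{crocodile_dens_rmk}, the only new point being that, since $f$ is merely in $L^1(\Omega)$, one must take the approximating functions in $C^\infty_c(\Omega)$ to be non-negative, uniformly bounded, and a.e.\ convergent to $w$, so that the right-hand sides pass to the limit by dominated convergence. One thus obtains $\langle\h u,w\rangle\le\int_\Omega f\,u_+\,dx\le\|f_+\|_{L^1(\Omega)}\|u_+\|_{L^\infty(\Omega)}$. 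On the other hand, splitting the absolutely convergent integral defining $\langle\h u,w\rangle$ over $\Omega\times\Omega$, over $(\Omega\times\Co\Omega)\cup(\Co\Omega\times\Omega)$, and over $\Co\Omega\times\Co\Omega$ (where $w\equiv0$), and exploiting the symmetry of the integrand, gives
$$
\langle\h u,w\rangle=\iint_{\Omega\times\Omega}G\!\left(\tfrac{u(x)-u(y)}{|x-y|}\right)\big(u_+(x)-u_+(y)\big)\frac{dx\,dy}{|x-y|^{n+s}}+2\int_\Omega\!\int_{\Co\Omega}G\!\left(\tfrac{u(x)-u(y)}{|x-y|}\right)u_+(x)\frac{dx\,dy}{|x-y|^{n+s}},
$$
and the last term is bounded in absolute value by $\Lambda\,\|u_+\|_{L^\infty(\Omega)}\,\Per_s(\Omega,\R^n)$, since $|G|\le\Lambda/2$.

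Combining the last three displays yields
$$
[u_+]_{W^{s,1}(\Omega)}=I+II\ \le\ \frac{\|u_+\|_{L^\infty(\Omega)}}{c_\star}\Big(\|f_+\|_{L^1(\Omega)}+\Lambda\,\Per_s(\Omega,\R^n)\Big)+c_s(\Omega),
$$
which is of the form~\eqref{localWs1est}, with a constant depending only on $n$, $s$, $g$ and on upper bounds for $|\Omega|$, $\diam(\Omega)$, $\Per_s(\Omega,\R^n)$. The genuinely delicate points are the short case analysis behind the pointwise inequality for $G$ and the (routine but slightly fussy) verification that $w=u_+\chi_\Omega$ is an admissible test function when $f$ is only in $L^1(\Omega)$; everything else is bookkeeping.
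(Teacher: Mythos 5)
Your proposal is correct and follows essentially the same route as the paper: test the weak inequality with $\chi_\Omega u_+$, use the monotonicity of $G$ together with the lower bound in~\eqref{Gbounds} to bound the diagonal term from below by $c_\star\big([u_+]_{W^{s,1}(\Omega)}-c_s(\Omega)\big)$, and control the cross term by $\Lambda\,\Per_s(\Omega)\,\|u_+\|_{L^\infty(\Omega)}$ via the boundedness of $G$. Your splitting of the seminorm according to $\{|x-y|\le |u_+(x)-u_+(y)|\}$ is just a reorganization of the paper's use of the inequality $G(t)t\ge c_\star(|t|-1)$ plus Lemma~\ref{dumb_kernel_lemma}, and your handling of the test-function admissibility matches the paper's.
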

\begin{proof}
First of all, as~$f \le f_+$, we have that
\begin{equation} \label{flef+}
\langle \h u,v\rangle\le\int_\Omega f_+ v\,dx \qquad\mbox{for every }v\in C_c^\infty(\Omega)\mbox{ such that }v\ge0.
\end{equation}
We now apply~\eqref{flef+} with~$v := \chi_{\Omega} u_+ \in \W^s_0(\Omega)$. This can be done even though~$v$ might not belong to~$C^\infty_c(\Omega)$, thanks to the same considerations made in the proof of Lemma~\ref{crocodile_dens_rmk}. Note that the convergence of the right-hand in~\eqref{takethelimit} side follows from Lebesgue's dominated convergence theorem, as the approximating sequence~$\{ v_k \}$ is now uniformly bounded by the~$L^\infty(\Omega)$ norm of~$u_+$---see Proposiiton~\ref{smooth_cpt_dense}.

Recalling definition~\eqref{weak_opossum_curv}, we get
\begin{align*}
& \int_{\Omega}\int_{\Omega} G \left( \frac{u(x)-u(y)}{|x-y|} \right) \big( u_+(x) - u_+(y) \big) \frac{dx \, dy}{|x-y|^{n+s}} \\
& \hspace{40pt} \le - 2 \int_{\Omega} \left\{ \int_{\Co \Omega} G \left( \frac{u(x)-u(y)}{|x-y|} \right) \frac{dy}{|x - y|^{n + s}} \right\} u_+(x) \, dx + \int_{\Omega} f_+(x) u_+(x) \, dx.
\end{align*}
From the monotonicity of~$G$, it is not hard to see that
$$
G \left( \frac{u(x)-u(y)}{|x-y|} \right) \big( u_+(x) - u_+(y) \big) \ge G \left( \frac{u_+(x)-u_+(y)}{|x-y|} \right) \big( u_+(x) - u_+(y) \big) \quad \mbox{for a.e.~} x, y \in \Omega.
$$
Combining this with the fact that~$G(t) t \ge c_\star(|t| - 1)$ for all~$t \in \R$---which follows from the properties of~$G$, in particular the left-hand inequality in~\eqref{Gbounds}---and Lemma~\ref{dumb_kernel_lemma}, we obtain
$$
\int_{\Omega}\int_{\Omega} G \left( \frac{u(x)-u(y)}{|x-y|} \right) \big( u_+(x) - u_+(y) \big) \frac{dx \, dy}{|x-y|^{n+s}} \ge c_\star \left( [u_+]_{W^{s, 1}(\Omega)} - c_s(\Omega) \right),
$$
with~$c_s(\Omega)$ as in~\eqref{csOmegadef}. On the other hand, using the right-hand bound in~\eqref{Gbounds}, we have that
$$
- 2 \int_{\Omega} \left\{ \int_{\Co \Omega} G \left( \frac{u(x)-u(y)}{|x-y|} \right) \frac{dy}{|x - y|^{n + s}} \right\} u_+(x) \, dx \le \Lambda \Per_s(\Omega) \, \| u_+ \|_{L^\infty(\Omega)}
$$
and
$$
\int_{\Omega} f_+(x) u_+(x) \, dx \le \| f_+ \|_{L^1(\Omega)} \| u_+ \|_{L^\infty(\Omega)}.
$$
Estimate~\eqref{localWs1est} then plainly follows.
\end{proof}

Next is a~$W^{s, 1}$ bound for minimizers of~$\F$. We will use it in Section~\ref{Linftysec} to gain the compactness needed to obtain the existence of solutions to the Dirichlet problem and prove Theorem~\ref{Dirichlet}. We stress that, although we will apply it to the family of bounded minimizers~$\{ u_M \}$, no boundedness assumption on the minimizer is required for its validity. Recall the definition~\eqref{taildef} of~$\Tail_s$.

\begin{prop} \label{Ws1prop}
There exist two positive constants~$\Theta$ and~$C$, depending only on~$n$,~$s$, and~$g$, such that the following holds true. If~$\Omega \subseteq \R^n$ is a bounded open set with Lipschitz boundary,~$\varphi:\Co\Omega\to\R$ is such that~$\Tail_s(\varphi, \Omega_{\Theta \diam(\Omega)} \setminus \Omega;\,\cdot\,) \in L^1(\Omega)$, and~$u\in\W^s_\varphi(\Omega)$ is such that
\bgs{
\F^M(u,\Omega)\le\F^M(v,\Omega)\quad\mbox{for every }v\in \W^s_\varphi(\Omega) \mbox{ s.t.~} |v| \le M \mbox{ a.e.~in } \Omega,
}
for some~$M \ge 0$, then
\bgs{
\diam(\Omega)^{- s} \| u \|_{L^1(\Omega)} + [u]_{W^{s, 1}(\Omega)}
\le C \left(  \left\| \Tail_s(\varphi, \Omega_{\Theta \diam(\Omega)} \setminus \Omega;\,\cdot\,) \right\|_{L^1(\Omega)}
+ \diam(\Omega)^{1 - s} |\Omega| \right).
}
\end{prop}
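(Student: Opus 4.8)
The plan is to test the minimality of $u$ against the single competitor $v\in\W^s_\varphi(\Omega)$ given by $v:=0$ in $\Omega$ and $v:=\varphi$ in $\Co\Omega$; this is admissible because $|v|=0\le M$ a.e.\ in $\Omega$. Fix $R:=\Theta\diam(\Omega)$ with $\Theta=\Theta(n,s,g)>2$ large, to be pinned down at the end. Using the identity and the absolute convergence provided by Lemma~\ref{tartariccio}, together with the evenness of $\G$ (Lemma~\ref{gsprop}) and $v\equiv0$ in $\Omega$, the inequality $\F^M(u,\Omega)\le\F^M(v,\Omega)$ becomes
\[
\A(u,\Omega)+2\int_\Omega\!\int_{\Co\Omega}\!\left\{\G\!\left(\tfrac{u(x)-\varphi(y)}{|x-y|}\right)-\G\!\left(\tfrac{\varphi(y)}{|x-y|}\right)\right\}\frac{dy\,dx}{|x-y|^{n-1+s}}\le0,
\]
in which the nonlocal double integral is absolutely convergent; I then split the inner integral over $\Co\Omega$ into the near part over $\Omega_R\setminus\Omega$ and the far part over $\Co\Omega_R$, moving the latter to the right-hand side.

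The far part is the crux. The elementary bound
\[
\left|\G\!\left(\tfrac{u(x)-\varphi(y)}{|x-y|}\right)-\G\!\left(\tfrac{\varphi(y)}{|x-y|}\right)\right|\le\frac{\Lambda}{2}\,\frac{|u(x)|}{|x-y|}\qquad\text{for all }x\in\Omega,\ y\in\Co\Omega
\]
holds because $\G$ is even, so the left-hand side equals $\big|\G(|p'|)-\G(|q'|)\big|$ with $p':=\tfrac{u(x)-\varphi(y)}{|x-y|}$ and $q':=\tfrac{\varphi(y)}{|x-y|}$, which by the Lipschitz estimate~\eqref{Lip_Gcal} is at most $\tfrac{\Lambda}{2}\big||p'|-|q'|\big|\le\tfrac{\Lambda}{2}|p'+q'|=\tfrac{\Lambda}{2}|u(x)|/|x-y|$ (here I use the pointwise fact $\big||p'|-|q'|\big|\le|p'+q'|$ for real numbers). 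Since $|x-y|\ge R$ whenever $x\in\Omega$ and $y\notin\Omega_R$, integrating this against $|x-y|^{-(n-1+s)}$ over $\Co\Omega_R$ bounds the far contribution by $\Lambda\,s^{-1}\Ha^{n-1}(\mathbb S^{n-1})\,\Theta^{-s}\diam(\Omega)^{-s}\|u\|_{L^1(\Omega)}$. Crucially this term involves only $\|u\|_{L^1(\Omega)}$ and not the (possibly unbounded) tail of $\varphi$ outside $\Omega_R$ --- this is precisely where the non-homogeneity of $\h$, i.e.\ the boundedness of $G$, is used --- so it can be absorbed.

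For the near part I bound the left-hand side from below: Lemma~\ref{Adomainlem} gives $\A(u,\Omega)\ge\tfrac{c_\star}{2}\big([u]_{W^{s,1}(\Omega)}-c_s(\Omega)\big)$; for the $\G$--term with argument $\tfrac{u(x)-\varphi(y)}{|x-y|}$ I use $\G(t)\ge\tfrac{c_\star}{2}(|t|-1)$ from~\eqref{GGboundcons} and the reverse triangle inequality $|u(x)-\varphi(y)|\ge|u(x)|-|\varphi(y)|$, and for the subtracted $\G(\tfrac{\varphi(y)}{|x-y|})$ I use $\G(t)\le\tfrac{\Lambda}{2}|t|$. This produces on the left the quantities $\tfrac{c_\star}{2}[u]_{W^{s,1}(\Omega)}$ and $c_\star\int_\Omega|u(x)|\big(\int_{\Omega_R\setminus\Omega}|x-y|^{-n-s}\,dy\big)\,dx$, up to additive errors that are multiples of $\|\Tail_s(\varphi,\Omega_R\setminus\Omega;\,\cdot\,)\|_{L^1(\Omega)}$, of $c_s(\Omega)$ (see~\eqref{csOmegadef}), and of $\int_\Omega\int_{\Omega_R\setminus\Omega}|x-y|^{-(n-1+s)}\,dy\,dx$; this last integral is at most $C(\Theta,n,s)\,|\Omega|\diam(\Omega)^{1-s}$ by Lemma~\ref{dumb_kernel_lemma} since $\diam\big(\Omega\cup(\Omega_R\setminus\Omega)\big)\le(1+2\Theta)\diam(\Omega)$, and $c_s(\Omega)$ has the same form. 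Finally, because $\Theta>2$, the spherical shell $\{\diam(\Omega)<|x-y|<2\diam(\Omega)\}$ is contained in $\Omega_R\setminus\Omega$ for every $x\in\Omega$, hence $\int_{\Omega_R\setminus\Omega}|x-y|^{-n-s}\,dy\ge c(n,s)\,\diam(\Omega)^{-s}$, so the above integral dominates $c(n,s)\,\diam(\Omega)^{-s}\|u\|_{L^1(\Omega)}$ from below.

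Putting the two parts together, the minimality inequality yields
\[
\tfrac{c_\star}{2}[u]_{W^{s,1}(\Omega)}+c_\star c(n,s)\,\diam(\Omega)^{-s}\|u\|_{L^1(\Omega)}\le C_1(n,s,g)\Big(\big\|\Tail_s(\varphi,\Omega_R\setminus\Omega;\,\cdot\,)\big\|_{L^1(\Omega)}+|\Omega|\diam(\Omega)^{1-s}\Big)+\frac{\Lambda\,\Ha^{n-1}(\mathbb S^{n-1})}{s\,\Theta^s}\,\diam(\Omega)^{-s}\|u\|_{L^1(\Omega)},
\]
and it remains only to fix $\Theta=\Theta(n,s,g)>2$ so large that $\Lambda\,\Ha^{n-1}(\mathbb S^{n-1})/(s\Theta^s)\le\tfrac{1}{2}c_\star c(n,s)$, which lets me absorb the last term into the left-hand side and conclude after dividing through by $\min\{c_\star/2,\,c_\star c(n,s)/2\}$. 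The only genuinely delicate step is the far-region estimate: the whole proof works because that contribution is controlled by $\|u\|_{L^1(\Omega)}$ alone and not by an unrestricted tail of $\varphi$; the remaining steps are routine manipulations with the elementary bounds of Lemma~\ref{gsprop} and Lemma~\ref{dumb_kernel_lemma}.
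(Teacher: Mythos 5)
Your proof is correct and follows essentially the same route as the paper's: the same competitor $v=\chi_{\Co\Omega}\,u$, the same splitting of $\Co\Omega$ at $\Omega_{\Theta\diam(\Omega)}$, the same Lipschitz bound on $\G$ producing the absorbable far-field term $\Theta^{-s}\diam(\Omega)^{-s}\|u\|_{L^1(\Omega)}$, and the same final choice of $\Theta$ large. The only (harmless) variation is in how the $L^1$ norm is generated on the left-hand side: you apply the reverse triangle inequality $|u(x)-\varphi(y)|\ge|u(x)|-|\varphi(y)|$ pointwise in the near region together with a spherical-shell lower bound for $\int_{\Omega_R\setminus\Omega}|x-y|^{-n-s}\,dy$, whereas the paper retains the cross seminorm $\int_\Omega\int_{\Omega_R\setminus\Omega}|u(x)-u(y)|\,|x-y|^{-n-s}\,dx\,dy$ and converts it into an $L^1$ bound afterwards via the Poincar\'e-type inequality~\eqref{newFPI}.
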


\begin{proof}
We use the function~$v := \chi_{\Co \Omega} u$ as a competitor for~$u$. We get
\begin{equation} \label{uMlev}
0 \le \F^M(v, \Omega) - \F^M(u, \Omega) = - \A(u, \Omega) + 2 \int_{\Omega} \int_{\Co \Omega} \frac{\I(x, y)}{|x - y|^{n - 1 + s}} \, dx\,dy,
\end{equation}
with
$$
\I(x, y) := \G \left( \frac{v(x) - v(y)}{|x - y|} \right) - \G \left( \frac{u(x) - u(y)}{|x - y|} \right).
$$

Write~$d := \diam(\Omega)$. On the one hand, by Lemma~\ref{Adomainlem},
\begin{equation} \label{AsuM}
\A(u, \Omega) \ge \frac{c_\star}{2} \int_\Omega \int_\Omega \frac{|u(x) - u(y)|}{|x - y|^{n + s}} \, dx \, dy
-\frac{c_\star\,\Ha^{n-1}(\mathbb S^{n-1})}{2(1-s)} |\Omega| d^{1 - s},
\end{equation}
with~$c_\star>0$ as defined in~\eqref{cstardef}. On the other hand, let~$R := \Theta d$, with~$\Theta \ge 1$ to be chosen later. Recalling the definition of~$v$ and taking advantage of point~\ref{gsprop_GG} of Lemma~\ref{gsprop}, we obtain
$$
\I(x, y) \le \frac{\Lambda}{2} \frac{|\varphi(y)|}{|x - y|} +\frac{c_\star}{2}- \frac{c_\star}{2} \frac{|u(x) - u(y)|}{|x - y|} \quad \mbox{for every } x \in \Omega, \, y \in \Omega_R \setminus \Omega
$$
and
$$
\I(x, y) \le \frac{\Lambda}{2} \frac{|u(x)|}{|x - y|} \quad \mbox{for every } x \in \Omega, \, y \in \Co \Omega_R.
$$
Hence, using Lemma~\ref{dumb_kernel_lemma}, that~$c_\star\le\Lambda$, and that~$\Co \Omega_R \subseteq \Co B_R(x)$ for every~$x \in \Omega$, we get
\begin{align*}
2\int_{\Omega} \int_{\Co \Omega} \frac{\I(x, y)}{|x - y|^{n - 1 + s}} \, dx\,dy &\le \int_{\Omega} \left( \int_{\Omega_R \setminus \Omega} \frac{\Lambda |\varphi(y)| - c_\star |u(x) - u(y)|}{|x - y|^{n + s}} \, dy \right) dx \\
&\quad+ c_\star \int_{\Omega} \int_{\Omega_R \setminus \Omega} \frac{dx\,dy}{|x - y|^{n - 1 + s}} + \Lambda \int_{\Omega} |u(x)| \left( \int_{\Co \Omega_R} \frac{dy}{|x - y|^{n + s}} \right) dx  \\
& \le \Lambda\, \bigg( \! \left\| \Tail_s(\varphi, \Omega_{\Theta d} \setminus \Omega; \,\cdot\,) \right\|_{L^1(\Omega)}
+\frac{\Ha^{n-1}(\mathbb S^{n-1})}{1-s} (1 + 2 \Theta)^{1 - s} d^{1 - s} |\Omega| \\
& \quad + \frac{\Ha^{n-1}(\mathbb S^{n-1})}{s \Theta^{s} d^{s}} \, \| u \|_{L^1(\Omega)} \bigg) - c_\star \int_{\Omega} \int_{\Omega_{\Theta d} \setminus \Omega} \frac{|u(x) - u(y)|}{|x - y|^{n + s}} \, dx\,dy.
\end{align*}
Putting together this estimate with~\eqref{uMlev} and~\eqref{AsuM}, and recalling that~$\Theta\ge1$, we find that
\begin{equation} \label{semiest}
\int_{\Omega} \int_{\Omega_{\Theta d}} \frac{|u(x) - u(y)|}{|x - y|^{n + s}} \, dx\,dy \le C_1 \! \left( \left\| \Tail_s(\varphi, \Omega_{\Theta d} \setminus \Omega; \,\cdot\,) \right\|_{L^1(\Omega)} + \Theta^{1 - s} d^{1 - s} |\Omega| + \frac{\| u \|_{L^1(\Omega)}}{\Theta^s d^s} \right)
\end{equation}
for some constant~$C_1 > 0$ depending only on~$n$,~$s$, and~$g$.

Observe now that~$\diam(\Omega_d)=3d$ and~$|\Omega_d\setminus\Omega|\ge c_nd^n$, for a dimensional constant~$c_n>0$. Indeed, the equality is an immediate consequence of the definition
of~$\Omega_d$, while the measure estimate follows from the fact that~$B_{d/2}(x_0)\subseteq\Omega_d\setminus\Omega$ for every~$x_0\in\partial\Omega_{d/2}$. Using these two facts, we estimate
$$
\int_{\Omega_d \setminus \Omega} \frac{dy}{|x - y|^{n + s}} \ge \frac{|\Omega_d \setminus \Omega|}{\diam(\Omega_d)^{n + s}} \ge \frac{c_n}{3^{n + s} d^s} \quad \mbox{for every } x \in \Omega.
$$
By this and the triangular inequality, we have
\begin{equation} \label{newFPI}
\begin{aligned}
\| u \|_{L^1(\Omega)} & \le \frac{3^{n + s} d^s}{c_n} \int_{\Omega} |u(x)| \left( \int_{\Omega_d \setminus \Omega} \frac{dy}{|x - y|^{n + s}} \right) dx \\
& \le \frac{3^{n + s} d^s}{c_n}  \left( \int_{\Omega} \int_{\Omega_{d}\setminus \Omega} \frac{|u(x) - u(y)|}{|x - y|^{n + s}} \, dx\,dy + \left\| \Tail_s(\varphi, \Omega_d \setminus \Omega;\,\cdot\,) \right\|_{L^1(\Omega)} \right).
\end{aligned}
\end{equation}
Using this estimate together with~\eqref{semiest} and recalling that~$\Theta \ge 1$, we get
$$
\| u \|_{L^1(\Omega)} \le C_2 \left( d^s \left\| \Tail_s(\varphi, \Omega_{\Theta d} \setminus \Omega; \,\cdot\,) \right\|_{L^1(\Omega)} + \Theta^{1 - s} d |\Omega| + \Theta^{-s} \| u \|_{L^1(\Omega)} \right),
$$
with~$C_2>0$ depending only on~$n$,~$s$, and~$g$.
By taking~$\Theta$ sufficiently large (in dependence of~$n$,~$s$, and~$g$ only), we can reabsorb the~$L^1$ norm of~$u$ on the left-hand side and obtain that
$$
\| u \|_{L^1(\Omega)} \le C_2 \left( d^s \left\| \Tail_s(\varphi, \Omega_{\Theta d} \setminus \Omega;\,\cdot\,) \right\|_{L^1(\Omega)} + d |\Omega| \right),
$$
for possibly a larger~$C_2$. The conclusion follows by combining this estimate with~\eqref{semiest}.
\end{proof}

\subsection{Interior $L^\infty$ estimates} \label{locboundsub}

In this subsection, we establish some interior boundedness results for minimizers of~$\F$ and~$\Per_s$. First, through a rather standard De Giorgi-type iteration, we prove the following proposition. It is stated for subminimizers---recall Definition~\ref{submindef}---and provides a bound from above. Of course, a two-sided~$L^\infty$ estimate can then be deduced for minimizers.

\begin{prop} \label{Linftylocprop}
Let~$R > 0$ and~$u$ be a subminimizer of~$\F$ in~$B_{2 R}$. Then,
\begin{equation} \label{supubound}
\sup_{B_R} u \le C \left( R + \dashint_{B_{2 R}} u_+(x) \, dx \right),
\end{equation}
for some constant~$C > 0$ depending only on~$n$,~$s$, and~$g$.
\end{prop}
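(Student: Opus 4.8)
The plan is a De Giorgi-type iteration on super-level sets. Since the argument of $\G$ inside $\F$ is invariant under the rescaling $u \mapsto R^{-1}u(R\,\cdot\,)$ and $\F(u,B_{2R}) = R^{n+1-s}\,\F(R^{-1}u(R\,\cdot\,),B_2)$ (and likewise $\F^M$, by Lemma~\ref{tartariccio}), this transformation preserves subminimality; the same is true of membership in $\W^s$. Hence, up to replacing $u$ with $R^{-1}u(R\,\cdot\,)$, I may assume $R=1$ and it suffices to prove $\sup_{B_1}u \le C\bigl(1+\dashint_{B_2}u_+(x)\,dx\bigr)$ with $C=C(n,s,g)$.

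The core step is a Caccioppoli-type inequality: for $u$ a subminimizer of $\F$ in $B_2$, every $k\ge 0$, and all radii $1\le r<\rho\le 2$,
\[
[(u-k)_+]_{W^{s,1}(B_r)} \le \frac{C}{(\rho-r)^s}\,\|(u-k)_+\|_{L^1(B_\rho)} + C\,\bigl|\{x\in B_r: u(x)>k\}\bigr|, \qquad C=C(n,s,g).
\]
To prove it I test the subminimality inequality~\eqref{submin_eq} against the competitor $v:=u-\tau(u-k)_+$, where $\tau\in C^\infty_c(B_{(r+\rho)/2})$ satisfies $\tau\equiv 1$ on $B_r$, $0\le\tau\le 1$, $|\nabla\tau|\le 4/(\rho-r)$; since $\tau(u-k)_+\ge 0$ has support compactly contained in $B_2$, one has $v\le u$, $v=u$ in $\Co B_2$, and $v\in\W^s(B_2)$, so~\eqref{submin_eq} applies. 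The heart of the matter is the pointwise inequality: writing $w:=(u-k)_+$, $D_w:=\frac{w(x)-w(y)}{|x-y|}$, and letting $\tau_\ast$ denote the value of $\tau$ at whichever of $x,y$ has the larger value of $u$, the convexity of $\G$, the monotonicity and boundedness ($|G|\le\Lambda/2$) of $G=\G'$, and the bound $\G(t)\ge\frac{c_\star}{2}(|t|-1)$ yield
\[
\G\!\left(\tfrac{u(x)-u(y)}{|x-y|}\right) - \G\!\left(\tfrac{v(x)-v(y)}{|x-y|}\right) \ge \G\bigl(\tau_\ast|D_w|\bigr) - \frac{\Lambda}{2}\,\frac{|\tau(x)-\tau(y)|\,\min\{w(x),w(y)\}}{|x-y|}.
\]
Integrating over $\R^{2n}$ (the left-hand side vanishes off $\{x\in\supp\tau\}\cup\{y\in\supp\tau\}$), the first term on the right is bounded below — after discarding the nonnegative part outside $B_r^2$ (where $\tau_\ast=1$) and restricting the ``$-1$'' to $\{D_w\ne 0\}$ — by $\frac{c_\star}{2}[w]_{W^{s,1}(B_r)} - C|\{u>k\}\cap B_r|$ via Lemma~\ref{dumb_kernel_lemma}; the second term is controlled, using $|\tau(x)-\tau(y)|\le\min\{1,\tfrac{4}{\rho-r}|x-y|\}$ and $\min\{w(x),w(y)\}\le w(z)$ for $z\in\supp\tau\subseteq B_\rho$, by $\frac{C}{(\rho-r)^s}\|w\|_{L^1(B_\rho)}$. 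Rearranging gives the claim. I expect this step to be the main obstacle: the delicate point is that the cutoff transition is kept at the scale-critical order $(\rho-r)^{-s}\|w\|_{L^1}$ rather than producing an unabsorbable bulk $W^{s,1}$-seminorm on the annulus $B_\rho\setminus B_r$, and this is precisely where the linear growth of $\G$ at infinity (boundedness of $G$) is used.

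Combining the Caccioppoli inequality with the fractional Sobolev embedding $W^{s,1}(B_\rho)\hookrightarrow L^{\kappa}(B_\rho)$, $\kappa:=\tfrac{n}{n-s}$ (with constant uniform for $\rho\in[1,2]$), I run the iteration. Set $\rho_j:=1+2^{-j}$, $k_j:=\ell(2-2^{-j})$ with $\ell\ge 1$ to be fixed, $A_j:=\{x\in B_{\rho_j}:u(x)>k_j\}$, and $a_j:=\|(u-k_j)_+\|_{L^1(B_{\rho_j})}$. From $u-k_j>k_{j+1}-k_j=\ell 2^{-(j+1)}$ on $A_{j+1}$, Chebyshev gives $|A_{j+1}|\le 2^{j+1}\ell^{-1}a_j$, and the Caccioppoli inequality (with $r=\rho_{j+1}$, $\rho=\rho_j$, level $k_{j+1}$) together with $\|(u-k_{j+1})_+\|_{L^1(B_{\rho_j})}\le a_j$ gives $\|(u-k_{j+1})_+\|_{W^{s,1}(B_{\rho_{j+1}})}\le C\,2^{j+1}a_j$. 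Then, since $(u-k_{j+1})_+=(u-k_{j+1})_+\chi_{A_{j+1}}$ on $B_{\rho_{j+1}}$, Hölder's and Sobolev's inequalities yield
\[
a_{j+1}\le \|(u-k_{j+1})_+\|_{L^\kappa(B_{\rho_{j+1}})}\,|A_{j+1}|^{1-1/\kappa} \le \frac{C}{\ell^{\beta}}\,\mathcal{B}^{\,j}\,a_j^{1+\beta}, \qquad \beta:=\tfrac sn,\quad \mathcal{B}:=2^{1+\beta}.
\]
By the standard fast-convergence lemma, $a_j\to 0$ provided $a_0=\|(u-\ell)_+\|_{L^1(B_2)}\le c_\ast(n,s,g)\,\ell$; since $a_0\le|B_2|\dashint_{B_2}u_+$, this holds once $\ell:=\max\{1,\,C_1\dashint_{B_2}u_+\}$ with $C_1=C_1(n,s,g)$ large enough. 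Finally, Fatou's lemma gives $\|(u-2\ell)_+\|_{L^1(B_1)}\le\liminf_j a_j=0$, hence $u\le 2\ell\le 2+2C_1\dashint_{B_2}u_+$ a.e.\ in $B_1$, which is the desired estimate; undoing the rescaling yields~\eqref{supubound}.
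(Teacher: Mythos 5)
Your proof is correct, and while the overall skeleton (the competitor $v = u - \tau(u-k)_+$, a Caccioppoli inequality, the fractional Sobolev embedding, and a De Giorgi iteration) coincides with the paper's, the way you derive the Caccioppoli inequality is genuinely different and, in two respects, better. The paper bounds $\G\bigl(\tfrac{v(x)-v(y)}{|x-y|}\bigr)-\G\bigl(\tfrac{u(x)-u(y)}{|x-y|}\bigr)$ from above separately on $B_\varrho\times B_\varrho$ and on $B_\tau\times\Co B_\varrho$ (estimates~\eqref{estinBr} and~\eqref{estoutBr}); the second estimate leaves a positive seminorm term $[w]_{W^{s,1}(B_\tau^2\setminus B_\varrho^2)}$ on the wrong side, which then has to be reabsorbed via the hole-filling lemma of~\cite[Lemma~1.1]{GG82}. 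Your single pointwise inequality, resting on the elementary fact that $\G(a)-\G(a-t)\ge\G(t)$ for $0\le t\le a$ (convexity of $\G$ plus monotonicity of $G$) together with the global Lipschitz bound~\eqref{Lip_Gcal}, keeps the nonnegative term $\G(\tau_*|D_w|)$ everywhere and confines the error to $\tfrac{\Lambda}{2}\,|\tau(x)-\tau(y)|\min\{w(x),w(y)\}/|x-y|$, so the annulus term never appears and the hole-filling step is unnecessary. More significantly, in the mixed case $u(x)>k\ge u(y)$ the paper's route needs the matched-slope cancellation $\G(A)-\G(B)\le\tfrac{\Lambda}{2}(A-B)+\lambda$, i.e.\ the refined lower bound~\eqref{GGbetterbound}, which is precisely where hypothesis~\eqref{intglelambda} enters (the paper flags this explicitly after Lemma~\ref{gsprop}); your argument only invokes the cruder bound~\eqref{GGboundcons}, so it goes through under~\eqref{geven},~\eqref{gbounds}, and~\eqref{gintegr} alone and would cover kernels such as $g(t)=1/(1+t^2)$ that the paper's proof excludes. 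The remaining deviations --- rescaling to $R=1$ (legitimate, since the integrand of~\eqref{submin_eq} is invariant under $u\mapsto R^{-1}u(R\,\cdot\,)$ up to the factor $R^{n+1-s}$) and applying the Sobolev embedding directly on the ball rather than through a second cutoff and the inequality on $\R^n$ --- are cosmetic.
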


\begin{proof}
Let~$0 < \varrho < \tau \le 2 R$ and~$\eta \in C_c^\infty(\R^n)$ be a
cutoff function acting between the balls~$B_\varrho$ and~$B_\tau$, i.e.,~satisfying~$0 \le \eta \le 1$ in~$\R^n$,~$\supp(\eta) \Subset B_\tau$,~$\eta = 1$
in~$B_\varrho$ and~$| \nabla \eta | \le 2 / (\tau - \varrho)$ in~$\R^n$. For~$k \ge 0$, we consider the
functions~$w = w_k := (u - k)_+$ and~$v := u - \eta w$. Clearly,~$v \le u$ in~$\R^n$ and~$v = u$ in~$\Co B_\tau$. Therefore,
\begin{equation} \label{Iineq}
\iint_{Q(B_\tau)} \frac{\I(x, y)}{|x - y|^{n - 1 + s}} \, dx\, dy \ge 0,
\end{equation}
with
$$
\I(x, y) := \G \left( \frac{v(x) - v(y)}{|x - y|} \right) - \G \left( \frac{u(x) - u(y)}{|x - y|} \right).
$$

We consider the sets~$A(k) := \{ x \in \R^n : u > k \}$ and~$A(k, t) := B_t \cap A(k)$, for~$t > 0$. First of all, we claim that
\begin{equation} \label{estinBr}
\I(x, y) \le - \frac{\Lambda}{2} \, \frac{|w(x) - w(y)|}{|x - y|} + \lambda \, \chi_{B_\varrho^2 \setminus (B_\varrho \setminus A(k, \varrho))^2}(x, y) \quad \mbox{for } x, y \in B_\varrho,
\end{equation}
with~$\lambda$ and~$\Lambda$ as defined in~\eqref{intglelambda} and~\eqref{gintegr} respectively.
Clearly,~\eqref{estinBr} holds for every~$x, y \in \Co A(k)$, since~$\I(x, y) = 0$ for these points and~$w = 0$ in~$\Co A(k)$.
Furthermore, it is also valid for~$x, y \in A(k, \varrho)$, as, indeed, by~\eqref{GGbetterbound} we have
$$
\I(x, y) = - \G \left( \frac{u(x) - u(y)}{|x - y|} \right) = - \G \left( \frac{w(x) - w(y)}{|x - y|} \right) \le - \frac{\Lambda}{2} \, \frac{|w(x) - w(y)|}{|x - y|} + \lambda.
$$
By symmetry, we are left to check~\eqref{estinBr} for~$x \in A(k, \varrho)$ and~$y \in B_\varrho \setminus A(k, \varrho)$.
In this case, using~$u(x) > k \ge u(y)$ along with~\eqref{GGbetterbound}, we get
\begin{align*}
\I(x, y) & = \G \left( \frac{k - u(y)}{|x - y|} \right) - \G \left( \frac{u(x) - u(y)}{|x - y|} \right) \le \frac{\Lambda}{2} \left( \frac{k - u(y)}{|x - y|} - \frac{u(x) - u(y)}{|x - y|} \right) + \lambda \\
& = - \frac{\Lambda}{2} \, \frac{u(x) - k}{|x - y|} + \lambda = -\frac{\Lambda}{2} \, \frac{|w(x) - w(y)|}{|x - y|} + \lambda.
\end{align*}
Hence,~\eqref{estinBr} is verified.

We now claim that
\begin{equation} \label{estoutBr}
\I(x, y) \le \Lambda \left( \chi_{B_\tau}(y) \frac{|w(x) - w(y)|}{|x - y|} + \frac{w(x)}{\max \{ \tau - \varrho, |x - y| \}} \right) \quad \mbox{for } x \in B_\tau, \, y \in \Co B_\varrho.
\end{equation}
We already observed that~$\I(x, y) = 0$ for every~$x, y \in \Co A(k)$. When~$x \in B_\tau \setminus A(k, \tau)$ and~$y \in A(k)$,
\begin{align*}
u(y) - u(x) & \ge u(y) - u(x) - \eta(y) (u(y) - k) = (1 - \eta(y)) u(y) + k \eta(y) - u(x) \\
& \ge (1 - \eta(y)) u(y) + k \eta(y) - k = (1 - \eta(y)) (u(y) - k) \ge 0
\end{align*}
and therefore
$$
\I(x, y) = \G \left( \frac{u(y) - u(x) - \eta(y) (u(y) - k)}{|x - y|} \right) - \G \left( \frac{u(y) - u(x)}{|x - y|} \right) \le 0,
$$
by the symmetry and monotonicity properties of~$\G$. We are thus left to deal with~$x \in A(k, \tau)$ and~$y \in \Co B_\varrho$. In this case, by the Lipschitz character of~$\G$ and the properties of~$\eta$,
\begin{align*}
\I(x, y) & \le \frac{\Lambda}{2} \, \frac{|\eta(x) w(x) - \eta(y) w(y)|}{|x - y|} \le \frac{\Lambda}{2} \, \frac{\eta(y) |w(x) - w(y)| + w(x) |\eta(x) - \eta(y)|}{|x - y|} \\
& \le \frac{\Lambda}{2} \left( \chi_{B_\tau}(y) \frac{|w(x) - w(y)|}{|x - y|} + \min \left\{ \frac{2}{\tau - \varrho}, \frac{1}{|x - y|} \right\} w(x) \right),
\end{align*}
and~\eqref{estoutBr} follows.

By taking advantage of estimates~\eqref{estinBr} and~\eqref{estoutBr} in~\eqref{Iineq}, by symmetry we deduce that
\begin{align*}
\iint_{B_\varrho^2} \frac{|w(x) - w(y)|}{|x - y|^{n + s}} \, dx\, dy & \le C \, \Bigg\{ \iint_{B_\tau^2 \setminus B_\varrho^2} \frac{|w(x) - w(y)|}{|x - y|^{n + s}} \, dx \,dy + \int_{A(k, \varrho)} \int_{B_\varrho} \frac{dx\, dy}{|x - y|^{n - 1 + s}} \\
& \quad + \int_{B_\tau} w(x) \left( \frac{1}{\tau - \varrho} \int_{B_{\tau - \varrho}} \frac{dz}{|z|^{n - 1 + s}} + \int_{\Co B_{\tau - \varrho}} \frac{dz}{|z|^{n + s}} \right) dx \Bigg\} \\
& \le  C \left\{\iint_{B_\tau^2 \setminus B_\varrho^2} \frac{|w(x) - w(y)|}{|x - y|^{n + s}} \, dx\, dy + |A(k, \varrho)| \varrho^{1 - s} + \frac{\| w \|_{L^1(B_\tau)}}{(\tau - \varrho)^s} \right\},
\end{align*}
for some constant~$C > 0$ depending only on~$n$,~$s$, and~$g$. Note that for the second inequality we also used Lemma~\ref{dumb_kernel_lemma}. Adding to both sides~$C$ times the left-hand side and dividing by~$1 + C$, we get that
$$
[w]_{W^{s, 1}(B_\varrho)} \le \theta \left( [w]_{W^{s, 1}(B_\tau)} + |A(k, \tau)| \tau^{1 - s} + \frac{\| w \|_{L^1(B_\tau)}}{(\tau - \varrho)^s} \right)
$$
for every~$0 < \varrho < \tau \le 2 R$ and for some constant~$\theta \in (0, 1)$ depending only on~$n$,~$s$, and~$g$. Applying, e.g,~\cite[Lemma~1.1]{GG82}, we infer that
$$
[w]_{W^{s, 1}(B_{(\varrho + \tau) / 2})} \le C \left( |A(k, \tau)| \tau^{1 - s} + \frac{\| w \|_{L^1(B_\tau)}}{(\tau - \varrho)^s} \right).
$$

Let~$\eta$ be a new cutoff acting between the balls~$B_\varrho$ and~$B_{(3 \varrho + \tau) / 4}$. Then, by the fractional Sobolev inequality (see,~e.g.,~\cite[Theorem~1]{MS02} or~\cite[Theorem~6.5]{DPV12}) and computations similar to other made previously, we have that
\begin{align*}
\| w \|_{L^{\frac{n}{n - s}}(B_\varrho)} & \le \| \eta w \|_{L^{\frac{n}{n - s}}(\R^n)} \le C \int_{\R^n} \int_{\R^n} \frac{|\eta(x) w(x) - \eta(y) w(y)|}{|x- y|^{n + s}} \, dx \, dy\\
& \le C \left( [w]_{W^{s, 1}(B_{(\varrho + \tau) / 2})} + \frac{\| w \|_{L^1(B_\tau)}}{(\tau - \varrho)^{s}} \right).
\end{align*}
Combining the last two inequalities and recalling that~$w = w_k$, we arrive at
\begin{equation} \label{wstarlew}
\| w_k \|_{L^{\frac{n}{n - s}}(B_\varrho)} \le C \left( |A(k, \tau)| \tau^{1 - s} + \frac{\| w_k \|_{L^1(B_\tau)}}{(\tau - \varrho)^s} \right)
\end{equation}
for every~$0 < \varrho < \tau \le 2 R$ and~$k \ge 0$.

Take now~$k > h \ge 0$. We have
$$
\| w_h \|_{L^1(B_\tau)} \ge \int_{A(k, \tau)} (u(x) - h) \, dx \ge (k - h) |A(k, \tau)|
$$
and
$$
\| w_h \|_{L^1(B_\tau)} \ge \int_{A(k, \tau)} (u(x) - h) \, dx \ge \int_{A(k, \tau)} (u(x) - k) \, dx = \| w_k \|_{L^1(B_\tau)}.
$$
Thanks to these relations,~\eqref{wstarlew}, and H\"older's inequality, it is easy to see that
\begin{equation} \label{preiter}
\varphi(k, \varrho) \le \frac{C}{(k - h)^{s/n}} \left( \frac{\tau^{1 - s}}{k - h} + \frac{1}{(\tau - \varrho)^s} \right) \varphi(h, \tau)^{1 + \frac{s}{n}},
\end{equation}
where we set~$\varphi(\ell, t) := \| w_\ell \|_{L^1(B_t)}$.

Consider the two sequences~$\{ k_j \}$ and~$\{ r_j \}$ defined by~$k_j := M (1 - 2^{-j})$ and~$r_j := R (1 + 2^{-j})$ for every non-negative integer~$j$ and with~$M > 0$ to be chosen later. By applying~\eqref{preiter} with~$k = k_{j + 1}$,~$\varrho = r_{j + 1}$,~$h = k_j$, and~$\tau = r_j$, setting~$\varphi_j := \varphi(k_j, r_j)$, and taking~$M \ge R$, we find
$$
\varphi_{j + 1} \le \frac{C (2^j \varphi_j)^{1 + \frac{s}{n}}}{(R \sqrt[n]{M})^s}.
$$
Applying now,~e.g.,~\cite[Lemma~7.1]{G03}, we conclude that~$\varphi_j$ converges to~$0$---i.e.,~$u \le M$ in~$B_R$---, provided we choose~$M$ in such a way that
$$
\| u_+ \|_{L^1(B_{2 R})} = \varphi_0 \le c_\sharp R^n M,
$$
for a sufficiently small constant~$c_\sharp > 0$ depending only on~$n$,~$s$, and~$g$. This concludes the proof.
\end{proof}

The counterpart of Proposition~\ref{Linftylocprop} for subgraphs that (locally) minimize~$\Per_s$ is contained in the next result, which easily follows from the density estimates of~\cite{CRS10}.

\begin{prop} \label{LinftyforPers}
Let~$\Omega\subseteq\R^n$ be an open set and~$E \subseteq \R^{n + 1}$ be a locally~$s$-minimal set in~$\Omega \times \R$ satisfying
\begin{equation*}
E \cap \left( \Omega \times \R \right) = \Big\{ (x,t) \in \Omega \times \R : t < u(x) \Big\},
\end{equation*}
for some measurable function~$u: \Omega \to \R$. Then,~$u\in L^\infty_\loc(\Omega)$.
\end{prop}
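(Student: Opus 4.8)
The plan is to argue by contradiction, deriving the conclusion from the density estimates of~\cite{CRS10}. First I would suppose that $u \notin L^\infty_\loc(\Omega)$ and reduce, by a routine covering argument --- if $u$ were essentially bounded near every point of a compact $K \Subset \Omega$, it would be essentially bounded on $K$ --- to the existence of a point $x_0 \in \Omega$ such that $u$ is essentially unbounded on $B_\rho(x_0)$ for every $\rho$ with $\overline{B_\rho(x_0)} \subset \Omega$. Since ``essential unboundedness from above'' and ``from below'' on $B_\rho(x_0)$ are each monotone in $\rho$, one of the two alternatives holds for all admissible $\rho$; up to a reflection across $\{X_{n+1} = 0\}$ together with passing to the complement --- operations which preserve local $s$-minimality and which, modulo a null set, turn the trace of $E$ in $\Omega \times \R$ into the subgraph of $-u$ --- I may assume that $u$ is essentially unbounded from above on every such $B_\rho(x_0)$. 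Then I would fix $\delta > 0$ with $\overline{B_{2\delta}(x_0)} \subset \Omega$.

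The core of the argument is a simple measure-theoretic observation. For $L > 0$ set $\omega_L := \{ x \in B_\delta(x_0) : u(x) > L \}$. Since $u$ is real-valued and essentially unbounded from above on $B_\delta(x_0)$, we have $|\omega_L| > 0$ for every $L$ but $\bigcap_{L > 0} \omega_L = \emptyset$, whence $|\omega_L| \to 0$ as $L \to +\infty$. Because $E$ coincides with $\S_u$ throughout $\Omega \times \R \supseteq B_\delta(x_0) \times \R$, for every $L$ one has $E \cap \big( B_\delta(x_0) \times (L, L + 3\delta) \big) \subseteq \omega_L \times (L, L + 3\delta)$; hence, writing $X_L := (x_0, L + 2\delta)$ so that $B_\delta(X_L) \subseteq B_\delta(x_0) \times (L + \delta, L + 3\delta)$, I get $\big| E \cap B_\delta(X_L) \big| \le 3\delta\,|\omega_L| \to 0$. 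Moreover, by the choice of $x_0$ and $\delta$, the ball $B_\delta(X_L)$ is compactly contained in $\Omega \times \R$ (its closure projects into $\overline{B_\delta(x_0)} \subset B_{2\delta}(x_0) \Subset \Omega$), so $E$ is $s$-minimal in $B_\delta(X_L)$.

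At this point I would invoke the density estimates of~\cite{CRS10} in the following ``$\eps$-regularity'' form: there exists $\eps_0 = \eps_0(n, s) > 0$ such that, if a set $F$ is $s$-minimal in a ball $B_\rho(Y_0) \subseteq \R^{n + 1}$ and $|F \cap B_\rho(Y_0)| < \eps_0 \rho^{n + 1}$, then $F \cap B_{\rho / 2}(Y_0)$ is negligible. (Indeed, were $\partial F$ to meet $\overline{B_{\rho/2}(Y_0)}$, the lower density bound at such a point, at a scale comparable to $\rho$, would contradict the smallness hypothesis; hence $\partial F \cap B_{\rho/2}(Y_0) = \emptyset$, so $\chi_F$ is a.e.~constant on $B_{\rho/2}(Y_0)$, and the smallness rules out the possibility that $F \supseteq B_{\rho/2}(Y_0)$.) Applying this with $F = E$, $Y_0 = X_L$, $\rho = \delta$, for $L$ so large that $3\delta\,|\omega_L| < \eps_0 \delta^{n + 1}$, I obtain that $E \cap B_{\delta/2}(X_L)$ is negligible. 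Since $B_{\delta/2}(X_L)$ contains $B_{\delta/4}(x_0) \times \big( L + \tfrac{7\delta}{4}, L + \tfrac{9\delta}{4} \big)$, this forces $u \le L + \tfrac{7\delta}{4}$ a.e.~on $B_{\delta/4}(x_0)$, contradicting the essential unboundedness from above of $u$ on $B_{\delta/4}(x_0)$. This contradiction would conclude the proof.

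I expect the only substantive step to be the last one: correctly packaging the density estimates of~\cite{CRS10} into the dichotomy above --- small mass in a ball forces emptiness in the concentric half-ball --- and carrying out the geometric bookkeeping that places each ball $B_\delta(X_L)$ compactly inside $\Omega \times \R$ so that local $s$-minimality applies there. Everything else is elementary measure theory. It is worth stressing that the reduction $|\omega_L| \to 0$ in the second paragraph uses crucially that $u$ is real-valued: a positive-measure set on which $u = +\infty$ would leave an entire vertical column inside $E$ and would call for a separate treatment.
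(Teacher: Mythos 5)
Your argument is correct, and it reaches the conclusion by a route that differs in its mechanics from the one in the paper, although both ultimately rest on the density estimates of~\cite{CRS10}. The paper also argues by contradiction, but in the opposite direction: assuming $|E\cap(B\times(k,+\infty))|>0$ for all $k$, it produces boundary points $X_k=(x_k,t_k)\in\partial E$ with $t_k>k$, invokes the \emph{clean ball condition} (\cite[Corollary~4.3]{CRS10}) to place a ball of radius comparable to $r$ \emph{inside} $E\cap\mathcal B_r(X_k)$, hence gets $u\ge k-r$ on a ball $B_{cr}(z_k)$, and then uses compactness of $\{z_k\}$ to find a fixed ball on which $u=+\infty$ a.e., contradicting that $u$ is real-valued. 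You instead quantify the smallness of $E$ in high horizontal slabs via $|\omega_L|\to0$ and use the lower volume density bound to show that $E$ must be \emph{empty} in a concentric half-ball, which bounds $u$ from above on a fixed ball and contradicts essential unboundedness. Your version buys a little more: the threshold $L$ is controlled by the first level at which $3\delta|\omega_L|<\eps_0\delta^{n+1}$, so it naturally yields the quantitative $L^\infty$ bound in terms of $\|u_+\|_{L^1}$ that the paper only alludes to in the remark following the proposition; the price is the preliminary dichotomy/reflection reduction to one-sided unboundedness, which the paper's symmetric two-inclusion formulation avoids. Two small points worth making explicit if you write this up: the passage from ``$s$-minimal in $B_\rho(Y_0)$'' to ``$s$-minimal in $B_{\rho/2}(Y)$ for $Y\in\overline{B_{\rho/2}(Y_0)}$'' (minimality localizes to open subsets), and the choice of the good representative of $E$ for which the topological boundary coincides with the essential boundary, which is what licenses applying the density estimate at every point of $\partial E$.
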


\begin{proof}
	The claim is equivalent to proving that, for every ball~$B := B_r(x_0)$ with~$4 B := B_{4 r}(x_0) \subseteq \Omega$, there exists a constant~$M>0$ such that, up to sets of measure zero,
	\[
	B \times(-\infty,-M)\subseteq E \cap \left( B \times \R \right)\subseteq B \times(-\infty,M).
	\]
	We only establish the rightmost inclusion, the other one being analogous.

	We argue by contradiction and suppose that
	\[
	\left|E\cap\left(B\times(k,+\infty)\right)\right|>0,
	\]
	for every~$k\in\N$. Notice that, for every~$k\in\N$, it holds
	\[
	\left|\left(B\times(k,+\infty)\right)\setminus E\right|>0,
	\]
	as otherwise we would have~$u\equiv+\infty$ in~$B$. These two facts imply that there exists a sequence of points~$X_k=(x_k,t_k)\in\partial E\cap\left(B\times\R\right)$ such that~$t_k>k$. As~$E$ is $s$-minimal in the $(n+1)$-dimensional ball~$\mathcal{B}_r(X_k) := \left\{ Y \in \R^{n + 1} : |Y - X_k| < r \right\} \Subset \Omega \times \R$, the clean ball condition~\cite[Corollary~$4.3$]{CRS10} ensures the existence of a point~$Z_k=(z_k,\tau_k)$ such that
	\[
	\mathcal{B}_{2 c r}(Z_k)\subseteq E\cap\mathcal{B}_r(X_k),
	\]
	for some constant~$c \in (0, 1/2)$ depending only on~$n$ and~$s$. This yields that~$u\geq \tau_k > k - r$ in~$B_{c r}(z_k)$.

	Now, up to passing to a subsequence, the points~$z_k$ converge to some~$z_0 \in 3 B := B_{3 r}(x_0)$. Hence,~$B_{c r}(z_0)\subseteq B_{2 c r}(z_k)$ for every large enough~$k$. As a result,~$u\geq k-d$ in~$B_{c r}(z_0)$ for every large~$k$. Since this is a contradiction, the proof of the proposition is complete.
\end{proof}

Proposition~\ref{LinftyforPers} establishes that the function~$u$ defining a locally~$s$-minimal subgraph is necessarily locally bounded. We point out that, by suitably modifying the proof we just presented, one can also obtain a quantitative~$L^\infty$ bound in terms of the~$L^1$ norm of~$u$---such as~\eqref{supubound}. For our aims, the statement of Proposition~\ref{LinftyforPers} is however more powerful, since it does not require a priori~$u$ to be integrable, but merely measurable.

\subsection{Global $L^\infty$ estimates} \label{globboundsub}

When the outside datum is bounded in a suitably large neighborhood of~$\Omega$, we can establish the boundedness of minimizers of~$\F$ up to the boundary of~$\Omega$---as claimed in Theorem~\ref{minareboundedthm}. We obtain this result by showing that, if~$u: \R^n \to \R$ is bounded in~$\Omega_R \setminus \Omega$ for some large~$R > 0$, the value of~$\F^M$ decreases when~$u$ is truncated at a high enough level---namely considering
\begin{equation} \label{uNdef}
u^{(N)} := \begin{cases}
\min \{ u, N \} & \quad \mbox{in } \Omega,\\
u & \quad \mbox{in } \Co \Omega,
\end{cases}
\end{equation}
for~$N \ge 0$. The precise formulation of this fact is as follows.

\begin{prop} \label{truncdecreaseprop}
There exists a positive constant~$\Theta$, depending only on~$n$,~$s$, and~$g$, such that the following holds true. If~$\Omega \subseteq \R^n$ is a bounded open set with Lipschitz boundary,~$M \ge 0$, and~$u: \R^n \to \R$ is a function bounded from above in~$\Omega_{\Theta \diam(\Omega)} \setminus \Omega$, then it holds
\begin{equation} \label{AandNdecrease}
\A(u^{(N)}, \Omega) \le \A(u, \Omega) \quad \mbox{and} \quad \Nl^M(u^{(N)}, \Omega) \le \Nl^M(u, \Omega)
\end{equation}
for every
\begin{equation} \label{NgeR0+sup}
N \ge \diam(\Omega) + \sup_{\Omega_{\Theta \diam(\Omega)} \setminus \Omega} u.
\end{equation}
In particular,
$$
\F^M(u^{(N)}, \Omega) \le \F^M(u, \Omega)
$$
for every~$N$ satisfying~\eqref{NgeR0+sup}.
\end{prop}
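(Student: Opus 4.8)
The plan is to establish the two inequalities in~\eqref{AandNdecrease} \emph{separately}; since~$u|_\Omega\in W^{s,1}(\Omega)$ implies~$u^{(N)}|_\Omega=\min\{u,N\}|_\Omega\in W^{s,1}(\Omega)$, both~$\F^M(u,\Omega)$ and~$\F^M(u^{(N)},\Omega)$ are then finite (Lemmas~\ref{Adomainlem} and~\ref{NMdomainlem}) and the claim on~$\F^M$ follows by simply adding the two inequalities. I may assume from the outset that~$u|_\Omega\in W^{s,1}(\Omega)$: otherwise~$\A(u,\Omega)=+\infty$, so the first inequality is trivial, while the second remains meaningful because---as the proof of Lemma~\ref{NMdomainlem} shows---the exterior values of~$u$ enter~$\Nl^M$ only through the bounded function~$\overline{G}$, so that~$\Nl^M(u^{(N)},\Omega)$ and~$\Nl^M(u,\Omega)$ differ by an absolutely convergent integral whose sign is controlled exactly as below. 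The inequality for~$\A$ needs no lower bound on~$N$: the map~$t\mapsto\min\{t,N\}$ is monotone and~$1$-Lipschitz, hence~$|u^{(N)}(x)-u^{(N)}(y)|\le|u(x)-u(y)|$ for a.e.~$x,y\in\Omega$, and since~$\G$ is even and nondecreasing on~$[0,\infty)$ (Lemma~\ref{gsprop}) we obtain the pointwise bound~$\G\big((u^{(N)}(x)-u^{(N)}(y))/|x-y|\big)\le\G\big((u(x)-u(y))/|x-y|\big)$; integrating over~$\Omega\times\Omega$ yields~$\A(u^{(N)},\Omega)\le\A(u,\Omega)$.

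For the nonlocal part I would start from the representation~\eqref{nonlocal_explicit}. Since~$u^{(N)}=u$ on~$\Co\Omega$, the terms~$\G\big((M\pm u(y))/|x-y|\big)$ and the last summand of~\eqref{nonlocal_explicit} are unchanged, so that
\begin{equation*}
\Nl^M(u^{(N)},\Omega)-\Nl^M(u,\Omega)=2\int_\Omega\int_{\Co\Omega}\left[\G\!\left(\frac{u^{(N)}(x)-u(y)}{|x-y|}\right)-\G\!\left(\frac{u(x)-u(y)}{|x-y|}\right)\right]\frac{dy\,dx}{|x-y|^{n-1+s}}.
\end{equation*}
The bracketed integrand vanishes unless~$x\in\Omega$ with~$u(x)>N$, in which case~$u^{(N)}(x)=N$. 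Write~$d:=\diam(\Omega)$ and~$R:=\Theta d$, with~$\Theta\ge1$ to be fixed, and split~$\Co\Omega=(\Omega_R\setminus\Omega)\cup\Co\Omega_R$. On~$\Omega_R\setminus\Omega$ every~$y$ satisfies~$u(y)\le\sup_{\Omega_R\setminus\Omega}u\le N-d<N<u(x)$ by~\eqref{NgeR0+sup}, hence~$0<(N-u(y))/|x-y|<(u(x)-u(y))/|x-y|$ and, by monotonicity of~$\G$ on~$[0,\infty)$, the integrand is~$\le0$ there. Restricting further to the sub-annulus~$\Omega_d\setminus\Omega$, where in addition~$(N-u(y))/|x-y|\ge d/\diam(\Omega_d)\ge1/3$, the convexity inequality~$\G(b)-\G(a)\ge G(a)(b-a)$ (recall~$\G'=G$) together with~$G(a)\ge c_\star\min\{1,a\}\ge c_\star/3$ from~\eqref{Gbounds} upgrades this to the sharper pointwise bound~$\le-\tfrac{c_\star}{3}\,(u(x)-N)/|x-y|$. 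On~$\Co\Omega_R$, instead, the Lipschitz estimate~\eqref{Lip_Gcal} gives the pointwise bound~$\le\tfrac{\Lambda}{2}\,(u(x)-N)/|x-y|$.

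Integrating these three pointwise estimates---discarding the non-positive contribution of~$\Omega_R\setminus\Omega_d$, using~$\int_{\Omega_d\setminus\Omega}|x-y|^{-n-s}\,dy\ge c_n\,3^{-n-s}d^{-s}$ for~$x\in\Omega$ (as in the proof of Proposition~\ref{Ws1prop}, since~$B_{d/2}(x_0)\subseteq\Omega_d\setminus\Omega$ whenever~$x_0\in\partial\Omega_{d/2}$), and~$\int_{\Co\Omega_R}|x-y|^{-n-s}\,dy\le\Ha^{n-1}(\mathbb S^{n-1})\,(s\Theta^s d^s)^{-1}$ (because~$\Co\Omega_R\subseteq\Co B_R(x)$ for~$x\in\Omega$)---I would arrive at
\begin{equation*}
\Nl^M(u^{(N)},\Omega)-\Nl^M(u,\Omega)\le\left(\frac{\Lambda\,\Ha^{n-1}(\mathbb S^{n-1})}{s\,\Theta^s}-\frac{2c_\star c_n}{3^{\,n+1+s}}\right)\frac{\|(u-N)_+\|_{L^1(\Omega)}}{d^s}.
\end{equation*}
It then suffices to fix~$\Theta=\Theta(n,s,g)\ge1$ so large that the parenthesis is~$\le0$ to conclude~$\Nl^M(u^{(N)},\Omega)\le\Nl^M(u,\Omega)$, and hence, together with the bound on~$\A$, the full statement.

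The only genuinely delicate point---and therefore the main obstacle---is precisely this choice of~$\Theta$. The strictly negative contribution I am able to extract lives on the \emph{fixed} annulus~$\Omega_d\setminus\Omega$ and is a~$\Theta$-independent multiple of~$\|(u-N)_+\|_{L^1(\Omega)}/d^s$, whereas the unfavorable contribution coming from~$\Co\Omega_R$ decays only like~$\Theta^{-s}$, and~$s<1$. What makes the two balance is the observation that the intermediate region~$\Omega_R\setminus\Omega_d$ contributes non-positively---because there~$u(y)<N<u(x)$---and may be discarded outright; were it instead genuinely unfavorable, the~$O(\Theta^{-1})$ gain one could hope to draw from it would fall short of the~$O(\Theta^{-s})$ loss incurred at infinity.
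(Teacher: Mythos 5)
Your proof is correct and follows essentially the same route as the paper's: the same reduction via representation~\eqref{nonlocal_explicit}, the same splitting of~$\Co\Omega$ into a near region and~$\Co\Omega_R$, the same convexity inequality~$\G(a)-\G(b)\le -G(a)(b-a)$ to extract a negative contribution of order~$d^{-s}$ against a Lipschitz bound of order~$\Theta^{-s}d^{-s}$ at infinity, and the same final choice of~$\Theta$. The only (immaterial) variations are that you treat~$\A$ via the~$1$-Lipschitz character of the truncation rather than Lemma~\ref{Fminmaxlem}, and you lower-bound the favorable term using the measure of the full annulus~$\Omega_d\setminus\Omega$ instead of a supporting-hyperplane half-ball~$D_1=H_+\cap B_{\diam(\Omega)}(x_0)$.
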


Observe that Proposition~\ref{truncdecreaseprop}
directly implies Theorem~\ref{minareboundedthm}, thanks to the uniqueness of the minimizer of~$\F$ in~$\Omega$ with respect to its own outside datum---see point~\ref{tartapower_iii} of Remark~\ref{tartapower_Remark}.

\begin{proof}[Proof of Proposition~\ref{truncdecreaseprop}]
Write~$v := u^{(N)}$,~$R_0 := \diam(\Omega)$, and~$R := \Theta R_0$, with~$\Theta \ge 1$ to be chosen later sufficiently large,
in dependence of~$n$,~$s$, and~$g$ only.
From Lemma~\ref{Fminmaxlem}, it clearly follows that~$\A(v, \Omega) \le \A(u, \Omega)$.
Hence, we can focus on the inequality for the nonlocal part~$\Nl^M$.

Thanks to representation~\eqref{nonlocal_explicit}, we have
$$
\Nl^M(v, \Omega) - \Nl^M(u, \Omega) = 2 \int_{\Omega} \int_{\Co \Omega} \left\{ \G \left( \frac{v(x) - v(y)}{|x - y|} \right) - \G \left( \frac{u(x) - u(y)}{|x - y|} \right) \right\} \frac{dx\,dy}{|x - y|^{n - 1 + s}}.
$$
Setting~$\Omega_+ := \{ x \in \Omega : u(x) > N \}$ and writing~$\Co \Omega = A_1 \cup A_2$, with~$A_1 := \Omega_R \setminus \Omega$ and~$A_2 := \C \Omega_R$, we infer from the above identity that the second inequality in~\eqref{AandNdecrease} is equivalent to
\begin{equation} \label{Nlorderequiv}
\alpha_1 + \alpha_2 \le 0,
\end{equation}
where we set
\begin{align*}
\alpha_i := \int_{\Omega_+} \left\{ \int_{A_i} \left\{ \G \left( \frac{N - u(y)}{|x - y|} \right) - \G \left( \frac{u(x) - u(y)}{|x - y|} \right) \right\} \frac{dy}{|x - y|^{n - 1 + s}} \right\} dx \quad \mbox{for } i = 1, 2.
\end{align*}
 
First, we establish a (negative) upper bound for~$\alpha_1$. Let~$x \in \Omega_+$ and~$y \in A_1$. Since, by hypothesis~\eqref{NgeR0+sup},~$u(y) \le N < u(x)$ and~$G$ is increasing, we have
\begin{equation} \label{Gincruyux}
\begin{aligned}
\G \left( \frac{N - u(y)}{|x - y|} \right) - \G \left( \frac{u(x) - u(y)}{|x - y|} \right) & = - \int_{\frac{N - u(y)}{|x - y|}}^{\frac{u(x) - u(y)}{|x - y|}} G(t) \, dt \\
& \le - G\left( \frac{N - u(y)}{|x - y|} \right) \frac{u(x) - N}{|x - y|},
\end{aligned}
\end{equation}
and consequently
$$
\alpha_1 \le - \int_{\Omega} \left( u(x) - N \right)_+ \left\{ \int_{A_1} G \left( \frac{N - u(y)}{|x - y|} \right) \frac{dy}{|x - y|^{n + s}} \right\} dx.
$$
Let~$D_1 \subseteq \Omega_{R_0} \setminus \Omega \subseteq A_1$ be a measurable set with~$|D_1| \ge c_\star R_0^n$ for some dimensional constant~$c_\star > 0$. To obtain such a set, we can select a point~$x_0 \in \partial \Omega$ for which~$\Omega$ lies on one side of a supporting hyperplane for~$\partial \Omega$ at~$x_0$---the existence of such a point follows from the boundedness of~$\Omega$. Calling~$H_+$ the half-space bounded by this hyperplane which does not contain~$\Omega$, we can simply set~$D_1 := H_+ \cap B_{R_0}(x_0)$. Using~\eqref{NgeR0+sup} and the monotonicity of~$G$, we estimate
$$
\int_{A_1} G \left( \frac{N - u(y)}{|x - y|} \right) \frac{dy}{|x - y|^{n + s}} \ge \int_{D_1} G \left( \frac{R_0}{R_0 + R_0} \right) \frac{dy}{(R_0 + R_0)^{n + s}} = \frac{G(1/2) |D_1|}{(2 R_0)^{n + s}} \ge \frac{c_1}{R_0^s}
$$
for every~$x \in \Omega$ and for some constant~$c_1 > 0$ depending only on~$n$,~$s$, and~$g$. Accordingly,
\begin{equation} \label{negcontr}
\alpha_1 \le - \frac{c_1}{R_0^s} \int_{\Omega} \left( u(x) - N \right)_+ dx.
\end{equation}

On the other hand, to control~$\alpha_2$ we simply use that~$\G$ is a globally Lipschitz function, by~\eqref{Lip_Gcal}, and compute
\begin{align*}
\alpha_2 & \le\frac{\Lambda}{2} \int_{\Omega} \left( u(x) - N \right)_+ \left( \int_{\C \Omega_R} \frac{dy}{|x - y|^{n + s}} \right) dx \\
& \le \frac{\Lambda}{2}\int_{\Omega} \left( u(x) - N \right)_+ \left( \int_{\C B_R} \frac{dz}{|z|^{n + s}} \right) dx \le \frac{C_2}{\Theta^s R_0^s} \int_{\Omega} \left( u(x) - N \right)_+ dx,
\end{align*}
for some constant~$C_2 > 0$ depending only on~$n$,~$s$, and~$g$. Notice that to get the second inequality we changed variables and took advantage of the inclusion~$B_R(x) \subseteq \Omega_R$, which holds for all~$x \in \Omega$. Combining this last estimate with~\eqref{negcontr}, we obtain
$$
\alpha_1 + \alpha_2 \le - \frac{1}{R_0^{s}} \left( c_1 - \frac{C_2}{\Theta^s} \right) \int_{\Omega} \left( u(x) - N \right)_+ dx,
$$
and~\eqref{Nlorderequiv} follows provided we take~$\Theta \ge (C_2 / c_1)^{1/s}$.
\end{proof}

Through an appropriate modification of the proof
of Proposition~\ref{truncdecreaseprop} and the interior~$L^\infty$ bound of Proposition~\ref{Linftylocprop}, we can actually improve Theorem~\ref{minareboundedthm}, establishing the global boundedness of minimizers of~$\F$ which have exterior data bounded only in an \emph{arbitrarily small} neighborhood of~$\Omega$.

We indicate with~$\bar{d}_\Omega$ the signed distance function from~$\partial\Omega$, negative inside~$\Omega$, and write
\begin{equation} \label{Omegarhodef}
\Omega_\varrho := \Big\{ x\in\R^n : \bar{d}_\Omega(x) <\varrho\Big \} \quad \mbox{for } \varrho \in \R.
\end{equation}

\begin{prop}\label{Bdary_Bdedness_prop}
Let~$\Omega\subseteq\R^n$ be a bounded open set with Lipschitz boundary.
If~$u$ is a minimizer of~$\F$ in~$\Omega$ and~$u\in L^\infty(\Omega_d\setminus\Omega)$
for some~$d > 0$,
then~$u\in L^\infty(\Omega)$ and it holds
$$
\|u\|_{L^\infty(\Omega\setminus\Omega_{-\theta d})}
\le d+\max\left\{\|u\|_{L^\infty(\Omega_{-\theta d})},\|u\|_{L^\infty(\Omega_d\setminus\Omega)}\right\},
$$
for some constant~$\theta\in(0,1)$ depending only on~$n$,~$s$,~$\Omega$, and~$g$.
\end{prop}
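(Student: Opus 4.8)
The plan is to combine a boundary-layer version of the truncation argument of Proposition~\ref{truncdecreaseprop} with the interior $L^\infty$ bound of Proposition~\ref{Linftylocprop} and the uniqueness of minimizers (point~\ref{tartapower_iii} of Remark~\ref{tartapower_Remark}). First I would set $L := \max\{\|u\|_{L^\infty(\Omega_{-\theta d})},\|u\|_{L^\infty(\Omega_d\setminus\Omega)}\}$, where $\theta\in(0,1)$ is a small parameter to be fixed, and argue that it suffices to prove $u\le d+L$ a.e.~in $\Omega\setminus\Omega_{-\theta d}$, the lower bound following by applying the result to $-u$. As in Proposition~\ref{truncdecreaseprop}, I consider the truncation $u^{(N)}$ from~\eqref{uNdef} with $N := d+L$; since by Proposition~\ref{Linftylocprop} $u$ is already known to be bounded on the compactly contained set $\Omega_{-\theta d}$ (note $\overline{\Omega_{-\theta d}}\Subset\Omega$), one has $u^{(N)}=u$ on $\Omega_{-\theta d}$ as soon as $N\ge\|u\|_{L^\infty(\Omega_{-\theta d})}$, so only the boundary layer $\Omega_+ := \{x\in\Omega\setminus\Omega_{-\theta d} : u(x)>N\}$ is affected. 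If I show $\F^M(u^{(N)},\Omega)\le\F^M(u,\Omega)$, then by strict convexity and uniqueness $u=u^{(N)}$ a.e., giving the claim.

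The estimate on $\A(u^{(N)},\Omega)\le\A(u,\Omega)$ is immediate from Lemma~\ref{Fminmaxlem} exactly as before, so the work is in $\Nl^M$. Writing, via~\eqref{nonlocal_explicit},
\[
\Nl^M(u^{(N)},\Omega)-\Nl^M(u,\Omega) = 2\int_{\Omega_+}\int_{\Co\Omega}\left\{\G\!\left(\frac{N-u(y)}{|x-y|}\right)-\G\!\left(\frac{u(x)-u(y)}{|x-y|}\right)\right\}\frac{dy}{|x-y|^{n-1+s}}\,dx,
\]
I would split $\Co\Omega = (\Omega_d\setminus\Omega)\cup(\Co\Omega_d)$, and on the far piece $\Co\Omega_d$ use global Lipschitzianity of $\G$ (via~\eqref{Lip_Gcal}) together with $|x-y|\ge\dist(\Omega_+,\Co\Omega_d)$ to bound that contribution by $C(n,s)\,d^{-s}\int_{\Omega_+}(u(x)-N)_+\,dx$ times a constant that can be made small—but here there is a subtlety: for $x$ deep in the boundary layer, $\dist(x,\Co\Omega_d)$ could be comparable to $d$, which is fine, but for $x$ very close to $\partial\Omega$ I must be careful that the exterior neighborhood over which I gain a negative term is genuinely inside $\Omega_d\setminus\Omega$. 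For the near piece I want a negative term: using monotonicity of $G$ as in~\eqref{Gincruyux}, for $x\in\Omega_+$ and $y$ in a suitable measurable subset $D_x\subseteq\Omega_d\setminus\Omega$ with $|D_x|\gtrsim d^n$ and $|x-y|\lesssim d$, I get $\G(\tfrac{N-u(y)}{|x-y|})-\G(\tfrac{u(x)-u(y)}{|x-y|})\le -G(\tfrac{N-u(y)}{|x-y|})\tfrac{u(x)-N}{|x-y|}$, and since $u(y)\le L\le N-d$ on $\Omega_d\setminus\Omega$ we have $N-u(y)\ge d$, so the $G$-factor is bounded below by $G(1/C)>0$, yielding a negative contribution $\le -c\,d^{-s}\int_{\Omega_+}(u(x)-N)_+\,dx$.

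The main obstacle I anticipate is geometric: unlike in Proposition~\ref{truncdecreaseprop}, where $\Omega_+\subseteq\Omega$ and one could use a fixed supporting-hyperplane ball near a single boundary point, here $\Omega_+$ sits in the thin shell $\Omega\setminus\Omega_{-\theta d}$, and for each $x$ in that shell I need to produce an exterior set $D_x\subseteq\Omega_d\setminus\Omega$ of measure $\gtrsim d^n$ lying within distance $\lesssim d$ of $x$, \emph{uniformly} in $x$. For a Lipschitz domain this should follow from the uniform cone condition: at the nearest boundary point to $x$ there is an exterior cone of fixed aperture and height $\sim d$, and for $x$ within distance $\theta d$ of $\partial\Omega$ with $\theta$ small this cone (truncated) lies in $\Omega_d\setminus\Omega$ and within distance $Cd$ of $x$. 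Fixing $\theta$ small enough (depending on the Lipschitz constant of $\Omega$) to make this work, and then choosing—exactly as in Proposition~\ref{truncdecreaseprop}—the two constant factors so the negative near-term dominates the positive far-term, gives $\Nl^M(u^{(N)},\Omega)\le\Nl^M(u,\Omega)$ and hence the conclusion; the resulting $\theta$ and the absorbed constants depend only on $n$, $s$, $\Omega$, and $g$, as claimed.
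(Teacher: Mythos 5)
Your overall strategy is the same as the paper's: truncate at $N = d + L$, observe that $\Omega_+\subseteq\Omega\setminus\Omega_{-\theta d}$, control the far exterior interactions via the Lipschitz bound~\eqref{Lip_Gcal} on~$\G$, and extract a negative term from exterior mass near $\partial\Omega$ using the monotonicity of $G$ as in~\eqref{Gincruyux}. But there is a genuine quantitative gap in how you close the argument. Your far-field contribution is bounded by $C_1 d^{-s}\int_{\Omega}(u-N)_+\,dx$, where $C_1$ depends only on $n$, $s$, $g$ (it comes from $\int_{\Co B_d(x)}|x-y|^{-n-s}\,dy\le C d^{-s}$) and \emph{cannot} be made small: unlike in Proposition~\ref{truncdecreaseprop}, you cannot push the far region out to $\Co\Omega_{\Theta\diam(\Omega)}$ with $\Theta$ large, because $u$ is only known to be bounded in $\Omega_d\setminus\Omega$. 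Your negative term, built from an exterior cone $D_x$ of height $\sim d$, measure $\gtrsim d^n$, lying at distance $\lesssim d$ from $x$, is only $-c\,d^{-s}\int_{\Omega}(u-N)_+\,dx$ with $c$ another \emph{fixed} constant (depending on the cone aperture, hence on the Lipschitz character of $\Omega$). At that point there is no free parameter left---``choosing the two constant factors'' is not something you get to do---and nothing guarantees $c>C_1$: for a domain with a poor Lipschitz constant the cone density $c$ can be arbitrarily small, while $C_1$ is essentially universal.

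The missing idea is to use $\theta$ to \emph{amplify} the negative term, not merely to guarantee geometric containment. The paper takes, for $x\in\Omega_+\subseteq\Omega\setminus\Omega_{-\theta d}$, the exterior set $B_{\theta d}(p_x)\setminus\Omega$, where $p_x$ is a nearest boundary point: the Lipschitz condition gives $|B_{\theta d}(p_x)\setminus\Omega|\ge\delta(\theta d)^n$, and every $y$ there satisfies $|x-y|\le 2\theta d$, so $\int_{B_{\theta d}(p_x)\setminus\Omega}|x-y|^{-n-s}\,dy\ge c_2\,\theta^{-s}d^{-s}$, while still $(N-u(y))/|x-y|\ge d/(2\theta d)\ge 1$, so the factor $G(\cdot)$ stays $\ge G(1)>0$. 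The negative term then carries a coefficient of order $\theta^{-s}d^{-s}$ and dominates $C_1 d^{-s}$ as soon as $\theta<(c_2 G(1)/C_1)^{1/s}$, which is exactly where the dependence of $\theta$ on $n$, $s$, $\Omega$, $g$ comes from. With this replacement your proof closes; the rest of your outline (the bound $\A(u^{(N)},\Omega)\le\A(u,\Omega)$ via Lemma~\ref{Fminmaxlem}, the use of representation~\eqref{nonlocal_explicit}, the sign of the integrand on the near piece, the finiteness of $\|u\|_{L^\infty(\Omega_{-\theta d})}$ from Proposition~\ref{Linftylocprop}, and the conclusion either via uniqueness or directly from $\int_\Omega(u-N)_+\,dx=0$) is correct and matches the paper.
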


We stress that the~$L^\infty$ norm of~$u$ in~$\Omega_{- \theta d} \Subset \Omega$ is finite thanks to the interior~$L^\infty$ estimate of Proposition~\ref{Linftylocprop} and a simple covering argument. Also, note that if we further assume that~$\Tail_s(u, \Omega_{\Theta \diam(\Omega)} \setminus \Omega;\,\cdot\,) \in L^1(\Omega)$ for~$\Theta > 0$ sufficiently large,
then the combination of the~$L^1$ estimate of Proposition~\ref{Ws1prop},
the~$L^\infty$ one of Proposition~\ref{Linftylocprop}, and the bound of Proposition~\ref{Bdary_Bdedness_prop} leads to an estimate on~$\|u\|_{L^\infty(\Omega)}$ purely in terms of the
exterior data and of the geometry of~$\Omega$.

\begin{proof}[Proof of Proposition~\ref{Bdary_Bdedness_prop}]
Without loss of generality, we only prove the bound from above.

Pick any
\eqlab{\label{sonny_rollins}
N\ge d+\max\left\{\sup_{\Omega_{-\theta d}} u,\sup_{\Omega_d\setminus\Omega} u\right\},
}
with~$\theta \in (0, 1/4]$ to be chosen suitably small later, and let~$u^{(N)}$ be defined by~\eqref{uNdef}. By taking~$u^{(N)}$ as a competitor in Definition~\ref{mindef} and using Lemma~\ref{Fminmaxlem} as in the proof of Proposition~\ref{truncdecreaseprop}, we easily deduce that
\begin{equation} \label{beta1+beta2ge0}
\beta_1 + \beta_2 \ge 0,
\end{equation}
where, setting~$\Omega_+:=\{ x\in\Omega : u(x)>N\}$,
\begin{align*}
\beta_1 & := \int_{\Omega_+}\left\{\int_{\Co B_d(x)\setminus\Omega} \left\{ \G\left(\frac{N-u(y)}{|x-y|}\right)-\G\left(\frac{u(x)-u(y)}{|x-y|}\right) \right\}
\frac{dy}{|x-y|^{n-1+s}}\right\}dx, \\
\beta_2 & := \int_{\Omega_+}\left\{\int_{B_d(x)\setminus\Omega} \left\{ \G\left(\frac{N-u(y)}{|x-y|}\right)-\G\left(\frac{u(x)-u(y)}{|x-y|}\right) \right\}
\frac{dy}{|x-y|^{n-1+s}}\right\}dx.
\end{align*}

On the one hand, by the Lipschitz character of~$\G$---see~\eqref{Lip_Gcal}---we compute
\begin{equation} \label{beta1est}
\beta_1 \le \frac{\Lambda}{2} \int_{\Omega_+} \left( u(x) - N \right) \left( \int_{\Co B_d(x)} \frac{dy}{|x-y|^{n + s}}\right) dx \le \frac{C_1}{d^s} \int_{\Omega} \left( u(x) - N \right)_+ dx,
\end{equation}
for some constant~$C_1 > 0$ depending only on~$n$,~$s$, and~$g$.

We now address~$\beta_2$. First, notice that, in view of hypothesis~\eqref{sonny_rollins}, we have that~$u(y) \le N < u(x)$ for every~$x \in \Omega_+$ and~$y \in B_d(x) \setminus \Omega$. Hence, applying inequality~\eqref{Gincruyux} we get
$$
\beta_2 \le-\int_{\Omega_+}\left(u(x)-N\right) \left\{ \int_{B_d(x)\setminus\Omega}G\left(\frac{N-u(y)}{|x-y|}\right)
\frac{dy}{|x-y|^{n+s}}\right\}dx.
$$
As~$\Omega$ is bounded and Lipschitz, there exists a small constant~$\delta > 0$, depending only on~$n$ and~$\Omega$, such that~$|B_r(p) \setminus \Omega| \ge \delta r^n$ for every~$p \in \partial \Omega$ and~$r > 0$. Given~$x \in \Omega \setminus \Omega_{- \theta d}$, let~$p_x$ be any point on~$\partial \Omega$ such that~$|x - p_x| = \dist(x, \partial \Omega)$. Then,~$x \in \overline{B_{\theta d}(p_x)} \subseteq B_d(x)$ and~$|B_{\theta d}(p_x) \setminus \Omega| \ge \delta \theta^n d^n$. Since, by~\eqref{sonny_rollins},~$\Omega_+\subseteq\Omega\setminus\Omega_{-\theta d}$, this holds in particular for every~$x \in \Omega_+$. By virtue of this and the fact that, by the monotonicity of~$G$ and again~\eqref{sonny_rollins},~$G\left(\frac{N-u(y)}{|x-y|}\right) \ge G(1) > 0$ for every~$x \in \Omega_+$ and~$y \in B_d(x) \setminus \Omega$, we further estimate
$$
\beta_2 \le-G(1)\int_{\Omega_+}\left(u(x)-N\right) \left( \int_{B_{\theta d}(p_x)\setminus\Omega} \frac{dy}{|x-y|^{n+s}}\right) dx \le - \frac{c_2}{\theta^s d^s} \int_{\Omega}\left(u(x)-N\right)_+ dx,
$$
for some constant~$c_2 > 0$ depending only on~$n$,~$s$,~$\Omega$, and~$g$.

By combining this estimate,~\eqref{beta1est}, and~\eqref{beta1+beta2ge0}, we find that
$$
0 \le - \frac{1}{d^s} \left( \frac{c_2}{\theta^s} - C_1 \right) \int_{\Omega}\left(u(x)-N\right)_+ dx.
$$
If~$\theta < (c_2 / C_1)^{1/s}$, this is a contradiction, unless~$u \le N$ a.e.~in~$\Omega$. The proof is thus complete.
\end{proof}

\section{Viscosity solutions of fractional mean curvature-type equations}\label{ViscoWeak_Sect}

\noindent
In Subsection~\ref{Prelim_EL_op} we have seen how weak solutions of~$\h u = 0$ naturally arise when dealing with the fractional area-type functional~$\F^M$. Here, we focus instead on a different notion of solution, based on the viscosity approach developed by Caffarelli \& Silvestre~\cite{CS09,CS11} for a different class of integro-differential operators. In the geometric case~$g = g_s$, this notion is strongly related to the one considered in~\cite{CRS10}. The final aim of the section will be to show that viscosity (sub)solutions are also weak distributional (sub)solutions, thus proving Theorem~\ref{Gen_viscweak}.

\subsection{Viscosity (sub)solutions}

The starting point of our analysis is the following simple remark about an important monotonicity property enjoyed by the operator~$\h$.

\begin{remark}\label{mah}
Let~$x_0 \in \R^n$ and~$u,\,v:\R^n\to\R$ be such that
\[u(x_0)=v(x_0)\quad\textrm{and}\quad u(x)\leq v(x)\quad \mbox{for all } x\in\R^n.\]
Then, recalling definition~\eqref{deltagdef} and the monotonicity of the function~$G$, we infer that
\[\delta_g(u,x_0;\xi)\geq\delta_g(v,x_0;\xi) \quad \mbox{for all } \xi\in\R^n.\]
In particular, it follows that
\[\h u(x_0)\geq\h v(x_0),\]
provided the quantities~$\h u(x_0)$ and~$\h v(x_0)$ are well-defined.
\end{remark}

In light of the above remark, it is reasonable to consider the following definition.

\begin{defn}\label{visc_sol_def}
	Let~$\Omega\subseteq\R^n$ be an open set and~$f\in C(\overline{\Omega})$.
	We say that a function~$u:\R^n\to\R$ is a \emph{viscosity subsolution} of~$\h u=f$ in~$\Omega$, and we write
	\[
	\h u\leq f\quad\textrm{in }\Omega,
	\]
	if~$u$ is upper semicontinuous in~$\Omega$ and the following happens: if
	\begin{enumerate}[label=$(\roman*)$,leftmargin=*]
		\item \label{touched1} $x_0\in\Omega$,
		\item \label{touched2} $v\in C^{1,1}(B_r(x_0))$ for some~$r<\dist(x_0,\partial\Omega)$,
		\item \label{touched3} $v(x_0)=u(x_0)$ and~$v(y)\geq u(y)$ for every~$y\in B_r(x_0)$,
	\end{enumerate}
	then the function
	\begin{equation}\label{vtildedef}
	\tilde v(x):=
	\begin{cases}
	v(x) & \quad \textrm{if }x\in B_r(x_0),\\
	u(x) & \quad \textrm{if }x\in\R^n\setminus B_r(x_0),
	\end{cases}
	\end{equation}
	satisfies
	\[
	\h\tilde v(x_0)\leq f(x_0).
	\]
	A function~$u$ is a \emph{viscosity supersolution} of~$\h u=f$ in~$\Omega$ if~$-u$ is a viscosity supersolution of~$\h(-u)\le-f$ in~$\Omega$. A \emph{viscosity solution} of~$\h u = f$ in~$\Omega$ is a function~$u:\R^n\to\R$ which is continuous in~$\Omega$ and is both a viscosity subsolution and supersolution of~$\h u = f$ in~$\Omega$.
\end{defn}

From now on, we will mostly focus on subsolutions, as corresponding statements for supersolutions can be easily obtained by symmetry arguments. Also, unless otherwise specified,~$\Omega$ always denotes a bounded open subset of~$\R^n$ and~$f$ a continuous function in~$\overline{\Omega}$.

A first, useful observation on viscosity subsolutions is contained in the following result.

\begin{prop}\label{pwise_prop}
Let~$u$ be a viscosity subsolution of~$\h u = f$ in~$\Omega$. Assume that~$u$ is touched from above at a point~$x_0 \in \Omega$ by a~$C^{1,1}$ function~$v$, namely that points~\ref{touched2} and~\ref{touched3} of Definition~\ref{visc_sol_def} hold true. Then,~$\h u(x_0)$ is well-defined in the Lebesgue sense, is finite, and satisfies
\[\h u(x_0)\leq f(x_0).\]
\end{prop}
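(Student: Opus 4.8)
The plan is to apply Definition~\ref{visc_sol_def} not to the given $C^{1,1}$ function $v$ at the fixed scale $r$, but to its restrictions to the shrinking balls $B_\varrho(x_0)$, and then to let $\varrho\searrow 0$. Indeed, for every $\varrho\in(0,r)$ the function $v|_{B_\varrho(x_0)}$ is still of class $C^{1,1}$ and still satisfies~\ref{touched2}--\ref{touched3} on $B_\varrho(x_0)$, so the associated competitor $\tilde v_\varrho$, equal to $v$ on $B_\varrho(x_0)$ and to $u$ on $\R^n\setminus B_\varrho(x_0)$ as in~\eqref{vtildedef}, yields $\h\tilde v_\varrho(x_0)\le f(x_0)$. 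Since $\tilde v_\varrho$ is $C^{1,1}$ near $x_0$, Lemma~\ref{classical_form_reg_func} guarantees that $\h\tilde v_\varrho(x_0)$ is an absolutely convergent Lebesgue integral; splitting it at scale $\varrho$ and using that $\tilde v_\varrho=v$ inside $B_\varrho(x_0)$ while $\tilde v_\varrho=u$ (and $\tilde v_\varrho(x_0)=u(x_0)$) outside, one checks that $\h\tilde v_\varrho(x_0)=\h^{<\varrho}v(x_0)+\h^{\ge\varrho}u(x_0)$, the term $\h^{\ge\varrho}u(x_0)$ being finite by Remark~\ref{rmk_tail}. Hence
\[
\h^{\ge\varrho}u(x_0)\le f(x_0)-\h^{<\varrho}v(x_0)\qquad\text{for every }\varrho\in(0,r).
\]

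Next I would pass to the limit $\varrho\searrow 0$, which rests on two observations. First, by the $C^{1,1}$ estimate~\eqref{local_C1gamma} (with $\gamma=1$) one has $|\h^{<\varrho}v(x_0)|\le C\varrho^{1-s}\to 0$, so $\limsup_{\varrho\searrow 0}\h^{\ge\varrho}u(x_0)\le f(x_0)$. Second, from $u\le v$ on $B_r(x_0)$, $u(x_0)=v(x_0)$ and the monotonicity of $G$ (cf.\ Remark~\ref{mah}), together again with~\eqref{local_C1gamma}, one gets the one-sided bound $\delta_g(u,x_0;\xi)\ge\delta_g(v,x_0;\xi)\ge -C|\xi|$ for $|\xi|<r$; combined with the global bound $|\delta_g(u,x_0;\xi)|\le\Lambda$ coming from~\eqref{Gbounds}, this shows that $\delta_g(u,x_0;\,\cdot\,)^-/|\,\cdot\,|^{n+s}\in L^1(\R^n)$. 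I would then apply the version of Fatou's lemma valid for functions bounded below by a fixed integrable function to $\xi\mapsto\chi_{\R^n\setminus B_\varrho}(\xi)\,\delta_g(u,x_0;\xi)/|\xi|^{n+s}$ as $\varrho\searrow 0$, obtaining
\[
\int_{\R^n}\frac{\delta_g(u,x_0;\xi)}{|\xi|^{n+s}}\,d\xi\le\liminf_{\varrho\searrow 0}\h^{\ge\varrho}u(x_0)\le f(x_0).
\]
Since the negative part of the integrand is integrable, this integral is a priori well-defined in $(-\infty,+\infty]$; being bounded above by $f(x_0)<\infty$, it is in fact finite. Therefore $\delta_g(u,x_0;\,\cdot\,)/|\,\cdot\,|^{n+s}\in L^1(\R^n)$, the quantity $\h u(x_0)$ is a genuine finite Lebesgue integral, and $\h u(x_0)\le f(x_0)$.

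The main obstacle is that one cannot run this argument directly at the fixed scale $r$: the monotonicity property of $\h$ in Remark~\ref{mah} only delivers $\h u(x_0)\ge\h\tilde v(x_0)$, which is the opposite of what is needed, so the shrinking-ball device is genuinely essential. The other delicate point is the integrability of $\delta_g(u,x_0;\,\cdot\,)/|\,\cdot\,|^{n+s}$ near the origin: the available pointwise estimates give only $\delta_g(u,x_0;\xi)\le\Lambda$, which is \emph{not} integrable against $|\xi|^{-n-s}$ near $0$, so this integrability has no pointwise proof and must be extracted a posteriori from the limiting inequality — which is precisely the role of the Fatou step above.
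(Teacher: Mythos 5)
Your proof is correct and follows essentially the same route as the paper: both arguments apply the subsolution property to the shrinking competitors $\tilde v_\varrho$, use the one-sided comparison $\delta_g(u,x_0;\xi)\ge\delta_g(v,x_0;\xi)\ge -C|\xi|$ coming from the touching function to get integrability of the negative part, and then pass to the limit $\varrho\searrow 0$ with a convergence theorem to control the positive part. The only differences are cosmetic: you split the integral at scale $\varrho$ into $\h^{<\varrho}v(x_0)+\h^{\ge\varrho}u(x_0)$ and invoke Fatou with an integrable minorant, whereas the paper works with $\delta_g^{\pm}(v_\varrho,x_0;\cdot)$ on all of $\R^n$ and uses monotone convergence of $\delta_g^{+}(v_\varrho)\nearrow\delta_g^{+}(u)$.
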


\begin{proof}
We begin by showing that~$\delta_g(u,x_0;\xi)|\xi|^{-n-s}$ is integrable in~$\R^n$, so that~$\h u(x_0)$ is well-defined as a Lebesgue integral. Our argument follows that of~\cite[Proposition 1]{Lin16}.

For every~$\varrho\in(0,r]$, we consider the functions
\[ v_\varrho(y):=
\begin{cases}
v(y) & \quad \textrm{if }y\in B_\varrho(x_0),\\
u(y) & \quad \textrm{if }y\in\R^n\setminus B_\varrho(x_0).
\end{cases}
\]
As~$ v\in C^{1,1}(B_r(x_0))$, the function $\xi \mapsto \delta_g( v_\varrho,x_0;\xi)|\xi|^{-n-s}$ is integrable in $\R^n$, that is
\[\int_{\R^n}\frac{\delta_g^+( v_\varrho,x_0;\xi)+\delta_g^-( v_\varrho,x_0;\xi)}{|\xi|^{n+s}}\,d\xi=
\int_{\R^n}\frac{|\delta_g( v_\varrho,x_0;\xi)|}{|\xi|^{n+s}}\,d\xi<+\infty,\]
where, for a general function~$\psi$, we write
\[\delta_g^+( \psi,x_0;\xi):=\max\{\delta_g( \psi,x_0;\xi),0\}\quad\textrm{and}\quad
\delta_g^-( \psi,x_0;\xi):=\max\{-\delta_g( \psi,x_0;\xi),0\}.\]
Moreover, by Remark~\ref{mah},
\begin{equation} \label{deltagvrhos}
\delta_g(u,x_0;\xi)\geq\delta_g( v_{\varrho_1},x_0;\xi)\geq\delta_g( v_{\varrho_2},x_0;\xi) \quad\textrm{for every }
0<\varrho_1\leq\varrho_2\leq r,
\end{equation}
and hence
\eqlab{\label{delta-}
\int_{\R^n}\frac{\delta_g^-(u,x_0;\xi)}{|\xi|^{n+s}}\,d\xi
\le\int_{\R^n}\frac{|\delta_g(v_r,x_0;\xi)|}{|\xi|^{n+s}}\,d\xi<+\infty.}
Also,~$\delta_g^+( v_\varrho,x_0;\xi)\nearrow\delta_g^+(u,x_0;\xi)$ as~$\varrho \searrow 0$, and therefore the monotone convergence theorem gives that
\[
\int_{\R^n}\frac{\delta_g^+(u,x_0;\xi)}{|\xi|^{n+s}}\,d\xi = \lim_{\varrho\to0^+}\int_{\R^n}\frac{\delta_g^+( v_\varrho,x_0;\xi)}{|\xi|^{n+s}}\,d\xi.
\]
Now, since~$u$ is a subsolution, we have
\[
\int_{\R^n}\frac{\delta_g^+( v_{\varrho_1},x_0;\xi)}{|\xi|^{n+s}}\,d\xi - \int_{\R^n}\frac{\delta_g^-( v_{\varrho_1},x_0;\xi)}{|\xi|^{n+s}}\,d\xi = \int_{\R^n}\frac{\delta_g( v_{\varrho_1},x_0;\xi)}{|\xi|^{n+s}}\,d\xi\leq f(x_0),
\]
and thus, recalling~\eqref{deltagvrhos},
$$
\int_{\R^n}\frac{\delta_g^+( v_{\varrho_1},x_0;\xi)}{|\xi|^{n+s}}\,d\xi
\leq\int_{\R^n}\frac{\delta_g^-( v_{\varrho_1},x_0;\xi)}{|\xi|^{n+s}}\,d\xi+f(x_0) \leq\int_{\R^n}\frac{\delta_g^-( v_{\varrho_2},x_0;\xi)}{|\xi|^{n+s}}\,d\xi+f(x_0),
$$
for every $0<\varrho_1\leq \varrho_2\leq r$. Letting~$\varrho_1 \rightarrow 0$ and taking~$\varrho_2 = \varrho$, we obtain that
\begin{equation}\label{pwise_def}
\int_{\R^n}\frac{\delta_g^+(u,x_0;\xi)}{|\xi|^{n+s}}\,d\xi\leq
\int_{\R^n}\frac{\delta_g^-( v_\varrho,x_0;\xi)}{|\xi|^{n+s}}\,d\xi+f(x_0)<+\infty,
\end{equation}
for every~$\varrho\in(0,r]$. Combining~\eqref{delta-} and~\eqref{pwise_def}, we conclude that~$\delta_g(u,x_0;\xi)|\xi|^{-n-s}$ is integrable in~$\R^n$ and hence~$\h u(x_0)$ is well-defined and finite.

To check that~$\h u(x_0) \le f(x_0)$, notice that, by~\eqref{deltagvrhos},
\[
\frac{\delta_g^-( v_\varrho,x_0;\xi)}{|\xi|^{n+s}}\leq\frac{\delta_g^-( v_r,x_0;\xi)}{|\xi|^{n+s}} \quad \mbox{for every } \varrho \in (0, r].
\]
As the function on the right-hand side is integrable in $\R^n$, by Lebesgue's dominated convergence theorem we can let~$\varrho\to0$ in~\eqref{pwise_def}, obtaining
\[\int_{\R^n}\frac{\delta_g^+(u,x_0;\xi)}{|\xi|^{n+s}}\,d\xi\leq
\int_{\R^n}\frac{\delta_g^-(u,x_0;\xi)}{|\xi|^{n+s}}\,d\xi+f(x_0),\]
which is the claim.
\end{proof}

For later use, it is convenient to introduce the following definition.
\begin{defn}\label{defC11}
A function~$u: \R^n \to \R$ is said to be~$C^{1,1}$ at a point~$x_0 \in \R^n$, and we write~$u\in C^{1,1}(x_0)$, if there exist $\ell\in\R^n$ and $M, r>0$ such that
\eqlab{\label{localC11}
|u(x_0+\xi)-u(x_0)-\ell\cdot\xi|\le M|\xi|^2 \quad \mbox{for all } \xi\in B_r.
}
\end{defn}

Clearly, if~$u\in C^{1,1}(B_R(x_0))$ for some~$R > 0$, then~$u\in C^{1,1}(x_0)$. Geometrically,~$u$ is~$C^{1,1}$ at~$x_0$ if there exist both an interior and an exterior tangent paraboloid to the subgraph of~$u$ at the point~$(x_0,u(x_0))$.

As a consequence of Proposition \ref{pwise_prop}, we obtain the following corollary.

\begin{corollary}\label{pwise_cor}
Let~$u$ be a viscosity subsolution of~$\h u = f$ in~$\Omega$ and assume that~$u$ is~$C^{1,1}$ at some point~$x_0 \in \Omega$. Then,~$\h u(x_0)$ is well-defined in the Lebesgue sense, is finite, and satisfies
\bgs{
\h u(x_0)\le f(x_0).
}
\end{corollary}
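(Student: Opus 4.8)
The plan is to deduce the statement directly from Proposition~\ref{pwise_prop} by exhibiting an explicit test function touching~$u$ from above at~$x_0$. Since~$u\in C^{1,1}(x_0)$, Definition~\ref{defC11} provides a vector~$\ell\in\Rn$ and constants~$M,r>0$ for which the estimate~\eqref{localC11} holds on~$B_r$. Because~$x_0\in\Omega$ and~$\Omega$ is open, we have~$\dist(x_0,\partial\Omega)>0$, so after possibly replacing~$r$ by a smaller radius we may and do assume that~$r<\dist(x_0,\partial\Omega)$.

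Next I would set
$$
v(x):=u(x_0)+\ell\cdot(x-x_0)+M|x-x_0|^2,
$$
which is a polynomial, hence of class~$C^{1,1}(B_r(x_0))$ (indeed~$C^\infty(\Rn)$). By~\eqref{localC11} we have~$v(x_0)=u(x_0)$ and~$v(y)\ge u(y)$ for every~$y\in B_r(x_0)$; geometrically, the subgraph of~$v$ is an exterior tangent paraboloid to the subgraph of~$u$ at the point~$(x_0,u(x_0))$. Thus~$v$ fulfills conditions~\ref{touched2} and~\ref{touched3} of Definition~\ref{visc_sol_def}, i.e.,~$u$ is touched from above at~$x_0$ by the~$C^{1,1}$ function~$v$ in the sense required by Proposition~\ref{pwise_prop}.

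Finally, I would invoke Proposition~\ref{pwise_prop} with this choice of~$v$: it yields at once that~$\delta_g(u,x_0;\xi)|\xi|^{-n-s}$ is integrable on~$\Rn$, so that~$\h u(x_0)$ is well-defined as a Lebesgue integral and finite, and moreover that~$\h u(x_0)\le f(x_0)$, which is the claim. I do not expect any real obstacle here: the only thing to check is that the pointwise~$C^{1,1}$ expansion~\eqref{localC11} legitimately produces an admissible touching function, and this is immediate from the quadratic bound. (The analogous statement for viscosity supersolutions that are~$C^{1,1}$ at a point follows by applying the above to~$-u$.)
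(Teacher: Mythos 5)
Your proposal is correct and follows essentially the same route as the paper: build the exterior tangent paraboloid~$q(x)=u(x_0)+\ell\cdot(x-x_0)+M|x-x_0|^2$ from the pointwise~$C^{1,1}$ expansion and feed it to Proposition~\ref{pwise_prop}. The only (welcome) extra care you take is shrinking~$r$ below~$\dist(x_0,\partial\Omega)$, which the paper leaves implicit.
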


\begin{proof}
Consider the paraboloid
\bgs{
q(x):=u(x_0)+\ell\cdot(x-x_0)+M|x-x_0|^2 \quad \mbox{for all } x\in B_r(x_0),
}
with $\ell,\,M$ and $r$ as in Definition~\ref{defC11}. Obviously,~$q\in C^{1,1}(B_r(x_0))$. Also, by~\eqref{localC11} we know that~$q$ touches~$u$ from above at~$x_0$. The conclusion then follows from Proposition~\ref{pwise_prop}.
\end{proof}

\subsection{Sup-convolutions}
Here, we introduce and study the so-called sup-convolutions~$u^\eps$ of a viscosity subsolution~$u$---namely, a sequence of subsolutions of approximating equations which converge to~$u$ and enjoy nice regularity properties. For simplicity, we will consider only subsolutions which are bounded in the whole of~$\R^n$.

\begin{defn}\label{supconvolution}
Let~$u: \R^n \to \R$ be a bounded function and~$\eps > 0$. We define the \emph{sup-convolution}~$u^\eps$ of~$u$ as
$$
u^\eps(x):=\sup_{y\in\R^n}\left\{u(y)-\frac{1}{\eps}|y-x|^2\right\} \quad \mbox{for } x \in \R^n.
$$
\end{defn}

Now we point out some easy properties of sup-convolutions. From the definition, it immediately follows that
\begin{equation} \label{uepsgeu}
u^\eps\ge u\quad\mbox{in }\R^n.
\end{equation}
Moreover, setting~$M := \sup_{\R^n} |u| <+\infty$, we see that~$u^\varepsilon$ can be equivalently written as
\eqlab{\label{silence}
u^\eps(x)=\sup_{|y-x|\le\sqrt{2M\eps}}\left\{u(y)-\frac{1}{\eps}|y-x|^2\right\}.
}
Indeed, if $|y-x|>\sqrt{2M\eps}$, then~$u(y)-|y-x|^2/\eps<-M\le u(x)$, and~\eqref{silence} follows from~\eqref{uepsgeu}.

\begin{remark}\label{attained_conv}
Given an open set $\Omega\subseteq\R^n$, we define
\eqlab{\label{open_visc}
\Omega^\eps:=\left\{x\in\Omega : d(x,\partial\Omega)>2\sqrt{2M\eps}\right\}.
}
If~$u$ is upper semicontinuous in~$\Omega$, then for every~$x\in\Omega^\eps$ there exists~$y_0\in B_{\sqrt{2 M \varepsilon}}(x) \subseteq \Omega$ such that
\bgs{
u^\eps(x)=u(y_0)-\frac{1}{\eps}|y_0-x|^2=\max_{|y-x|\le\sqrt{2M\eps}}\left\{u(y)-\frac{1}{\eps}|y-x|^2\right\}.
}
This is a straightforward consequence of~\eqref{silence} and the upper semicontinuity of~$u$.
\end{remark}

In the next theorem we collect some important properties of sup-convolutions,
whose proofs can be found, for instance, in~\cite{Ambrosio}. First, we recall the definition of semiconvex functions.

\begin{defn}
Let $\Omega\subseteq\R^n$ be an open set and let $u:\Omega\to\R$. We say that $u$ is \emph{semiconvex} in $\Omega$ if there
exists a constant $c\ge0$ such that
\bgs{
x\mapsto u(x)+\frac{c}{2}|x|^2
}
is convex in every ball~$B\subseteq\Omega$. The smallest constant~$c\ge0$ for which this happens is called the \emph{semiconvexity constant} of~$u$ and is denoted by~$sc(u,\Omega)$.
\end{defn}

\begin{prop}\label{supconvo_teo}
Let~$u: \R^n \to \R$ be a bounded function and~$\varepsilon > 0$. Then,~$u^\eps$ is semiconvex in~$\R^n$ and~$sc(u^\eps,\R^n)\le \frac{2}{\varepsilon}$. Also,~$u^\eps\in W^{1,\infty}_\loc(\R^n)$,~$\nabla u^\eps\in \BV_\loc(\R^n,\R^n)$, and~$u^\eps\in C^{1,1}(x)$ for a.e.~$x\in\R^n$.

Furthermore, if~$u$ is upper semicontinuous in an open set~$\Omega\subseteq\R^n$, then, for every~$x \in \Omega$,
\bgs{
u^\eps(x)\searrow u(x) \quad \mbox{as} \quad \eps \searrow 0.
}
The convergence is locally uniform if~$u$ is continuous in~$\Omega$.
\end{prop}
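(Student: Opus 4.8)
The plan is to read off all the stated properties directly from the definition of $u^\eps$ as a supremum, reducing semiconvexity and the regularity claims to classical facts about convex functions, and proving the convergence by a soft argument based on Remark~\ref{attained_conv}. First I would establish semiconvexity: expanding $|y-x|^2 = |y|^2 - 2\,y\cdot x + |x|^2$, one rewrites
$$
u^\eps(x) + \frac{1}{\eps}|x|^2 = \sup_{y \in \R^n}\left\{ u(y) - \frac{1}{\eps}|y|^2 + \frac{2}{\eps}\,y\cdot x \right\},
$$
which, being a supremum of affine functions of the variable $x$, is convex on all of $\R^n$. Hence $u^\eps$ is semiconvex with $sc(u^\eps,\R^n) \le 2/\eps$. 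Since $M = \sup_{\R^n}|u| < \infty$ by assumption, the inequality $u(x) \le u^\eps(x) \le \sup_{\R^n} u \le M$ (using \eqref{uepsgeu}) shows in particular that $u^\eps$ is finite everywhere.

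Next, the regularity statements would all be deduced from the structure of the convex function $v := u^\eps + \frac{1}{\eps}|\,\cdot\,|^2$. A convex function on an open set is locally Lipschitz, so $v$ — and therefore $u^\eps$ — belongs to $W^{1,\infty}_\loc(\R^n)$. Its distributional Hessian $D^2 v$ is a (nonnegative-definite) matrix-valued Radon measure, so $\nabla v \in \BV_\loc(\R^n,\R^n)$, and hence so is $\nabla u^\eps = \nabla v - \frac{2}{\eps}\,x$. Finally, by Alexandrov's theorem $v$ admits a second-order Taylor expansion at a.e.\ point $x_0$; absorbing the $o(|\xi|^2)$ remainder into the quadratic term gives $|v(x_0+\xi) - v(x_0) - \nabla v(x_0)\cdot\xi| \le C|\xi|^2$ for $|\xi|$ small, and subtracting the smooth quadratic yields the pointwise bound~\eqref{localC11}, i.e.\ $u^\eps \in C^{1,1}(x_0)$. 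All of these are classical and I would simply refer to~\cite{Ambrosio} for the details.

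For the convergence, monotonicity in $\eps$ is immediate: if $0 < \eps_1 \le \eps_2$ then $u(y) - \frac{1}{\eps_1}|y-x|^2 \le u(y) - \frac{1}{\eps_2}|y-x|^2$ for all $y$, whence $u^{\eps_1} \le u^{\eps_2}$; combined with $u^\eps \ge u$ from~\eqref{uepsgeu}, the limit $\lim_{\eps\searrow0} u^\eps(x)$ exists and is $\ge u(x)$. To see that it equals $u(x)$ for $x\in\Omega$, I would take $\eps$ small enough that $x\in\Omega^\eps$ (recall~\eqref{open_visc}); by Remark~\ref{attained_conv} the supremum in~\eqref{silence} is attained at some $y_\eps$ with $|y_\eps - x| \le \sqrt{2M\eps}$, so $u^\eps(x) = u(y_\eps) - \frac{1}{\eps}|y_\eps - x|^2 \le u(y_\eps)$; since $y_\eps \to x$ as $\eps\searrow0$, the upper semicontinuity of $u$ gives $\limsup_{\eps\searrow0} u^\eps(x) \le u(x)$, and the claim follows. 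When $u$ is continuous, letting $\omega$ be a modulus of continuity of $u$ on a compact neighborhood of a given compact set $K\Subset\Omega$, the same chain of inequalities yields $0 \le u^\eps(x) - u(x) \le \omega(\sqrt{2M\eps})$ for every $x\in K$ and all small $\eps$, which is the asserted local uniform convergence; alternatively, one may invoke Dini's theorem, using that each $u^\eps$ and $u$ are continuous and $u^\eps\searrow u$ monotonically.

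The only genuinely nontrivial input I expect is Alexandrov's twice-differentiability theorem underpinning the a.e.\ $C^{1,1}$ regularity; everything else is elementary manipulation of the sup-convolution together with standard convex analysis, which is the reason we content ourselves with citing~\cite{Ambrosio} for the complete proofs.
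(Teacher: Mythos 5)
Your proposal is correct and follows essentially the same route as the paper, which simply delegates all four claims to the cited results in~\cite{Ambrosio} (semiconvexity from the sup-of-affine-functions structure, local Lipschitz and $\BV$ regularity of the gradient from convexity, a.e.\ $C^{1,1}$ from Alexandrov's theorem, and the convergence from the attainment of the supremum at nearby points). The only difference is that you spell out the standard convex-analysis arguments that the paper leaves to the reference, which is fine.
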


\begin{proof}
The semiconvexity of~$u^\eps$ follows from~\cite[Proposition 4$(i)$]{Ambrosio}. By~\cite[Theorems~15 and~16]{Ambrosio}, this gives in turn that~$u^\eps\in W^{1,\infty}_{\loc}(\R^n)$ and~$\nabla u^\eps\in \BV_{\loc}(\R^n,\R^n)$.
That~$u^\eps\in C^{1,1}(x)$ for a.e.~$x\in\R^n$ follows from the Taylor expansion of~\cite[Theorem 16$(ii)$]{Ambrosio}. Finally, the convergence of~$u^\eps$ to~$u$ can be obtained by arguing as in the proof of~\cite[Proposition 4$(ii)$]{Ambrosio}.
\end{proof}

Sup-convolutions are particularly useful since they preserve the subsolution property, eventually up to a small error. We deal with this in the next result.

\begin{prop}\label{convo_visc}
Let~$\Omega\subseteq\R^n$ be a bounded open set,~$f\in C(\overline{\Omega})$, and~$u$ be a viscosity subsolution of~$\h u = f$ in~$\Omega$, bounded in~$\R^n$. Then,
\bgs{
\h u^\eps(x) \le f(x) + c_\eps \quad\mbox{for a.e.~} x \in \Omega^\eps,
}
where~$\Omega^\eps$ is defined by~\eqref{open_visc}, with~$M := \sup_{\R^n} |u|$, and
\begin{equation} \label{cepsdef}
c_\eps:=\sup_{\substack{x, y\in\overline\Omega \\ |x-y|\le\sqrt{2M\eps}}}|f(x)-f(y)|.
\end{equation}
Notice in particular that
\eqlab{\label{errorproperty}
c_\eps\searrow0 \mbox{ as }\eps\searrow0\quad\mbox{and}\quad c_\eps=0 \mbox{ if } f \mbox{ is constant.}
}
\end{prop}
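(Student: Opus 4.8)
The plan is to prove the estimate at every point $x_0 \in \Omega^\eps$ at which $u^\eps$ is $C^{1,1}$ in the sense of Definition~\ref{defC11}; by Proposition~\ref{supconvo_teo} the complement of this set has measure zero, so this is enough. Fix such an $x_0$ and let $\ell \in \R^n$, $L, r > 0$ be the objects appearing in~\eqref{localC11} for $u^\eps$. First, I would check that $\h u^\eps(x_0)$ is a well-defined, finite Lebesgue integral. Indeed, by~\eqref{deltagdef},~\eqref{deltagest}, and~\eqref{localC11},
\[
\left| \delta_g(u^\eps, x_0; \xi) \right| \le \left| \frac{u^\eps(x_0 + \xi) + u^\eps(x_0 - \xi) - 2 u^\eps(x_0)}{|\xi|} \right| \le 2 L |\xi| \qquad \mbox{for all } \xi \in B_r,
\]
which is integrable against $|\xi|^{-n-s}$ near the origin since $s < 1$, while Remark~\ref{rmk_tail} takes care of the region $\R^n \setminus B_r$. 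This also shows that $\h g(x_0) = \h u^\eps(x_0)$ for $g := u^\eps + \eps^{-1}|z_0|^2$ (any additive constant leaves $\delta_g$, hence $\h$, unchanged).

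Next, I would bring in the point at which the sup-convolution is attained. By Remark~\ref{attained_conv}, the inclusion $x_0 \in \Omega^\eps$ gives a point $y_0 \in B_{\sqrt{2 M \eps}}(x_0) \subseteq \Omega$ with $u^\eps(x_0) = u(y_0) - \eps^{-1}|y_0 - x_0|^2$; set $z_0 := y_0 - x_0$. Taking $y = x + z_0$ in Definition~\ref{supconvolution} yields $u^\eps(x) \ge u(x + z_0) - \eps^{-1}|z_0|^2$ for every $x \in \R^n$, with equality at $x = x_0$. Hence $g = u^\eps + \eps^{-1}|z_0|^2$ satisfies $g \ge w$ on all of $\R^n$ and $g(x_0) = u(y_0) = w(x_0)$, where $w := u(\,\cdot\, + z_0)$. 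Since $g$ is still $C^{1,1}$ at $x_0$, the paraboloid $q(x) := g(x_0) + \ell \cdot (x - x_0) + L|x - x_0|^2$ satisfies $q \ge g \ge w$ on $B_r(x_0)$ and $q(x_0) = w(x_0)$; equivalently, the smooth function $q(\,\cdot\, - z_0)$ touches $u$ from above at $y_0$ on $B_r(y_0)$. Because $x_0 \in \Omega^\eps$ and $|y_0 - x_0| \le \sqrt{2 M \eps}$, we have $\dist(y_0, \partial \Omega) > \sqrt{2 M \eps} > 0$, so (shrinking $r$ if needed so that $r < \dist(y_0, \partial \Omega)$) Proposition~\ref{pwise_prop} applies and gives that $\h u(y_0)$ is a well-defined, finite Lebesgue integral with $\h u(y_0) \le f(y_0)$. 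Moreover $\h w(x_0) = \h u(y_0)$, since $\h$ is invariant under translations of the argument, as is immediate from~\eqref{deltagdef}--\eqref{delta_op}.

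It remains to compare $\h u^\eps(x_0)$ with $\h u(y_0)$. Both $\h g(x_0) = \h u^\eps(x_0)$ and $\h w(x_0) = \h u(y_0)$ have just been shown to be well-defined, and $w \le g$ on $\R^n$ with $w(x_0) = g(x_0)$; hence Remark~\ref{mah} gives
\[
\h u^\eps(x_0) = \h g(x_0) \le \h w(x_0) = \h u(y_0) \le f(y_0) \le f(x_0) + c_\eps,
\]
the final inequality being the definition~\eqref{cepsdef} of $c_\eps$ together with $|y_0 - x_0| \le \sqrt{2 M \eps}$ and $x_0, y_0 \in \overline{\Omega}$. As $x_0$ ranged over a full-measure subset of $\Omega^\eps$, this is the claim; properties~\eqref{errorproperty} then follow at once from the uniform continuity of $f$ on $\overline{\Omega}$. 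I expect the only genuinely delicate point to be the well-posedness bookkeeping: one must make sure that $\h u^\eps(x_0)$ (from pointwise $C^{1,1}$ regularity) and $\h u(y_0)$ (from Proposition~\ref{pwise_prop}) are honest finite integrals, not merely principal values, before chaining them through Remark~\ref{mah}, and that the translation and additive constant used to pass between $u$, $w$, $u^\eps$, and $g$ do not disturb this. Everything else is a short, routine computation.
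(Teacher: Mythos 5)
Your proof is correct, and its engine is the same as the paper's: the attained point $y_0$ from Remark~\ref{attained_conv}, the inequality $u^\eps(x)\ge u(x+z_0)-\eps^{-1}|z_0|^2$ for all $x$ with equality at $x_0$, and the monotonicity of $G$. The organization is genuinely different, though. The paper first shows that $u^\eps$ is a \emph{viscosity} subsolution of $\h u^\eps=f+c_\eps$ in $\Omega^\eps$: it takes an arbitrary $C^{1,1}$ test function $v$ touching $u^\eps$ from above at $x_0$, transplants it to a function $\psi$ touching $u$ at $y_0$, and compares $\h\tilde v(x_0)$ with $\h\tilde\psi(y_0)$ by splitting the operator into $\h^{<r}$ (equal, by translation) and $\h^{\ge r}$ (ordered, by the sup-convolution inequality); only afterwards does it pass to the a.e.\ pointwise inequality via Corollary~\ref{pwise_cor}. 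You instead work directly at an a.e.\ point of pointwise $C^{1,1}$ regularity of $u^\eps$, note that $\h u^\eps(x_0)$ is then an absolutely convergent integral, and apply the global monotonicity of Remark~\ref{mah} to the ordered, touching pair $u(\cdot+z_0)-\eps^{-1}|z_0|^2\le u^\eps$ in a single stroke, the paraboloid at $y_0$ serving only to trigger Proposition~\ref{pwise_prop} for $u$. This bypasses both the test-function formulation and the near/far splitting, and you correctly identify and discharge the one point that makes the shortcut legitimate, namely that $\h u^\eps(x_0)$ and $\h u(y_0)$ are honest finite Lebesgue integrals before the pointwise inequality on $\delta_g$ is integrated. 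The only thing lost relative to the paper's route is the intermediate statement that $u^\eps$ is itself a viscosity subsolution of the shifted equation, which is not needed downstream: Corollary~\ref{convo_weak} only uses the a.e.\ inequality together with Proposition~\ref{BHprop_curvature}.
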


\begin{proof}
Observe that, in view of Proposition~\ref{supconvo_teo},~$u^\eps$ is~$C^{1, 1}$ at a.e.~point of~$\Omega^\eps$. Thanks to this and Corollary~\ref{pwise_cor}, it then suffices to show that~$u^\eps$ is a viscosity subsolution of~$\h u^\eps = f + c_\eps$ in~$\Omega^\eps$.

Let~$x_0\in\Omega^\eps$ and suppose that there exist~$r \in (0, \dist(x_0, \partial \Omega^\eps))$ and~$v\in C^{1,1}(B_r(x_0))$ such that
\bgs{
v(x_0)=u^\eps(x_0)\quad\mbox{and}\quad v(x)\ge u^\eps(x) \quad \mbox{for all } x \in  B_r(x_0).
}
Defining~$\tilde{v}$ as in~\eqref{vtildedef}, we need to show that
\begin{equation} \label{Hvtildele}
\h\tilde v(x_0)\le f(x_0)+c_\eps,
\end{equation}
with~$c_\eps$ as in~\eqref{cepsdef}.

Thanks to Remark~\ref{attained_conv}, we can find~$y_0\in\Omega$ in such a way that~$|y_0-x_0|\le\sqrt{2M\eps}$ and
\bgs{
u^\eps(x_0)=u(y_0)-\frac{1}{\eps}|y_0-x_0|^2.
}
Then, we define
\bgs{
\psi(x):=v(x+x_0-y_0)+\frac{1}{\eps}|y_0-x_0|^2 \quad \mbox{for all } x\in B_r(y_0).
}
Clearly,~$\psi\in C^{1,1}(B_r(y_0))$. Moreover,
\bgs{
\psi(y_0)=v(x_0)+\frac{1}{\eps}|y_0-x_0|^2
=u^\eps(x_0)+\frac{1}{\eps}|y_0-x_0|^2=u(y_0).
}
As~$v\ge u^\eps$ in~$B_r(x_0)$, by definition of~$u^\eps$ we also have that
\bgs{
u(y)-\frac{1}{\eps}|y-x|^2\le u^\eps(x)\le v(x) \quad \mbox{for all } y\in\R^n \mbox{ and } x\in B_r(x_0).
}
Taking $y\in B_r(y_0)$ and $x:=y+x_0-y_0$, we get
\bgs{
u(y)\le \psi(y) \quad \mbox{for all } y\in B_r(y_0).
}
Thus,~$\psi$ touches~$u$ from above at~$y_0$ and hence
\begin{equation} \label{psitildeeq}
\h\tilde\psi(y_0)\le f(y_0),
\end{equation}
where, again,~$\tilde{\psi}$ is defined as in~\eqref{vtildedef} around the point~$y_0$, starting from~$\psi$.

Recalling definition~\eqref{h<hge} and changing variables appropriately, we compute
\begin{equation} \label{h<rpsitilde}
\begin{aligned}
\h^{<r}\tilde\psi(y_0)&
=2 \, \PV \int_{B_r(y_0)}G \left( \frac{\psi(y_0)-\psi(y)}{|y-y_0|} \right) \frac{dy}{|y-y_0|^{n+s}}\\
&
=2 \, \PV \int_{B_r(x_0)}G \left( \frac{v(x_0)-v(x)}{|x-x_0|} \right) \frac{dx}{|x-x_0|^{n+s}} =\h^{<r}\tilde v(x_0).
\end{aligned}
\end{equation}
On the other hand,
\bgs{
\h^{\ge r}\tilde\psi(y_0)&
= 2 \int_{\R^n\setminus B_r(y_0)} G\left( \frac{u^\eps(x_0)+\eps^{-1}|y_0-x_0|^2-u(y)} {|y-y_0|} \right) \frac{dy}{|y-y_0|^{n+s}}\\
&
= 2 \int_{\R^n\setminus B_r(x_0)}G \left( \frac{u^\eps(x_0)+ \eps^{-1} |y_0-x_0|^2-u(x+y_0-x_0)}{|x-x_0|} \right) \frac{dx}{|x-x_0|^{n+s}}.
}
Plugging~$y:=x+y_0-x_0$ in the definition of~$u^\eps(x)$ yields
\bgs{
\eps^{-1} |y_0-x_0|^2-u(x+y_0-x_0)\ge-u^\eps(x) \quad \mbox{for all } x\in\R^n.
}
Hence, by the monotonicity of~$G$,
\bgs{
\h^{\ge r}\tilde\psi(y_0)\ge 2 \int_{\R^n\setminus B_r(x_0)} G \left( \frac{u^\eps(x_0)-u^\eps(x)}{|x-x_0|} \right) \frac{dx}{|x-x_0|^{n+s}}=\h^{\ge r}\tilde v(x_0).
}
By combining this with~\eqref{h<rpsitilde} and~\eqref{psitildeeq}, we obtain that
\bgs{
\h\tilde v(x_0)\le\h\tilde\psi(y_0)\le f(y_0)\le f(x_0)+c_\eps,
}
which is~\eqref{Hvtildele}. This concludes the proof.
\end{proof}

\subsection{Relationship with weak (sub)solutions}

In this subsection, we explore the connection existing between viscosity and weak subsolutions---recall Definition~\ref{weaksoldef}. In particular, we will show the validity of Theorem~\ref{Gen_viscweak}.

To do this, we use a perturbative approach based on the sup-convolutions introduced in the previous subsection. Observe that, by Proposition~\ref{convo_visc}, we already know that sup-convolutions are pointwise a.e.~subsolutions of approximating equations. To improve this result to an inequality holding in the weak sense of Definition~\ref{weaksoldef}, it is convenient to consider the space of functions with bounded Hessian in~$\Omega$, defined as
\bgs{
\BH(\Omega) := & \,\, \Big\{u\in W^{1,1}(\Omega) : \nabla u\in \BV(\Omega,\R^n)\Big\}\\
= & \,\, \Big\{u\in W^{1,1}(\Omega) : \partial_j u\in \BV(\Omega) \mbox{ for every } j=1,\dots,n\Big\}
}
and endowed with the norm
\bgs{
\|u\|_{\BH(\Omega)}:=\|u\|_{W^{1,1}(\Omega)}+|D^2u|(\Omega),
}
where~$|D^2u|(\Omega)$ indicates the total variation of~$D^2 u$ in~$\Omega$, i.e., the~$\BV$ seminorm of~$\nabla u$ in~$\Omega$. 

For the properties of the space~$\BH(\Omega)$, we refer the interested reader to~\cite{Demengel}. We only recall the following useful density property---see~\cite[Proposition 1.4]{Demengel} for a proof.

\begin{prop}\label{bh_approx}Let~$\Op\subseteq\R^n$ be a bounded open set with $C^2$ boundary and let~$u\in \BH(\Op)$. Then, there exist a sequence of functions~$\{ u_k \} \subseteq C^2(\Op)\cap W^{2,1}(\Op)$ such that
\bgs{
\lim_{k\to\infty}\left\{\|u-u_k\|_{W^{1,1}(\Op)}+\big||D^2u|(\Op)-|D^2u_k|(\Op)\big|\right\}=0.
}
\end{prop}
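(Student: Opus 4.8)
The plan is to adapt to the space $\BH$ the classical strict (or ``intermediate'') approximation theorem for $\BV$ functions, applied to the distributional Hessian of $u$. A naive global mollification of an extension of $u$ to $\R^n$ would not do the job: the extended function carries, in general, singular Hessian mass on $\partial\Op$, so the total variations of the Hessians of the mollifications would converge to something strictly larger than $|D^2u|(\Op)$. One must therefore mollify $u$ by an \emph{interior} procedure, based on a partition of unity whose gradients \emph{and} Hessians sum to zero, so that the Leibniz cross terms disappear in the limit.

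Concretely, I would fix a locally finite open cover $\{A_i\}_{i\ge1}$ of $\Op$ with $A_i\Subset\Op$ and a subordinate partition of unity $\{\varphi_i\}\subseteq C_c^\infty(\Op)$, with $\varphi_i\ge0$, $\supp\varphi_i\Subset A_i$, and $\sum_i\varphi_i\equiv1$ on $\Op$; in particular $\sum_i\nabla\varphi_i\equiv0$ and $\sum_iD^2\varphi_i\equiv0$. For each $k\in\N$ I would choose mollification parameters $\varepsilon_i=\varepsilon_i(k)>0$, with $\varepsilon_i(k)\to0$ as $k\to\infty$ and each $\varepsilon_i(k)$ small enough that the pieces $(\varphi_iw)*\rho_{\varepsilon_i}$ are compactly supported in $\Op$ and still form a locally finite family for every $w\in L^1_{\loc}(\Op)$, and then set
$$
u_k:=\sum_i(\varphi_iu)*\rho_{\varepsilon_i}\in C^\infty(\Op).
$$

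The two required convergences are then checked separately. For the $W^{1,1}$ part, differentiating under the sum gives $\nabla u_k=\sum_i(\varphi_i\nabla u)*\rho_{\varepsilon_i}+\sum_i(u\,\nabla\varphi_i)*\rho_{\varepsilon_i}$; since $u\in W^{1,1}(\Op)$ (an inclusion built into the definition of $\BH(\Op)$), the first sum converges to $\nabla u$ in $L^1(\Op)$ by standard mollification estimates, while the identity $\sum_i\nabla\varphi_i\equiv0$ lets one rewrite the second sum as $\sum_i\big[(u\,\nabla\varphi_i)*\rho_{\varepsilon_i}-u\,\nabla\varphi_i\big]$, which tends to $0$ in $L^1(\Op)$ because each $u\,\nabla\varphi_i\in L^1$; similarly $u_k\to u$ in $L^1(\Op)$. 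For the Hessian, lower semicontinuity of the total variation together with $\nabla u_k\to\nabla u$ in $L^1(\Op)$ gives $\liminf_k|D^2u_k|(\Op)\ge|D^2u|(\Op)$. For the reverse inequality I would use the Leibniz rule for the distributional Hessian of a product, $D^2(\varphi_iu)=\varphi_i\,D^2u+\nabla\varphi_i\otimes\nabla u+\nabla u\otimes\nabla\varphi_i+u\,D^2\varphi_i$ (a matrix-valued measure, since $\nabla u\in\BV(\Op,\R^n)$), convolve, and sum:
$$
D^2u_k=\sum_i(\varphi_i\,D^2u)*\rho_{\varepsilon_i}+\sum_i\big(\nabla\varphi_i\otimes\nabla u+\nabla u\otimes\nabla\varphi_i+u\,D^2\varphi_i\big)*\rho_{\varepsilon_i}.
$$
The first sum has $L^1(\Op)$ norm at most $\sum_i\int_\Op\varphi_i\,d|D^2u|=|D^2u|(\Op)$, by Young's inequality for the convolution of a measure with a probability kernel; the second sum equals, thanks to $\sum_i\nabla\varphi_i\equiv0$ and $\sum_iD^2\varphi_i\equiv0$, a telescoped expression of the form $\sum_i\big(b_i*\rho_{\varepsilon_i}-b_i\big)$ with $b_i\in L^1(\Op)$ (again because $\nabla u\in L^1$), hence tends to $0$ in $L^1(\Op)$. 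Therefore $\limsup_k|D^2u_k|(\Op)\le|D^2u|(\Op)$, so $|D^2u_k|(\Op)\to|D^2u|(\Op)$. Finally, each $u_k\in C^\infty(\Op)\subseteq C^2(\Op)$, and since $u_k\in W^{1,1}(\Op)$ and $|D^2u_k|(\Op)=\|D^2u_k\|_{L^1(\Op)}$ is finite, one has $u_k\in W^{2,1}(\Op)$, which is the assertion.

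I expect the main obstacle to be the careful bookkeeping in the upper bound for $|D^2u_k|(\Op)$: one must apply the Leibniz rule to a product in which one factor, $\nabla u$, has only a measure (not a function) as its derivative, keep the mollified pieces supported in $\Op$ and locally finite uniformly in $k$, and — crucially — exploit that $u\in W^{1,1}(\Op)$ precisely to guarantee that the telescoped cross terms lie in $L^1$ and hence vanish in the limit; this is exactly the extra hypothesis encoded in the definition of $\BH(\Op)$. The $C^2$ regularity of $\partial\Op$ plays essentially no role in this interior construction; it is assumed here only for compatibility with the form in which the result is used later and with the reference~\cite{Demengel}.
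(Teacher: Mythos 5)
Your argument is correct: it is the standard Anzellotti--Giaquinta-type ``strict'' approximation via a locally finite partition of unity and piecewise mollification, with the cross terms killed by $\sum_i\nabla\varphi_i\equiv0$ and $\sum_iD^2\varphi_i\equiv0$, which is exactly the proof behind the reference \cite[Proposition~1.4]{Demengel} that the paper cites instead of proving the statement. The only point to make explicit is that the parameters $\varepsilon_i(k)$ must be chosen quantitatively (e.g.\ so that each mollification error is at most $2^{-i}/k$) for the infinite sums of error terms to vanish, and you are right that the $C^2$ regularity of $\partial\Op$ plays no role in this interior construction.
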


Exploiting this density property, we can prove the following result.

\begin{lemma}\label{bv2_prop}
Let~$\Omega' \Subset \Omega\subseteq\R^n$ be bounded open sets and~$u\in \BH(\Omega)$. Then,
\eqlab{\label{bv2esti}
\int_{\Omega'}|u(x+\xi)+u(x-\xi)-2u(x)|\,dx\le 2|\xi|^2|D^2u|(\Omega) \quad \mbox{for every } \xi\in B_d.
}
where we set~$d:=\dist(\Omega',\partial\Omega)/2$.
\end{lemma}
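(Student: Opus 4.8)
The plan is to prove the estimate first for functions of class $C^2$, where the second-order difference admits an integral representation in terms of $D^2u$, and then to pass to a general $u \in \BH(\Omega)$ by means of the approximation result of Proposition~\ref{bh_approx}. Since $\Omega$ is only assumed to be a bounded open set, Proposition~\ref{bh_approx} cannot be invoked on $\Omega$ itself, so I first fix an auxiliary open set $\Omega''$ with $C^2$ (indeed $C^\infty$) boundary such that $\Omega' + B_d \subseteq \Omega'' \Subset \Omega$. Such a set exists because $\dist(\Omega' + B_d, \partial \Omega) \ge \dist(\Omega', \partial\Omega) - d = d > 0$, so one may take a suitable superlevel set of a mollified indicator of $\Omega' + B_d$ (a regular value being furnished by Sard's theorem). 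Note that, by construction, $\Omega' + \sigma \xi \subseteq \Omega''$ for every $\xi \in B_d$ and every $\sigma \in [-1, 1]$, a fact that will be used repeatedly below, and that $u \in \BH(\Omega'')$ by restriction, with $|D^2 u|(\Omega'') \le |D^2 u|(\Omega)$.

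Fix $\xi \in B_d$ and let $w \in C^2(\Omega'') \cap W^{2,1}(\Omega'')$. For $x \in \Omega'$ the whole segment $\{ x + \sigma \xi : \sigma \in [-1,1] \}$ lies in $\Omega''$, so setting $\phi(r) := w(x + r\xi)$ one has $\phi''(r) = \langle D^2 w(x + r\xi)\, \xi, \xi \rangle$ and the elementary identity
\[
w(x+\xi) + w(x-\xi) - 2 w(x) = \phi(1) + \phi(-1) - 2\phi(0) = \int_0^1 \left( \int_{-t}^{t} \big\langle D^2 w(x + \sigma\xi)\, \xi, \xi \big\rangle \, d\sigma \right) dt .
\]
Taking absolute values, bounding $|\langle D^2 w(z)\,\xi, \xi\rangle| \le |D^2 w(z)|\,|\xi|^2$, integrating in $x$ over $\Omega'$, applying Tonelli's theorem, and changing variables $y = x + \sigma\xi$ (which maps $\Omega'$ into $\Omega''$), I obtain
\[
\int_{\Omega'} |w(x+\xi) + w(x-\xi) - 2 w(x)| \, dx \le |\xi|^2 \left( \int_0^1 \! \int_{-t}^{t} d\sigma \, dt \right) |D^2 w|(\Omega'') = |\xi|^2 \, |D^2 w|(\Omega''),
\]
since $\int_0^1 2t \, dt = 1$.

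To conclude, I apply Proposition~\ref{bh_approx} on $\Omega''$ to get $u_k \in C^2(\Omega'') \cap W^{2,1}(\Omega'')$ with $u_k \to u$ in $W^{1,1}(\Omega'')$ and $|D^2 u_k|(\Omega'') \to |D^2 u|(\Omega'')$. Because $\Omega'$, $\Omega' + \xi$ and $\Omega' - \xi$ are all contained in $\Omega''$, the $L^1(\Omega'')$ convergence of $u_k$ to $u$ yields $u_k(\cdot \pm \xi) \to u(\cdot \pm \xi)$ in $L^1(\Omega')$, hence $u_k(\cdot + \xi) + u_k(\cdot - \xi) - 2u_k \to u(\cdot + \xi) + u(\cdot - \xi) - 2u$ in $L^1(\Omega')$; in particular the left-hand side of the last display (with $w = u_k$) converges to $\int_{\Omega'} |u(x+\xi) + u(x-\xi) - 2u(x)| \, dx$. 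Passing to the limit and using $|D^2 u|(\Omega'') \le |D^2 u|(\Omega)$ gives $\int_{\Omega'} |u(x+\xi) + u(x-\xi) - 2u(x)| \, dx \le |\xi|^2 |D^2 u|(\Omega) \le 2 |\xi|^2 |D^2 u|(\Omega)$, which is \eqref{bv2esti}. The only mildly delicate points are the construction of the regular intermediate domain $\Omega''$ and the bookkeeping ensuring that the translated copies $\Omega' \pm \xi$ (and $\Omega' + \sigma\xi$) stay inside $\Omega''$; both are comfortably accommodated by the factor $2$ in the definition of $d = \dist(\Omega', \partial\Omega)/2$, so no genuine obstacle arises.
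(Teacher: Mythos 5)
Your proof is correct and follows essentially the same route as the paper: approximate $u$ by $C^2$ functions on an intermediate $C^2$ domain via Proposition~\ref{bh_approx}, control the second difference by a Taylor-type integral representation, apply Tonelli and a translation of variables, and pass to the limit. The only (harmless) difference is that you use the exact identity $w(x+\xi)+w(x-\xi)-2w(x)=\int_0^1\int_{-t}^{t}\langle D^2w(x+\sigma\xi)\xi,\xi\rangle\,d\sigma\,dt$ instead of splitting into two first-order Taylor remainders, which gives you the constant $1$ in place of the paper's $2$ and hence a slightly sharper estimate.
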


\begin{proof}
Let~$\Op\subseteq\Omega$ be a bounded open set with~$C^2$ boundary such that
\eqlab{\label{op_appro_BH}
\Omega'\Subset\Op \quad \mbox{and} \quad d(\Omega',\partial\Op)>d.
}
By Proposition~\ref{bh_approx}, we can find a sequence~$\{ u_k \} \subseteq C^2(\Op)\cap W^{2,1}(\Op)$ such that
\eqlab{\label{appro_BH_conv}
\lim_{k\to\infty}\left\{\|u-u_k\|_{W^{1,1}(\Op)}+\big||D^2u|(\Op)-|D^2u_k|(\Op)\big|\right\}=0.
}
Now, let~$\xi \in B_d$ be fixed and notice that
\bgs{
|u_k(x+\xi)+u_k(x-\xi)-2u_k(x)| & \le |u_k(x+\xi)-u_k(x)-\nabla u_k(x)\cdot\xi|\\
& \quad +
|u_k(x-\xi)-u_k(x)-\nabla u_k(x)\cdot(-\xi)|.
}
By Taylor's theorem with integral remainder, we have
\bgs{
|u_k(x \pm \xi)-u_k(x)-\nabla u_k(x)\cdot (\pm \xi)|\le|\xi|^2\int_0^1|D^2u_k(x \pm t\xi)|\,dt,
}
so that, integrating as~$x$ ranges over~$\Omega'$ and using Fubini's theorem, we get
\bgs{
\int_{\Omega'}|u_k(x+\xi)+u_k(x-\xi)-2u_k(x)|\,dx & \le
|\xi|^2\int_{-1}^1 \left( \int_{\Omega'}|D^2u_k(x+t\xi)|\,dx \right) \! dt \\
& \le 2|\xi|^2|D^2u_k|(\Op),
}
since $|\xi|<d$ and $\Op$ satisfies \eqref{op_appro_BH}. Then, Fatou's Lemma and \eqref{appro_BH_conv} yield
\bgs{
\int_{\Omega'}|u(x+\xi)+u(x-\xi)-2u(x)|\,dx&\le 2|\xi|^2\lim_{k\to\infty}|D^2u_k|(\Op)=2|\xi|^2|D^2u|(\Op) \le2|\xi|^2|D^2u|(\Omega),
}
which is the desired bound~\eqref{bv2esti}.
\end{proof}

Thanks to the previous lemma, we can prove the following crucial result.

\begin{prop}\label{BHprop_curvature}
Let~$\Omega\subseteq\R^n$ be a bounded open set and~$u\in \BH(\Omega)$. Then,~$\h u\in L^1_{\loc}(\Omega)$ and
\begin{equation} \label{weakhuasinnerinL2}
\langle\h u,v\rangle=\int_{\Omega}\h u(x)v(x)\,dx \quad \mbox{for every } v\in C^\infty_c(\Omega).
\end{equation}
\end{prop}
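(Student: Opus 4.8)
The plan is to decompose $\h u$ into a near-diagonal part and a bounded tail, estimate the former by means of the second-difference bound of Lemma~\ref{bv2_prop}, and then derive the distributional identity by truncating the kernel singularity, using Fubini on the truncated region, and sending the truncation to zero.

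I would first establish that $\h u \in L^1_\loc(\Omega)$. Fix an open set $\Omega' \Subset \Omega$ and set $d := \dist(\Omega', \partial\Omega)/2$. For any $\varrho \in (0, d)$, combining the pointwise estimate~\eqref{deltagest}, Tonelli's theorem, and Lemma~\ref{bv2_prop} gives
\[
\int_{\Omega'} \int_{B_\varrho} \frac{|\delta_g(u, x; \xi)|}{|\xi|^{n+s}} \, d\xi \, dx \le \int_{B_\varrho} \frac{1}{|\xi|^{n+1+s}} \left( \int_{\Omega'} |u(x+\xi) + u(x-\xi) - 2 u(x)| \, dx \right) d\xi \le 2 \, |D^2 u|(\Omega) \int_{B_\varrho} \frac{d\xi}{|\xi|^{n-1+s}},
\]
which is finite. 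In particular, for a.e.~$x \in \Omega'$ the function $\xi \mapsto \delta_g(u, x; \xi) |\xi|^{-n-s}$ is integrable over $B_\varrho$; since by Remark~\ref{rmk_tail} it is integrable over $\R^n \setminus B_\varrho$ as well, it belongs to $L^1(\R^n)$, so that $\h u(x) = \h^{<\varrho} u(x) + \h^{\ge\varrho} u(x)$ is a genuine Lebesgue integral for a.e.~$x \in \Omega'$, equal to $\lim_{r \searrow 0} \h^{\ge r} u(x)$. Together with the bound of Remark~\ref{rmk_tail} on $\h^{\ge\varrho}u$, the displayed estimate yields $\h u \in L^1(\Omega')$; as $\Omega'$ was arbitrary, $\h u \in L^1_\loc(\Omega)$.

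To prove~\eqref{weakhuasinnerinL2}, fix $v \in C^\infty_c(\Omega)$, let $\Omega' \Subset \Omega$ be an open set containing $\supp v$, and pick $\varrho$ and $d$ as above. By dominated convergence — with dominating function $\|v\|_{L^\infty(\R^n)} \big( |\h^{\ge\varrho} u(\cdot)| + \int_{B_\varrho} |\delta_g(u, \cdot\,; \xi)| |\xi|^{-n-s} d\xi \big) \in L^1(\Omega')$, which bounds $|v\,\h^{\ge r}u|$ for all $r \in (0, \varrho]$ — one has $\int_\Omega \h u \, v \, dx = \lim_{r \searrow 0} \int_\Omega \h^{\ge r} u \, v \, dx$. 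For fixed $r > 0$, since $|\delta_g| \le \Lambda$ and $\int_{\R^n \setminus B_r} |\xi|^{-n-s} d\xi < \infty$, Fubini's theorem applies on $\Omega' \times (\R^n \setminus B_r)$; expanding $\delta_g$ via~\eqref{deltagdef} and performing the translation $x \mapsto x + \xi$ in the summand involving $G((u(x-\xi) - u(x))/|\xi|)$, one obtains
\[
\int_\Omega \h^{\ge r} u(x) \, v(x) \, dx = \int_{\R^n \setminus B_r} \int_{\R^n} G \! \left( \frac{u(x) - u(x+\xi)}{|\xi|} \right) \big( v(x) - v(x+\xi) \big) \frac{dx \, d\xi}{|\xi|^{n+s}}.
\]
Finally, after the change of variables $y = x + \xi$ in definition~\eqref{weak_opossum_curv}, $\langle \h u, v \rangle$ equals precisely this iterated integral taken over all of $\R^n \times \R^n$; since $|G| \le \Lambda/2$, the integrand is dominated by $\tfrac{\Lambda}{2} |v(x) - v(x+\xi)| |\xi|^{-n-s}$, which is integrable over $\R^n \times \R^n$ in view of~\eqref{hsucont}. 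Dominated convergence therefore gives $\lim_{r \searrow 0} \int_\Omega \h^{\ge r} u \, v \, dx = \langle \h u, v \rangle$, and comparing with the previous step proves~\eqref{weakhuasinnerinL2}.

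I expect the main obstacle — and the reason for the truncation/Fubini/limit scheme in place of a direct application of Fubini — to be that the unsymmetrized kernels $G((u(x) - u(x+\xi))/|\xi|) |\xi|^{-n-s}$ fail to be integrable near $\xi = 0$, so that only the symmetrized combination $\delta_g(u, x; \xi) |\xi|^{-n-s}$ is; the $\BH$ hypothesis intervenes exactly through the second-difference estimate of Lemma~\ref{bv2_prop}, which makes $\h u$ a bona fide locally integrable function and supplies the $r$-uniform dominating function needed to pass to the limit on the left-hand side.
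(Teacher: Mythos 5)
Your proof is correct and follows essentially the same route as the paper's: the local integrability of $\h u$ via the second-difference bound of Lemma~\ref{bv2_prop} combined with the tail bound of Remark~\ref{rmk_tail}, followed by a truncation of the kernel, symmetrization of the truncated integral, and two applications of dominated convergence (the second justified by $v \in W^{s,1}(\R^n)$ and the boundedness of $G$). The only difference is presentational: you spell out the Tonelli argument for a.e.\ well-definedness and the Fubini-plus-translation step that the paper compresses into ``by symmetry.''
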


\begin{proof}
Let~$\Omega'\Subset\Omega$ and~$d:= \dist(\Omega',\partial\Omega) / 2$. Taking advantage of Remark~\ref{rmk_tail},~\eqref{deltagest}, and~\eqref{bv2esti}, we estimate
\begin{align*}
\int_{\Omega'} |\h u(x)|\,dx & \le \int_{\Omega'} \left( \int_{\R^n}\frac{|\delta_g(u,x;\xi)|}{|\xi|^{n+s}}\,d\xi \right) \! dx\\
& \hspace{0pt} \le \frac{\Lambda \Ha^{n - 1}(\mathbb{S}^{n - 1})}{s} \frac{|\Omega'|}{d^s}
+\int_{B_d} \left( \int_{\Omega'}|u(x+\xi)+u(x-\xi)-2u(x)|\,dx \right) \! \frac{d\xi}{|\xi|^{n+1+s}}\\
& \hspace{0pt} \le \frac{\Lambda \Ha^{n - 1}(\mathbb{S}^{n - 1})}{s} \frac{|\Omega'|}{d^s} + 2 |D^2u|(\Omega) \frac{\Ha^{n - 1}(\mathbb{S}^{n - 1})}{1-s} \, d^{1-s}<+\infty.
\end{align*}
This proves that~$\h u\in L^1_{\loc}(\Omega)$.

We now head to the proof of~\eqref{weakhuasinnerinL2}. Notice that
\bgs{
|\h^{\ge\varrho}u(x)|\le \int_{\R^n}\frac{|\delta_g(u,x;\xi)|}{|\xi|^{n+s}}\,d\xi \quad \mbox{for every } \varrho>0
}
and that the right-hand side of the above formula is locally integrable in~$\Omega$ as a function of~$x$, thanks to the previous computation. Therefore, given~$v\in C^\infty_c(\Omega)$ we can apply Lebesgue's dominated convergence theorem to obtain that
\bgs{
\lim_{\varrho\searrow 0}\int_{\R^n} \h^{\ge\varrho}u(x)v(x)\,dx=\int_{\R^n} \h u(x)v(x)\,dx.
}
Now notice that, by symmetry,
\bgs{
\int_{\R^n} \h^{\ge\varrho}u(x)v(x)\,dx=\int_{\R^n}\int_{\R^n}G \left( \frac{u(x)-u(y)}{|x-y|} \right) \! \big(v(x)-v(y)\big)
\chi_{\C B_\varrho}(x-y) \, \frac{dx\,dy}{|x-y|^{n+s}}.
}
Hence, since~$v\in C^\infty_c(\Omega)\subseteq W^{s,1}(\R^n)$ and~$G$ is bounded, Lebesgue's dominated convergence theorem can be used once again to deduce that
\bgs{
\lim_{\varrho\searrow 0}\int_{\R^n}\int_{\R^n}G \left( \frac{u(x)-u(y)}{|x-y|} \right) \! \big(v(x)-v(y)\big)
\chi_{\C B_\varrho}(x-y) \, \frac{dx\,dy}{|x-y|^{n+s}} =\langle\h u,v\rangle.
}
The combination of the last three identities leads us to~\eqref{weakhuasinnerinL2}.
\end{proof}

By putting together Propositions~\ref{convo_visc},~\ref{supconvo_teo}, and~\ref{BHprop_curvature}, we immediately obtain the following result.

\begin{corollary}\label{convo_weak}
Let~$\Omega\subseteq\R^n$ be a bounded open set,~$f\in C(\overline\Omega)$, and~$u$ be a viscosity subsolution of~$\h u = f$ in~$\Omega$, bounded in~$\R^n$. Then,~$u^\varepsilon$ is a weak subsolution of~$\h u^\varepsilon = f + c_\eps$ in~$\Omega^\eps$, with~$c_\eps$ as in~\eqref{cepsdef}.
\end{corollary}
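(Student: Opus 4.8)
The plan is to establish Corollary~\ref{convo_weak} by combining the three ingredients exactly as advertised: the a.e.~pointwise subsolution property of the sup-convolution, its $\BH_\loc$ regularity, and the identification of the distributional and pointwise operators on $\BH$. First I would fix $\varepsilon > 0$ and recall from Proposition~\ref{supconvo_teo} that $u^\varepsilon$ is semiconvex in $\R^n$, hence $u^\varepsilon \in W^{1,\infty}_\loc(\R^n)$ with $\nabla u^\varepsilon \in \BV_\loc(\R^n, \R^n)$; in particular $u^\varepsilon \in \BH(\Omega'')$ for every bounded open set $\Omega'' \Subset \R^n$, and so $u^\varepsilon \in \BH(\Omega''')$ for any bounded open $\Omega''' \Subset \Omega^\varepsilon$. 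By Proposition~\ref{BHprop_curvature} applied on such $\Omega'''$, we get $\h u^\varepsilon \in L^1_\loc(\Omega^\varepsilon)$ together with the identity $\langle \h u^\varepsilon, v \rangle = \int_{\Omega^\varepsilon} \h u^\varepsilon(x)\, v(x)\, dx$ for every $v \in C^\infty_c(\Omega^\varepsilon)$.

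Next I would invoke Proposition~\ref{convo_visc}: since $u$ is a viscosity subsolution of $\h u = f$ in $\Omega$ and is bounded in $\R^n$, with $M = \sup_{\R^n}|u|$, the sup-convolution satisfies the pointwise inequality $\h u^\varepsilon(x) \le f(x) + c_\varepsilon$ for a.e.~$x \in \Omega^\varepsilon$, with $c_\varepsilon$ as in~\eqref{cepsdef}. Crucially, Proposition~\ref{convo_visc} already guarantees that $\h u^\varepsilon(x)$ is well-defined in the Lebesgue sense at a.e.~point of $\Omega^\varepsilon$ (via Corollary~\ref{pwise_cor}, $u^\varepsilon$ being $C^{1,1}$ at a.e.~point by Proposition~\ref{supconvo_teo}), so this pointwise statement is meaningful and consistent with the $L^1_\loc$ function produced by Proposition~\ref{BHprop_curvature}---indeed the two notions of $\h u^\varepsilon$ agree a.e., since both are given by the absolutely convergent integral $\int_{\R^n} \delta_g(u^\varepsilon, x; \xi)\, |\xi|^{-n-s}\, d\xi$ wherever the latter converges.

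Then I would simply integrate against a nonnegative test function: for any $v \in C^\infty_c(\Omega^\varepsilon)$ with $v \ge 0$, multiply the a.e.~inequality $\h u^\varepsilon(x) \le f(x) + c_\varepsilon$ by $v(x) \ge 0$ and integrate over $\Omega^\varepsilon$, obtaining
$$
\int_{\Omega^\varepsilon} \h u^\varepsilon(x)\, v(x)\, dx \le \int_{\Omega^\varepsilon} \big( f(x) + c_\varepsilon \big) v(x)\, dx.
$$
Combining this with the identity from Proposition~\ref{BHprop_curvature} gives $\langle \h u^\varepsilon, v \rangle \le \int_{\Omega^\varepsilon} (f + c_\varepsilon) v\, dx$ for every nonnegative $v \in C^\infty_c(\Omega^\varepsilon)$, which is precisely the statement that $u^\varepsilon$ is a weak subsolution of $\h u^\varepsilon = f + c_\varepsilon$ in $\Omega^\varepsilon$ in the sense of Definition~\ref{weaksoldef}. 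This completes the proof.

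The only point requiring care---and the natural ``main obstacle''---is the compatibility of the two a priori distinct meanings of $\h u^\varepsilon$: the pointwise Lebesgue integral produced by the viscosity machinery (Proposition~\ref{convo_visc}/Corollary~\ref{pwise_cor}) and the $L^1_\loc(\Omega^\varepsilon)$ representative whose pairing against test functions equals $\langle \h u^\varepsilon, \cdot\rangle$ (Proposition~\ref{BHprop_curvature}). Both, however, are obtained from the same formula $\h u^\varepsilon(x) = \int_{\R^n} \delta_g(u^\varepsilon, x; \xi)\, |\xi|^{-n-s}\, d\xi$ whenever this integral is absolutely convergent; since $u^\varepsilon \in \BH$ forces the integrand to be in $L^1(\R^n \times \Omega')$ for $\Omega' \Subset \Omega^\varepsilon$ (as in the first displayed estimate of the proof of Proposition~\ref{BHprop_curvature}), the two definitions coincide for a.e.~$x \in \Omega^\varepsilon$, and everything is consistent. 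The remaining steps are entirely routine, being just the juxtaposition of the cited propositions.
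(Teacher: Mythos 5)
Your proposal is correct and follows exactly the route the paper intends: the paper states Corollary~\ref{convo_weak} as an immediate consequence of Propositions~\ref{convo_visc},~\ref{supconvo_teo}, and~\ref{BHprop_curvature}, and your argument is precisely the juxtaposition of these three results (with the additional, correct observation that the pointwise and distributional realizations of~$\h u^\eps$ coincide a.e.\ since both are given by the same absolutely convergent integral of~$\delta_g(u^\eps,x;\xi)|\xi|^{-n-s}$).
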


An easy consequence of the previous corollary is the next result, which already provides a proof of Theorem~\ref{Gen_viscweak} in the case of bounded, (semi)continuous outside data. Indeed, in the following statement we require the subsolution~$u$ to be upper semicontinuous outside of a set~$S \subseteq \C \Omega$ having vanishing Lebesgue measure. Note that, when~$\Omega$ is well-behaved, one may take~$S$ to contain~$\partial \Omega$, thus allowing~$u$ to be discontinuous across~$\partial \Omega$.

\begin{prop}\label{ViscWeak_reg}
Let~$\Omega\subseteq\R^n$ be a bounded open set with Lipschitz boundary,~$f\in C(\overline\Omega)$, and~$u$ be  a viscosity subsolution of~$\h u = f$ in~$\Omega$, bounded in~$\R^n$. Assume that there exists a closed set~$S\subseteq \R^n \setminus \Omega$
such that~$|S|=0$ and that~$u$ is upper semicontinuous in~$\R^n\setminus S$.

Then,~$u$ is a weak subsolution of~$\h u = f$ in~$\Omega$. In addition,~$u_+ \in W^{s, 1}(\Omega)$ and it holds
\begin{equation} \label{u+inWs1}
[u_+]_{W^{s, 1}(\Omega)} \le C,
\end{equation}
for some constant~$C > 0$ depending only on~$n$,~$s$,~$g$,~$\Omega$,~$\| u_+ \|_{L^\infty(\Omega)}$, and~$\| f_+ \|_{L^1(\Omega)}$.
\end{prop}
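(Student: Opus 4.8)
The plan is to approximate $u$ from above by its sup-convolutions $u^\eps$, for which the sought properties are easier to verify, and then let $\eps\searrow0$. Since $S$ is closed and disjoint from $\Omega$, the function $u$ is upper semicontinuous in $\Omega$ (so that Definition~\ref{visc_sol_def} applies) and, more generally, in the open set $\R^n\setminus S$. Hence, by Proposition~\ref{supconvo_teo} we have $u^\eps(x)\searrow u(x)$ for every $x\in\R^n\setminus S$, and therefore $u^\eps\to u$ a.e.~in $\R^n$, as $|S|=0$. Setting $M:=\sup_{\R^n}|u|$ and taking $\Omega^\eps$, $c_\eps$ as in~\eqref{open_visc} and~\eqref{cepsdef}, Corollary~\ref{convo_weak} tells us that $u^\eps$ is a weak subsolution of $\h u^\eps=f+c_\eps$ in $\Omega^\eps$, with $c_\eps\searrow0$ by~\eqref{errorproperty}.

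First, I would show that $u$ is a weak subsolution of $\h u=f$ in $\Omega$. Fix $v\in C^\infty_c(\Omega)$ with $v\ge0$. Since $\Omega^\eps\nearrow\Omega$ as $\eps\searrow0$, we have $\supp v\Subset\Omega^\eps$ for all small $\eps$, so that $\langle\h u^\eps,v\rangle\le\int_\Omega(f+c_\eps)\,v\,dx$. Letting $\eps\searrow0$, the left-hand side tends to $\langle\h u,v\rangle$ by Lemma~\ref{easylemma} (using $u^\eps\to u$ a.e.~and $v\in W^{s,1}(\R^n)$), while the right-hand side tends to $\int_\Omega fv\,dx$ because $c_\eps\to0$ and $\Omega$ is bounded. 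This gives the first assertion.

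Next, I would establish the \emph{finiteness} $u_+|_\Omega\in W^{s,1}(\Omega)$. Fix a bounded open set $\Omega'\Subset\Omega$ with Lipschitz boundary; for $\eps$ small, $\Omega'\Subset\Omega^\eps$, hence $u^\eps$ is a weak subsolution of $\h u^\eps=f+c_\eps$ in $\Omega'$, and $(u^\eps)_+\in W^{s,1}(\Omega')\cap L^\infty(\Omega')$ since $u^\eps\in W^{1,\infty}_\loc(\R^n)$ by Proposition~\ref{supconvo_teo}. Applying Proposition~\ref{localWs1prop} on $\Omega'$, together with the crude bounds $\|(f+c_\eps)_+\|_{L^1(\Omega')}\le\|f_+\|_{L^1(\Omega)}+|\Omega|$ and $\|(u^\eps)_+\|_{L^\infty(\Omega')}\le M$, yields $[(u^\eps)_+]_{W^{s,1}(\Omega')}\le C_1$ with $C_1$ independent of $\eps$. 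Since $(u^\eps)_+\to u_+$ in $L^1(\Omega')$ (dominated convergence, as $0\le(u^\eps)_+\le M$ on $\Omega'$), the lower semicontinuity of the $W^{s,1}$ seminorm under $L^1$ convergence gives $[u_+]_{W^{s,1}(\Omega')}\le C_1$. Choosing an increasing exhaustion $\Omega'=\Omega_k\nearrow\Omega$ by bounded Lipschitz open sets with $\sup_k\Per_s(\Omega_k)<+\infty$ — which exists because $\Omega$ is bounded and Lipschitz — the constant $C_1$ can be taken uniform in $k$, and monotone convergence of the seminorm over $\Omega_k\times\Omega_k\nearrow\Omega\times\Omega$ forces $[u_+]_{W^{s,1}(\Omega)}<+\infty$. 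As also $u_+\in L^\infty(\Omega)\subseteq L^1(\Omega)$, we conclude $u_+\in\W^s(\Omega)$.

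Finally, the estimate~\eqref{u+inWs1} follows by applying Proposition~\ref{localWs1prop} \emph{directly to $u$ on $\Omega$}: the previous two steps show that $u$ is a weak subsolution of $\h u=f$ in $\Omega$ with $u_+\in\W^s(\Omega)\cap L^\infty(\Omega)$, and Proposition~\ref{localWs1prop} then produces a bound on $[u_+]_{W^{s,1}(\Omega)}$ whose constant depends only on $n$, $s$, $g$, $\Omega$, $\|u_+\|_{L^\infty(\Omega)}$, and $\|f_+\|_{L^1(\Omega)}$. I expect the delicate point to be the third step: the sup-convolutions may be large near $\partial\Omega$ (their values there can reflect $\sup_{\R^n}|u|$ rather than the essential supremum of $u_+$ over $\Omega$), so only a crude, $M$-dependent bound is available while proving finiteness, which forces the use of an exhaustion with uniformly controlled fractional perimeter and some care with the limit passages; the clean dependence on $\|u_+\|_{L^\infty(\Omega)}$ is recovered only afterwards, through the second application of the a priori estimate.
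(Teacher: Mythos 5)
Your proof is correct and follows essentially the same route as the paper: sup-convolutions, Corollary~\ref{convo_weak} plus Lemma~\ref{easylemma} for the weak subsolution property, and Proposition~\ref{localWs1prop} on an exhaustion by Lipschitz subdomains with uniformly controlled fractional perimeter for the energy bound. The only (harmless) difference is at the end: the paper obtains the clean constant directly by noting via~\eqref{silence} that $\|(u^\eps)_+\|_{L^\infty(\Omega')}\le\|u_+\|_{L^\infty(\Omega)}$ for $\eps$ small, whereas you first prove a crude, $M$-dependent finiteness of $[u_+]_{W^{s,1}(\Omega)}$ and then recover the stated dependence by reapplying Proposition~\ref{localWs1prop} to $u$ itself on $\Omega$.
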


\begin{proof}
The hypotheses on~$u$ and Proposition~\ref{supconvo_teo} give that, as~$\eps \to 0$, the sup-convolutions~$\{ u^\eps \}$ converge to~$u$ pointwise outside of~$S$ and hence a.e.~in $\R^n$. Let~$v\in C^\infty_c(\Omega)$ be a non-negative function. Note that~$\supp(v)\subseteq\Omega^\eps$ provided~$\eps$ is small enough. Thus, using Lemma~\ref{easylemma}, Corollary~\ref{convo_weak}, and property~\eqref{errorproperty},
we obtain
\bgs{
\langle\h u,v\rangle=\lim_{\eps \searrow 0}\langle\h u^\eps,v\rangle\le
\lim_{\eps\searrow 0}\int_\Omega(f+c_\eps)v\,dx=\int_\Omega fv\,dx.
}
Hence,~$u$ is a weak solution of~$\h u = f$ in~$\Omega$.

To check the validity of~\eqref{u+inWs1}, let~$\Omega' \Subset \Omega$ be any open set with Lipschitz boundary such that~$\Per_s(\Omega') \le \Per_s(\Omega) + 1$. For all~$\varepsilon$ sufficiently small,~$u^\varepsilon$ is a weak subsolution of~$\h u^\varepsilon = f + c_\varepsilon$ in~$\Omega'$ lying in~$\W^s_\loc(\R^n) \cap L^\infty(\R^n)$. Thus, we may apply to it Proposition~\ref{localWs1prop} and deduce that
\begin{align*}
[u^\varepsilon_+]_{W^{s, 1}(\Omega')} & \le C \Big( 1 + \left( 1 + \| f_+ \|_{L^1(\Omega')} + c_\varepsilon |\Omega'| \right) \| u^\varepsilon_+ \|_{L^\infty(\Omega')} \Big) \\
& \le C \Big( 1 + \left( 2 + \| f_+ \|_{L^1(\Omega)} \right) \| u_+ \|_{L^\infty(\Omega)} \Big)
\end{align*}
for some constant~$C > 0$ depending only on~$n$,~$s$,~$g$, and~$\Omega$. For the second inequality, we also took advantage of expression~\eqref{silence} for~$u^\varepsilon$, property~\eqref{errorproperty} for~$c_\varepsilon$, and assumed~$\varepsilon$ to be suitably small. Letting~$\varepsilon \searrow 0$ in the above inequality, by Fatou's lemma one gets a~$W^{s, 1}(\Omega')$ bound for~$u_+$ with constant independent of~$\Omega'$. Estimate~\eqref{u+inWs1} then follows by the arbitrariness of~$\Omega' \Subset \Omega$, using again Fatou's lemma.
\end{proof}

In order to extend Proposition~\ref{ViscWeak_reg} to the case of general exterior data, and thus prove Theorem~\ref{Gen_viscweak} in its full generality, we will use a particular approximation procedure. The crucial point is represented by the following observation, which follows essentially from the fact that~$\h^{\ge d}u(x)$ can be bounded independently of both~$u$ and~$x$---see Remark~\ref{rmk_tail}.

\begin{lemma}\label{approtrick}
Let~$\Omega' \Subset \Omega\subseteq\R^n$ be bounded open sets,~$f\in C(\overline\Omega)$, and~$u$ be a viscosity subsolution of~$\h u = f$ in~$\Omega$, locally integrable in~$\R^n$. Let~$\{ u_k \} \subseteq L^1_\loc(\R^n)$ be a sequence of functions converging to~$u$ in $L^1_{\loc}(\R^n)$. Define
\begin{equation*}
\bar u_k(x):=
\begin{cases}
u(x) & \quad \mbox{if }x\in\Omega,\\
u_k(x) & \quad \mbox{if }x\in\R^n\setminus\Omega,
\end{cases}
\end{equation*}
Then,~$\bar{u}_k$ is a viscosity subsolution of~$\h \bar{u}_k = f + e_k$ in~$\Omega'$, for some non-negative constant~$e_k$ such that~$e_k \rightarrow 0$ as~$k \rightarrow +\infty$.
\end{lemma}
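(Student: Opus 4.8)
The plan is to verify Definition~\ref{visc_sol_def} directly for~$\bar u_k$ in~$\Omega'$, with right-hand side~$f + e_k$. Since~$\bar u_k = u$ on~$\Omega \supseteq \Omega'$, the function~$\bar u_k$ is automatically upper semicontinuous in~$\Omega'$, so only the touching inequality requires work. Suppose then that~$\bar u_k$ is touched from above at~$x_0 \in \Omega'$ by~$v \in C^{1,1}(B_r(x_0))$ with~$r < \dist(x_0, \partial\Omega')$, and let~$\tilde v_k$ be the function equal to~$v$ on~$B_r(x_0)$ and to~$\bar u_k$ outside; the goal is~$\h\tilde v_k(x_0) \le f(x_0) + e_k$. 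The first step is a reduction to the case~$B_r(x_0) \Subset \Omega$: if~$r$ is not already this small, replace it by~$r' := \tfrac12\min\{r, \dist(\Omega', \Co\Omega)\}$. Then~$v|_{B_{r'}(x_0)}$ still touches~$\bar u_k$ from above at~$x_0$, the associated truncation~$\tilde v_k^{(r')}$ lies below~$\tilde v_k$ and agrees with it at~$x_0$, and—both functions being~$C^{1,1}$ at~$x_0$, hence having a pointwise~$\h$ in the Lebesgue sense by Lemma~\ref{classical_form_reg_func}—the monotonicity of Remark~\ref{mah} yields~$\h\tilde v_k(x_0) \le \h\tilde v_k^{(r')}(x_0)$. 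So it suffices to prove the inequality when~$B_r(x_0) \subseteq \Omega$, which I assume henceforth.

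With~$B_r(x_0) \subseteq \Omega$, the identity~$\bar u_k = u$ on~$\Omega$ shows that~$v$ also touches~$u$ from above at~$x_0$ on~$B_r(x_0)$; hence, by the viscosity subsolution property of~$u$ in~$\Omega$, the replacement~$\hat v$ (equal to~$v$ on~$B_r(x_0)$, to~$u$ outside) satisfies~$\h\hat v(x_0) \le f(x_0)$. Now~$\tilde v_k$ and~$\hat v$ coincide on all of~$\Omega$ and both are~$C^{1,1}$ at~$x_0$; splitting~$\h$ at a radius smaller than~$r$ (where the two functions agree, so the singular parts cancel and both tails converge absolutely) and symmetrizing, one obtains
\[
\h\tilde v_k(x_0) - \h\hat v(x_0) = 2\int_{\Co\Omega}\left[G\!\left(\frac{v(x_0) - u_k(y)}{|x_0-y|}\right) - G\!\left(\frac{v(x_0) - u(y)}{|x_0-y|}\right)\right]\frac{dy}{|x_0-y|^{n+s}}.
\]
It then remains to bound this quantity by a nonnegative constant~$e_k$, independent of~$x_0 \in \Omega'$ and of~$v$, with~$e_k \to 0$.

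For this, set~$d := \dist(\Omega', \Co\Omega) > 0$ and fix~$\sigma$ with~$\Omega' \subseteq B_\sigma$. Given any~$\rho \ge d$, split~$\Co\Omega$ into~$\Co\Omega \cap B_\rho(x_0)$ and~$\Co\Omega \setminus B_\rho(x_0)$. On the bounded piece, the Lipschitz bound~$|G(a) - G(b)| \le |a - b|$ together with~$|x_0 - y| \ge d$ controls the contribution by~$2 d^{-n-1-s}\|u_k - u\|_{L^1(B_{\rho+\sigma})}$; on the far piece, the bound~$|G| \le \Lambda/2$ (which is the content of Remark~\ref{rmk_tail}) controls it by~$2\Lambda\,\Ha^{n-1}(\mathbb{S}^{n-1})/(s\rho^s)$. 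Setting
\[
e_k := \inf_{\rho\ge d}\left(\frac{2\,\|u_k - u\|_{L^1(B_{\rho+\sigma})}}{d^{n+1+s}} + \frac{2\Lambda\,\Ha^{n-1}(\mathbb{S}^{n-1})}{s\,\rho^s}\right) \ge 0,
\]
one gets~$\h\tilde v_k(x_0) \le f(x_0) + e_k$; and~$e_k \to 0$ because, given~$\eta > 0$, one first chooses~$\rho$ so large that the second term is~$< \eta/2$ and then uses~$u_k \to u$ in~$L^1_\loc(\R^n)$ to make the first term~$< \eta/2$ for~$k$ large. This establishes that~$\bar u_k$ is a viscosity subsolution of~$\h\bar u_k = f + e_k$ in~$\Omega'$.

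The only genuinely delicate point is the bookkeeping that makes ``subtract~$\h\hat v(x_0)$'' a legitimate operation and localizes the difference to~$\Co\Omega$; this is precisely where the reduction to~$B_r(x_0)\subseteq\Omega$ (so that~$v$ also touches~$u$) and Lemma~\ref{classical_form_reg_func} (which renders~$\h$ of a function~$C^{1,1}$ at~$x_0$ a Lebesgue integral) are indispensable. After that, the tail estimate—essentially Remark~\ref{rmk_tail}—and the Lipschitz estimate on the bounded part are routine, the main thing to be careful about being their uniformity in~$x_0 \in \Omega'$, which is guaranteed by~$\dist(\Omega',\Co\Omega) > 0$ and the boundedness of~$\Omega'$.
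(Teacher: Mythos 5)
Your proof is correct and follows essentially the same route as the paper's: the error term $e_k$ and the near/far splitting of the exterior integral are identical, the only difference being that you compare the two test-function replacements $\tilde v_k$ and $\hat v$ directly (their $\h$'s being Lebesgue integrals by Lemma~\ref{classical_form_reg_func}), whereas the paper writes $\h\bar u_k(x_0)=\h^{<d}u(x_0)+\h^{\ge d}\bar u_k(x_0)$ and invokes Proposition~\ref{pwise_prop} together with Remark~\ref{mah}. Note only that your preliminary reduction to $B_r(x_0)\Subset\Omega$ is vacuous, since $r<\dist(x_0,\partial\Omega')$ and $\Omega'\Subset\Omega$ already force $\overline{B_r(x_0)}\subseteq\overline{\Omega'}\subseteq\Omega$.
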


\begin{proof}
We write~$d:=\dist(\Omega',\partial\Omega)>0$ and observe that, for every~$x\in\Omega'$,
\eqlab{\label{approdel2}
\delta_g(\bar u_k,x;\xi)=\delta_g(u,x;\xi) \quad \mbox{for all } \xi\in B_d.
}
On the other hand, set
\bgs{
\omega_k(x):=\h^{\ge d}\bar u_k(x)-\h^{\ge d}u(x) \quad \mbox{for every } x\in\Omega'
}
and let~$R_0>0$ be such that~$\Omega\subseteq B_{R_0}$. Then, for every~$x\in\Omega'$ and~$R\ge d$ we have
\bgs{
|\omega_k(x)|&\le2\int_{\R^n\setminus B_d(x)} \left| G \left( \frac{u(x)-\bar u_k(y)}{|x-y|} \right) - G \left( \frac{u(x)-u(y)}{|x-y|} \right) \right|
\frac{dy}{|x-y|^{n+s}}\\
&
\le2\int_{B_R(x)\setminus B_d(x)}\frac{|\bar u_k(y)-u(y)|}{|x-y|^{n+1+s}}\,dy
+2\Lambda\int_{\R^n\setminus B_R(x)}\frac{dy}{|x-y|^{n+s}}\\
&
\le\frac{2}{d^{n+1+s}}\|u_k-u\|_{L^1(B_{R+R_0})}+\frac{2\Lambda \Ha^{n - 1}(\mathbb{S}^{n - 1})}{s R^s},
}
where for the second inequality we took advantage of~\eqref{lip_G_bla} and~\eqref{Gbounds}. Hence, setting
$$
e_k:=\inf_{R\ge d}\left(\frac{2}{d^{n+1+s}}\|u_k-u\|_{L^1(B_{R+R_0})}+\frac{2\Lambda \Ha^{n - 1}(\mathbb{S}^{n - 1})}{s R^s}\right),
$$
we obtained that
\eqlab{\label{unif_est}
\sup_{\Omega'} |\omega_k| \le e_k \quad \mbox{for every } k\in\N.
}
Notice that, as~$u_k\to u$ in $L^1_{\loc}(\R^n)$ by assumption, it holds
\bgs{
\limsup_{k\to\infty}e_k\le \frac{2}{d^{n+1+s}} \lim_{k\to\infty}\|u_k-u\|_{L^1(B_{R+R_0})} + \frac{2\Lambda \Ha^{n - 1}(\mathbb{S}^{n - 1})}{s R^s}
=\frac{2\Lambda \Ha^{n - 1}(\mathbb{S}^{n - 1})}{s R^s},
}
for every~$R\ge d$. Since~$R$ can be taken arbitrarily large, it follows that~$e_k \to 0$.

Let now~$x_0\in\Omega'$ and suppose that there exist~$r \in (0, \dist(x_0,\partial \Omega'))$ and~$v\in C^{1,1}(B_r(x_0))$ such that
\bgs{
v(x_0)=\bar u_k(x_0)=u(x_0)\quad\mbox{and}\quad v(x)\ge \bar u_k(x)=u(x)\quad \mbox{for all } x\in B_r(x_0).
}
By Proposition \ref{pwise_prop}, we then infer that~$\h u(x_0)$ is well-defined and satisfies~$\h u(x_0)\le f(x_0)$. Hence,~$\h \bar{u}_k(x_0)$ is well-defined too and, taking into account~\eqref{approdel2} and~\eqref{unif_est}, we get
$$
\h \bar u_k(x_0) =\h^{<d}u(x_0)+\h^{\ge d}\bar u_k(x_0)=\h u(x_0)+\omega_k(x_0) \le f(x_0)+e_k.
$$
The conclusion of the theorem is then an immediate consequence of Remark~\ref{mah}.
\end{proof}

With this approximation tool at hand, we are ready to tackle Theorem~\ref{Gen_viscweak} in its full generality.

\begin{proof}[Proof of Theorem~\ref{Gen_viscweak}]
We only prove the second (global) statement, as the first (local) one is then an immediate consequence---to this aim, notice that a viscosity subsolution~$u$ is always locally bounded from above in~$\Omega$, thanks to its upper semicontinuity.

Thus, we assume~$\Omega \subseteq \R^n$ to be open, bounded, and Lipschitz, and~$u$ to be a viscosity subsolution of~$\h u = f$ in~$\Omega$, bounded from above in the whole~$\Omega$. Recalling Definition~\ref{weaksoldef}, we only need to prove that, for every open set~$\Omega' \Subset \Omega$ with Lipschitz boundary,
\begin{equation} \label{viscisweakclaim}
u \mbox{ is a weak subsolution of } \h u = f \mbox{ in } \Omega' \quad \mbox{ and } \quad u_+ \in \W^s(\Omega').
\end{equation}
Notice that this would also yield that~$\max \{ u, k\} \in \W^s(\Omega)$ for every~$k \in \R$, thanks to Proposition~\ref{localWs1prop}, Fatou's lemma, and the fact that~$u$ is a subsolution bounded from above if and only if~$u - k$ is.

We prove~\eqref{viscisweakclaim} in three steps, of increasing degree of generality.

\begin{enumerate}[label=\emph{Step~$\arabic*$},leftmargin=*,wide]

\itemsep2pt

\item \hspace{-5pt}. \label{Step1} We first establish~\eqref{viscisweakclaim} under the additional assumption that
\begin{equation} \label{uinL1loc}
u \in L^\infty(\R^n).
\end{equation}

As~$u$ satisfies~\eqref{uinL1loc}, there exists a sequence~$\{ u_k \} \subseteq C(\R^n)\cap L^\infty(\R^n)$ converging to~$u$ in~$L^1_\loc(\R^n)$ and a.e.~in~$\R^n$. Let now~$\bar u_k$ and~$e_k$ be as in the statement of Lemma~\ref{approtrick}. Notice that the functions~$\bar u_k$ are globally bounded. Moreover, since $u$ is upper semicontinuous in $\Omega$ and $u_k$ is continuous in $\R^n$,
the functions $\bar u_k$ are upper semicontinuous in $\R^n\setminus\partial\Omega$.

In light of this and of the fact that~$\partial \Omega$ has zero Lebesgue measure (being Lipschitz), we can apply Lemma~\ref{approtrick} and Proposition~\ref{ViscWeak_reg}, deducing that~$\bar{u}_k$ is a weak subsolution of~$\h \bar{u}_k = f + e_k$ in~$\Omega'$ and that~$u_+|_{\Omega'} = (\bar{u}_k)_+|_{\Omega'} \in W^{s, 1}(\Omega')$. Then, since~$\bar u_k\to u$ a.e.~in~$\R^n$ and~$e_k\to 0$, by Lemma~\ref{easylemma} we easily deduce that~$u$ is a weak subsolution of~$\h u = f$ in~$\Omega'$.

This concludes the proof of claim~\eqref{viscisweakclaim} under the extra hypothesis~\eqref{uinL1loc}.

\item \hspace{-5pt}. \label{Step2} We now prove~\eqref{viscisweakclaim} assuming only that
\begin{equation} \label{ulocbounded}
u \in L^\infty(\Omega).
\end{equation}

Take~$k > \| u \|_{L^\infty(\Omega)}$ and set
$$
u_k := \begin{cases}
k & \quad \mbox{in } \{ u \ge k \},\\
u & \quad \mbox{in } \{ - k < u < k \},\\
-k & \quad \mbox{in } \{ u \le - k \}.
\end{cases}
$$
Notice that~$u_k$ coincides with~$u$ in~$\Omega$ and is thus upper semicontinuous in~$\Omega$. Let~$x_0 \in \Omega'$ and suppose that~$u_k$ is touched from above by a~$C^{1, 1}$ function at~$x_0$. Then, this function also touches~$u$ from above at~$x_0$ and, by Proposition~\ref{pwise_prop}, the quantity~$\h u(x_0)$ is well-defined and satisfies~$\h u(x_0) \le f(x_0)$. Clearly,~$\h u_k(x_0)$ is also well-defined and we have
\begin{align*}
\h u_k(x_0) & \le f(x_0) + \int_{\{ |u| > k \}} \left\{ G \left( \frac{u(x_0) - u_k(y)}{|x_0 - y|} \right) - G \left( \frac{u(x_0) - u(y)}{|x_0 - y|} \right) \right\} \frac{dy}{|x_0 - y|^{n + s}} \\
& \le f(x_0) + \Lambda \int_{\{ |u| > k \}} \frac{dy}{|x_0 - y|^{n + s}},
\end{align*}
where the second inequality follows from~\eqref{Gbounds}. From the boundedness of~$\Omega'$ it is easy to deduce the existence of a constant~$C > 0$, depending only on~$n$,~$s$,~$\diam(\Omega')$, and~$\dist(\Omega', \partial \Omega)$, such that
$$
|x_0 - y|^{- n - s} \le C (1 + |y|)^{- n - s} \quad \mbox{for all } y \in \Co \Omega.
$$
Observing that~$\{ |u| > k \} \subseteq \Co \Omega$, we get
$$
\h u_k(x_0) \le f(x_0) + C \Lambda \int_{\R^n} \frac{\chi_{\{ |u| > k \}}(y)}{(1 + |y|)^{n + s}} \, dy =: f(x_0) + \delta_k.
$$

Recalling Remark~\ref{mah},~$u_k$ is a viscosity subsolution of~$\h u_k = f + \delta_k$ in~$\Omega'$. By dominated convergence and the fact that~$\chi_{\{ |u| > k \}} \rightarrow 0$ a.e.~in~$\R^n$, we have that~$\delta_k \searrow 0$ as~$k \rightarrow +\infty$. As~$u_k \in L^\infty(\R^n)$ and~$f + \delta_k \in C(\overline{\Omega})$, we may apply what we proved in~\ref{Step1}, obtaining that~$u_k$ is a weak subsolution of~$\h u_k = f + \delta_k$ in~$\Omega'$ such that~$u_+|_{\Omega'} = (u_k)_+|_{\Omega'} \in \W^s(\Omega')$. By Lemma~\ref{easylemma} and the fact that~$u_k \rightarrow u$ a.e.~in~$\R^n$, we immediately conclude the validity of~\eqref{viscisweakclaim} under assumption~\eqref{ulocbounded}.

\item \hspace{-5pt}. \label{Step3} We now show that~\eqref{viscisweakclaim} holds true for a general viscosity subsolution~$u$ bounded from above in~$\Omega$. Note that, up to a partition of unity argument, it suffices to prove~\eqref{viscisweakclaim} for~$\Omega'$ equal to any ball~$B = B_\varrho(\bar{x}) \Subset \Omega$ of radius~$\varrho > 0$ arbitrarily small.

For~$k > 0$, consider the function~$\varphi_k := - k - \chi_{B_{2 \varrho}(\bar{x})}$. If~$\varrho \in (0, 1)$ is small enough,~$\varphi_k$ satisfies
\begin{equation} \label{phibarrier}
\h \varphi_k \le f \quad \mbox{in } B,
\end{equation}
Indeed, changing variables appropriately and taking advantage of the oddness and monotonicity of~$G$, for every~$x \in B$ we have
\begin{align*}
\h \varphi_k(x)
& = - \int_{\Co B_{2 \varrho}(\bar{x})} G \left( \frac{1}{|y - x|} \right) \frac{dy}{|y - x|^{n + s}} \\
& \le - \varrho^{-s} \int_{B_3 \setminus B_2} G \left( \frac{1}{|\varrho z - (x - \bar{x})|} \right) \frac{dz}{|z - (x - \bar{x})/\varrho|^{n + s}} \le - \frac{|B_3 \setminus B_2| \, G(1/4)}{4^{n + s}} \, \varrho^{-s},
\end{align*}
which is smaller than~$-\| f_- \|_{L^\infty(\Omega)}$, provided~$\varrho$ is sufficiently small. Hence,~\eqref{phibarrier} holds true.

Let now~$\hat{u}_k := \max \{ u, \varphi_k \}$. Clearly,~$\hat{u}_k$ is bounded and upper semicontinuous in~$B$. Suppose now that~$\hat{u}_k$ is touched from above by a~$C^{1, 1}$ function~$v$ at some point~$x \in B$. Then,~$v$ also touches either~$u$ or~$\varphi_k$ from above at~$x$. In both cases, using~\eqref{phibarrier}, the monotonicity of~$G$, and Proposition~\ref{pwise_prop}, we easily deduce that~$\h \hat{u}_k(x) \le f(x)$. Accordingly,~$\hat{u}_k$ is a viscosity subsolution of~$\h \hat{u}_k = f$ in~$B$, bounded in~$B$. By~\ref{Step2},~$\hat{u}_k$ is then also a weak subsolution of the same equation and its positive part belongs to~$\W^s(B)$. As~$\hat{u}_k \rightarrow u$ a.e.~in~$\R^n$ as~$k \rightarrow + \infty$, by Lemma~\ref{easylemma} we conclude that~$u$ is a weak subsolution as well. Using Proposition~\ref{localWs1prop} and Fatou's lemma, we also obtain that~$u_+ \in \W^s(B)$.
\end{enumerate}

\vspace{-3pt}

The proof of Theorem~\ref{Gen_viscweak} is thus complete.
\end{proof}

\section{Existence of minimizers. Proofs of Theorems~\ref{Dirichlet} and~\ref{Perron_Thm}} \label{Linftysec}

\noindent
In this section, we deal with the existence of minimizers and local minimizers of~$\F$ in an open set, with respect to a given outside datum.

\subsection{Existence of minimizers via the Direct Method} \label{minviadirect}

We begin by establishing the existence of minimizers of~$\F$ in a bounded Lipschitz set~$\Omega$ among all functions which agree with a given function~$\varphi$ outside of~$\Omega$. That is, we prove Theorem~\ref{Dirichlet}.

As anticipated in the Introduction, we prove the existence of minimizers through an approximation procedure that makes use of the truncated functionals~$\F^M$ and of their minimizers within an appropriate family of spaces, that we define as follows.

Given a bounded open set~$\Omega\subseteq\R^n$ and~$M\ge0$, we consider the spaces
$$
\B\W^s(\Omega):= \Big\{ u \in\W^s(\Omega) : u|_\Omega\in L^\infty(\Omega) \Big\}
$$
and
$$
\B_M\W^s(\Omega):= \Big\{ u\in\B\W^s(\Omega) : \|u\|_{L^\infty(\Omega)}\le M \Big\}.
$$
Moreover, given a function~$\varphi:\Co\Omega\to\R$, we define
$$
\B\W^s_\varphi(\Omega) := \Big\{ u\in\B\W^s(\Omega) : u=\varphi\textrm{ a.e.~in }\Co\Omega \Big\}
$$
and
$$
\B_M\W^s_\varphi(\Omega) := \Big\{ u\in\B_M\W^s(\Omega) : u=\varphi\textrm{ a.e.~in }\Co\Omega \Big\}.
$$

Our notion of minimality for the Dirichlet problem having~$\varphi$ as outside datum is essentially that of Definition~\ref{mindef}. Namely, a function~$u\in\W^s_\varphi$ is a minimizer of~$\F$ in~$\W^s_\varphi(\Omega)$ if
$$
\iint_{Q(\Omega)}\left\{\G\left(\frac{u(x)-u(y)}{|x-y|}\right)-\G\left(\frac{v(x)-v(y)}{|x-y|}\right)\right\}
\frac{dx\,dy}{|x-y|^{n-1+s}}\le0,
$$
for every~$v\in\W^s_\varphi(\Omega)$. To obtain Theorem~\ref{Dirichlet}, we first solve the Dirichlet problem in~$\B_M \W^s_\varphi(\Omega)$, for a fixed~$M > 0$. This is achieved easily with the aid of the following two results.

First, we observe that~$\F^M$ is lower semicontinuous in~$\B_M\W^s(\Omega)$ with respect to pointwise convergence almost everywhere.

\begin{lemma} \label{semicontlem}
Let~$\Omega \subseteq \R^n$ be an open set and~$M > 0$. Let~$\{ u_k \} \subseteq\B_M\W^s(\Omega)$ be a sequence of functions converging to some~$u: \R^n \to \R$ a.e.~in~$\R^n$.
Then,
\bgs{
\F^M(u,\Omega)\le\liminf_{k\to\infty}\F^M(u_k,\Omega).
}
\end{lemma}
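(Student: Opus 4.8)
The idea is that, on $\B_M\W^s(\Omega)$, both summands making up $\F^M$ are integrals of non-negative functions, so that Fatou's lemma can be applied separately to each. Recall the splitting $\F^M(u,\Omega)=\A(u,\Omega)+\Nl^M(u,\Omega)$ from~\eqref{FMdef}. Since $u_k\to u$ a.e.\ in $\R^n$, we first observe that $|u|\le M$ a.e.\ in $\Omega$, so that $u\in\B_M\W^s(\Omega)$ (modulo the membership $u|_\Omega\in W^{s,1}(\Omega)$, which is not needed: if $\liminf_k\A(u_k,\Omega)=+\infty$ the desired inequality is trivial, since $\Nl^M\ge0$ on $\B_M\W^s(\Omega)$ forces $\F^M(u_k,\Omega)\ge\A(u_k,\Omega)$; otherwise Fatou below automatically gives $\A(u,\Omega)<\infty$, hence $u|_\Omega\in W^{s,1}(\Omega)$ by Lemma~\ref{Adomainlem}).

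For the local part, the integrand $\G\big((u(x)-u(y))/|x-y|\big)\,|x-y|^{-(n-1+s)}$ is non-negative by the bound $\G\ge0$ in~\eqref{GGbounds}, and it converges pointwise a.e.\ on $\Omega\times\Omega$ because $u_k(x)-u_k(y)\to u(x)-u(y)$ for a.e.\ $(x,y)$ and $\G$ is continuous. Fatou's lemma then gives $\A(u,\Omega)\le\liminf_{k\to\infty}\A(u_k,\Omega)$. For the nonlocal part I would use the representation~\eqref{NMldef}. The crucial point, which is exactly the remark preceding Example~\ref{Exe_neg_nonloc}, is that since $|u_k|\le M$ and $|u|\le M$ a.e.\ in $\Omega$ one has
$$
\frac{-M-u_k(y)}{|x-y|}\le\frac{u_k(x)-u_k(y)}{|x-y|}\le\frac{M-u_k(y)}{|x-y|}\qquad\mbox{for a.e.\ }(x,y)\in\Omega\times\Co\Omega,
$$
so that, together with $\overline{G}\ge0$, the inner integrand in~\eqref{NMldef} is non-negative. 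Moreover $\overline{G}$ is bounded and continuous, so that integrand depends continuously on the (a.e.\ finite) values $u_k(x)$ and $u_k(y)$ and hence converges pointwise a.e.\ to the corresponding quantity for $u$; Fatou's lemma yields $\Nl^M(u,\Omega)\le\liminf_{k\to\infty}\Nl^M(u_k,\Omega)$.

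Combining the two bounds through $\liminf_k(a_k+b_k)\ge\liminf_k a_k+\liminf_k b_k$ (legitimate since all quantities involved are non-negative, avoiding any $\infty-\infty$ issue) gives
$$
\F^M(u,\Omega)=\A(u,\Omega)+\Nl^M(u,\Omega)\le\liminf_{k\to\infty}\A(u_k,\Omega)+\liminf_{k\to\infty}\Nl^M(u_k,\Omega)\le\liminf_{k\to\infty}\F^M(u_k,\Omega),
$$
which is the claim. I do not anticipate a genuine obstacle; the only delicate point is the sign of the integrand of $\Nl^M$, which is precisely what the $L^\infty$ constraint built into $\B_M\W^s(\Omega)$ secures—without it $\Nl^M$ need not be non-negative (cf.\ Example~\ref{Exe_neg_nonloc}) and this direct Fatou argument would break down.
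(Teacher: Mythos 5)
Your proposal is correct and follows essentially the same route as the paper: Fatou's lemma applied separately to $\A$ and $\Nl^M$, with the uniform bound $\|u_k\|_{L^\infty(\Omega)}\le M$ used precisely to guarantee that the integrand in the representation~\eqref{NMldef} of~$\Nl^M$ is non-negative. Your additional remarks (pointwise convergence of the integrands, superadditivity of the liminf, and the dispensability of $u|_\Omega\in W^{s,1}(\Omega)$ as a hypothesis) merely spell out details the paper leaves implicit.
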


\begin{proof}
The proof is a consequence of Fatou's lemma, applied separately to the functionals~$\A$ and~$\Nl^M$.
Notice that, in order to use this result with~$\Nl^M$, 
the uniform bound~$\|u_k\|_{L^\infty(\Omega)}\le M$ is important to guarantee that the quantity inside square brackets in~\eqref{NMldef}
is non-negative---recall that~$\overline{G} \ge 0$ by definition~\eqref{Gbardef}.
\end{proof}

Next is a compactness result for sequences uniformly bounded with respect to~$\A$. 

\begin{lemma} \label{complem}
Let~$\Omega \subseteq \R^n$ be a bounded open set with Lipschitz boundary. Let~$\{ u_k \}$ be a sequence of functions~$u_k: \Omega \to \R$ satisfying
$$
\sup_{k \in \N} \Big( \|u_k\|_{L^1(\Omega)}+\A(u_k, \Omega) \Big) < \infty.
$$
Then, up to a subsequence,~$\{ u_k \}$ converges to a function~$u \in W^{s, 1}(\Omega)$ in $L^1(\Omega)$ and a.e.~in~$\Omega$.
\end{lemma}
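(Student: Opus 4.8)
The plan is to obtain compactness from the bound on $\A(u_k,\Omega)$ by first converting it into a uniform bound on the Gagliardo seminorm $[u_k]_{W^{s,1}(\Omega)}$, and then invoking the standard compact embedding of $W^{s,1}(\Omega)$ into $L^1(\Omega)$ for bounded Lipschitz domains.

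First I would apply Lemma~\ref{Adomainlem}, more precisely the left-hand inequality in~\eqref{AGags}, which gives
$$
[u_k]_{W^{s,1}(\Omega)} \le \frac{2}{c_\star}\,\A(u_k,\Omega) + c_s(\Omega)
$$
for every $k$, with $c_\star>0$ and $c_s(\Omega)$ as in~\eqref{cstardef} and~\eqref{csOmegadef}. Combined with the assumed uniform bound on $\|u_k\|_{L^1(\Omega)}+\A(u_k,\Omega)$, this yields
$$
\sup_{k\in\N}\|u_k\|_{W^{s,1}(\Omega)} = \sup_{k\in\N}\Big(\|u_k\|_{L^1(\Omega)}+[u_k]_{W^{s,1}(\Omega)}\Big) < \infty.
$$

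Next I would use the compactness of the embedding $W^{s,1}(\Omega)\hookrightarrow L^1(\Omega)$, valid since $\Omega$ is bounded with Lipschitz boundary (see, e.g.,~\cite[Theorem~7.1]{DPV12}). This provides a subsequence, still denoted $\{u_k\}$, converging in $L^1(\Omega)$ to some $u\in L^1(\Omega)$; up to a further subsequence we may also assume $u_k\to u$ a.e.~in $\Omega$. That $u\in W^{s,1}(\Omega)$ follows from the lower semicontinuity of the Gagliardo seminorm under a.e.~convergence, via Fatou's lemma applied to the double integral defining $[\,\cdot\,]_{W^{s,1}(\Omega)}$:
$$
[u]_{W^{s,1}(\Omega)} \le \liminf_{k\to\infty} [u_k]_{W^{s,1}(\Omega)} < \infty.
$$

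There is no real obstacle here: the statement is essentially a repackaging of Lemma~\ref{Adomainlem} together with the classical fractional Rellich--Kondrachov theorem. The only point requiring a little care is that the hypothesis controls $\A(u_k,\Omega)$ rather than the seminorm directly, so one must pass through~\eqref{AGags}; and one should remember to extract the a.e.~convergent subsequence and to check membership in $W^{s,1}(\Omega)$ by lower semicontinuity, rather than taking it for granted from the $L^1$ convergence alone.
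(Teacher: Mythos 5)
Your proof is correct and follows exactly the route the paper takes: the paper's own proof is a one-line appeal to the coercivity of $\A$ from Lemma~\ref{Adomainlem} (i.e., the left-hand inequality in~\eqref{AGags}) together with the compact embedding $W^{s,1}(\Omega)\hookrightarrow\hookrightarrow L^1(\Omega)$ of~\cite[Theorem~7.1]{DPV12}. Your write-up merely spells out the details (the extraction of the a.e.~convergent subsequence and the Fatou argument for $u\in W^{s,1}(\Omega)$), which the paper leaves implicit.
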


Lemma~\ref{complem} follows at once from the coercivity of~$\A$ with respect to the~$W^{s, 1}(\Omega)$ seminorm observed in Lemma~\ref{Adomainlem} and the compact embedding~$W^{s, 1}(\Omega) \hookrightarrow \hookrightarrow L^1(\Omega)$---see,~e.g.,~\cite[Theorem~7.1]{DPV12}.

By combining these two results, we easily obtain the existence of a (unique) minimizer~$u_M$ of~$\F^M$ among all functions in~$\B_M \W^s(\Omega)$ with fixed values outside of~$\Omega$.

\begin{prop}\label{ertyui}
Let~$\Omega \subseteq \R^n$ be a bounded open set with Lipschitz boundary and~$\varphi: \Co \Omega \to \R$ be a given function. For every~$M > 0$, there exists a unique minimizer~$u_M$ of~$\F^M(\,\cdot\,, \Omega)$ in~$\B_M\W^s_\varphi(\Omega)$, i.e., a unique~$u_M \in \B_M \W^s_\varphi(\Omega)$ for which
\begin{equation} \label{uMmin}
\F^M(u_M,\Omega) = \inf \Big\{ \F^M(v, \Omega) : v \in \B_M\W_\varphi^s(\Omega) \Big\}.
\end{equation}
\end{prop}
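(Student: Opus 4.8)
\noindent
\emph{Plan of proof.} The plan is to run the Direct Method of the Calculus of Variations on the set $\B_M\W^s_\varphi(\Omega)$, which is convex and, by Lemma~\ref{conv_func}\ref{convlem_ii}, a set on which $\F^M(\,\cdot\,,\Omega)$ is strictly convex. The two analytic ingredients are the lower semicontinuity of $\F^M$ with respect to a.e.~convergence (Lemma~\ref{semicontlem}) and the compactness of sequences with bounded $\A$-energy (Lemma~\ref{complem}); the only point requiring a little care is to control $\Nl^M$ along a minimizing sequence uniformly, which is where the truncation is essential.

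First I would record two preliminary facts. The class $\B_M\W^s_\varphi(\Omega)$ is nonempty, since $v_0:=\chi_{\Co\Omega}\varphi$ (extended by $0$ in $\Omega$) belongs to it. Moreover $\F^M(\,\cdot\,,\Omega)$ is bounded below on $\B_M\W^s_\varphi(\Omega)$ by a constant depending only on $n,s,g,\Omega$, and $M$: by Lemma~\ref{Adomainlem} one has $\A(v,\Omega)\ge-\tfrac{c_\star}{2}c_s(\Omega)$ with $c_s(\Omega)$ as in~\eqref{csOmegadef}, while from the defining formula~\eqref{NMldef}, the boundedness $\overline{G}\le\Lambda$, and the constraint $\|v\|_{L^\infty(\Omega)}\le M$ one gets, exactly as in~\eqref{eqBlabla}, the bound $|\Nl^M(v,\Omega)|\le 4\Lambda M\,\Per_s(\Omega)$, which is finite because $\Omega$ is bounded and Lipschitz. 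In particular $I_M:=\inf\{\F^M(v,\Omega):v\in\B_M\W^s_\varphi(\Omega)\}$ is a real number (it is also $<\infty$, e.g.~$\F^M(v_0,\Omega)$ is finite).

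Next I would take a minimizing sequence $\{v_k\}\subseteq\B_M\W^s_\varphi(\Omega)$, so that $\F^M(v_k,\Omega)\to I_M$. Since $\|v_k\|_{L^\infty(\Omega)}\le M$ and $\Omega$ is bounded, $\|v_k\|_{L^1(\Omega)}\le M|\Omega|$; combining $\A(v_k,\Omega)=\F^M(v_k,\Omega)-\Nl^M(v_k,\Omega)$ with the two bounds just obtained yields $\sup_k\A(v_k,\Omega)<\infty$. Lemma~\ref{complem} then gives a subsequence (not relabelled) converging in $L^1(\Omega)$ and a.e.~in $\Omega$ to some $u\in W^{s,1}(\Omega)$. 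Extending $u$ by setting $u:=\varphi$ a.e.~in $\Co\Omega$, we obtain $u\in\W^s(\Omega)$ with $u=\varphi$ a.e.~in $\Co\Omega$ and, since $|v_k|\le M$ a.e.~in $\Omega$ and $v_k\to u$ a.e., also $\|u\|_{L^\infty(\Omega)}\le M$; hence $u\in\B_M\W^s_\varphi(\Omega)$. Because $v_k=\varphi=u$ a.e.~in $\Co\Omega$, we have $v_k\to u$ a.e.~in $\R^n$, so Lemma~\ref{semicontlem} gives $\F^M(u,\Omega)\le\liminf_k\F^M(v_k,\Omega)=I_M$; therefore $u$ realizes the infimum, i.e.~$u$ is a minimizer of $\F^M(\,\cdot\,,\Omega)$ in $\B_M\W^s_\varphi(\Omega)$.

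Finally, uniqueness follows from strict convexity: $\B_M\W^s_\varphi(\Omega)$ is a convex subset of $\W^s_\varphi(\Omega)$, and by Lemma~\ref{conv_func}\ref{convlem_ii} the functional $\F^M(\,\cdot\,,\Omega)$ is strictly convex there; if $u_1,u_2$ were two distinct minimizers, they would necessarily differ on a subset of $\Omega$ of positive measure, and then $(u_1+u_2)/2\in\B_M\W^s_\varphi(\Omega)$ would satisfy $\F^M((u_1+u_2)/2,\Omega)<\tfrac12\F^M(u_1,\Omega)+\tfrac12\F^M(u_2,\Omega)=I_M$, contradicting the definition of $I_M$. This yields the uniqueness of $u_M$ and completes the argument. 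The whole proof is routine; the only genuinely substantive observation is that, thanks to the $\overline{G}$-representation~\eqref{NMldef} and the boundedness of $\overline{G}$, the bound on $\Nl^M$ along the minimizing sequence depends only on $M$ and $\Per_s(\Omega)$ and requires no integrability hypothesis on the exterior datum $\varphi$ — which is precisely the reason for working with the truncated functionals $\F^M$.
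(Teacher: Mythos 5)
Your proposal is correct and follows essentially the same route as the paper: the Direct Method using Lemma~\ref{complem} for compactness, Lemma~\ref{semicontlem} for lower semicontinuity, and the strict convexity of Lemma~\ref{conv_func}\ref{convlem_ii} for uniqueness. The only cosmetic difference is that the paper obtains the uniform bound on $\A$ along the minimizing sequence by noting that $\Nl^M\ge 0$ on $\B_M\W^s_\varphi(\Omega)$ (so $\A\le\F^M$ directly), whereas you use the two-sided bound $|\Nl^M|\le 4\Lambda M\Per_s(\Omega)$; both are valid.
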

\begin{proof}
Since~$\B_M\W^s_\varphi(\Omega)$ is a convex subset of~$\W^s_\varphi(\Omega)$,
the uniqueness of the minimizer of~$\F^M(\,\cdot\,, \Omega)$ within~$\B_M\W^s_\varphi(\Omega)$
is a consequence of the strict convexity of~$\F^M(\,\cdot\,, \Omega)$---see point~\ref{convlem_ii} of Lemma~\ref{conv_func}.
Therefore, we are only left to establish its existence.

Let~$\{ u^{(k)} \} \subseteq \B_M\W^s_\varphi(\Omega)$ be a minimizing sequence, that is
$$
\lim_{k\to\infty}\F^M(u^{(k)},\Omega)=\inf \Big\{ \F^M(v, \Omega) : v \in \B_M\W_\varphi^s(\Omega) \Big\} =: m.
$$
Observe that both~$\A$ and~$\Nl^M$ are non-negative in~$\B_M\W_\varphi^s(\Omega)$---recall definitions~\eqref{Adef} and~\eqref{NMldef}. Hence,~$m \ge 0$ and~$\A(u^{(k)}) \le \F^M(u^{(k)},\Omega) \le m + 1$ for~$k$ large enough. In light of Lemma~\ref{complem}, we then deduce that~$\{ u^{(k)} \}$ converges (up to a subsequence) to a function~$u_M \in \B_M\W^s_\varphi(\Omega)$ a.e.~in~$\R^n$. Identity~\eqref{uMmin} follows by applying Lemma~\ref{semicontlem}.
\end{proof}

Proposition~\ref{ertyui} shows that, for each~$M > 0$, there exists a unique minimizer~$u_M$ within the space~$\B_M\W_\varphi^s(\Omega)$. To establish the existence of a minimizer of~$\F$ in the whole~$\W^s_\varphi(\Omega)$---and thus prove Theorem~\ref{Dirichlet}---, we need~$u_M$ to stabilize as~$M\to\infty$. This is achieved through the uniform~$W^{s, 1}$ estimate of Proposition~\ref{Ws1prop}, at the price of assuming some (weighted) integrability on the exterior datum in a sufficiently large neighborhood of~$\Omega$.

\begin{proof}[Proof of Theorem \ref{Dirichlet}]
Let~$\Theta > 0$ be the constant given by Proposition~\ref{Ws1prop}. For any~$M > 0$, the minimizer~$u_M$ satisfies the hypotheses of Proposition~\ref{Ws1prop}. Therefore,
\begin{equation} \label{Ws1estforuM}
\| u_M \|_{W^{s, 1}(\Omega)} \le C \left( \left\| \Tail_s(\varphi, \Omega_{\Theta \diam(\Omega)} \setminus \Omega;\,\cdot\,) \right\|_{L^1(\Omega)} + 1 \right),
\end{equation}
for some constant~$C > 0$ depending only on~$n$,~$s$,~$g$, and~$\Omega$---in particular,~$C$ is independent of~$M$.

By the compact fractional Sobolev embedding (see, e.g.,~\cite[Theorem~7.1]{DPV12}),
we conclude that there exists a function~$u \in \W^s_\varphi(\Omega)$ to which~$\{ u_{M_j} \}$
converges in~$L^1(\Omega)$ and~a.e.~in~$\Omega$, for some diverging sequence~$\{ M_j \}_{j \in \N}$.
Letting~$M = M_j \rightarrow +\infty$ in~\eqref{Ws1estforuM}, by Fatou's Lemma
we see that~$u$ satisfies~\eqref{Ws1estformin}.
We are therefore left to show that~$u$ is a minimizer for~$\F$ in~$\W^s_\varphi(\Omega)$.

Take~$v \in \B\W^s_\varphi(\Omega)$. Then, for~$j$ large enough we have $M_j\ge\| v \|_{L^\infty(\Omega)}$,
and hence, by the minimality of~$u_{M_j}$ we get~$\F^{M_j}(u_{M_j}, \Omega) \le \F^{M_j}(v, \Omega)$. Equivalently,
\begin{equation} \label{equivuMjmin}
\begin{aligned}
0 & \ge \A(u_{M_j}) + 2 \int_{\Omega} \left\{ \int_{\Omega_R \setminus \Omega} \G \left( \frac{u_{M_j}(x) - \varphi(y)}{|x-y|} \right) \frac{dy}{|x-y|^{n - 1 + s}} \right\} dx \\
& \quad - \A(v) - 2 \int_{\Omega} \left\{ \int_{\Omega_R \setminus \Omega} \G \left( \frac{v(x) - \varphi(y)}{|x-y|} \right) \frac{dy}{|x-y|^{n - 1+ s}} \right\} dx \\
& \quad + 2 \int_{\Omega} \left\{ \int_{\Co \Omega_R} \left\{ \G \left( \frac{u_{M_j}(x) - \varphi(y)}{|x-y|} \right) - \G \left( \frac{v(x) - \varphi(y)}{|x-y|} \right) \right\} \frac{dy}{|x-y|^{n - 1 + s}} \right\} dx,
\end{aligned}
\end{equation}
for any fixed~$R \in (0, \Theta \diam(\Omega)]$. Note that such a choice for~$R$ guarantees the finiteness of all the quantities appearing in~\eqref{equivuMjmin}, taking advantage of the properties of~$\G$ and of hypothesis~\eqref{TailinL1}.

We now claim that letting~$j \rightarrow +\infty$ in~\eqref{equivuMjmin},
we obtain the same inequality with~$u_{M_j}$ replaced by~$u$. Indeed, the quantities on the first line can be dealt with by using Fatou's lemma.
Moreover, the Lipschitz character of~$\G$---see~\eqref{Lip_Gcal}---and the fact that~$u_{M_j} \rightarrow u$ in~$L^1(\Omega)$ ensure that
\bgs{
& \lim_{j \rightarrow +\infty} \int_{\Omega} \left\{ \int_{\Co \Omega_R} \left| \G \left( \frac{u_{M_j}(x) - \varphi(y)}{|x-y|} \right) - \G \left( \frac{u(x) - \varphi(y)}{|x-y|} \right) \right| \frac{dy}{|x-y|^{n - 1 + s}} \right\} dx\\
& \hspace{130pt} \le \frac{\Lambda}{2} \lim_{j \rightarrow +\infty} \int_\Omega|u_{M_j}(x)-u(x)| \left( \int_{\Co B_R(x)}\frac{dy}{|x-y|^{n+s}}\right) dx \\
& \hspace{130pt} \le \frac{\Lambda \Ha^{n - 1}(\S^{n - 1})}{2s R^{s}} \lim_{j \rightarrow +\infty} \|u_{M_j}-u\|_{L^1(\Omega)} = 0.
}
Hence, the third line passes to the limit as well.
All in all, we have proved that~$u$ minimizes~$\F$ in~$\B\W_\varphi^s(\Omega)$. The minimality of~$u$
within the larger class~$\W^s_\varphi(\Omega)$ follows from density arguments, using,~e.g.,~Proposition~\ref{smooth_cpt_dense} and Lemma~\ref{tartariccio}. See also point~\ref{tartapower_iv} of Remark~\ref{tartapower_Remark}.

Finally, the uniqueness of the minimizer follows by point~\ref{tartapower_iii} of Remark~\ref{tartapower_Remark}.
\end{proof}

\begin{remark}\label{Comments_minim}
Here are some mostly technical observations on Theorem~\ref{Dirichlet} and its proof.
\begin{enumerate}[label=$(\roman*)$,leftmargin=*]
\itemsep2pt
\item The strategy just displayed is inspired by the one employed in~\cite{Cyl} by the second author to obtain the existence of~$s$-minimal surfaces in general open sets---thus extending~\cite[Theorem~3.2]{CRS10} to the case of unbounded or irregular~$\Omega$. However, there is a striking difference between Theorem~\ref{Dirichlet} here and, say,~\cite[Corollary~1.11]{Cyl}: there, the existence of a (locally) $s$-minimal set is obtained under no restriction on the outside datum, whereas here we need to limit ourselves to data satisfying~\eqref{TailinL1}.

We believe that it would be interesting to understand whether Theorem~\ref{Dirichlet} could be proved under weaker or even no assumptions on~$\varphi$ (obtaining perhaps only a local minimizer of~$\F$), or whether, in the geometric case~$g = g_s$, the local minimizers constructed in~\cite{Cyl} are necessarily subgraphs inside~$\Omega^\infty$.

\item In light of point~\ref{tartapower_ii} of Remark~\ref{tartapower_Remark}, we could have proceeded to directly minimize the functional~$\F^M$ in~$\W^s_\varphi(\Omega)$, for some fixed~$M \ge 0$, instead of considering a family of approximating problems. This approach works as well, but brings in its own difficulties, first and foremost the fact that the functional~$\F^M$ may assume negative values in~$\W^s(\Omega) \setminus \B_M\W^s(\Omega)$, as shown by Example~\ref{Exe_neg_nonloc}. In addition, we preferred the use of several~$\F^M$'s in order to maintain an analogy with the argument of~\cite{Cyl} and keep a connection with the underlying geometry, as motivated by the results of Subsection~\ref{areageom}.

\item A different strategy to obtain Theorem~\ref{Dirichlet}---similar to the one employed in~\cite[Section~12]{G84}---is to show that the~$L^\infty(\Omega)$ norm, and not the~$W^{s, 1}(\Omega)$ norm, of~$u_M$ stabilizes for large~$M$. This can be done, depending on the exterior data, using the~$L^\infty$ estimates of Subsections~\ref{locboundsub} and~\ref{globboundsub} in place of Proposition~\ref{Ws1prop}. This approach will be exploited in order to establish the existence of a solution to the obstacle problem in~\cite{LuCla}.

\item When the exterior datum~$\varphi$ satisfies the global integrability condition~\eqref{Exterior_global_cond}, the proof of Theorem~\ref{Dirichlet} can be simplified considerably. Indeed, in this case the functional~\eqref{Fcdef} is well defined in~$\W^s_\varphi(\Omega)$ and one can minimize it directly, with no need to consider the approximate minimizers~$u_M$ or the truncated functionals~$\F^M$---see Section~\ref{GlobTail_section} for the rigorous arguments.

However, although natural to deal with functional~\eqref{Fcdef}, condition~\eqref{Exterior_global_cond} is quite restrictive on the behavior of~$\varphi$ at infinity and does not play any role in the well-posedness of the operator~$\h$, which corresponds to the first variation of~$\F$ or~$\F^M$---recall the end of Section~\ref{Intro_motives} for a more detailed discussion.
\end{enumerate}
\end{remark}

As shown in the following Lemma, the integrability of the restricted tail prescribed by~\eqref{TailinL1} is equivalent to plain integrability of the datum in the exterior neighborhood~$\Omega_{\Theta \diam(\Omega)} \setminus \Omega$ plus weighted integrability arbitrarily close to the boundary of~$\Omega$. 

\begin{lemma}\label{tail_equiv_cond_Lemma}
Let~$\Omega \Subset\Op\subseteq \R^n$ be two bounded open sets, with~$\partial \Omega$ Lipschitz. Let~$r \in (0, \dist(\Omega, \partial \Op))$ and~$\varphi:\Co\Omega\to\R$.

Then,~$\Tail_s(\varphi, \Op \setminus \Omega;\,\cdot\,) \in L^1(\Omega)$
if and only if~$\varphi\in L^1(\Op\setminus\Omega)$
and~$\Tail_s(\varphi, \Omega_r \setminus \Omega;\,\cdot\,) \in L^1(\Omega\setminus\Omega_{-r})$. Moreover,
\begin{enumerate}[label=$(\roman*)$,leftmargin=*]
\item \label{tail_equiv_i} if~$\varphi\in L^1(\Op\setminus\Omega) \cap W^{s,1}(\Omega_r\setminus\Omega)$,
then~$\Tail_s(\varphi, \Op \setminus \Omega;\,\cdot\,) \in L^1(\Omega)$;
\item \label{tail_equiv_ii} if~$\varphi\in L^1(\Op\setminus\Omega) \cap L^\infty(\Omega_r\setminus\Omega)$,
then~$\Tail_\sigma(\varphi, \Op \setminus \Omega;\,\cdot\,) \in L^1(\Omega)$
for every~$\sigma\in(0,1)$.
\end{enumerate}
\end{lemma}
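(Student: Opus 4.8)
The plan is to localize the tail~\eqref{taildef} around~$\partial\Omega$: the part of $\Op\setminus\Omega$ that stays at distance at least~$r$ from~$\Omega$ is harmless, while the collar $A:=\Omega_r\setminus\Omega$ carries all the subtlety and is controlled by a fractional Hardy inequality. Write $B:=\Op\setminus\Omega_r$, where $\Omega_\varrho$ is as in~\eqref{Omegarhodef}; since $r<\dist(\Omega,\partial\Op)$ we have $A\subseteq\Op$ and the disjoint decomposition $\Op\setminus\Omega=A\cup B$, so that $\Tail_s(\varphi,\Op\setminus\Omega;x)=\Tail_s(\varphi,A;x)+\Tail_s(\varphi,B;x)$ for every $x\in\Omega$.

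First I would record the distance facts that make the equivalence work. If $x\in\Omega$ and $y\in B$, then $\bar{d}_\Omega(y)\ge r>0$, hence $y\notin\overline\Omega$ and $|x-y|\ge\dist(y,\Omega)=\bar{d}_\Omega(y)\ge r$, so $\Tail_s(\varphi,B;\,\cdot\,)\le r^{-n-s}\|\varphi\|_{L^1(\Op\setminus\Omega)}$ is bounded on~$\Omega$, hence lies in $L^1(\Omega)$ since $\Omega$ is bounded. If $x\in\Omega_{-r}$, then $\dist(x,\Co\Omega)=\dist(x,\partial\Omega)>r$, so $|x-y|>r$ for every $y\in A\subseteq\Co\Omega$ and $\Tail_s(\varphi,A;\,\cdot\,)\le r^{-n-s}\|\varphi\|_{L^1(\Op\setminus\Omega)}$ on $\Omega_{-r}$. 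Finally $|x-y|\le\diam(\Op)$ for all $x,y\in\Op$. The equivalence is then immediate: if $\varphi\in L^1(\Op\setminus\Omega)$ and $\Tail_s(\varphi,A;\,\cdot\,)\in L^1(\Omega\setminus\Omega_{-r})$, the first two bounds show that $\Tail_s(\varphi,B;\,\cdot\,)\in L^1(\Omega)$ and that $\Tail_s(\varphi,A;\,\cdot\,)$ is integrable both on $\Omega_{-r}$ and on $\Omega\setminus\Omega_{-r}$, hence on $\Omega$, so the sum lies in $L^1(\Omega)$; conversely, $\Tail_s(\varphi,A;\,\cdot\,)\le\Tail_s(\varphi,\Op\setminus\Omega;\,\cdot\,)$ gives the first condition, and Tonelli's theorem together with $|x-y|\le\diam(\Op)$ yields
\[
|\Omega|\,\diam(\Op)^{-n-s}\,\|\varphi\|_{L^1(\Op\setminus\Omega)}\le\int_\Omega\Tail_s(\varphi,\Op\setminus\Omega;x)\,dx<\infty,
\]
i.e.\ $\varphi\in L^1(\Op\setminus\Omega)$.

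For the two ``Moreover'' items I would instead estimate $\int_\Omega\Tail_\sigma(\varphi,A;x)\,dx$ directly for $\sigma\in(0,1)$. Since $\Omega\subseteq\R^n\setminus B_{\dist(y,\Omega)}(y)$ for $y\in A$, a computation as in Lemma~\ref{dumb_kernel_lemma} gives $\int_\Omega|x-y|^{-n-\sigma}\,dx\le\sigma^{-1}\Ha^{n-1}(\mathbb S^{n-1})\,\dist(y,\partial\Omega)^{-\sigma}$, so Tonelli's theorem and $\dist(\,\cdot\,,\partial A)\le\dist(\,\cdot\,,\partial\Omega)$ on $A$ yield
\[
\int_\Omega\Tail_\sigma(\varphi,A;x)\,dx\le\frac{\Ha^{n-1}(\mathbb S^{n-1})}{\sigma}\int_A\frac{|\varphi(y)|}{\dist(y,\partial\Omega)^{\sigma}}\,dy\le\frac{\Ha^{n-1}(\mathbb S^{n-1})}{\sigma}\int_A\frac{|\varphi(y)|}{\dist(y,\partial A)^{\sigma}}\,dy.
\]
For~\ref{tail_equiv_ii}, $\varphi\in L^\infty(A)$ and $\sigma<1$ make the last integral finite, because the bounded Lipschitz collar $A$ satisfies $\int_A\dist(\,\cdot\,,\partial A)^{-\sigma}\,dy<\infty$; combined with the pointwise bound on $\Tail_\sigma(\varphi,B;\,\cdot\,)$ this gives $\Tail_\sigma(\varphi,\Op\setminus\Omega;\,\cdot\,)\in L^1(\Omega)$ for every $\sigma\in(0,1)$. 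For~\ref{tail_equiv_i}, taking $\sigma=s$, the fractional Hardy inequality on the bounded Lipschitz set $A$ (a form of Proposition~\ref{FHI}) bounds the last integral by $C\|\varphi\|_{W^{s,1}(A)}<\infty$, and again the $B$-part is harmless, so $\Tail_s(\varphi,\Op\setminus\Omega;\,\cdot\,)\in L^1(\Omega)$.

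The only non-elementary ingredient is the unconditioned fractional Hardy inequality $\int_U|v|\,\dist(\,\cdot\,,\partial U)^{-s}\,dy\le C\|v\|_{W^{s,1}(U)}$ applied to $U=A$. Accordingly, the main point I expect to need care with is checking that, by the Lipschitz regularity of $\partial\Omega$ and the smallness of $r$, the collar $A=\Omega_r\setminus\overline\Omega$ is genuinely a bounded Lipschitz (hence extension) domain, so that Proposition~\ref{FHI} applies, and that the available version of that proposition is the one not requiring vanishing boundary values. Everything else is routine bookkeeping with the neighborhoods of~\eqref{Omegarhodef}, the only slightly delicate points being the disjoint splitting $\Op\setminus\Omega=A\cup B$ and the elementary observation that $|x-y|>r$ whenever $\dist(x,\Co\Omega)>r$ and $y\in\Co\Omega$.
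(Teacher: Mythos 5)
Your argument is correct and follows essentially the same route as the paper: the same splitting of~$\Op\setminus\Omega$ into the collar~$\Omega_r\setminus\Omega$ and the far part at distance~$\ge r$, the same elementary bounds~$|x-y|\ge r$ and~$|x-y|\le\diam(\Op)$ for the equivalence, and the fractional Hardy inequality on the collar for item~\ref{tail_equiv_i} (you essentially re-derive Corollary~\ref{FHI_corollary} via the estimate~$\int_\Omega|x-y|^{-n-s}\,dx\le C\dist(y,\partial\Omega)^{-s}$ rather than citing it, and the paper makes the same ``without loss of generality take~$r$ small so that~$\partial\Omega_r$ is Lipschitz'' reduction that you flag). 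The one minor deviation is item~\ref{tail_equiv_ii}: the paper bounds the double integral directly by~$\|\varphi\|_{L^\infty(\Omega_r\setminus\Omega)}\Per_\sigma(\Omega)<\infty$, using only that~$\Omega$ is bounded and Lipschitz, whereas your passage from~$\dist(y,\partial\Omega)$ to the smaller~$\dist(y,\partial A)$ imposes Lipschitz regularity of the collar~$A$ where none is needed --- stopping at~$\int_A\dist(y,\partial\Omega)^{-\sigma}\,dy<\infty$, which follows from the Lipschitz regularity of~$\partial\Omega$ alone, would be both simpler and sharper.
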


\begin{proof}
To begin, assume that~$\Tail_s(\varphi, \Op \setminus \Omega;\,\cdot\,) \in L^1(\Omega)$. Note that~$|x-y| \le \diam(\Op)$ for every~$(x, y)\in\Omega \times \Op\setminus\Omega$. Hence,
\bgs{
\|\varphi\|_{L^1(\Op\setminus\Omega)}\le \frac{\diam(\Op)^{n+s}}{|\Omega|}\left\|\Tail_s(\varphi, \Op \setminus \Omega;\,\cdot\,)\right\|_{L^1(\Omega)}.
}
Moreover, we clearly have
\bgs{
\left\|\Tail_s(\varphi, \Omega_r \setminus \Omega;\,\cdot\,)\right\|_{L^1(\Omega\setminus\Omega_{-r})}\le
\left\|\Tail_s(\varphi, \Op \setminus \Omega;\,\cdot\,)\right\|_{L^1(\Omega)}
}
for every~$r \in (0, \dist(\Omega, \partial \Op))$. Accordingly,~$\varphi\in L^1(\Op\setminus\Omega)$ and~$\Tail_s(\varphi, \Omega_r \setminus \Omega;\,\cdot\,) \in L^1(\Omega\setminus\Omega_{-r})$.

Next, let~$\varphi\in L^1(\Op\setminus\Omega)$ be such that~$\Tail_s(\varphi, \Omega_r \setminus \Omega;\,\cdot\,) \in L^1(\Omega\setminus\Omega_{-r})$ for some~$r \in (0, \dist(\Omega, \partial \Op))$. We verify that~$\Tail_s(\varphi, \Op \setminus \Omega;\,\cdot\,) \in L^1(\Omega)$. Indeed, since~$|x-y|\ge r$ for every~$(x, y) \in\Omega \times \Op\setminus\Omega_r$ and~$(x, y) \in \Omega_{- r} \times \Omega_r \setminus \Omega$, we have
\eqlab{\label{alatriste_e_le_maledette_code}
\left\|\Tail_s(\varphi, \Op \setminus \Omega_r;\,\cdot\,)\right\|_{L^1(\Omega)}
\le\frac{|\Omega|}{r^{n+s}}\|\varphi\|_{L^1(\Op\setminus\Omega_r)}
}
and
\bgs{
\left\|\Tail_s(\varphi, \Omega_r \setminus \Omega;\,\cdot\,)\right\|_{L^1(\Omega_{-r})}
\le\frac{|\Omega_{-r}|}{r^{n+s}}\|\varphi\|_{L^1(\Omega_r\setminus\Omega)}
\le\frac{|\Omega|}{r^{n+s}}\|\varphi\|_{L^1(\Omega_r\setminus\Omega)}.
}
Therefore,
\bgs{
\left\|\Tail_s(\varphi, \Op \setminus \Omega;\,\cdot\,)\right\|_{L^1(\Omega)}&
=\left\|\Tail_s(\varphi, \Op \setminus \Omega_r;\,\cdot\,)\right\|_{L^1(\Omega)}
+\left\|\Tail_s(\varphi, \Omega_r \setminus \Omega;\,\cdot\,)\right\|_{L^1(\Omega_{-r})}\\
&
\quad+\left\|\Tail_s(\varphi, \Omega_r\setminus\Omega;\,\cdot\,)\right\|_{L^1(\Omega\setminus\Omega_{-r})}\\
&
\le\frac{|\Omega|}{r^{n+s}}\|\varphi\|_{L^1(\Op\setminus\Omega)}
+\left\|\Tail_s(\varphi, \Omega_r\setminus\Omega;\,\cdot\,)\right\|_{L^1(\Omega\setminus\Omega_{-r})} < \infty.
}

We now address point~\ref{tail_equiv_i}. Without loss of generality, we assume~$r$ to be small enough for~$\partial \Omega_r$ to be Lipschitz. As~$\varphi\in W^{s,1}(\Omega_r\setminus\Omega)$, Corollary~\ref{FHI_corollary} (applied in the Lipschitz set~$\Omega_r \setminus \Omega$) yields
\bgs{
\left\|\Tail_s(\varphi, \Omega_r \setminus \Omega;\,\cdot\,)\right\|_{L^1(\Omega)}
\le C \|\varphi\|_{W^{s,1}(\Omega_r\setminus\Omega)},
}
for some constant~$C > 0$. The fact that~$\Tail_s(\varphi, \Op \setminus \Omega;\,\cdot\,) \in L^1(\Omega)$ follows then from this and~\eqref{alatriste_e_le_maledette_code}.

Finally, we deal with point~\ref{tail_equiv_ii}. If~$\varphi\in L^\infty(\Omega_r\setminus\Omega)$, then
\bgs{
\left\|\Tail_\sigma(\varphi, \Omega_r \setminus \Omega;\,\cdot\,)\right\|_{L^1(\Omega)}
\le\|\varphi\|_{L^\infty(\Omega_r\setminus\Omega)}\Per_\sigma(\Omega),
}
for every~$\sigma\in(0,1)$. Thus, we obtain~\ref{tail_equiv_ii} by using again~\eqref{alatriste_e_le_maledette_code}. This concludes the proof.
\end{proof}

\subsubsection{Integrable global tail}\label{GlobTail_section}

We briefly present here an alternative approach to the existence of minimizers of~$\F$, valid when the exterior datum~$\varphi$ satisfies the global summability condition~\eqref{Exterior_global_cond}. We begin by showing that in this situation the functional~$\F$ is well-defined on~$\W^s_\varphi(\Omega)$.

\begin{lemma}\label{quocca_style}
	Let~$\Omega \subseteq \R^n$ be a bounded open set with Lipschitz boundary and~$\varphi:\Co\Omega\to\R$ be a measurable function satisfying~\eqref{Exterior_global_cond}. Then,~$\F(u) \in [0, +\infty)$ for every~$u \in \W^s_\varphi(\Omega)$.
\end{lemma}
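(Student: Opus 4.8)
The plan is to split the integral defining $\F(u)$ over the decomposition $Q(\Omega) = (\Omega\times\R^n) \cup (\Co\Omega \times \Omega) \cup (\Omega \times \Co\Omega)$ — or more simply to write $\F(u,\Omega) = \A(u,\Omega) + \Nl(u,\Omega)$ as already recorded in Subsection~\ref{areafunc} — and to bound each piece separately. Since $u|_\Omega \in W^{s,1}(\Omega)$ by hypothesis, Lemma~\ref{Adomainlem} immediately gives $\A(u,\Omega) \le \frac{\Lambda}{2}[u]_{W^{s,1}(\Omega)} < \infty$, and of course $\A(u,\Omega)\ge 0$ because $\G\ge 0$. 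So the whole content of the lemma is the finiteness (and non-negativity) of $\Nl(u,\Omega)$, and the non-negativity is again trivial from $\G\ge 0$, so I only need the upper bound on $\Nl(u,\Omega)$.

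For the bound on $\Nl(u,\Omega)$, first I would use the global Lipschitz estimate~\eqref{Lip_Gcal}, namely $\G(t)\le\frac{\Lambda}{2}|t|$ (the right-hand inequality in~\eqref{GGbounds}), to write
\begin{equation*}
\Nl(u,\Omega) \le \Lambda \int_\Omega\int_{\Co\Omega} \frac{|u(x)-\varphi(y)|}{|x-y|^{n+s}}\,dx\,dy \le \Lambda\int_\Omega\int_{\Co\Omega}\frac{|u(x)|}{|x-y|^{n+s}}\,dx\,dy + \Lambda\int_\Omega\int_{\Co\Omega}\frac{|\varphi(y)|}{|x-y|^{n+s}}\,dx\,dy.
\end{equation*}
The second double integral on the right is exactly $\Lambda\|\Tail_s(\varphi,\Co\Omega;\cdot)\|_{L^1(\Omega)}$, which is finite precisely by the assumption~\eqref{Exterior_global_cond}. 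For the first double integral, since $u|_\Omega\in W^{s,1}(\Omega)$ and $\Omega$ is bounded Lipschitz, the inner integral $\int_{\Co\Omega}|x-y|^{-n-s}\,dy$ is controlled — this is the fractional Hardy-type bound appearing through Corollary~\ref{FHI_corollary}, exactly as used in the proof of Lemma~\ref{NMdomainlem} (see~\eqref{eqBlabla}) — and gives $\Lambda\int_\Omega |u(x)|\,(\int_{\Co\Omega}|x-y|^{-n-s}\,dy)\,dx \le C\Lambda\|u\|_{W^{s,1}(\Omega)}<\infty$. Combining the two estimates yields $\Nl(u,\Omega)<\infty$, hence $\F(u,\Omega) = \A(u,\Omega)+\Nl(u,\Omega) \in [0,+\infty)$.

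I do not expect a genuine obstacle here: the statement is essentially a bookkeeping consequence of the bounds already established for $\A$ (Lemma~\ref{Adomainlem}) and for the weighted $L^1$ control near $\partial\Omega$ (Corollary~\ref{FHI_corollary}), together with the definition~\eqref{taildef} of $\Tail_s$ and the hypothesis~\eqref{Exterior_global_cond}. The only mild care needed is to make sure the splitting into $\A+\Nl$ is legitimate, i.e.\ that no cancellation between a $+\infty$ and a $-\infty$ is hidden — but this is automatic since $\G\ge 0$ forces every integrand in sight to be non-negative, so all the integrals are well-defined in $[0,+\infty]$ and the triangle-inequality manipulation above is valid termwise.
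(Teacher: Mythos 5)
Your proposal is correct and follows essentially the same route as the paper: finiteness of $\A$ via Lemma~\ref{Adomainlem}, the bound $\G(t)\le\frac{\Lambda}{2}|t|$ together with the triangle inequality and Corollary~\ref{FHI_corollary} to control $\Nl$, and non-negativity from $\G\ge0$. The only slip is a label: the inequality $\G(t)\le\frac{\Lambda}{2}|t|$ is the right-hand side of~\eqref{GGbetterbound} (or~\eqref{Lip_Gcal} with $\tau=0$), not of~\eqref{GGbounds}.
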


\begin{proof}
	In view of Lemma~\ref{Adomainlem}, we only need to prove that the nonlocal part~$\Nl$ is finite. For this, by~\eqref{GGbetterbound}, the triangle inequality, and Corollary~\ref{FHI_corollary}, given any function~$u\in\W^s_\varphi(\Omega)$ we have
	\bgs{
		\Nl(u,\Omega)&=2\int_\Omega\int_{\Co\Omega}\G\left(\frac{u(x)-u(y)}{|x-y|}\right)\frac{dx\,dy}{|x-y|^{n-1+s}}\le
		\Lambda\int_\Omega\int_{\Co\Omega}\frac{|u(x)-u(y)|}{|x-y|^{n+s}}dx\,dy\\
		&
		\le C\left(\|u\|_{W^{s,1}(\Omega)}+\left\|\Tail_s(\varphi,\Co\Omega;\,\cdot\,)\right\|_{L^1(\Omega)}\right)< + \infty.
	}
	The non-negativity of~$\F$ is an immediate consequence of its definition.
\end{proof}

The existence of a unique minimizer can then be obtained via the Direct Method of the Calculus of Variations.

\begin{prop}
	Let~$\Omega \subseteq \R^n$ a bounded open set with Lipschitz boundary and~$\varphi:\Co\Omega\to\R$ be a measurable function satisfying~\eqref{Exterior_global_cond}. Then, there exists a unique function~$u\in\W^s_\varphi(\Omega)$ such that
	\begin{equation}\label{Quocca_uber_alles}
	\F(u,\Omega)=\inf\Big\{\F(v,\Omega) : v\in\W^s_\varphi(\Omega)\Big\}.
	\end{equation}
\end{prop}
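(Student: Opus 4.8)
The plan is to invoke the Direct Method of the Calculus of Variations, taking advantage of the fact that, by Lemma~\ref{quocca_style}, the functional $\F(\,\cdot\,,\Omega)$ is finite and non-negative on the whole affine space $\W^s_\varphi(\Omega)$ — which is moreover non-empty, since it contains the function equal to $0$ in $\Omega$ and to $\varphi$ in $\Co\Omega$. Hence $m:=\inf\{\F(v,\Omega):v\in\W^s_\varphi(\Omega)\}$ belongs to $[0,+\infty)$, and I would fix a minimizing sequence $\{u_k\}\subseteq\W^s_\varphi(\Omega)$, with $\F(u_k,\Omega)\le m+1$ for all large~$k$.

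The core estimate — and the step I expect to require the most care — is the compactness of $\{u_k\}$, for which I would show that $\sup_k\big(\|u_k\|_{L^1(\Omega)}+\A(u_k,\Omega)\big)<\infty$ and then apply Lemma~\ref{complem}. Since $\Nl(\,\cdot\,,\Omega)$ is an integral of the non-negative function $\G$, one has $\Nl(u_k,\Omega)\ge0$, so that $\A(u_k,\Omega)\le\F(u_k,\Omega)\le m+1$, which by Lemma~\ref{Adomainlem} bounds $[u_k]_{W^{s,1}(\Omega)}$. The $L^1$ bound I would obtain arguing as in the proof of Proposition~\ref{Ws1prop}: picking $d>0$ (so that $\Omega_d\setminus\Omega\subseteq\Co\Omega$) and using that $\int_{\Omega_d\setminus\Omega}|x-y|^{-n-s}\,dy\ge c_0\,d^{-s}$ for every $x\in\Omega$, together with the triangle inequality,
$$
\|u_k\|_{L^1(\Omega)}\le\frac{d^s}{c_0}\left(\int_\Omega\int_{\Omega_d\setminus\Omega}\frac{|u_k(x)-u_k(y)|}{|x-y|^{n+s}}\,dx\,dy+\left\|\Tail_s(\varphi,\Omega_d\setminus\Omega;\,\cdot\,)\right\|_{L^1(\Omega)}\right),
$$
the last norm being finite by~\eqref{Exterior_global_cond}. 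The remaining double integral over $\Omega\times(\Omega_d\setminus\Omega)$ is in turn controlled by $\Nl(u_k,\Omega)$: since $\G\ge0$, one has $\Nl(u_k,\Omega)\ge2\int_\Omega\int_{\Omega_d\setminus\Omega}\G((u_k(x)-u_k(y))/|x-y|)\,|x-y|^{-(n-1+s)}\,dx\,dy$, and plugging in the lower bound~\eqref{GGboundcons} and Lemma~\ref{dumb_kernel_lemma} yields $\int_\Omega\int_{\Omega_d\setminus\Omega}|u_k(x)-u_k(y)|\,|x-y|^{-n-s}\,dx\,dy\le c_\star^{-1}(m+1)+C$. Hence $\sup_k\|u_k\|_{W^{s,1}(\Omega)}<\infty$.

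By Lemma~\ref{complem}, a subsequence of $\{u_k\}$ converges in $L^1(\Omega)$ and a.e.~in $\Omega$ to some $u\in W^{s,1}(\Omega)$; extending $u$ by $\varphi$ in $\Co\Omega$ we obtain $u\in\W^s_\varphi(\Omega)$ and $u_k\to u$ a.e.~in $\R^n$. The integrand $\G((u_k(x)-u_k(y))/|x-y|)\,|x-y|^{-(n-1+s)}$ is non-negative and, by the continuity of $\G$, converges a.e.~on $Q(\Omega)$ to the integrand associated with $u$; Fatou's lemma therefore gives $\F(u,\Omega)\le\liminf_k\F(u_k,\Omega)=m$, and since $u$ is an admissible competitor, $\F(u,\Omega)=m$. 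This establishes existence.

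For uniqueness, I would rely on strict convexity. If $u_1,u_2\in\W^s_\varphi(\Omega)$ both attain $m$ but differ on a set of positive measure in $\Omega$, then arguing exactly as in the proof of point~\ref{convlem_ii} of Lemma~\ref{conv_func} — all quantities being finite thanks to Lemma~\ref{quocca_style} — the strict convexity of $\G$ applied to the nonlocal part forces $\F((u_1+u_2)/2,\Omega)<\tfrac12\F(u_1,\Omega)+\tfrac12\F(u_2,\Omega)=m$, contradicting the definition of $m$ since $(u_1+u_2)/2\in\W^s_\varphi(\Omega)$. Hence $u_1=u_2$ a.e.~in $\R^n$. (Alternatively, one could first observe that, under~\eqref{Exterior_global_cond}, minimizers in the sense of~\eqref{Quocca_uber_alles} coincide with those of Definition~\ref{mindef}, and then invoke point~\ref{tartapower_iii} of Remark~\ref{tartapower_Remark}.)
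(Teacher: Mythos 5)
Your proposal is correct and follows essentially the same route as the paper's proof: Direct Method with the $W^{s,1}$ seminorm controlled by $\A\le\F$, the $L^1$ norm controlled via the argument of~\eqref{newFPI} together with the lower bound on $\G$ applied to a piece of $\Nl$, compactness from the embedding $W^{s,1}(\Omega)\hookrightarrow\hookrightarrow L^1(\Omega)$, lower semicontinuity by Fatou, and uniqueness by strict convexity as in Lemma~\ref{conv_func}. The only cosmetic difference is your use of the cruder bound~\eqref{GGboundcons} in place of~\eqref{GGbetterbound}, which is immaterial here.
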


\begin{proof}
	First of all, we observe that, by Lemma~\ref{quocca_style}, the infimum of~$\F$ in~$\W^s_\varphi(\Omega)$ is finite and non-negative. Now, consider a minimizing sequence~$u_k\in\W^s_\varphi(\Omega)$, i.e.,
	\[
	\lim_{k\to\infty}\F(u_k,\Omega)=\inf\Big\{\F(v,\Omega) : v\in\W^s_\varphi(\Omega)\Big\}=:m.
	\]
	Since the nonlocal part~$\Nl$ is non-negative, by~\eqref{AGags} we have the uniform estimate
	\begin{equation}\label{Heidegger_puzza1}
	[u_k]_{W^{s,1}(\Omega)}\le \frac{2}{c_\star} \, \A(u_k,\Omega)+c_s(\Omega)\le \frac{2}{c_\star} \, \F(u_k,\Omega)+c_s(\Omega)\le \frac{2}{c_\star}(m+1)+c_s(\Omega),
	\end{equation}
	for every~$k$ large enough. Moreover, arguing as for~\eqref{newFPI} we easily obtain that
	\begin{equation}\label{Heidegger_puzza2}
		\|u_k\|_{L^1(\Omega)} \le C_1 \left( \int_\Omega\int_{\Omega_1\setminus\Omega}\frac{|u_k(x)-u_k(y)|}{|x-y|^{n+s}} \, dx\,dy+\left\| \Tail_s(\varphi,\Co\Omega;\,\cdot\,)\right\|_{L^1(\Omega)}\right),
	\end{equation}
	with~$C_1 := \diam(\Omega_1)^{n + s} / |\Omega_1 \setminus \Omega|$. Now, we observe that, by~\eqref{GGbetterbound} and Lemma~\ref{dumb_kernel_lemma}, it holds
	\begin{align*}
		\int_\Omega\int_{\Omega_1\setminus\Omega}\frac{|u_k(x)-u_k(y)|}{|x-y|^{n+s}} \, dx\,dy&
		\le\frac{2}{\Lambda}\int_\Omega\int_{\Omega_1\setminus\Omega}\left\{ \G\left(\frac{|u_k(x)-u_k(y)|}{|x-y|}\right)+\lambda\right\} \frac{dx\,dy}{|x-y|^{n-1+s}}\\
		&
		\le\frac{2}{\Lambda}\left(\Nl(u_k,\Omega)+\lambda\frac{\Ha^{n-1}(\mathbb S^{n-1})}{1-s}|\Omega|\diam(\Omega_1)^{1-s}\right) \le C_2 (m + 1),
	\end{align*}
	for some constant~$C_2 > 0$ depending only on~$n$,~$s$,~$g$, and~$\Omega$. Note that, for the third inequality we also took advantage of the non-negativity of~$\A$. Adding together~\eqref{Heidegger_puzza1},~\eqref{Heidegger_puzza2}, and the last estimate, we obtain that there exists a constant~$C_3>0$, depending only on~$n$,~$s$,~$g$, and~$\Omega$, such that
	\[
	\|u_k\|_{W^{s,1}(\Omega)}\le C_3 \left(m+1+\left\|\Tail_s(\varphi,\Co\Omega;\,\cdot\,)\right\|_{L^1(\Omega)}\right),
	\]
	for every~$k$ large enough. By the compactness of the embedding of~$W^{s,1}(\Omega)$ into~$L^1(\Omega)$, we find that, up to a subsequence,~$u_k$ converges to some function~$\tilde{u} \in W^{s, 1}(\Omega)$ a.e.~in~$\Omega$ and in~$L^1(\Omega)$. Then, if we define the function~$u\in\W^s_\varphi(\Omega)$ by setting~$u:=\tilde{u}$ in~$\Omega$ and~$u:=\varphi$ in~$\Co\Omega$, by Fatou's Lemma we conclude that
	\[
	m\le\F(u,\Omega)\le\liminf_{k\to\infty}\F(u_k,\Omega)=m.
	\]
	The uniqueness of the minimizer~$u$ follows from the strict convexity of the functional~$\F$ on~$\W^s_\varphi(\Omega)$---which can be proved by arguing as in Lemma~\ref{conv_func}.
\end{proof}

We stress that a function~$u\in\W^s_\varphi(\Omega)$ which minimizes~$\F$ in the sense of~\eqref{Quocca_uber_alles} is clearly also a minimizer in the sense of Definition~\ref{mindef}. Hence, all the results satisfied by minimizers apply also to minimizers in the sense of~\eqref{Quocca_uber_alles}. This is true in particular for the a priori estimates of Section~\ref{sect3} and for the results on the relationship existing in the geometric framework with nonlocal minimal graphs, which will be fully explored in Section~\ref{Rearrange_Sect}.

\subsection{Existence of local minimizers via a Perron-type result} \label{Perron_proof}

We present here a proof of Theorem~\ref{Perron_Thm}, which claims that the existence of local minimizers of~$\F$ is equivalent to the existence of an ordered pair of locally bounded weak sub- and supersolutions of~$\h u = 0$.

	\begin{proof}[Proof of Theorem~\ref{Perron_Thm}]
	Implication~\ref{perronequiv1}~$\Rightarrow$~\ref{perronequiv2} is an immediate consequence of Proposition~\ref{Linftylocprop} and Corollary~\ref{weak_implies_min_lemma}. By these results,~$u$ is locally bounded in~$\Omega$ and weakly solves~$\h u=0$ in~$\Omega$. Hence, we can consider~$\underline{u}=\overline{u}=u$.
	
	We now show that~\ref{perronequiv2}~$\Rightarrow$~\ref{perronequiv1}. Let~$u_0\in\W^s_\loc(\Omega)$ be the function defined by setting~$u_0|_\Omega:=\frac{\underline{u}+\overline{u}}{2}$ and $u_0|_{\Co\Omega}:=\varphi$. Consider a regular exhaustion of~$\Omega$, i.e.,~a sequence~$\{\Omega_h\}$ of bounded open sets with Lipschitz boundaries such that
	\[
	\Omega_h\Subset\Omega_{h+1}\Subset\Omega\quad\mbox{and}\quad\bigcup_{h=1}^\infty\Omega_h=\Omega.
	\]
	
	We first solve an auxiliary minimization problem in each~$\Omega_h$. Set
	\[
	M_h:=\max\Big\{\|\underline{u}\|_{L^\infty(\Omega_h)},\|\overline{u}\|_{L^\infty(\Omega_h)}\Big\}.
	\]
	By appropriately modifying the proof of Proposition~\ref{ertyui}, we easily find that there exists a unique function~$u_h\in\B_{M_h}\W^s_{u_0}(\Omega_h)$ such that
	\[
	\F^{M_h}(u_h,\Omega_h)=\inf\Big\{\F^{M_h}(v,\Omega_h) : v\in\W^s_{u_0}(\Omega_h)\mbox{ and~}\underline{u}\leq v\leq\overline{u}\mbox{ a.e.~in~}\Omega_h\Big\}.
	\]
	
	We then claim that, up to a subsequence, the functions~$u_h$'s converge in $L^1_\loc(\Omega)$ and a.e.~in~$\Omega$ to a function~$u\in\W^s_\loc(\Omega)$ such that~$u=\varphi$ a.e.~in~$\Co\Omega$. To this aim, we estimate
	\begin{equation*}
	[u_h]_{W^{s,1}(\Omega_k)}\leq C\big(\A(u_h,\Omega_k)+1\big)
	\leq C\big(\F^{M_k}(u_h,\Omega_k)+1\big),
	\end{equation*}
	for every integer~$h\geq k$ and for some constant~$C>0$ depending only on~$n$,~$s$,~$g$, and~$\Omega_k$. Note that the first inequality follows from Lemma~\ref{Adomainlem}, while the second one from the non-negativity of~$\Nl^{M_k}(u_h, \Omega_k)$---this is an immediate consequence of definition~\eqref{NMldef} and the fact that~$|u_h| \le M_k$ a.e.~in~$\Omega_k$. Now let~$\tilde{u}_h\in\W^s_{u_h}(\Omega_k)$ be defined by~$\tilde{u}_h|_{\Omega_k}:=u_0$ and~$\tilde{u}_h|_{\Co\Omega_k}:=u_h$. By the minimality of~$u_h$ and Lemma~\ref{tartariccio}, we have that
	\begin{equation*}
	\F^{M_k}(u_h,\Omega_k)-\F^{M_k}(\tilde{u}_h,\Omega_k)=\F^{M_h}(u_h,\Omega_h)-\F^{M_h}(\tilde{u}_h,\Omega_h)\leq0,
	\end{equation*}
	for every $h\geq k$. Next, using~\eqref{eqBlabla} to estimate the term~$\Nl^{M_k}(\tilde{u}_h,\Omega_k)$, we have the bound
	\begin{equation*}
	\F^{M_k}(\tilde{u}_h,\Omega_k)=\A(u_0,\Omega_k)+\Nl^{M_k}(\tilde{u}_h,\Omega_k)\leq
	\A(u_0,\Omega_k)+4\Lambda M_k\Per_s(\Omega_k,\R^n)=:c_k.
	\end{equation*}			
	Combining these three estimates, we obtain
	\[
	[u_h]_{W^{s,1}(\Omega_k)}\leq C(c_k+1),
	\]
	for every~$h\geq k$. Since we also have that~$\|u_h\|_{L^\infty(\Omega_k)}\leq M_k$, by the compact embedding of~$W^{s,1}(\Omega_k)$ into~$L^1(\Omega_k)$ and a diagonal argument we conclude that the claim holds true.
	
	We are left to show the local minimality of~$u$. First, we prove that, given any open set~$\Op \Subset \Omega$ with Lipschitz boundary,~$u$ is the unique minimizer of~$\F$ among all functions~$v$ in~$\W^s_u(\Op)$ such that~$\underline{u}\leq v\leq\overline{u}$ a.e.~in~$\Op$. To see this, let~$\ell$ be the smallest integer for which~$\Op\Subset\Omega_\ell$. For~$h\geq \ell$, define then~$v_h$ by setting~$v_h|_\Op:=v$ and~$v_h|_{\Co\Op}:=u_h$. Observe that~$v_h$ is a competitor for~$u_h$. Hence,~$\F^{M_\ell}(u_h,\Op) \le \F^{M_\ell}(v_h,\Op)$ and, by Lemma~\ref{semicontlem},
	\[
	\F^{M_\ell}(u,\Op)\leq\liminf_{h\to\infty}\F^{M_\ell}(u_h,\Op)\leq\liminf_{h\to\infty}\F^{M_\ell}(v_h,\Op)=\A(v,\Op)+\liminf_{h\to\infty}\Nl^{M_\ell}(v_h,\Op).
	\]
	Note that, by the global Lipschitzianity of~$\G$, for every~$(x,y)\in\Op\times\Co\Op$ we have
	\bgs{
		\left| 2 \, \G \! \left( \frac{v(x)-u_h(y)}{|x-y|} \right) -
		\G \! \left( \frac{M_\ell+u_h(y)}{|x-y|} \right) - \G \! \left( \frac{M_\ell-u_h(y)}{|x-y|} \right) \right| \frac{1}{\kers} & \leq \Lambda\frac{|v(x)|+M_\ell}{|x-y|^{n+s}} \\
		& \leq 2\Lambda\frac{M_\ell}{|x-y|^{n+s}}.
	}
	As the function on the last line is integrable in~$\Op\times\Co\Op$, we may apply Lebesgue's dominated convergence theorem and deduce, using representation~\eqref{nonlocal_explicit}, that
	\[
	\lim_{h\to\infty}\Nl^{M_\ell}(v_h,\Op)=\Nl^{M_\ell}(v,\Op).
	\]
	Accordingly,~$\F^{M_\ell}(u,\Op) \le \F^{M_\ell}(v,\Op)$. This shows that~$u$ has the desired minimality properties. Its uniqueness is an immediate consequence of the strict convexity of~$\F^{M_\ell}$.
	
	Finally, we prove that~$u$ is a true local minimizer of~$\F$---i.e., not only with respect to competitors constrained between~$\underline{u}$ and~$\overline{u}$. Let~$\Theta=\Theta(n,s)$ be the positive constant of Theorem~\ref{Dirichlet}. For~$x\in\Omega$ fixed, let~$\varrho_x:=\frac{\dist(x,\partial\Omega)}{2\Theta+2}$. Since~$u\in\W^s_\loc(\Omega)$, by Lemma~\ref{tail_equiv_cond_Lemma} we have that~$\Tail_s(u,B_{(1+2\Theta)\varrho_x}(x)\setminus B_{\varrho_x}(x);\,\cdot\,)\in L^1(B_{\varrho_x}(x))$. Hence, Theorem~\ref{Dirichlet} ensures the existence of a unique minimizer~$\tilde{u}$ of~$\F$ within $\W^s_u(B_{\varrho_x}(x))$. Since, by the comparison principle of Proposition~\ref{Compari_prop}, we have that~$\underline{u}\leq \tilde{u}\leq\overline{u}$ a.e.~in~$\R^n$, by the uniqueness of~$u$ we conclude that~$u=\tilde{u}$ a.e.~in~$\R^n$. In particular,~$u$ is a weak solution of~$\h u=0$ in~$B_{\varrho_x}(x)$. By the arbitrariness of~$x\in\Omega$ and a partition of unity argument, we find that~$u$ is a weak solution of~$\h u=0$ in the whole~$\Omega$. In light of Corollary~\ref{weak_implies_min_lemma},~$u$ is then a local minimizer of~$\F$ in~$\Omega$.
	\end{proof}

\section{Minimizers of~$\F_s$ versus minimizers of~$\Per_s$. Proof of Theorem~\ref{Persdecreases}} \label{Rearrange_Sect}

\noindent
We continue here the analysis, started in Subsection~\ref{areageom}, of the geometric properties enjoyed by the functional~$\F_s^M$ and of its relation with the~$s$-perimeter. In particular, we prove Theorem~\ref{Persdecreases}, i.e., we show that the nonlocal perimeter decreases under the vertical rearrangement~\eqref{rearr_func_def}. This fact will be a consequence of a rearrangement inequality for a rather general class of~$1$-dimensional integral set functions, that we establish in the next subsection.

\subsection{A one-dimensional rearrangement inequality}\label{Onedim_rearr_Sect}

Let~$K: \R \to \R$ be a non-negative function. Given two measurable sets~$A, B \subseteq \R$, we define
\begin{equation} \label{IKdef}
\I_K(A, B) := \int_A \int_B d\mu, \quad \mbox{where } \, d\mu = d\mu_K(x, y) := K(x - y) \, dx \, dy,
\end{equation}
whenever this quantity is finite.

Fix two real numbers~$\alpha, \beta$ and consider two sets~$A, B \subseteq \R$ satisfying
$$
(-\infty, \alpha) \subseteq A \quad \mbox{and} \quad (\beta, +\infty) \subseteq B.
$$
We define the~\emph{decreasing rearrangement}~$A_*$ of~$A$ as
\begin{equation} \label{decrrearr}
A_* := (-\infty, a_* ), \quad \mbox{with } \, a_* := \lim_{R \rightarrow +\infty} \left( \int_{-R}^{R} \chi_A(t) \, dt - R \right).
\end{equation}
Similarly, the~\emph{increasing rearrangement}~$B^*$ of~$B$ is given by
\begin{equation} \label{incrrearr}
B^* := (b^*, +\infty), \quad \mbox{with } \, b^* := \lim_{R \rightarrow +\infty} \left( R - \int_{-R}^R \chi_B(t) \, dt \right).
\end{equation}
Notice that, up to a set of vanishing measure---actually, a point---it holds
\begin{equation} \label{decrincr}
B^* = \Co (\Co B)_*.
\end{equation}

The next result shows that the value of~$\I_K$ decreases when its arguments are appropriately rearranged.

\begin{prop} \label{iotadecreasesprop}
Let~$A, B \subseteq \R$ be two measurable sets satisfying
$$
(-\infty, \ubar{\alpha}] \subseteq A^\circ \subseteq \overline{A} \subseteq (-\infty, \bar{\alpha}) \quad \mbox{and} \quad [\bar{\beta}, +\infty) \subseteq B^\circ \subseteq \overline{B} \subseteq (\ubar{\beta}, +\infty),
$$
for some real numbers~$\ubar{\alpha} < \bar{\alpha}$ and~$\ubar{\beta} < \bar{\beta}$. Let~$K: \R \to \R$ be a measurable, non-negative function and suppose that
\begin{equation} \label{Kerint}
\I_K \! \left( (-\infty, \bar{\alpha}), (\ubar{\beta}, +\infty) \right) < \infty.
\end{equation}
Then,
\begin{equation} \label{iotadecreases}
\I_K(A_*, B^*) \le \I_K(A, B).
\end{equation}
In addition, if~$K$ is locally bounded from below by positive constants and~$A = A_*$ (or~$B = B^*$) up to sets of measure zero, then the inequality in~\eqref{iotadecreases} is strict unless also~$B = B^*$ (or~$A = A_*$) up to sets of measure zero.
\end{prop}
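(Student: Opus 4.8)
The plan is to reduce the two-dimensional rearrangement inequality to a repeated application of a one-variable "polarization"-type estimate, exploiting the fact that $\I_K$ splits as a double integral and that $K$ depends only on the difference $x-y$. First I would dispose of the integrability bookkeeping: since $A \subseteq (-\infty,\bar\alpha)$ and $B \subseteq (\ubar\beta,+\infty)$, hypothesis~\eqref{Kerint} guarantees that $\I_K(A,B)$, $\I_K(A_*,B)$, $\I_K(A,B^*)$, and $\I_K(A_*,B^*)$ are all finite, and moreover that $a_*$ and $b^*$ in~\eqref{decrrearr}--\eqref{incrrearr} are well-defined finite real numbers (the limits exist because $A \triangle (-\infty,\ubar\alpha)$ and $B \triangle (\bar\beta,+\infty)$ have finite measure, a consequence of~\eqref{Kerint} together with $K$ being locally bounded below). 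Note also that $|A_*| = |A|$ and $|B^*| = |B|$ in the sense that $A_* \triangle A$ and $B^* \triangle B$ need not have finite measure individually, but the symmetric differences with the reference half-lines do; this is exactly what makes $a_*$, $b^*$ finite.

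The core step is a one-sided rearrangement lemma: \emph{fixing $B$, one has $\I_K(A_*,B) \le \I_K(A,B)$.} To prove this I would write, using Fubini and the translation structure,
$$
\I_K(A,B) = \int_{\R} \left( \int_A K(x-y)\,dx \right) \chi_B(y)\,dy = \int_{\R} \Phi_B(\,\cdot\,) * \text{(stuff)},
$$
more precisely: for each fixed $y$, the inner integral $\int_A K(x-y)\,dx$ is maximized over all sets $A'$ of the same "normalized measure" as $A$ by the choice $A' = A_*$ \emph{provided} $K(\,\cdot\,-y)$ is largest near $-\infty$... which it is not in general. So the naive bathtub principle does not directly apply. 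Instead the right tool is the Riesz rearrangement inequality in the half-line setting, or rather its discrete/two-set form: one moves mass in $A$ leftward one "swap" at a time. Concretely, for $x_1 < x_2$ with $x_1 \notin A$, $x_2 \in A$ (in a measure-theoretic layer-cake sense), replacing the layer at height near $x_2$ by one near $x_1$ changes $\I_K$ by $\int_B [K(x_1-y) - K(x_2-y)]\,dy$ integrated against the relevant density; since $B$ is an \emph{up-set} (contains $(\bar\beta,+\infty)$ and, after we have rearranged it, equals $(b^*,+\infty)$), and $K$ is even... wait, $K$ need not be even. The clean way: apply the classical Riesz rearrangement inequality $\int\int f(x) K(x-y) g(y)\,dx\,dy \le \int\int f^*(x) K^*(x-y) g_*(y)\,dx\,dy$ is not quite what we want either since we are rearranging to half-lines, not to symmetric intervals.

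The honest approach, and the one I would actually carry out, is the \textbf{two-point rearrangement (polarization)} argument adapted to this non-symmetric one-dimensional situation, following the structure that must already be implicit in the paper's Subsection~\ref{Onedim_rearr_Sect}. The key monotonicity input is: for $s < t$, $K(s) \ge$ or $\le K(t)$ need not hold, so one cannot use pointwise monotonicity of $K$; instead one uses that after rearranging $B$ to a half-line $B^* = (b^*,\infty)$, the quantity $\int_{B^*} K(x-y)\,dy = \int_{b^*-x}^{\infty} K(-u)\,du =: \Psi(x)$ is a \emph{nondecreasing} function of $x$ (it is a tail integral of a nonnegative function!), and symmetrically $\int_{A_*} K(x-y)\,dx$ is nonincreasing in $y$. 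So the strategy is: (i) first rearrange $B \rightsquigarrow B^*$ keeping $A$ fixed, using that $x \mapsto \int_A K(x-y)\,dx$ need not be monotone — hmm, this forces me to rearrange the \emph{first} argument first. Let me reorder: (i) rearrange $A \rightsquigarrow A_*$ with $B$ fixed; here the relevant "weight" seen by $A$ is $w(x) := \int_B K(x-y)\,dy$, which is \emph{not} monotone in general. So neither order works by pure monotonicity, and one genuinely needs the bilinear Riesz-type inequality. I would therefore invoke the Riesz rearrangement inequality on $\R$ in the form: if $f, g \ge 0$ and $f$ is replaced by the rearrangement that is nonincreasing (pushing mass to $-\infty$) and $g$ by the one nondecreasing (pushing mass to $+\infty$), while $K$ stays put, then $\int\int f(x)K(x-y)g(y)$ can only decrease — this is exactly the content obtained by applying the standard Riesz inequality (mass to the center) after the reflection $y \mapsto -y$ in the second variable, which turns "$B$ pushed to $+\infty$" into "$-B$ pushed to $-\infty$" and converts same-side pushing into opposite-side pushing appropriate for $K(x-(-y')) = K(x+y')$... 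I will need to track this reflection carefully.

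\textbf{Main obstacle.} The delicate point is the \emph{equality case}: showing that if, say, $A = A_*$ already and $K$ is locally bounded below by positive constants, then strict inequality holds in~\eqref{iotadecreases} unless $B = B^*$ up to null sets. For this I would argue by contradiction: if $\I_K(A_*, B) = \I_K(A_*, B^*)$ with $B \ne B^*$, then there exist points $p < q$ and a positive-measure "defect" — a set $S_1 \subseteq (\ubar\beta, q) \setminus B$ and $S_2 \subseteq B \cap (q, +\infty)$ of equal positive measure, with $q$ near $b^*$ — and swapping these strictly decreases $\I_K(A_*, \cdot)$, because the weight $y \mapsto \int_{A_*} K(x - y)\,dx = \int_{-\infty}^{a_* - y} K(u)\,du$ is \emph{strictly} decreasing in $y$ (its derivative involves $K$ evaluated at a point, which is positive by the local lower bound). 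This strict monotonicity of the weight is what upgrades the inequality to a strict one, and it is where hypothesis "$K$ locally bounded below by positive constants" is used. I would formalize the swap via a one-parameter family $B_\theta$ interpolating between $B$ and the swapped set and show the derivative of $\theta \mapsto \I_K(A_*, B_\theta)$ at $\theta = 0$ is strictly negative, contradicting minimality. The routine-but-tedious part is making the layer-cake / measure-theoretic formalization of "swap" rigorous when the symmetric differences have infinite measure; this is handled by working with the finite-measure sets $B \triangle (\bar\beta, \infty)$ and $B^* \triangle (\bar\beta,\infty)$ throughout.
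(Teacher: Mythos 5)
Your proposal correctly isolates the main difficulty---neither one-sided rearrangement can be performed first, because the weight $\int_B K(x-y)\,dy$ (resp.\ $\int_A K(x-y)\,dx$) is not monotone for a general $B$ (resp.\ $A$)---but it does not overcome it. The fallback on the Riesz rearrangement inequality is not a valid substitute: that inequality bounds $\iint f(x)K(x-y)g(y)\,dx\,dy$ from above by the integral in which \emph{all three} functions, including the kernel, are replaced by their symmetric decreasing rearrangements. Here $K$ is an arbitrary non-negative measurable function (no symmetry, no monotonicity), it must be kept fixed, and the desired inequality says that the configuration with $A$ pushed to $-\infty$ and $B$ pushed to $+\infty$ has \emph{smaller} interaction; reflecting the second variable does not turn this into an instance of Riesz (it destroys the convolution structure, producing a kernel depending on $x+y$). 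So the central step of your argument is missing, and the tool you propose to fill it with does not apply.

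The mechanism that actually makes \eqref{iotadecreases} work for arbitrary non-negative $K$ is the invariance of $d\mu_K=K(x-y)\,dx\,dy$ under \emph{diagonal} translations $(x,y)\mapsto(x+t,y+t)$. The paper first reduces, by outer regularity and monotone convergence, to the case where $A$ and $B$ are finite unions of intervals $A_i$, $B_j$; it then translates each product rectangle $A_i\times B_j$ diagonally by amounts built from the gap lengths of $A$ and $B$, and proves a purely combinatorial covering claim: the translated rectangles cover $[a_0,a_*]\times[b^*,b_0]$, i.e.\ the part of $A_*\times B^*$ not already present in $A\times B$. Since each diagonal translation preserves $\mu_K$-measure and $K\ge0$, the covering immediately yields the inequality. (The simplest instance is $\int_{(1,2)}\int_{(3,+\infty)}d\mu=\int_{(0,1)}\int_{(2,+\infty)}d\mu\ge\int_{(0,1)}\int_{(3,+\infty)}d\mu$.) Your treatment of the strictness statement is, by contrast, essentially on target: once $A=A_*$, the weight $y\mapsto\int_{-\infty}^{a_*-y}K(u)\,du$ is strictly decreasing on bounded intervals when $K$ is locally bounded below, and a swap argument (or, as in the paper, a direct computation giving $\I_K(A,B)-\I_K(A_*,B^*)\ge c\,|A\,\Delta\,A_*|^2$ in the case $B=B^*$) concludes. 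Finally, note that the finiteness of $a_*$ and $b^*$ follows directly from $A\,\Delta\,(-\infty,\ubar{\alpha}]\subseteq(\ubar{\alpha},\bar{\alpha})$ being bounded, and needs neither \eqref{Kerint} nor any lower bound on $K$.
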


We strongly believe that, more generally, the strict inequality is valid in~\eqref{iotadecreases} for all couples of sets~$A$ and~$B$ for which~\emph{at least} one of the two does not coincide with its rearrangement. However, we will not investigate the validity of this stronger statement, as it would not play any role for our applications to the~$s$-perimeter.

\begin{proof}[Proof of Proposition~\ref{iotadecreasesprop}]
First of all, we observe that we can restrict ourselves to assume that~$A$ and~$B$ are both open sets. Indeed, if~$A$ and~$B$ are merely measurable, by the outer regularity of the Lebesgue measure there exist two sequences of open sets~$\{ A_k \}, \, \{ B_k \}$ with~$A \subseteq A_k \subseteq (-\infty, \bar{\alpha})$ and~$B \subseteq B_k \subseteq (\ubar{\beta}, +\infty)$ for every~$k \in \N$, and such that~$|A_k \setminus A|, |B_k \setminus B| \rightarrow 0$ as~$k \rightarrow +\infty$. Suppose now that~\eqref{iotadecreases} holds with~$A_k$ and~$B_k$ respectively in place of~$A$ and~$B$. By this and the fact that, by definitions~\eqref{decrrearr}-\eqref{incrrearr}, it clearly holds~$A_* \subseteq (A_k)_*$ and~$B^* \subseteq (B_k)^*$ for any~$k$, we deduce that
$$
\I_K(A_*, B^*) \le \lim_{k \rightarrow +\infty} \I_K((A_k)_*, (B_k)^*) \le \lim_{k \rightarrow +\infty} \I_K(A_k, B_k) = \I_K(A, B).
$$
The last identity follows from Lebesgue's dominated convergence theorem, which can be used thanks to~\eqref{Kerint}. In light of this, it suffices to prove~\eqref{iotadecreases} when~$A$ and~$B$ are open sets.

Next, we recall that each open subset of the real line can be written as the union of countably many disjoint open intervals. In our setting, we have
$$
A = \bigcup_{k = 0}^{+\infty} A^{(k)}, \mbox{ with } \, A^{(k)} := \bigcup_{i = 0}^k A_i,
$$
and
$$
B = \bigcup_{k = 0}^{+\infty} B^{(k)}, \mbox{ with } \, B^{(k)} := \bigcup_{j = 0}^k B_j,
$$
for two sequences~$\{ A_i \}$,~$\{ B_j \}$ of open intervals satisfying~$A_{i_1} \cap A_{i_2} = \varnothing$ for every~$i_1 \ne i_2$ and~$B_{j_1} \cap B_{j_2} = \varnothing$ for every~$j_1 \ne j_2$, and such that~$(-\infty, \ubar{\alpha}) \subseteq A_0$ and~$(\bar{\beta}, +\infty) \subseteq B_0$, Suppose now that~\eqref{iotadecreases} holds when~$A$ and~$B$ are the unions of finitely many disjoint open intervals. In particular,~\eqref{iotadecreases} is true with~$A^{(k)}$ and~$B^{(k)}$ in place of~$A$ and~$B$, respectively. Hence,
\begin{equation} \label{finiteimpliesnum}
\I_K((A^{(k)})_*, (B^{(k)})^*) \le \I_K(A^{(k)}, B^{(k)}) \le \I_K(A, B)
\end{equation}
for every~$k \in \N$. On the other hand, it is easy to see that
$$
(-\infty, \ubar{\alpha}) \subseteq (A^{(k - 1)})_* \subseteq (A^{(k)})_* \subseteq A_* \quad \mbox{and} \quad (\bar{\beta}, +\infty) \subseteq (B^{(k - 1)})^* \subseteq (B^{(k)})^* \subseteq B^*
$$
for every~$k \in \N$. Since both~$|A_* \setminus (A^{(k)})_*|$ and~$|B^* \setminus (B^{(k)})^*|$ go to~$0$ as~$k \rightarrow +\infty$, Lebesgue's monotone convergence theorem yields that
$$
\I_K(A_*, B^*) = \lim_{k \rightarrow +\infty} \I_K((A^{(k)})_*, (B^{(k)})^*). 
$$
The combination of this and~\eqref{finiteimpliesnum} gives~\eqref{iotadecreases}.

In light of the above considerations, we are left to prove~\eqref{iotadecreases} when~$A$ and~$B$ are unions of finitely many disjoint open intervals. Thus, we fix~$M, N \in \N \cup \{ 0 \}$ and assume that
$$
A = \bigcup_{i = 0}^M A_i \quad \mbox{and} \quad B = \bigcup_{j = 0}^{N} B_j,
$$
with
\begin{equation} \label{ABdefs}
\begin{aligned}
A_0 := (-\infty, a_0) \quad \mbox{and} \quad A_i := (a_{2 i - 1}, a_{2 i}) & \quad \mbox{for } i = 1, \ldots, M, \\
B_0 := (b_0, +\infty) \quad \mbox{and} \quad B_j := (b_{2 j}, b_{2 j - 1}) & \quad \mbox{for } j = 1, \ldots, N,
\end{aligned}
\end{equation}
where~$\{ a_i \}_{i = 0}^{2 M}, \{ b_j \}_{j = 0}^{2 N} \subseteq \R$ are two sets of points satisfying~$a_{i - 1} < a_{i}$ and~$b_j < b_{j - 1}$, for every~$i = 1, \ldots, 2 M$ and~$j = 1, \ldots, 2 N$. In this framework, inequality~\eqref{iotadecreases} takes the form
\begin{equation} \label{ID2tech0}
\sum_{\substack{i = 0, \ldots, M \\ j = 0, \ldots, N}} \int_{A_i} \int_{B_j} d\mu \ge \int_{A_*} \int_{B^*} d\mu.
\end{equation}

Clearly, when~$M = N = 0$ there is nothing to prove, as it holds~$A_* = A$ and~$B^* = B$. In case either~$M = 0$ or~$N = 0$, the verification of~\eqref{ID2tech0} is also simple. Indeed, suppose for instance that~$N = 0$ and~$M \ge 1$. Then,~$B^* = B = (b_0, +\infty)$ and~$A_* = (-\infty, a_*)$ for some~$a_* \in \R$. Up to a set of measure zero we may write~$A_*$ as the union of~$M + 1$ disjoint adjacent intervals~$\{ C_i \}_{i = 0}^M$ given by~$C_i = A_i - \bar{a}_i$, for some~$\bar{a}_i \ge 0$ and for every~$i$. Accordingly,
\begin{equation} \label{A*B*leABN=0M=1}
\int_{A_*} \int_{B^*} d\mu = \sum_{i = 0}^M \int_{C_i} \int_{b_0}^{+\infty} d\mu = \sum_{i = 0}^M \int_{A_i} \int_{b_0 + \bar{a}_i}^{+\infty} d\mu \le \sum_{i = 0}^M \int_{A_i} \int_{b_0}^{+\infty} d\mu = \int_A \int_B d\mu,
\end{equation}
that is~\eqref{ID2tech0}. Note that the second identity follows by adding to both variables of the double integral the same quantity~$\bar{a}_i$. That is, we applied the change of coordinates~$x = w - \bar{a}_i$,~$y = z - \bar{a}_i$ and got
$$
\int_{C_i} \int_{b_0}^{+\infty} d\mu = \int_{C_i} \int_{b_0}^{+\infty} K(x - y) \, dx \, dy = \int_{A_i} \int_{b_0 + \bar{a}_i}^{+\infty} K(w - z) \, dw \, dz = \int_{A_i} \int_{b_0 + \bar{a}_i}^{+\infty} d\mu.
$$

As the case~$M = 0$,~$N \ge 1$ is completely analogous, we can now address the validity of~\eqref{ID2tech0} when~$M, N \ge 1$. Recalling definitions~\eqref{decrrearr}-\eqref{incrrearr}, it is immediate to see that
$$
A_* = \left( -\infty, a_* \right), \quad \mbox{with } \, a_* = a_0 + \sum_{\ell = 1}^M |A_\ell| = a_0 + \sum_{\ell = 1}^M (a_{2 \ell} - a_{2 \ell - 1})
$$
and
$$
B^* = \left( b^*, +\infty \right), \quad \mbox{with } \, b^* = b_0 - \sum_{\ell = 1}^N |B_j| = b_0 - \sum_{\ell = 1}^N (b_{2 \ell - 1} - b_{2 \ell}).
$$
Set
\begin{align}
\label{Cidef} C_i & := A_i - \bar{a}_i, \quad \mbox{with } \, \bar{a}_i := \sum_{\ell = 0}^{i - 1} (a_{2 \ell + 1} - a_{2 \ell}) \, \mbox{ for } i = 1, \ldots, M \mbox{ and } \bar{a}_0 := 0, \\
\label{Djdef} D_j & := B_j + \bar{b}_j, \quad \mbox{with } \, \bar{b}_j := \sum_{\ell = 0}^{j - 1} (b_{2 \ell} - b_{2 \ell + 1}) \, \hspace{2pt} \mbox{ for } j = 1, \ldots, N \mbox{ and } \bar{b}_0 := 0.
\end{align}
The families~$\{ C_i \}_{i = 0}^M$ and~$\{ D_j \}_{j = 0}^{N}$ are both made up of consecutive open intervals. Moreover, up to sets of measure zero, we have
\begin{equation} \label{CDfill}
A_* = \bigcup_{i = 0}^M C_i \quad \mbox{and} \quad B^* = \bigcup_{j = 0}^{N} D_j.
\end{equation}
Consequently, we can equivalently express~\eqref{ID2tech0} as
\begin{equation} \label{ID2tech1}
\sum_{\substack{ i = 0, \ldots, M \\ j = 0, \ldots, N}} \int_{A_i} \int_{B_j} d\mu \ge \sum_{\substack{ i = 0, \ldots, M \\ j = 0, \ldots, N}} \int_{C_i} \int_{D_j} d\mu.
\end{equation}

Let~$j = 1, \ldots, N$ be fixed. We compute
$$
\int_{A_0} \int_{B_j} d\mu = \int_{C_0} \int_{D_j - \bar{b}_j} d\mu = \int_{C_0 + \bar{b}_j} \int_{D_j} d\mu = \int_{\left( C_0 + \bar{b}_j \right) \setminus C_0} \int_{D_j} d\mu + \int_{C_0} \int_{D_j} d\mu.
$$
Notice that the first identity follows from definitions~\eqref{Cidef}-\eqref{Djdef}, the second by applying to both variables of the double integral a shift of length~$\bar{b}_j$, and the third since~$C_0 \subseteq C_0 + \bar{b}_j$. Similarly,
$$
\int_{A_i} \int_{B_0} d\mu = \int_{C_i} \int_{\left( D_0 - \bar{a}_i \right) \setminus D_0} d\mu + \int_{C_i} \int_{D_0} d\mu
$$
for every~$i = 1, \ldots, M$. Furthermore, by a translation of size~$\bar{b}_j - \bar{a}_i$, we may also write
$$
\int_{A_i} \int_{B_j} d\mu = \int_{C_i + \bar{a}_i} \int_{D_j - \bar{b}_j} d\mu = \int_{C_i + \bar{b}_j} \int_{D_j - \bar{a}_i} d\mu
$$
for every~$i = 1, \ldots, M$ and~$j = 1, \ldots, N$. Finally, as~$A_0 = C_0$ and~$B_0 = D_0$, we have
$$
\int_{A_0} \int_{B_0} d\mu = \int_{C_0} \int_{D_0} d\mu.
$$
Applying the last four identities together with~\eqref{CDfill}, formula~\eqref{ID2tech1} becomes
\begin{equation} \label{ID2tech2}
\sum_{\substack{ i = 0, \ldots, M \\ j = 0, \ldots, N}} \int_{E_{i; j}} \int_{F_{j; i}} \, d\mu \ge \int_{a_0}^{a_*} \int_{b^*}^{b_0} d\mu,
\end{equation}
where we put
\begin{equation} \label{EFdef}
\begin{aligned}
E_{0; 0} & := \{ a_0 \}, & \, F_{0; 0} & := \{ b_0 \}, && \\
E_{i; 0} & := C_i, & \, F_{0; i} & := \left( D_0 - \bar{a}_i \right) \setminus D_0, \, && \, \mbox{for } i = 1, \ldots, M, \\
E_{0; j} & := \left( C_0 + \bar{b}_j \right) \setminus C_0, & \, F_{j; 0} & := D_j, \, && \, \mbox{for } j = 1, \ldots, N, \\
E_{i; j} & := C_i + \bar{b}_j, & \, F_{j; i} & := D_j - \bar{a}_i, \, && \, \mbox{for } i = 1, \ldots, M, \, j = 1, \ldots, N.
\end{aligned}
\end{equation}

We now claim that
\begin{equation} \label{ID2claim}
[a_0, a_*] \times [b^*, b_0] \subseteq \bigcup_{\substack{i = 0, \ldots, M \\ j = 0, \ldots, N}} \overline{E_{i; j}} \times \overline{F_{j; i}}.
\end{equation}
Observe that~\eqref{ID2claim} is stronger than~\eqref{ID2tech2}, and therefore that its validity would lead us to the conclusion of the proof.

\begin{figure}[h]
\centering
\includegraphics[width=0.95\textwidth]{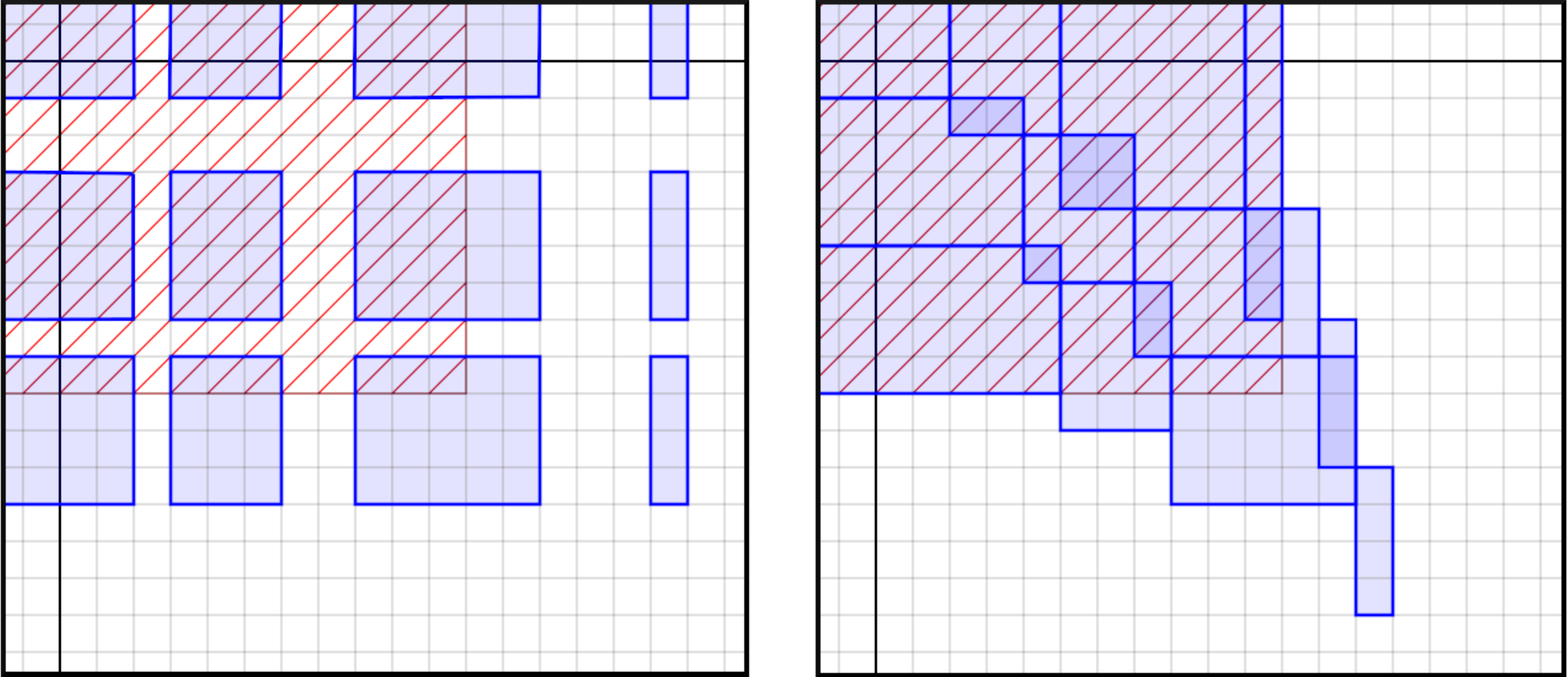}
\caption{An example illustrating the validity of~\eqref{ID2claim}. On the left, we have drawn in solid blue background the rectangles corresponding to the initial configuration given by the intervals~$A_0 = (-\infty, 2)$,~$A_1 = (3, 6)$,~$A_2 = (8, 13)$,~$A_3 = (16, 17)$---on the horizontal axis---and~$B_0 = (-1, +\infty)$,~$B_1 = (- 7, - 3)$,~$B_2 = (-12, -8)$---on the vertical axis. The rearranged set~$A_* \times B^* = (-\infty, 11) \times (-9, +\infty)$ is represented in red diagonal pattern. On the right, we translated the blue rectangles~$A_i \times B_j$ along the direction~$(1, 1)$, following the rules outlined above, to obtain the sets~$E_{i; j} \times F_{i; j}$. The new configuration covers completely the region shaded in red.}
\end{figure}

Before showing that~\eqref{ID2claim} is true, we make some considerations on the intervals~$E_{i; j}$'s and~$F_{j; i}$'s. Given a bounded non-empty interval~$I \subseteq \R$, we indicate with~$\ell(I)$ and~$r(I)$ its left and right endpoint, respectively. We have that
\begin{alignat}{2}
\label{rlE}
r(E_{i - 1; j}) & = \ell(E_{i; j}), && \qquad \mbox{for } i = 1, \ldots, M, \, j = 0, \ldots, N, \\
\label{rlF}
r(F_{j; i}) & = \ell(F_{j - 1; i}), && \qquad \mbox{for } i = 0, \ldots, M, \, j = 1, \ldots, N, \\
\label{Eends}
r(E_{M; j}) & \ge a_*, && \qquad \mbox{for } j = 0, \ldots, N, \\
\label{Fends}
\ell(F_{N; i}) & \le b^*, && \qquad \mbox{for } i = 0, \ldots, M.
\end{alignat}
To check~\eqref{rlE}, we recall definitions~\eqref{EFdef},~\eqref{Cidef},~\eqref{ABdefs}, and notice that
\begin{align*}
r(E_{i - 1; j}) & = r(A_{i - 1}) - \bar{a}_{i - 1} + \bar{b}_j = a_{2 i - 2} - \bar{a}_{i} + (a_{2 i - 1} - a_{2 i - 2}) + \bar{b}_j \\
& = \ell(A_{i}) - \bar{a}_{i} + \bar{b}_j = \ell(E_{i; j})
\end{align*}
for every~$i = 1, \ldots, M$ and~$j = 0, \ldots, N$. On the other hand, it holds
\begin{align*}
r(E_{M; j}) & = r(A_M) - \bar{a}_M + \bar{b}_j = a_{2 M} - \sum_{\ell = 0}^{M - 1} (a_{2 \ell + 1} - a_{2 \ell}) + \bar{b}_j \\
& = a_0 + \sum_{\ell = 1}^{M} (a_{2 \ell} - a_{2 \ell - 1}) + \bar{b}_j \ge a_*,
\end{align*}
which gives~\eqref{Eends}. Items~\eqref{rlF} and~\eqref{Fends} follow analogously.

In view of formulas~\eqref{rlE}-\eqref{Fends}, we immediately deduce that
\begin{equation} \label{Ecover}
[a_0, a_*] \subseteq \bigcup_{i = 0}^M \overline{E_{i ; j}} \quad \mbox{for every } j = 0, \ldots, N
\end{equation}
and
$$
\hspace{3pt} [b^*, b_0] \subseteq \bigcup_{j = 0}^N \overline{F_{j;i}} \quad \mbox{for every } i = 0, \ldots, M.
$$

On top of the previous facts, we also claim that
\begin{equation} \label{ID2claim2pre}
\ell(E_{i; j}) > \ell(E_{i; j - 1}) \quad \mbox{for every } i = 1, \ldots, M, \, j = 1, \ldots, N
\end{equation}
and
\begin{equation} \label{ID2claim2}
r(F_{j; i}) < r(F_{j; i - 1}) \quad \mbox{for every } i = 1, \ldots, M, \, j = 1, \ldots, N.
\end{equation}
Indeed, for~$i = 1, \ldots, M$ and~$j = 1, \ldots, N$ we have
$$
r(F_{j; i}) = r(D_j) - \bar{a}_i = r(D_j) - \bar{a}_{i - 1} - (a_{2 i - 1} - a_{2 i - 2}) < r(D_j) - \bar{a}_{i - 1} = r(F_{j; i - 1}).
$$
This proves~\eqref{ID2claim2}, while~\eqref{ID2claim2pre} can be checked in a similar fashion.

Thanks to the previous remarks, we can now address the proof of~\eqref{ID2claim}. Let
\begin{equation} \label{pdef}
p = (x, y) \in [a_0, a_*] \times [b^*, b_0]
\end{equation}
and suppose by contradiction that~$p$ does not belong to the right-hand side of~\eqref{ID2claim}. I.e.,
\begin{equation} \label{pcontradict}
p \notin \overline{E_{i; j}} \times \overline{F_{j; i}} \quad \mbox{for every } i = 0, \ldots, M \mbox{ and } j = 0, \ldots, N.
\end{equation}
By virtue of~\eqref{Ecover}, in correspondence to every~$j = 0, \ldots, N$ we can pick an~$i_j \in \{ 0, \ldots, M \}$ in such a way that
\begin{equation} \label{xinE}
x \in \overline{E_{i_j; j}}.
\end{equation}
We claim that
\begin{equation} \label{ijnonincr}
\{i_j\}_{j = 0}^N \mbox{ is non-increasing}.
\end{equation}
Indeed, suppose that we have constructed the (finite) sequence~$\{ i_\ell \}$ up to the index~$\ell = j - 1$, with~$j \in \{ 1, \ldots, N \}$. Of course, when~$i_{j - 1} = M$ we necessarily have~$i_{j} \le i_{j - 1}$. On the other hand, if~$i_{j - 1} \le M - 1$, using~\eqref{ID2claim2pre} and~\eqref{rlE}, we infer that
$$
\ell(E_{i_{j - 1} + 1; j}) > \ell(E_{i_{j - 1} + 1; j - 1}) = r(E_{i_{j - 1}; j - 1}) \ge x.
$$
Hence, also in this case~$i_j$ falls within the set~$\{ 0, \ldots, i_{j - 1} \}$ and~\eqref{ijnonincr} is established.

Next, by comparing~\eqref{xinE} and~\eqref{pcontradict}, we notice that~$y \notin \cup_{j = 0}^N \overline{F_{j; i_j}}$. This amounts to say that, for every index~$j = 0, \ldots, N$,
\begin{equation} \label{y<lory>r}
\mbox{ either } \, y < \ell(F_{j; i_j}) \, \mbox{ or } \, y > r(F_{j; i_j}).
\end{equation}
We now claim that the latter possibility cannot occur, i.e.,~that
\begin{equation} \label{ID2claimcontr}
y < \ell(F_{j; i_j})
\end{equation}
for every~$j = 0, \ldots, N$. Note that~\eqref{ID2claimcontr} would lead us to a contradiction. Indeed, by using it with~$j = N$ and in combination with~\eqref{pdef} and~\eqref{Fends}, we would get
$$
b^* \le y < \ell(F_{N; i_N}) \le b^*,
$$
which is clearly impossible. Therefore, to finish the proof we are only left to show that~\eqref{ID2claimcontr} holds true for every~$j = 0, \ldots, N$. To achieve this, we argue inductively. First, we check that~\eqref{ID2claimcontr} is verified for~$j = 0$. Indeed, by~\eqref{pdef} and~\eqref{EFdef},
$$
y \le b_0 = r(F_{0; i_0}),
$$
and thus~\eqref{y<lory>r} yields that~$y < \ell(F_{0; i_0})$---i.e.,~\eqref{ID2claimcontr} for~$j = 0$. Secondly, we pick any~$j \in \{ 1, \ldots, N\}$ and assume that~\eqref{ID2claimcontr} is valid with~$j - 1$ in place of~$j$. Then, recalling~\eqref{rlF},~\eqref{ijnonincr}, and possibly~\eqref{ID2claim2} (applied~$i_{j - 1} - i_j$ times), we get that
$$
y < \ell(F_{j - 1; i_{j - 1}}) = r(F_{j; i_{j - 1}}) \le r(F_{j; i_j}).
$$
By comparing this with~\eqref{y<lory>r}, we finally deduce that claim~\eqref{ID2claimcontr} holds true. Thus, the proof of~\eqref{iotadecreases} is complete.

We now assume the kernel~$K$ to satisfy~$\inf_I K > 0$ for every compact set~$I \subseteq \R$ and show that inequality~\eqref{iotadecreases} is strict when one between~$A$ and~$B$ coincides with its rearrangement and the other does not. Without loss of generality, we suppose that~$B = B^* = (b_0, +\infty)$ and~$d := |A \Delta A_*| > 0$. We claim that
\begin{equation} \label{strictclaim}
\delta(A, B) := \I_K(A, B) - \I_K(A_*, B^*) \ge c \, d^2,
\end{equation}
for some constant~$c > 0$ depending only on~$\underline{\alpha}$,~$\overline{\alpha}$,~$\underline{\beta}$,~$\overline{\beta}$, and~$K$.

Thanks to the same approximation procedure considered in the first part of the proof, it suffices to establish~\eqref{strictclaim} in the case when~$A$ can be written as
$$
A = \bigcup_{i = 0}^{M} A_i, \mbox{ with } A_0 = (- \infty, a_0) \mbox{ and } A_i = (a_{2i - 1}, a_{2 i}) \mbox{ for } i = 1, \ldots, M,
$$
for some~$M \in \N$ and~$\underline{\alpha} \le a_{i - 1} < a_i \le \overline{\alpha}$ for every~$i = 1, \ldots, 2 M$. Recalling~\eqref{A*B*leABN=0M=1}, we have that
\begin{equation} \label{stricttech1}
\delta(A, B) = \sum_{i = 1}^{M} \int_{a_{2 i - 1}}^{a_{2 i}} \int_{b_0}^{b_0 + \bar{a}_i} K(x - y) \, dx \, dy \ge \left( \inf_{\left[ \underline{\alpha} - \overline{\beta}, \, \overline{\alpha} - \underline{\beta} \right]} K \right) \sum_{i = 1}^{M} (a_{2 i} - a_{2i - 1}) \, \bar{a}_i,
\end{equation}
with~$\bar{a}_i$ as in~\eqref{Cidef}.

Consider the index~$m := \min \big\{ i \in \{ 1, \ldots, M \} : a_{2 i} \ge a_* \big\}$. A straightforward computation gives
$$
|A\setminus A_*| = \sum_{i = m}^{M} \big( a_{2 i} - a_{2 i - 1} \big) - \big( a_* - a_{2 m - 1} \big)_+  = \sum_{\ell = 0}^{m - 1} \big( a_{2 \ell + 1} - a_{2 \ell} \big) - \big( a_* - a_{2 m - 1} \big)_- = |A_* \setminus A|.
$$
Consequently, all these quantities are equal to~$d/2$ and therefore
\begin{align*}
\sum_{i = 1}^{M} (a_{2 i} - a_{2i - 1}) \, \bar{a}_i & = \sum_{i = 1}^{M} (a_{2 i} - a_{2i - 1}) \sum_{\ell = 0}^{i - 1} (a_{2 \ell + 1} - a_{2 \ell}) = \sum_{\ell = 0}^{M - 1} (a_{2 \ell + 1} - a_{2 \ell}) \sum_{i = \ell + 1}^{M} (a_{2 i} - a_{2i - 1}) \\
& \ge \sum_{\ell = 0}^{m - 1} (a_{2 \ell + 1} - a_{2 \ell}) \sum_{i = m}^{M} (a_{2 i} - a_{2i - 1}) \ge |A_* \setminus A| |A\setminus A_*| = \frac{d^2}{4}.
\end{align*}
Claim~\eqref{strictclaim} follows from this and~\eqref{stricttech1}. This concludes the proof of Proposition~\ref{iotadecreasesprop}.
\end{proof}

\subsection{Vertical rearrangements and the~$s$-perimeter} \label{rearrangsub}

We now take advantage of Proposition~\ref{iotadecreasesprop} to show that~$\Per_s$ decreases under vertical rearrangements---that is, we prove Theorem~\ref{Persdecreases}.

\begin{proof}[Proof of Theorem \ref{Persdecreases}]
Given~$\varepsilon > 0$, consider the nonlocal perimeter~$\Per_s^\varepsilon$ corresponding to the truncated kernel~$K^\varepsilon(r) := \max \{ r, \varepsilon \}^{- n - 1 - s}$ for~$r = |Z|$ and~$Z \in \R^{n + 1}$. That is, given a measurable~$F \subseteq \R^{n + 1}$ and~$\Op \subseteq \R^{n + 1}$ open, we define
$$
\Per_s^\varepsilon(F, \Op) := \Ll_s^\varepsilon(F\cap\Op,\Co F\cap\Op) + \Ll_s^\varepsilon(F \cap\Op,\Co F \setminus\Op) + \Ll_s^\varepsilon(F \setminus\Op,\Co F \cap\Op),
$$
where, for any two measurable sets~$A, B \subseteq \R^{n + 1}$,
$$
\Ll_s^\varepsilon(A,B):=\int_A\int_B K^\varepsilon(|X - Y|) \, dX\,dY = \int_A\int_B \frac{dX\,dY}{\max \{ |X - Y|, \varepsilon \}^{n + 1 + s}}.
$$

Let~$E$ be as in the statement of the theorem,~$w_E$ be the function introduced in~\eqref{rearr_func_def}, and write~$E_\star := \S_{w_E}$ for the vertical rearrangement of~$E$. Denote with~$F$ either the set~$E$ or its rearrangement~$E_\star$. Observe that, outside of~$\Omega^\infty$, both sets~$E$ and~$E_\star$ coincide with the subgraph of the same function~$v: \Co \Omega \to \R$. Hence,
\begin{equation} \label{GoutOmega}
F \setminus \Omega^\infty = \Big\{ (x, t) \in \left( \Co \Omega \right) \times \R : t < v(x) \Big\}.
\end{equation}
It is also clear that~$E_\star$ satisfies~\eqref{EboundedinOmega}. Accordingly,
\begin{equation} \label{GboundedinOmega}
\Omega \times (-\infty, -M) \subseteq F \cap \Omega^\infty \subseteq \Omega \times (-\infty, M).
\end{equation}

We compute
\begin{align*}
\Per_s^\varepsilon(F, \Omega^M)
& = \Ll_s^\varepsilon(F \cap \Omega^M,\Co F \cap \Omega^M) \\
& \quad + \Ll_s^\varepsilon(F \cap \Omega^M, \Co F \cap (\Omega^\infty \setminus \Omega^M)) + \Ll_s^\varepsilon(F \cap \Omega^M, \Co F \setminus \Omega^\infty) \\
& \quad + \Ll_s^\varepsilon(F \cap (\Omega^\infty \setminus \Omega^M), \Co F \cap \Omega^M) + \Ll_s^\varepsilon(F \setminus \Omega^\infty, \Co F \cap \Omega^M) \\
& = \Ll_s^\varepsilon(F \cap \Omega^\infty,\Co F \cap \Omega^\infty) - \Ll_s^\varepsilon(F \cap (\Omega^\infty \setminus \Omega^M), \Co F \cap (\Omega^\infty \setminus \Omega^M)) \\
& \quad + \Ll_s^\varepsilon(F \cap \Omega^M, \Co F \setminus \Omega^\infty) + \Ll_s^\varepsilon(F \setminus \Omega^\infty, \Co F \cap \Omega^M).
\end{align*}
Observe that all the above terms are finite, thanks to the boundedness of both~$\Omega$ and~$K_\varepsilon$, the decay of~$K_\varepsilon$ at infinity, and property~\eqref{GboundedinOmega}. By this identity and again~\eqref{GboundedinOmega},
\begin{equation} \label{PersEstar-PersE}
\begin{aligned}
\Per_s^\varepsilon(E_\star, \Omega^M) - \Per_s^\varepsilon(E, \Omega^M) & = \Ll_s^\varepsilon(E_\star \cap \Omega^\infty,\Co E_\star \cap \Omega^\infty) - \Ll_s^\varepsilon(E \cap \Omega^\infty,\Co E \cap \Omega^\infty) \\
& \quad + \Ll_s^\varepsilon(E_\star \cap \Omega^M, \Co E_\star \setminus \Omega^\infty) - \Ll_s^\varepsilon(E \cap \Omega^M, \Co E \setminus \Omega^\infty) \\
& \quad + \Ll_s^\varepsilon(E_\star \setminus \Omega^\infty, \Co E_\star \cap \Omega^M) - \Ll_s^\varepsilon(E \setminus \Omega^\infty, \Co E \cap \Omega^M).
\end{aligned}
\end{equation}
Set
$$
F(x) := \Big\{ t \in \R : (x, t) \in F \Big\} \quad \mbox{for } x \in \R^n
$$
and
$$
K_{a}^\varepsilon(t) := K^\varepsilon \Big( \! \sqrt{a^2 + t^2} \, \Big) \quad \mbox{for } a, t \in \R.
$$
Using the notation of~\eqref{IKdef}, by~\eqref{GoutOmega},~\eqref{GboundedinOmega}, and Fubini's theorem, identity~\eqref{PersEstar-PersE} becomes
\begin{align*}
\Per_s^\varepsilon(E_\star, \Omega^M) - \Per_s^\varepsilon(E, \Omega^M) & = \int_{\Omega} \int_{\Omega} \left\{ \I_{K_{|x - y|}^\varepsilon} \! \left( E_\star(x), \Co E_\star(y) \right) - \I_{K_{|x - y|}^\varepsilon} \! \left( E(x), \Co E(y) \right) \right\} dx \, dy \\
& \quad + \int_{\Omega} \int_{\C \Omega} \left\{ \I_{K_{|x - y|}^\varepsilon} \! \left( E_\star(x), \Co E_\star(y) \right) - \I_{K_{|x - y|}^\varepsilon} \! \left( E(x), \Co E(y) \right)  \right\} dx \, dy \\
& \quad + \int_{\C \Omega} \int_{\Omega} \left\{ \I_{K_{|x - y|}^\varepsilon} \! \left( E_\star(x), \Co E_\star(y) \right) - \I_{K_{|x - y|}^\varepsilon} \! \left( E(x), \Co E(y) \right)  \right\} dx \, dy.
\end{align*}
Recalling the definition of decreasing rearrangement of a subset of the real line introduced in~\eqref{decrrearr}, we observe that~$E(x)_* = (-\infty, w_E(x)) = E_\star(x)$ for all~$x \in \R^n$. Also note that, for every~$\alpha, \beta \in \R$, we have~$\I_{K_a^\varepsilon} \! \left( (-\infty, \alpha), (\beta, +\infty) \right) < \infty$. Hence, we can apply Proposition~\ref{iotadecreasesprop} and deduce that
$$
\I_{K_{|x - y|}^\varepsilon} \! \left( E_\star(x), \Co E_\star(y) \right) - \I_{K_{|x - y|}^\varepsilon} \! \left( E(x), \Co E(y) \right) \le 0 \quad \mbox{for a.e.~} x, y \in \R^n,
$$
where we also took advantage of property~\eqref{decrincr}. Using this inequality in the previous identity, we get that~$\Per_s^\varepsilon(E_\star, \Omega^M) \le \Per_s^\varepsilon(E, \Omega^M)$. Letting~$\varepsilon \searrow 0$, we conclude that~\eqref{Persdecreases} holds true.

To finish the proof, we are left to show that, if~$\Per_s(E, \Omega^M) < \infty$, then the inequality in~\eqref{Persdecreases} is strict unless~$E = E_\star$ up to a negligible set. Indeed, suppose that~\eqref{Persdecreases} holds as an identity. By letting~$\varepsilon \searrow 0$ in the last two formulas, it is easy to see that
$$
\I_{K_{|x - y|}} \! \left( E_\star(x), \Co E_\star(y) \right) - \I_{K_{|x - y|}} \! \left( E(x), \Co E(y) \right) = 0 \quad \mbox{for a.e.~} x, y \in \R^n,
$$
where~$K_a(t) := (a^2 + t^2)^{- \frac{n + 1 + s}{2}}$. But then, since~$K_a$ is positive and continuous for every~$a > 0$, the second part of the statement of Proposition~\ref{iotadecreasesprop} yields that~$|E(x) \Delta E_\star(x)| = 0$ for a.e.~$x \in \Omega$---note that we also exploited the fact that~$\Co E(y) = \Co E_\star(y)$ for a.e.~$y \in \Co \Omega$, thanks to~\eqref{GoutOmega}. From this, it follows that~$|E \Delta E_\star| = 0$. The proof of Theorem~\ref{Persdecreases} is thus complete.
\end{proof}

\section{Proof of Theorem~\ref{equiv_intro}}\label{Equiv_proof_Sect}

\noindent
We begin by showing the equivalence of~\ref{equiv:uvisc}-\ref{equiv:upwise}, assuming~$\Omega$ to be merely an open set.

Implication~\ref{equiv:uvisc}~$\Rightarrow$~\ref{equiv:ulocweak} is an immediate consequence of the first part of Theorem~\ref{Gen_viscweak}.

Next,~\ref{equiv:ulocweak}~$\Rightarrow$~\ref{equiv:ulocmin} can be easily deduced from Corollary~\ref{weak_implies_min_lemma}.

As for~\ref{equiv:ulocmin}~$\Rightarrow$~\ref{equiv:Sgumin}, by Proposition~\ref{Linftylocprop} we know that~$u\in L^\infty_{\loc}(\Omega)$. Let~$\{ \Omega_k \}$ be a sequence of open subsets of~$\Omega$ with Lipschitz boundary, such that~$\Omega_k \Subset \Omega_{k+1}$ for all~$k$ and~$\bigcup_{k\in\mathbb N}\Omega_k=\Omega$. Let~$\{ M_k \}$ be a diverging sequence for which
\eqlab{\label{cylind_approx}
M_k > \|u\|_{L^\infty(\Omega_k)},
}
and consider the cylinders~$\Op^k:=\Omega_k\times(-M_k,M_k)$. We claim that~$\S_u$ is~$s$-minimal in each~$\Op^k$. Since~$\Op^k\nearrow\Omega^\infty$, this would readily give that~$\S_u$ is locally $s$-minimal in~$\Omega^\infty$, as desired.

Let~$E\subseteq\R^{n+1}$ be such that~$E\setminus\Op^k=\S_u\setminus\Op^k$ and let~$w_E$ be the function defined in~\eqref{rearr_func_def}. We can suppose that~$\Per_s(E,\Op^k)<\infty$, otherwise there is nothing to prove. By~\eqref{cylind_approx}, we know that~$E$ satisfies~\eqref{EboundedinOmega} and hence Theorem~\ref{Persdecreases} yields that
\eqlab{\label{equiv_proof_eqn1}
	\Per_s(\S_{w_E},\Op^k)\le\Per_s(E,\Op^k).
}
Notice that, thanks to Proposition~\ref{per_of_subgraph_prop}, we know that~$w_E\in\B_{M_k}\W^s_u(\Omega_k)$---recall the terminology introduced at the beginning of Section~\ref{Linftysec}.
By this, the fact that~$u\in\B_{M_k}\W^s(\Omega_k)$ minimizes~$\F_s$ in~$\Omega_k$, identity~\eqref{per_of_subgraph} (with~$\Omega = \Omega_k$ and~$M = M_k$), and~\eqref{equiv_proof_eqn1},
we get
\bgs{
	\Per_s(\S_u,\Op^k)\le\Per_s(\S_{w_E},\Op^k)\le\Per_s(E,\Op^k).
}
The arbitrariness of the set~$E$ implies that~$\S_u$ is $s$-minimal in~$\Op^k$, as claimed.

We now prove that~\ref{equiv:Sgumin}~$\Rightarrow$~\ref{equiv:upwise}. First, by Proposition~\ref{LinftyforPers} we have that~$u \in L^\infty_{\loc}(\Omega)$. Then,~\cite[Theorem~1.1]{CaCo} actually yields that~$u\in C^\infty(\Omega)$. Hence, given any~$x\in\Omega$, we can find both an interior and an exterior tangent ball to~$\S_u$ at~$(x,u(x))\in\partial\S_u\cap\Omega^\infty$. The Euler-Lagrange equation satisfied by $s$-minimal sets---see~\cite[Theorem~5.1]{CRS10}---and~\eqref{Curv_subgr_funct_u_id} then imply that~$\h_s u(x)=H_s[\S_u](x,u(x))=0$.

Finally, implication~\ref{equiv:upwise}~$\Rightarrow$~\ref{equiv:uvisc} holds thanks to Definition~\ref{visc_sol_def} and Remark~\ref{mah}.

Assume now~$\Omega$ to be a bounded open set with Lipschitz boundary. Under this assumption, Corollary~\ref{weak_implies_min_lemma} ensures that~\ref{equiv:uglobweak}~$\Leftrightarrow$~\ref{equiv:uglobmin}. Also,~\ref{equiv:uglobmin}~$\Rightarrow$~\ref{equiv:ulocmin} is always trivially verified. To conclude, observe for instance that, when~$u \in L^\infty(\Omega)$, the implication~\ref{equiv:uvisc}~$\Rightarrow$~\ref{equiv:uglobweak} easily follows from Theorem~\ref{Gen_viscweak}.

\section{Proof of Theorem~\ref{last_Theorem}}\label{Unif_Cont_Sect}

\noindent
In this brief section, we establish the validity of Theorem~\ref{last_Theorem}. This will be a consequence of the next two propositions.

First, we address the existence and uniqueness of~$s$-minimal graphs. Before heading to our statement, we make the following observation.

Let~$\Omega$ be a bounded open set with~$C^2$ boundary and~$\varphi: \R^n \to \R$ be a measurable function, bounded in~$B_R \setminus \Omega$ for some~$R > 0$ and such that~$\varphi = 0$ a.e.~in~$\Co \Omega$. In~\cite{graph} it is proved that there exists a radius~$\tilde{R} > 0$, depending only on~$n$,~$s$, and~$\Omega$, such that if~$R \ge \tilde{R}$ and~$E$ is a locally $s$-minimal set in~$\Omega^\infty$ such that~$E \setminus \Omega^\infty = \S_{\varphi} \setminus \Omega^\infty$, then
\begin{equation} \label{Eisbounded}
	\Omega \times (-\infty, -M_0) \subseteq E \cap \Omega^\infty \subseteq \Omega \times (-\infty, M_0),
\end{equation}
with~$M_0 = C \left( R + \| \varphi \|_{L^\infty(B_R)} \right)$ for some numerical constant~$C > 0$. Roughly speaking, this is a global~``$L^\infty$ estimate'' for nonlocal minimal surfaces (not necessarily graphs) in terms of their (graphical) exterior data, and can be thought of as a geometric counterpart of our
Theorem~\ref{minareboundedthm}. Its validity can be inferred from a careful inspection of the proof of~\cite[Lemma~3.2]{graph}.

With this in hand, we can easily establish the following result.

\begin{prop}
Let~$\Omega\subseteq\R^n$ be an open set with boundary of class~$C^2$ and such that~$\Omega \subseteq B_{R_0}$ for some~$R_0 > 0$. There exists a radius~$R > R_0$, depending only on~$n$,~$s$, and~$\Omega$, such that the following holds true. If~$\varphi:\Co \Omega \to\R$ is a measurable function, bounded in~$B_R \setminus \Omega$, then there exists a unique locally~$s$-minimal set~$E$ in~$\Omega^\infty$ which coincides with the subgraph of~$\varphi$ outside of~$\Omega^\infty$. The set~$E$ is the subgraph~$\S_u$ of a function~$u: \R^n \to \R$ with~$u|_\Omega \in L^\infty(\Omega) \cap C^\infty(\Omega)$. 
\end{prop}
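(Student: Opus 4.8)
The plan is to realise $E$ as the subgraph of the minimizer of $\F_s$ furnished by Theorem~\ref{Dirichlet}, and then to promote uniqueness from the class of graphs to the class of all locally $s$-minimal sets by feeding the geometric $L^\infty$ bound~\eqref{Eisbounded} of~\cite{graph} into the rearrangement inequality of Theorem~\ref{Persdecreases}.

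First I would fix the radius. Let $\Theta$ be the larger of the two constants (both depending only on $n$ and $s$, as we are in the geometric framework $g = g_s$) appearing in Theorems~\ref{Dirichlet} and~\ref{minareboundedthm}, and let $\tilde R$ be the radius from~\cite{graph} recalled above~\eqref{Eisbounded}. I would set $R := \max\{(1 + 2\Theta) R_0,\,\tilde R\}$, which depends only on $n$, $s$, and $\Omega$. Since $\Omega \subseteq B_{R_0}$ we have $\Omega_{\Theta\diam(\Omega)} \subseteq B_{(1 + 2\Theta)R_0} \subseteq B_R$, so that $\varphi$ is bounded on $\Omega_{\Theta\diam(\Omega)} \setminus \Omega$; in particular $\Tail_s(\varphi, \Omega_{\Theta\diam(\Omega)} \setminus \Omega;\,\cdot\,) \in L^1(\Omega)$, which is hypothesis~\eqref{TailinL1} (this follows from Lemma~\ref{tail_equiv_cond_Lemma}, or directly from the elementary estimate used in Lemma~\ref{NMdomainlem}). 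Then Theorem~\ref{Dirichlet} yields a unique minimizer $u \in \W^s_\varphi(\Omega)$ of $\F_s$; Theorem~\ref{minareboundedthm} gives $u|_\Omega \in L^\infty(\Omega)$; and Theorem~\ref{equiv_intro}, in the bounded Lipschitz $L^\infty$ regime, shows that $\S_u$ is locally $s$-minimal in $\Omega^\infty$, that $u|_\Omega \in C^\infty(\Omega)$, and, since $u = \varphi$ a.e.\ in $\Co\Omega$, that $\S_u$ coincides with the subgraph of $\varphi$ outside $\Omega^\infty$. This disposes of existence with $E := \S_u$.

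For uniqueness, let $E$ be any locally $s$-minimal set in $\Omega^\infty$ with $E \setminus \Omega^\infty = \S_\varphi \setminus \Omega^\infty$. Because $R \ge \tilde R$, estimate~\eqref{Eisbounded} applies and $E$ satisfies~\eqref{EboundedinOmega} for some $M_0 > 0$; I would then set $M := \max\{M_0,\,\|u\|_{L^\infty(\Omega)}\}$, so that both $E$ and $\S_u$ satisfy~\eqref{EboundedinOmega} with this $M$. As $\Omega$ is bounded and Lipschitz, local $s$-minimality of $E$ in $\Omega^\infty$ is equivalent to $s$-minimality in each truncated cylinder $\Omega^M$ (by~\cite[Remark~4.2]{Cyl}), so in particular $\Per_s(E, \Omega^M) < \infty$. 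Let $w_E$ be the vertical rearrangement~\eqref{rearr_func_def} of $E$: then $w_E = \varphi$ a.e.\ in $\Co\Omega$ and $w_E$ takes values in $(-M, M)$ on $\Omega$, and combining $\Per_s(\S_{w_E}, \Omega^M) \le \Per_s(E, \Omega^M) < \infty$ (Theorem~\ref{Persdecreases}) with Proposition~\ref{per_of_subgraph_prop} gives $w_E|_\Omega \in W^{s,1}(\Omega)$, hence $w_E \in \B_M\W^s_\varphi(\Omega)$. Using that $u$ minimizes $\F_s^M(\cdot,\Omega)$ over $\W^s_\varphi(\Omega) \supseteq \B_M\W^s_\varphi(\Omega)$ (Remark~\ref{tartapower_Remark}), that $\Per_s(\S_v, \Omega^M) = \F_s^M(v,\Omega) + \kappa_{\Omega,M}$ for both $v = u$ and $v = w_E$ (Proposition~\ref{per_of_subgraph_prop}), and that $E \setminus \Omega^M = \S_u \setminus \Omega^M$ up to a null set by~\eqref{EboundedinOmega} with $E$ being $s$-minimal in $\Omega^M$, I would obtain the chain
\[
\Per_s(\S_u, \Omega^M) \;\le\; \Per_s(\S_{w_E}, \Omega^M) \;\le\; \Per_s(E, \Omega^M) \;\le\; \Per_s(\S_u, \Omega^M),
\]
forcing equality throughout. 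From $\Per_s(\S_{w_E}, \Omega^M) = \Per_s(E, \Omega^M)$ and the strict-inequality clause of Theorem~\ref{Persdecreases} one gets $E = \S_{w_E}$ up to a null set, and from $\F_s^M(u,\Omega) = \F_s^M(w_E,\Omega)$ together with the strict convexity of $\F_s^M$ (Lemma~\ref{conv_func}, Remark~\ref{tartapower_Remark}) one gets $u = w_E$ a.e.\ in $\R^n$; hence $E = \S_u$ up to a null set, and $u|_\Omega \in L^\infty(\Omega) \cap C^\infty(\Omega)$ as already shown.

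The main obstacle is the uniqueness part, and within it the only genuinely external ingredient is the global $L^\infty$ estimate~\eqref{Eisbounded} of~\cite{graph} for possibly non-graphical $s$-minimizers: it is precisely the vertical boundedness~\eqref{EboundedinOmega} of $E$ that both makes Theorem~\ref{Persdecreases} available and rules out non-graphical competitors; everything else is bookkeeping — choosing $R$ and $M$ so that Theorems~\ref{Dirichlet},~\ref{minareboundedthm},~\ref{equiv_intro},~\ref{Persdecreases}, Proposition~\ref{per_of_subgraph_prop}, and the estimate~\eqref{Eisbounded} all apply simultaneously, and checking that $w_E$ lands in $\B_M\W^s_\varphi(\Omega)$.
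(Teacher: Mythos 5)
Your argument is correct and follows essentially the same route as the paper: the radius is chosen so that Theorems~\ref{Dirichlet},~\ref{minareboundedthm}, and~\ref{equiv_intro} produce the graphical minimizer, and uniqueness is obtained by feeding the $L^\infty$ bound~\eqref{Eisbounded} of~\cite{graph} into Theorem~\ref{Persdecreases}. The only (harmless) difference is at the very end: the paper identifies $E=\S_{w_E}$ and then cites Theorem~\ref{equiv_intro} to conclude that $w_E$ minimizes $\F_s$ and hence equals $u$ by uniqueness, whereas you close the loop by writing out the chain of perimeter equalities and invoking the strict convexity of $\F_s^M$ directly — both are valid.
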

\begin{proof}
Let~$R$ be larger than the radius~$\tilde{R}$ considered earlier and such that~$\Omega_{\Theta \diam(\Omega)} \subseteq B_R$, with~$\Theta$ being the maximum between the two constants found in Theorems~\ref{Dirichlet} and~\ref{minareboundedthm}. Note that, thanks to Lemma~\ref{tail_equiv_cond_Lemma}\ref{tail_equiv_ii}, we know that condition~\eqref{TailinL1} holds true. Consequently, Theorem~\ref{Dirichlet} yields the existence of a unique minimizer~$u$ of~$\F_s$ in~$\Omega$ such that~$u = \varphi$ a.e.~in~$\Co \Omega$. By Theorem~\ref{minareboundedthm} we have that~$u \in L^\infty(\Omega)$, while Theorem~\ref{equiv_intro} gives that~$u$ is smooth inside~$\Omega$ and that its subgraph~$\S_u$ is locally~$s$-minimal in~$\Omega^\infty$.

Let now~$E \subseteq \R^{n + 1}$ be a locally~$s$-minimal set in~$\Omega^\infty$ such that~$E \setminus \Omega^\infty = \S_{\varphi} \setminus \Omega^\infty$. In view of our previous remark,~$E$ satisfies~\eqref{Eisbounded} for some~$M_0 > 0$. Consequently, we may apply to it Theorem~\ref{Persdecreases} and infer that~$E$ is the subgraph of a function~$v \in \B_{M_0} \W^s_\varphi(\Omega)$. Since, by Theorem~\ref{equiv_intro},~$v$ is a minimizer of~$\F_s$ in~$\Omega$, we conclude that~$u = v$ a.e.~in~$\R^n$. The proof is thus complete.
\end{proof}

To conclude the proof of Theorem~\ref{last_Theorem}, we are only left to deal with the uniform continuity of the minimizer~$u$ in~$\Omega$.

\begin{prop} \label{unifcontprop}
Let~$\Omega \subseteq \R^n$ be a bounded open set with boundary of class~$C^2$ and~$u$ be a minimizer of~$\F_s$ in~$\Omega$. If~$u = \varphi$ in~$\Co \Omega$, with~$\varphi: \Co \Omega \to \R$ such that~$\varphi \in C(\Omega_r \setminus \Omega)$ for some~$r > 0$, then~$u|_{\Omega}$ can be extended to a function~$\bar{u} \in C(\overline{\Omega})$.
\end{prop}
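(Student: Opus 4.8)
The plan is to reduce the statement to a uniform boundary oscillation estimate for $u$ and then to obtain that estimate by localizing near $\partial\Omega\times\R$ and appealing to the regularity theory for the obstacle problem for $\Per_s$ of~\cite{CDSS16}.

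First I would gather the ingredients. By Theorem~\ref{equiv_intro}, $u|_\Omega\in C^\infty(\Omega)$ and $\S_u$ is locally $s$-minimal in $\Omega^\infty$. Since $\overline{\Omega_{r/2}\setminus\Omega}$ is a compact subset of $\Omega_r\setminus\Omega$, the datum $\varphi$ is bounded and uniformly continuous on $\Omega_{r/2}\setminus\Omega$, whence $u\in L^\infty(\Omega_{r/2}\setminus\Omega)$ and Proposition~\ref{Bdary_Bdedness_prop} gives $u\in L^\infty(\Omega)$. It then suffices to produce a modulus of continuity $\omega$, depending only on $n$, $s$, $\Omega$, and the modulus of continuity of $\varphi$ near $\partial\Omega$, such that
\[
\sup_{B_\varrho(x_0)\cap\Omega} u-\inf_{B_\varrho(x_0)\cap\Omega} u\le\omega(\varrho)\qquad\text{for every }x_0\in\partial\Omega\text{ and every small }\varrho>0.
\]
Indeed, combining this with the interior smoothness of $u$ and the compactness of $\partial\Omega$, one checks that $u|_\Omega$ is uniformly continuous on $\Omega$, that the limit $\bar u(x_0):=\lim_{\Omega\ni x\to x_0}u(x)$ exists for every $x_0\in\partial\Omega$ with a modulus uniform in $x_0$, and hence that the function $\bar u$ so defined is a continuous extension of $u|_\Omega$ to $\overline\Omega$. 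Note that $\bar u|_{\partial\Omega}$ need not agree with $\varphi|_{\partial\Omega}$: the statement allows boundary stickiness, and concerns continuity of $u$ inside up to $\partial\Omega$, not the matching of the exterior datum.

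For the oscillation estimate, fix $x_0\in\partial\Omega$ and set $c:=\varphi(x_0)$ and $X_0:=(x_0,c)\in\R^{n+1}$. Because $\partial\Omega$ is of class $C^2$ it enjoys a uniform two-sided ball condition at some scale $\varrho_0>0$, and because $\varphi$ is uniformly continuous near $\partial\Omega$, for each $\eta>0$ there is $\sigma=\sigma(\eta)\to0$ such that $|\varphi-c|\le\eta$ on $(\Omega_\sigma\setminus\Omega)\cap B_\sigma(x_0)$. Using these facts I would construct, in a small cylindrical neighbourhood $\mathcal N$ of $X_0$, ordered barriers for $\S_u$: replacing the wall $\partial\Omega\times\R$ by the rounded ($C^{1,1}$) exterior and interior spheres of radius $\varrho_0$ through $x_0$, and replacing $\varphi$ by $c\pm\eta$, one produces two auxiliary sets with $C^{1,1}$ boundary which solve obstacle problems for $\Per_s$, trap $\S_u$ near $X_0$, and differ there by $O(\eta)$ plus the $C^2$-modulus of $\partial\Omega$ at scale $\sigma$. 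The regularity theory of Caffarelli--De Silva--Savin~\cite{CDSS16} applied to these obstacle problems (as in~\cite{graph}) then yields that $\partial\S_u\cap\mathcal N$ is $C^{1,\alpha}$ up to the contact set with the obstacles, with constants depending only on $n$, $s$, $\varrho_0$, and $\eta$. Since $\partial\S_u$ is the graph of $u$ over $\Omega$, this gives at once the existence of $\lim_{\Omega\ni x\to x_0}u(x)$ and the bound $\sup_{B_\varrho(x_0)\cap\Omega}u-\inf_{B_\varrho(x_0)\cap\Omega}u\le C\eta+o_\varrho(1)$; letting $\varrho\to0$ and then $\eta\to0$ delivers the uniform modulus $\omega$.

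The hard part will be the barrier construction together with the control of the constants. In particular, because $\h_s$ is nonlocal, any barrier used to dominate (or be dominated by) $u$ through the comparison principle of Proposition~\ref{Compari_prop} must do so on the whole of $\R^n$, not merely near $x_0$; this forces one to work on the local domain $B_\varrho(x_0)\cap\Omega$ with the exterior datum modified to a bounded function away from $x_0$ (which is legitimate since $u\in L^\infty(\Omega)$ and $\varphi$ is bounded near $\partial\Omega$), and to check that the modified problems still fall within the scope of~\cite{CDSS16}. One must also verify that the $C^2$ regularity of $\partial\Omega$ and the uniform continuity of $\varphi$ make every step quantitative and uniform in $x_0\in\partial\Omega$, so that the modulus $\omega$ is universal. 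Once the uniform boundary oscillation estimate is in hand, the passage to the continuous extension $\bar u\in C(\overline\Omega)$ is routine.
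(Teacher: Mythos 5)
Your overall reduction (interior smoothness plus global boundedness plus a boundary oscillation estimate) is reasonable, but the core of your argument --- trapping $\S_u$ near $X_0=(x_0,\varphi(x_0))$ between two obstacle-problem barriers that differ by $O(\eta)$ --- has a genuine gap, and it is exactly the stickiness phenomenon you acknowledge in passing. Because $\bar u(x_0)$ need not equal $\varphi(x_0)$, the graph of $u$ approaches $\partial\Omega$ at a height $\ell(x_0)$ that can be far from $c=\varphi(x_0)$ and is not known a priori; so no pair of sets concentrated in an $O(\eta)$ vertical window around height $c$ can sandwich $\S_u$ there. Indeed, if such a trapping were possible you would conclude $\bar u=\varphi$ on $\partial\Omega$, which is false in general. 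Re-centering at the correct height does not help, since that height is the unknown. Moreover, the claim that two solutions of obstacle problems whose data differ by $\eta$ near $x_0$ (and are modified away from $x_0$) stay $O(\eta)$-close near $X_0$ is a continuous-dependence-on-data statement that the comparison principle of Proposition~\ref{Compari_prop} does not give (it only gives ordering), and which is essentially as hard as the proposition itself. Finally, even granting the $C^{1,\alpha}$ regularity from \cite{CDSS16}, "regular up to the contact set" does not by itself yield existence of $\lim_{\Omega\ni y\to x_0}u(y)$: a $C^{1,\alpha}$ hypersurface bounding a subgraph can perfectly well contain a vertical segment $\{x_0\}\times[\ell^-,\ell^+]$ over a boundary point (the surface simply becomes vertical there), in which case the limit fails to exist. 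A separate argument is needed to exclude this, and your proposal does not contain one. Your stated goal is also quantitatively stronger than what is claimed: a modulus depending only on $n$, $s$, $\Omega$ and the modulus of $\varphi$ near $\partial\Omega$ is more than the qualitative continuity asserted (and is doubtful, since the location and size of the sticking depend on the datum far from $x_0$).

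For contrast, the paper's proof is soft and pointwise: assuming $\ell^-(x_0)<\ell^+(x_0)$, at least one of the two values, say $\ell^-(x_0)$, differs from $\varphi(x_0)$, and then $\mathcal B_\varrho(X_0)\setminus\Omega^\infty\subseteq\S_u$ for $X_0=(x_0,\ell^-(x_0))$; this is the configuration in which \cite[Theorem~5.1]{graph} (built on \cite{CDSS16}) applies and gives $C^{1,\frac{1+s}{2}}$ regularity of $\partial\S_u$ near $X_0$. The contradiction then comes not from regularity alone but from the Euler--Lagrange equation: the whole vertical segment $X_t=X_0+te_{n+1}$, $t\in[0,\varrho/2]$, lies in $\partial\S_u$ with $H_s[\S_u](X_t)=0$, and subtracting the equations at $X_t$ and $X_0$ forces $|\S_u\setminus\S_{u-t}|=0$, which is absurd. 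If you want to keep a barrier-based route you would still need an analogue of this last step; otherwise I suggest switching to the paper's contradiction argument.
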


\begin{proof}
First of all, since~$u$ is a minimizer of~$\F_s$ in~$\Omega$, Theorem~\ref{equiv_intro}
ensures that~$u\in C^\infty(\Omega)$. To obtain that~$u$ is continuous up to the boundary of~$\Omega$, we thus only need to show that
\begin{equation} \label{bdary_reg_claim_proof}
\mbox{for every } x \in \partial \Omega, \mbox{ the limit } 
 \ell(x) := \lim_{\Omega \ni y\to x} u(y) \mbox{ exists and is finite}.
\end{equation}
Indeed, if this is the case, then it is easy to see that~$\ell \in C(\partial \Omega)$ and thus that the extension of~$u|_\Omega$ by~$\ell$ defines a continuous function in the whole~$\overline{\Omega}$.

To prove~\eqref{bdary_reg_claim_proof}, we first observe that~$u\in L^\infty(\Omega)$, thanks to Propositions~\ref{Linftylocprop} and~\ref{Bdary_Bdedness_prop}. Hence,
\bgs{
\ell^-(x):=\liminf_{\Omega \ni y \to x} u(y)\in\R
\quad\mbox{and}\quad
\ell^+(x):=\limsup_{\Omega \ni y \to x} u(y)\in\R,
}
for every~$x\in\partial\Omega$. Claim~\eqref{bdary_reg_claim_proof} boils down to showing that~$\ell^-(x)=\ell^+(x)$.

We argue by contradiction and suppose that~$\ell^-(x_0) < \ell^+(x_0)$ at some~$x_0\in\partial\Omega$. Then, at least one between~$\ell^-(x_0)$ and~$\ell^+(x_0)$
is different from~$\varphi(x_0)$. Without loss of generality, we assume that~$\ell^-(x_0)<\varphi(x_0)$. By this and the continuity of~$\varphi$, there exists~$\delta>0$ such that~$\ell^-(x_0)<\varphi(x)$ for every~$x\in B_\delta(x_0)\setminus \Omega$. Thus, setting~$X_0:=(x_0,\ell^-(x_0))$, we have that
$$
\mathcal B_\varrho(X_0)\setminus \Omega^{\infty}\subseteq\S_u,
$$
for a small~$\varrho>0$. Also observe that, as a consequence of the definition of~$\ell^-(x_0)$, we have~$X_0\in\partial\S_u$. Therefore, we can apply~\cite[Theorem~5.1]{graph}, which gives that~$\partial\S_u$ is
of class~$C^{1,\frac{1+s}{2}}$ in~$\mathcal B_\varrho(X_0)$, up to taking a smaller~$\varrho$.

Write~$X_t := X_0 + t e_{n + 1}$. We claim that
\begin{equation} \label{vertsegclaim}
X_t \in \partial \S_u \mbox{ and } H_s[\S_u](X_t) = 0 \mbox{ for every } t \in \left[ 0, \frac{\varrho}{2} \right].
\end{equation}
To see that~$X_t \in \partial \S_u$, it suffices to observe that for every~$\ell \in [\ell^-(x_0), \ell^+(x_0)]$, there exists a sequence of points~$\{ y_k \} \subseteq \Omega$ converging to~$x_0$ and such that~$u(y_k) = \ell$ for all~$k \in \N$. This last fact can be easily deduced from the continuity of~$u$ inside~$\Omega$ and the regularity of~$\partial \Omega$. That~$H_s[\S_u](X_t) = 0$ also follows from this and the~$C^{1, \frac{1 + s}{2}}$ regularity of~$\partial \S_u$ in~$\mathcal{B}_\varrho(X_0)$, thanks to~\cite[Lemma~3.4]{graph}.

Claim~\eqref{bdary_reg_claim_proof} is now a consequence of the strong comparison principle. Indeed, using~\eqref{vertsegclaim} and a suitable change of variables, we get that
\begin{align*}
0 & = H_s[\S_u](X_t) - H_s[\S_u](X_0) \\
& = \PV \int_{\R^{n + 1}} \frac{\chi_{\Co \S_u}(X_t + Z) - \chi_{\S_u}(X_t + Z) - \chi_{\Co \S_u}(X_0 + Z) + \chi_{\S_u}(X_0 + Z)}{|Z|^{n + 1 + s}} \, dZ \\
& = 2 \, \PV \int_{\R^{n + 1}} \frac{\chi_{\S_u \setminus \S_{u - t}}(X_0 + Z)}{|Z|^{n + 1 + s}} \, dZ,
\end{align*}
for all~$t \in [0, \varrho/2]$. Since this is impossible, we conclude that~\eqref{bdary_reg_claim_proof} must hold true and the proof is thus complete.
\end{proof}

\appendix

\section{A density result and a Hardy-type inequality} \label{app}

\noindent
We include here a few auxiliary results that have been used throughout the previous sections. First, we have the following known result about the density of smooth functions in fractional Sobolev spaces.

\begin{prop}\label{smooth_cpt_dense}
Let~$s \in (0, 1)$ and~$p \ge 1$ be such that~$s p < 1$. Let~$\Omega\subseteq\Rn$ be a bounded open set with Lipschitz boundary and~$u \in W^{s, p}(\Omega)$. Then, there exists a sequence~$\{ u_k \} \subseteq C^\infty_c(\Omega)$ which converges to~$u$ in~$W^{s, p}(\Omega)$. Furthermore, if~$a \le u \le b$ a.e.~in~$\Omega$ for some~$- \infty \le a \le 0 \le b \le +\infty$, then we can choose the~$u_k$'s in such a way that also~$a \le u_k \le b$ in~$\Omega$ for every~$k \in \N$.
\end{prop}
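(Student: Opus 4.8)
The plan is to produce the approximating sequence by three successive regularizations — truncation, multiplication by a boundary cutoff, and mollification — each of which brings us strictly closer to a smooth compactly supported function and, crucially, each of which preserves the two-sided bound $a\le\cdot\le b$ thanks to the hypothesis $a\le0\le b$; a diagonal argument then assembles everything into a single sequence $\{u_k\}\subseteq C^\infty_c(\Omega)$. \emph{Step 1 (reduction to the bounded case).} Replace $u$ by $u_N:=\max\{-N,\min\{N,u\}\}$. The truncation map is $1$-Lipschitz and fixes $[-N,N]$, so $|u_N(x)-u_N(y)|\le|u(x)-u(y)|$, $u_N\to u$ pointwise a.e., and hence $u_N\to u$ in $L^p(\Omega)$ and (by dominated convergence applied to the Gagliardo integrand, dominated by $2^p|u(x)-u(y)|^p/|x-y|^{n+sp}$) also $[u_N-u]_{W^{s,p}(\Omega)}\to0$. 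Moreover $a\le u_N\le b$ whenever $a\le u\le b$. So we may assume from now on that $u\in L^\infty(\Omega)$.

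\emph{Step 2 (boundary cutoff — the core of the proof).} For $\varepsilon>0$ write $N_\varepsilon:=\{x\in\Rn:\dist(x,\partial\Omega)<\varepsilon\}$ and fix a Lipschitz function $\zeta_\varepsilon$ with $0\le\zeta_\varepsilon\le1$, $\zeta_\varepsilon\equiv1$ on $N_\varepsilon$, $\supp\zeta_\varepsilon\subseteq N_{2\varepsilon}$, and $|\nabla\zeta_\varepsilon|\le C/\varepsilon$; set $\eta_\varepsilon:=1-\zeta_\varepsilon$. Then $\eta_\varepsilon u$ is bounded, belongs to $W^{s,p}(\Omega)$ (product of a bounded Lipschitz function with a $W^{s,p}$ function), vanishes on $N_\varepsilon$ — so its support is compactly contained in $\Omega$ — and satisfies $a\le\eta_\varepsilon u\le b$ because $\eta_\varepsilon\in[0,1]$ and $0\in[a,b]$. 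I would then show $\eta_\varepsilon u\to u$ in $W^{s,p}(\Omega)$ as $\varepsilon\to0$. The $L^p$ part is immediate from $|N_{2\varepsilon}|\to0$ (a Lipschitz boundary is Lebesgue-null). For the seminorm I split $|(\zeta_\varepsilon u)(x)-(\zeta_\varepsilon u)(y)|\le\zeta_\varepsilon(x)|u(x)-u(y)|+|u(y)|\,|\zeta_\varepsilon(x)-\zeta_\varepsilon(y)|$: the first contribution is controlled by $\chi_{N_{2\varepsilon}}(x)|u(x)-u(y)|^p/|x-y|^{n+sp}$, which tends to $0$ in $L^1(\Omega\times\Omega)$ by dominated convergence (the finite quantity $[u]_{W^{s,p}(\Omega)}$ supplying the dominating function and $\chi_{N_{2\varepsilon}}\to0$ a.e.); the second contribution is at most $\|u\|_{L^\infty(\Omega)}^p\,[\zeta_\varepsilon]_{W^{s,p}(\Omega)}^p$, and one estimates $[\zeta_\varepsilon]_{W^{s,p}(\Omega)}^p$ by splitting the double integral over $\{|x-y|<\varepsilon\}$ — using $|\zeta_\varepsilon(x)-\zeta_\varepsilon(y)|\le C\varepsilon^{-1}|x-y|$ together with $\int_0^\varepsilon r^{p-1-sp}\,dr\sim\varepsilon^{p-sp}$, finite since $p>sp$ — and over $\{|x-y|\ge\varepsilon\}$ — using $|\zeta_\varepsilon(x)-\zeta_\varepsilon(y)|\le\chi_{N_{2\varepsilon}}(x)+\chi_{N_{2\varepsilon}}(y)$ and $\int_{|z|\ge\varepsilon}|z|^{-n-sp}\,dz\sim\varepsilon^{-sp}$ — to obtain $[\zeta_\varepsilon]_{W^{s,p}(\Omega)}^p\le C\,|N_{2\varepsilon}|\,\varepsilon^{-sp}\le C\varepsilon^{1-sp}\to0$, where the Lipschitz bound $|N_{2\varepsilon}|\le C\varepsilon$ and the condition $sp<1$ are both used.

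\emph{Step 3 (mollification and conclusion).} With $\varepsilon$ now fixed, extend $\eta_\varepsilon u$ by zero to $\Rn$; the extension lies in $W^{s,p}(\Rn)$ with compact support in $\Omega$ and still satisfies $a\le\eta_\varepsilon u\le b$ on all of $\Rn$ (again because $0\in[a,b]$). For $\delta$ smaller than the distance from $\supp(\eta_\varepsilon u)$ to $\partial\Omega$, the mollification $\rho_\delta*(\eta_\varepsilon u)$ belongs to $C^\infty_c(\Omega)$, preserves the bounds $[a,b]$ (being a convex average), and converges to $\eta_\varepsilon u$ in $W^{s,p}(\Rn)$, hence in $W^{s,p}(\Omega)$, as $\delta\to0$. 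Chaining the three approximations and extracting a diagonal sequence gives $\{u_k\}\subseteq C^\infty_c(\Omega)$ with $u_k\to u$ in $W^{s,p}(\Omega)$ and, under the extra hypothesis, $a\le u_k\le b$ for every $k$.

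The main obstacle is Step 2, specifically the decay $[\zeta_\varepsilon]_{W^{s,p}(\Omega)}\to0$: one must show that a cutoff localized in a shrinking neighborhood of $\partial\Omega$ carries asymptotically negligible fractional energy. This is exactly where the restriction $sp<1$ is essential (for $sp\ge1$ the estimate $[\zeta_\varepsilon]_{W^{s,p}}^p\lesssim\varepsilon^{1-sp}$ breaks down and the density statement itself is false in general), and it is where the Lipschitz regularity of $\partial\Omega$ enters, through the Minkowski-content bound $|\{x:\dist(x,\partial\Omega)<\varepsilon\}|\le C\varepsilon$. Everything else is routine: truncation, product rules for $W^{s,p}$, and standard properties of mollifiers.
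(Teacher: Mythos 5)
Your proof is correct, and its overall architecture (truncate, cut off near the boundary, mollify, diagonalize) coincides with the paper's; the genuine difference lies in how the boundary cutoff step is executed. The paper multiplies $u$ by the \emph{sharp} cutoff $\chi_{\Omega_{-\delta}}$ and controls the resulting error by the collar-interaction integral $\int_{\Omega\setminus\Omega_{-\delta}}\int_{\Omega_{-\delta}}|x-y|^{-n-sp}\,dx\,dy$, whose vanishing as $\delta\searrow0$ (for $sp<1$ and Lipschitz $\partial\Omega$) is outsourced to an external lemma; you instead use a \emph{Lipschitz} cutoff $\zeta_\varepsilon$ supported in a shrinking collar and prove by hand the estimate $[\zeta_\varepsilon]_{W^{s,p}(\Omega)}^p\le C\varepsilon^{1-sp}$ via the near/far splitting of the Gagliardo integral together with the Minkowski bound $|N_{2\varepsilon}|\le C\varepsilon$. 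Both mechanisms invoke $sp<1$ and the Lipschitz regularity at exactly the same point, so neither is more general; yours has the advantage of being fully self-contained (no citation needed for the key decay), at the cost of the slightly longer explicit seminorm computation, while the paper's sharp cutoff makes the error term split cleanly into two pieces without any product-rule decomposition. All the auxiliary points you leave as standard (extension by zero of a compactly supported $W^{s,p}(\Omega)$ function, convergence of mollifications in $W^{s,p}(\R^n)$, preservation of the bounds $a\le\cdot\le b$ under truncation, multiplication by $\eta_\varepsilon\in[0,1]$, and convolution, all relying on $a\le0\le b$) are indeed routine and are handled the same way in the paper.
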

\begin{proof}
First of all, we observe that it suffices to consider the case of a bounded~$u$, as the statement in its generality can then be proved easily via truncations. Hence, we assume that
\begin{equation} \label{aleuleb}
a \le u \le b \mbox{ a.e.~in~} \Omega, \mbox{ for some } a \in (- \infty, 0] \mbox{ and } b \in [0, +\infty).
\end{equation}

Secondly, we may further restrict to~$u$'s with support compactly contained in~$\Omega$. Indeed, suppose that the result holds true for all such functions. Then, given any general~$u \in W^{s, p}(\Omega)$, we take~$\delta > 0$ small and define~$v_\delta := u \chi_{\Omega_{- \delta}}$---recall~\eqref{Omegarhodef}. Clearly,~$\supp(v_\delta) \subseteq \Omega_{- \delta} \Subset \Omega$, and thus we can find~$u_\delta \in C^\infty_c(\Omega)$ such that~$a \le u_\delta \le b$ in~$\Omega$ and~$\| v_\delta - u_\delta \|_{W^{s, p}(\Omega)} \le \delta$. We claim that~$\lim_{\delta \rightarrow 0} \| u - v_\delta \|_{W^{s, p}(\Omega)} = 0$. To see this, notice that~$\| u - v_\delta \|_{L^p(\Omega)} \le \| u \|_{L^\infty(\Omega)} |\Omega \setminus \Omega_{- \delta}| \rightarrow 0$ as~$\delta \searrow 0$, thanks to the boundedness~$\Omega$ and the Lipschitz regularity of its boundary. On the other hand,
\begin{align*}
[ u - v_\delta ]_{W^{s, p}(\Omega)}^p & = \int_{\Omega \setminus \Omega_{- \delta}} \int_{\Omega \setminus \Omega_{- \delta}} \frac{|u(x) - u(y)|^p}{|x - y|^{n + s p}} \, dx \, dy + 2 \int_{\Omega \setminus \Omega_{- \delta}} |u(x)|^p \left( \int_{\Omega_{- \delta}} \frac{dy}{|x - y|^{n + s p}} \right) dx \\
& \le \int_{\Omega \setminus \Omega_{- \delta}} \int_{\Omega \setminus \Omega_{- \delta}} \frac{|u(x) - u(y)|^p}{|x - y|^{n + s p}} \, dx \, dy + 2 \| u \|_{L^\infty(\Omega)}^p \int_{\Omega \setminus \Omega_{- \delta}} \int_{\Omega_{- \delta}} \frac{dx \, dy}{|x - y|^{n + s p}}.
\end{align*}
Both summands converge to zero as~$\delta\searrow0$: the first by the continuity of the Lebesgue integral, the second by~\cite[Lemma~2.7(i)]{Cyl} and the fact that~$s p < 1$.

We thus take~$u \in W^{s, p}(\Omega)$ satisfying~\eqref{aleuleb} and~$\supp(u) \subseteq \Omega'$ for some open set~$\Omega' \Subset \Omega$. We also call~$u$ its extension to~$0$ outside of~$\Omega$---it is easy to see that~$u \in W^{s, p}(\R^n)$. Let~$\eta$ be a standard mollifier, i.e., a non-negative function~$\eta \in C_c^\infty(\R^n)$ such that~$\supp(\eta) \subseteq B_1$ and~$\| \eta \|_{L^1(\R^n)} = 1$. For~$\varepsilon > 0$, define~$\eta_\varepsilon(x) := \varepsilon^{-n} \eta(x/\varepsilon)$ and~$u_\varepsilon(x) := (u \ast \eta_\varepsilon)(x)$ for every~$x \in \R^n$. We have that~$u_\varepsilon \in C_c^\infty(\R^n)$,~$a \le u_\varepsilon \le b$ in~$\R^n$, and~$\supp(u_\varepsilon) \Subset \Omega$, provided~$\varepsilon \le \dist(\Omega', \partial \Omega) / 2$. It is well-known that~$u_\varepsilon \rightarrow u$ in~$L^p(\Omega)$ as~$\varepsilon \searrow 0$, and the convergence is actually in~$W^{s, p}(\Omega)$. This is probably well-known too. Nevertheless, we reproduce here the argument of~\cite[Lemma~11]{FSV15} for the convenience of the reader. By H\"older's inequality, we estimate
\begin{align*}
[u - u_\varepsilon]^p_{W^{s, p}(\Omega)} & \le \int_{\R^n} \int_{\R^n} \frac{|u(x) - u_\varepsilon(x) - u(y) + u_\varepsilon(y)|^p}{|x - y|^{n + s p}} \, dx \, dy \\
& \le \int_{\R^n} \int_{\R^n} \left( \int_{B_1} \left| u(x) - u(y) - u(x - \varepsilon z) + u(y - \varepsilon z) \right| \eta(z) \, dz \right)^p \frac{dx \, dy}{|x - y|^{n + s p}} \\
& \le |B_1|^{p - 1} \int_{B_1} \eta(z)^p \psi_\varepsilon(z) \, dz,
\end{align*}
with
$$
\psi_\varepsilon(z) := \left\| \tau_{\varepsilon z} V - V \right\|_{L^p(\R^n \times \R^n)}^p, \quad V(x, y) := \frac{u(x) - u(y)}{|x - y|^{\frac{n + s p}{p}}}, \quad \mbox{and} \quad \tau_w V(x, y) := V(x - w, y - w).
$$
Since~$V \in L^p(\R^n \times \R^n)$, by the continuity of translations in~$L^p$ we have that~$\psi_\varepsilon(z) \rightarrow 0$ as~$\varepsilon \searrow 0$, for every~$z \in B_1$. As~$|\psi_\varepsilon| \le 2^p [u]_{W^{s, p}(\R^n)}$ in~$B_1$, using Lebesgue's dominated convergence theorem we conclude that~$u_\varepsilon \rightarrow u$ in~$W^{s, p}(\Omega)$.
\end{proof}

Next, we have the following fractional Hardy-type inequality. This inequality is probably well-known to the expert reader---it is stated for instance in~\cite{Dy04}, see formula~(17) there. However, since its proof does not seem easily accessible in the literature, we provide a simple argument based on the fractional Hardy inequality on half-spaces established in~\cite{FS10}.

\begin{prop}\label{FHI}
	Let~$s \in (0, 1)$ and~$p \ge 1$ be such that~$s p < 1$. Let~$\Omega \subseteq \R^n$ be a bounded open set with Lipschitz boundary. Then, there exists a constant~$C > 0$, depending only on~$n$,~$s$,~$p$, and~$\Omega$, such that
	\begin{equation} \label{CH:ApP:hardyine}
	\int_\Omega \frac{|u(x)|^p}{\dist(x, \partial \Omega)^{s p}} \, dx \le C \| u \|_{W^{s, p}(\Omega)}^p
	\end{equation}
	for every~$u \in W^{s, p}(\Omega)$.
\end{prop}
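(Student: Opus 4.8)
The plan is to localize near $\partial\Omega$, flatten the boundary by a bi-Lipschitz change of variables, and reduce everything to the fractional Hardy inequality on a half-space,
\[
\int_{\mathbb{R}^n_+}\frac{|w(x)|^p}{x_n^{sp}}\,dx\le C_{n,s,p}\int_{\mathbb{R}^n_+}\int_{\mathbb{R}^n_+}\frac{|w(x)-w(y)|^p}{|x-y|^{n+sp}}\,dx\,dy,
\]
valid for $sp<1$ and every $w\in W^{s,p}(\mathbb{R}^n_+)$, which is the content of~\cite{FS10}. First I would dispose of the interior: fixing a collar width $\delta>0$ and writing $\Omega^\delta:=\{x\in\Omega:\dist(x,\partial\Omega)<\delta\}$, on $\Omega\setminus\Omega^\delta$ one has $\dist(x,\partial\Omega)^{-sp}\le\delta^{-sp}$, so that part of the integral in~\eqref{CH:ApP:hardyine} is bounded by $\delta^{-sp}\|u\|_{L^p(\Omega)}^p$. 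Thus only the boundary layer $\Omega^\delta$ needs attention.

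Since $\partial\Omega$ is compact and Lipschitz, I would cover it by finitely many open sets $U_1,\dots,U_N$ in each of which, after a rigid motion, $\Omega$ is the region above the graph of a Lipschitz function $\gamma_i$; let $\Phi_i$ be the associated bi-Lipschitz map $x=(x',x_n)\mapsto(x',x_n-\gamma_i(x'))$ (precomposed with the motion), which sends $\Omega\cap U_i$ onto $V_i\cap\mathbb{R}^n_+$ and $\partial\Omega\cap U_i$ onto $V_i\cap\{x_n=0\}$ for some open $V_i$. Choosing slightly shrunk charts $U_i'\Subset U_i$ that still cover $\partial\Omega$, and then $\delta$ small enough that $\Omega^\delta\subseteq\bigcup_iU_i'$, one has the elementary comparisons, with constants depending only on $n$, $\Omega$, and the cover: (i) $\dist(x,\partial\Omega)\simeq\dist(x,\partial\Omega\cap U_i)$ for $x\in\Omega^\delta\cap U_i'$, because the closest boundary point stays in $U_i$; and (ii) $\dist(x,\partial\Omega\cap U_i)\simeq(\Phi_i(x))_n$ by bi-Lipschitzness.

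For each $i$ pick $\chi_i\in C^\infty_c(V_i)$ with $\chi_i\equiv1$ on $\Phi_i(U_i')$ and set $v_i:=\chi_i\cdot(u\circ\Phi_i^{-1})$, extended by zero; then $v_i$ is compactly supported in $\overline{\mathbb{R}^n_+}$. Using (i)--(ii) and the change of variables $y=\Phi_i(x)$ (bounded Jacobian),
\[
\int_{\Omega^\delta\cap U_i'}\frac{|u(x)|^p}{\dist(x,\partial\Omega)^{sp}}\,dx\le C\int_{\mathbb{R}^n_+}\frac{|v_i(y)|^p}{y_n^{sp}}\,dy.
\]
Now I would apply the half-space inequality of~\cite{FS10} to $v_i$ and then use three routine facts, all valid for $sp<1$: multiplication by a smooth compactly supported cutoff is bounded on $W^{s,p}$; extending a compactly supported $W^{s,p}(\mathbb{R}^n_+)$ function by zero keeps the full Gagliardo seminorm over $\mathbb{R}^n_+$ finite and controlled, since the nonlocal ``tail'' $\int\!\!\int_{|x-y|\ge r}$ converges absolutely; and bi-Lipschitz changes of variables preserve $W^{s,p}$. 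This yields $\int_{\mathbb{R}^n_+}|v_i|^p y_n^{-sp}\,dy\le C_i\|u\|_{W^{s,p}(\Omega\cap U_i)}^p\le C_i\|u\|_{W^{s,p}(\Omega)}^p$. Summing over $i=1,\dots,N$ and adding the interior bound gives~\eqref{CH:ApP:hardyine}.

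The delicate point is the flattening bookkeeping: one must fix the collar width $\delta$ and the shrunk charts $U_i'$ so that the nearest-point comparison (i) holds with uniform constants, and one must check that applying the cutoff and extending by zero does not spoil control of the Gagliardo seminorm near the flat boundary $\{x_n=0\}$---exactly where the weight blows up and where no cutoff can be used. Both are standard for $sp<1$, the exponent restriction entering precisely in these two facts and, of course, in the half-space inequality itself.
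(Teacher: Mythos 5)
Your proposal is correct and follows essentially the same route as the paper: localize to a boundary collar, flatten by bi-Lipschitz charts, apply the half-space fractional Hardy inequality of~\cite{FS10}, and control the cutoff and the zero-extension using that the support stays at positive distance from the chart boundary. The only cosmetic differences are that the paper first reduces to $u\in C^\infty_c(\Omega)$ via a density result and uses a genuine partition of unity rather than cutoffs equal to $1$ on shrunk charts; neither changes the substance of the argument.
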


\begin{proof}
	In light of Proposition~\ref{smooth_cpt_dense}, we can restrict ourselves to consider~$u\in C^\infty_c(\Omega)$.
	
	Let~$\{ B^{(j)} \}_{j = 1}^N$ be a sequence of balls of the form~$B^{(j)} = B_{r}(x^{(j)})$, with~$N \in \N$,~$x^{(j)} \in \partial \Omega$, and~$r > 0$, for which there exist bi-Lipschitz homeomorphisms
	$$
	T_j : B'_2 \times (-2, 2) \longrightarrow 2 B^{(j)} := B_{2 r}(x^{(j)})
	$$
	satisfying
	\begin{align*}
	T_j(U_2) & = 2 B^{(j)}, && \hspace{-50pt} \mbox{with } U_2 := B'_2 \times (-2, 2),\\
	T_j(U^+_2) & = \Omega \cap 2 B^{(j)}, && \hspace{-50pt} \mbox{with } U^+_2 := B_2' \times (0, 2),\\
	T_j(U^0_2) & = \partial \Omega \cap 2 B^{(j)}, && \hspace{-50pt} \mbox{with } U^0_2 := B_2' \times \{ 0 \},
	\end{align*}
	and such that~$\partial \Omega \subseteq \cup_{j = 1}^N B^{(j)}$. Here, for~$\varrho > 0$ we write~$B'_\varrho := \left\{ x' \in \R^{n - 1} : |x'| < \varrho \right\}$.
	
	Let~$\varepsilon > 0$ be such that~$\Omega \setminus \cup_{j = 1}^N B^{(j)} \Subset \Omega_{- \varepsilon}$ and set~$B^{(0)} := \Omega_{- \varepsilon}$. Clearly,
	\begin{equation} \label{CH:ApP:B0covered}
	\int_{B^{(0)}} \frac{|u(x)|^p}{d_{\partial\Omega}(x)^{s p}} \, dx \le \varepsilon^{- s p} \int_{B^{(0)}} |u(x)|^p \, dx \le C \| u \|_{L^p(\Omega)}^p,
	\end{equation}
	where~$d_{\partial\Omega}(x) := \dist(x, \partial \Omega)$ for every~$x \in \Omega$ and, from now on,~$C$ denotes any constant larger than~$1$, whose value depends at most on~$n$,~$s$,~$p$, and~$\Omega$.

	Notice that~$\{ B^{(j)} \}_{j = 0}^N$ is an oper cover of~$\Omega$ and let~$\{ \eta_j \}_{j = 0}^N$ be a smooth partition of unity on~$\Omega$ subordinate to~$\{ B^{(j)} \}_{j = 0}^N$. For~$j =1, \ldots, N$, we define~$v_j := \eta_j u \in C^\infty_c(\Omega\cap B^{(j)})$. Changing variables through~$T_j$, we have
	$$
	\int_{\Omega \cap B^{(j)}} \frac{|v_j(x)|^p}{d_{\partial\Omega}(x)^{s p}} \, dx = \int_{T_j^{-1}(\Omega \cap B^{(j)})} \frac{|v_j(T_j(\bar{x}))|^p}{d_{\partial\Omega}(T_j(\bar{x}))^{s p}} \left| \det D T_j(\bar{x}) \right| d\bar{x}.
	$$
	Notice that for every~$x \in \Omega \cap B^{(j)}$ there exists~$D_j(x) \in \partial \Omega \cap 2 B^{(j)}$ such that~$d_{\partial\Omega}(x) = \left| x - D_j(x) \right|$. Since~$T_j$ is bi-Lipschitz and~$T_j^{-1}(D_j(x)) \in B_2' \times \{ 0 \}$, we have
	$$
	\begin{aligned}
	d_{\partial\Omega}(T_j(\bar{x})) & = |T_j(\bar{x}) - D_j(T_j(\bar{x}))| = |T_j(\bar{x}) - T_j(T_j^{-1}(D_j(T_j(\bar{x}))))| \\
	& \ge C^{-1} |\bar{x} - T_j^{-1}(D_j(T_j(\bar{x})))| \ge C^{-1} \bar{x}_n
	\end{aligned}
	$$
	for every~$\bar{x} \in T_j^{-1}(\Omega \cap B^{(j)})$. Accordingly, writing~$w_j := v_j \circ T_j$ we get
	$$
	\int_{\Omega \cap B^{(j)}} \frac{|v_j(x)|^p}{d_{\partial\Omega}(x)^{s p}} \, dx \le C \int_{U^+_2} \frac{|w_j(\bar{x}))|^p}{\bar{x}_n^{s p}} \, d\bar{x}.
	$$
	
	Let us observe that $w_j$ is supported inside~$T_j^{-1}(\Omega \cap B^{(j)})$.
	We now employ the fractional Hardy inequality on half-spaces of~\cite[Theorem~1.1]{FS10} and deduce that
	\begin{equation} \label{CH:ApP:hardytech1}
	\int_{\Omega \cap B^{(j)}} \frac{|v_j(x)|^p}{d_{\partial\Omega}(x)^{s p}} \, dx \le C \int_{\R^n_+} \int_{\R^n_+} \frac{|w_j(\bar{x}) - w_j(\bar{y})|^p}{|\bar{x} - \bar{y}|^{n + s p}} \, d\bar{x}\, d\bar{y},
	\end{equation}
	where~$\R^n_+ = \{ z \in \R^n \,|\, z_n > 0 \}$ and it is understood that $w_j$
	is extended by 0 in $\R^n_+\setminus U_2^+$. We point out that---since $T_j^{-1}(B^{(j)}) \Subset U_2$ and $T_j^{-1}(\Omega \cap B^{(j)})\subseteq U_2^+$---we have
	\[
	\mbox{dist}\big(T_j^{-1}(\Omega \cap B^{(j)}),\R^n_+ \setminus U_2^+\big)>0.
	\]
	Thus, using that~$w_j$ is supported inside~$T_j^{-1}(\Omega \cap B^{(j)})$,
	we estimate
	\begin{equation} \label{CH:ApP:hardytech2}
	\begin{aligned}
	\int_{\R^n_+} \int_{\R^n_+} \frac{|w_j(\bar{x}) - w_j(\bar{y})|^p}{|\bar{x} - \bar{y}|^{n + s p}} \, d\bar{x}\, d\bar{y} & \le \int_{U_2^+} \int_{U_2^+} \frac{|w_j(\bar{x}) - w_j(\bar{y})|^p}{|\bar{x} - \bar{y}|^{n + s p}} \, d\bar{x} \,d\bar{y} \\
	& \quad + 2 \int_{T_j^{-1}(\Omega \cap B^{(j)})} \left( \int_{\R^n_+ \setminus U_2^+} \frac{ |w_j(\bar{x})|^p}{|\bar{x} - \bar{y}|^{n + s p}} \, d\bar{y} \right) d\bar{x} \\
	& \le \int_{U_2^+} \int_{U_2^+} \frac{|w_j(\bar{x}) - w_j(\bar{y})|^p}{|\bar{x} - \bar{y}|^{n + s p}} \, d\bar{x}\, d\bar{y} + C \| w_j \|_{L^p(U_2^+)}^p.
	\end{aligned}
	\end{equation}
	
	By combining~\eqref{CH:ApP:hardytech1} with~\eqref{CH:ApP:hardytech2} and switching back to the variables in~$\Omega$, we easily find that
	$$
	\int_{\Omega \cap B^{(j)}} \frac{|v_j(x)|^p}{d_{\partial\Omega}(x)^{s p}} \, dx \le C \left( \int_{\Omega \cap 2 B^{(j)}} \int_{\Omega \cap 2 B^{(j)}} \frac{|v_j(x) - v_j(y)|^p}{|x - y|^{n + s p}} \, dx \,dy + \| v_j \|_{L^p(\Omega \cap 2 B^{(j)})}^p \right).
	$$
	Recalling that~$v_j = \eta_j u$ and~$\eta_j$ is Lipschitz, a simple computation then leads us to
	$$
	\int_{\Omega \cap B^{(j)}} \frac{|v_j(x)|^p}{d_{\partial\Omega}(x)^{s p}} \, dx \le C \| u \|_{W^{s, p}(\Omega)}^p \quad \mbox{for all } j = 1, \ldots, N.
	$$
	Estimate~\eqref{CH:ApP:hardyine} follows by putting together this with~\eqref{CH:ApP:B0covered} and using that~$\{ \eta_j \}$ is a partition of unity.
\end{proof}

A simple consequence of the previous Hardy inequality is the following estimate, which actually gives that~$\| \cdot \|_{W^{s, 1}(\R^n)}$ and~$\| \cdot \|_{W^{s, 1}(\Omega)}$ are equivalent norms for the space~$\W^{s}_0(\Omega)$ introduced in~\eqref{domain_w_data}.

\begin{corollary}\label{FHI_corollary}
	Let~$s \in (0, 1)$ and~$p \ge 1$ be such that~$s p < 1$. Let~$\Omega\subseteq\Rn$ be a bounded open set with Lipschitz boundary. Then, there exists a constant~$C > 0$, depending only on~$n$,~$s$,~$p$, and~$\Omega$, such that
	\bgs{
		\int_\Omega\left(|u(x)|^p\int_{\Co\Omega}\frac{dy}{|x-y|^{n+sp}} \right)dx\le
		C \|u\|^p_{W^{s,p}(\Omega)},
	}
	for every $u\in W^{s,p}(\Omega)$.
\end{corollary}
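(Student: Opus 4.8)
The plan is to reduce the statement directly to the fractional Hardy inequality of Proposition~\ref{FHI} by a one-line pointwise estimate on the inner integral. The key observation is that, for every $x \in \Omega$, writing $d_{\partial\Omega}(x) := \dist(x, \partial\Omega)$, the complement $\Co\Omega$ is contained in $\Co B_{d_{\partial\Omega}(x)}(x)$, since by definition of the distance no point of $\Co\Omega$ lies within $d_{\partial\Omega}(x)$ of $x$.

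First I would exploit this inclusion together with a change to polar coordinates to get
$$
\int_{\Co\Omega}\frac{dy}{|x-y|^{n+sp}} \le \int_{\Co B_{d_{\partial\Omega}(x)}(x)}\frac{dy}{|x-y|^{n+sp}} = \Ha^{n-1}(\mathbb S^{n-1})\int_{d_{\partial\Omega}(x)}^{+\infty}\frac{d\varrho}{\varrho^{1+sp}} = \frac{\Ha^{n-1}(\mathbb S^{n-1})}{sp}\,d_{\partial\Omega}(x)^{-sp},
$$
valid for every $x\in\Omega$ (note that $sp < 1$, in particular $sp > 0$, guarantees convergence of the radial integral).

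Then I would multiply by $|u(x)|^p$ and integrate over $\Omega$, obtaining
$$
\int_\Omega\left(|u(x)|^p\int_{\Co\Omega}\frac{dy}{|x-y|^{n+sp}}\right)dx \le \frac{\Ha^{n-1}(\mathbb S^{n-1})}{sp}\int_\Omega\frac{|u(x)|^p}{d_{\partial\Omega}(x)^{sp}}\,dx,
$$
and finally apply Proposition~\ref{FHI} to bound the right-hand side by $C\,\|u\|_{W^{s,p}(\Omega)}^p$, with $C$ depending only on $n$, $s$, $p$, and $\Omega$. This yields the claim with no further work.

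There is essentially no serious obstacle here: the only points to be careful about are that $sp < 1$ is used twice (once implicitly via $sp>0$ for the radial integral to converge, and once genuinely in the hypotheses of Proposition~\ref{FHI}), and that, since both the corollary and Proposition~\ref{FHI} are already stated for general $u \in W^{s,p}(\Omega)$, no density argument is needed—the two estimates simply compose.
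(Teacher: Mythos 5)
Your proposal is correct and coincides with the paper's own proof: the same pointwise bound $\int_{\Co\Omega}|x-y|^{-n-sp}\,dy\le \frac{\Ha^{n-1}(\mathbb S^{n-1})}{sp}\dist(x,\partial\Omega)^{-sp}$ followed by an application of Proposition~\ref{FHI}. Nothing is missing.
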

\begin{proof}
	The inequality follows immediately from the estimate
	$$
	\int_{\Co\Omega}\frac{dy}{|x-y|^{n+sp}} \le \int_{\Co B_{\dist(x, \partial \Omega)}}\frac{dz}{|z|^{n+sp}} = \frac{\Ha^{n - 1}(\S^{n - 1})}{s p} \hspace{1pt} \dist(x, \partial \Omega)^{- s p},
	$$
	which holds for every~$x \in \Omega$, and Proposition~\ref{FHI}.
\end{proof}


\begin{thebibliography}{99}

\bibitem{AV14}
N. Abatangelo, E. Valdinoci,
\emph{A notion of nonlocal curvature},
Numer. Funct. Anal. Optim. \textbf{35} (2014), no. 7--9, 793--815.

\bibitem{Ambrosio}
L. Ambrosio,
\emph{Geometric evolution problems, distance function and viscosity solutions}.
In: Calculus of Variations and Partial Differential Equations, 5--93, Springer, Berlin, 2000.

\bibitem{BFV14}
B. Barrios, A. Figalli, E. Valdinoci,
\emph{Bootstrap regularity for integro-differential operators and its application to nonlocal minimal surfaces},
Ann. Sc. Norm. Super. Pisa Cl. Sci. (5) \textbf{13} (2014), no. 3, 609--639.

\bibitem{LuCla}
C. Bucur, L. Lombardini,
\emph{Asymptotics as~$s \searrow 0$ of the nonlocal nonparametric Plateau problem with obstacles},
in preparation.

\bibitem{BLV16}
C. Bucur, L. Lombardini, E. Valdinoci,
\emph{Complete stickiness of nonlocal minimal surfaces for small values of the fractional parameter},
Ann. Inst. H. Poincar\'e Anal. Non Lin\'eaire \textbf{36} (2019), no. 3, 655--703. 

\bibitem{BV16}
C. Bucur, E. Valdinoci,
\emph{Nonlocal diffusion and applications},
Lecture Notes of the Unione Matematica Italiana, Vol. 20, Springer, Heidelberg, (2016).

\bibitem{C19}
X. Cabr\'e,
\emph{Calibrations and null-Lagrangians for nonlocal perimeters and an application to the viscosity theory},
Ann. Mat. Pura Appl. (4) \textbf{199} (2020), no. 5, 1979--1995.

\bibitem{CaCo} X. Cabr\'e, M. Cozzi,
\emph{A gradient estimate for nonlocal minimal graphs},
Duke Math. J. \textbf{168} (2019), no. 5, 775--848.

\bibitem{CDSS16} L. Caffarelli, D. De Silva, O. Savin,
\emph{Obstacle-type problems for minimal surfaces}, 
Comm. Partial Differential Equations \textbf{41} (2016), no. 8, 1303--1323.

\bibitem{CRS10} L. Caffarelli, J.-M. Roquejoffre, O. Savin,
\emph{Nonlocal minimal surfaces},
Comm. Pure Appl. Math. \textbf{63} (2010), 1111--1144.

\bibitem{CS09} L. Caffarelli, L. Silvestre,
\emph{Regularity theory for fully nonlinear integro-differential equations},
Comm. Pure Appl. Math. \textbf{62} (2009), no. 5, 597--638.

\bibitem{CS11} L. Caffarelli, L. Silvestre,
\emph{Regularity results for nonlocal equations by approximation},
Arch. Ration. Mech. Anal. \textbf{200} (2011), no. 1, 59--88. 

\bibitem{CV13}
L. Caffarelli, E. Valdinoci,
\emph{Regularity properties of nonlocal minimal surfaces via limiting arguments},
Adv. Math. \textbf{248} (2013), 843--871.

\bibitem{CFL18}
M. Cozzi, A. Farina, L. Lombardini,
\emph{Bernstein-Moser-type results for nonlocal minimal graphs},
accepted in Comm. Anal. Geom. (2018), arXiv preprint, arXiv:1807.05774.

\bibitem{CF17}
M. Cozzi, A. Figalli,
\emph{Regularity theory for local and nonlocal minimal surfaces: an overview},
Nonlocal and nonlinear diffusions and interactions: new methods and directions, 117--158, Lecture Notes in Math., Vol. 2186, Fond. CIME/CIME Found. Subser., Springer, Cham, (2017).


\bibitem{Demengel} F. Demengel, \emph{Fonctions \`a hessien born\'e},
Ann. Inst. Fourier (Grenoble) \textbf{34} (1984), no. 2, 155--190.

\bibitem{DKP14}
A. Di Castro, T. Kuusi, G. Palatucci,
\emph{Nonlocal {H}arnack inequalities},
J. Funct. Anal. \textbf{267} (2014), no. 6, 1807--1836.

\bibitem{DKP16}
A. Di Castro, T. Kuusi, G. Palatucci,
\emph{Local behavior of fractional~{$p$}-minimizers},
Ann. Inst. H. Poincar\'e Anal. Non Lin\'eaire \textbf{33} (2016), no. 5, 1279--1299.

\bibitem{DPV12}
E. Di Nezza, G. Palatucci, E. Valdinoci,
\emph{Hitchhiker's guide to the fractional {S}obolev spaces},
Bull. Sci. Math. \textbf{136} (2012), no. 5, 521--573.


\bibitem{bdary} S. Dipierro, O. Savin, E. Valdinoci,
\emph{Boundary behavior of nonlocal minimal surfaces},
J. Funct. Anal. \textbf{272} (2017), no. 5, 1791--1851. 

\bibitem{DSV19_2} S. Dipierro, O. Savin, E. Valdinoci,
\emph{Boundary properties of fractional objects: flexibility of linear equations and rigidity of minimal graphs},
accepted in J. Reine Angew. Math. (2020), arXiv preprint, arXiv:1907.01498.

\bibitem{graph} S. Dipierro, O. Savin, E. Valdinoci, \emph{Graph properties for nonlocal minimal surfaces},
Calc. Var. Partial Differential Equations \textbf{55} (2016), no. 4, Art. 86.

\bibitem{DSV19_1} S. Dipierro, O. Savin, E. Valdinoci,
\emph{Nonlocal minimal graphs in the plane are generically sticky},
Comm. Math. Phys. \textbf{376} (2020), no. 3, 2005--2063. 

\bibitem{DV18}
S. Dipierro, E. Valdinoci,
\emph{Nonlocal minimal surfaces: interior regularity, quantitative estimates and boundary stickiness},
Recent Developments in Nonlocal Theory, 165--209, Partial Differential Equations and Measure Theory, De Gruyter Open, Berlin, (2018).

\bibitem{Dy04} B. Dyda,
\emph{A fractional order Hardy inequality},
Illinois J. Math. \textbf{48} (2004), no. 2, 575--588.

\bibitem{FSV15}
A. Fiscella, R. Servadei, E. Valdinoci,
\emph{Density properties for fractional Sobolev spaces},
Ann. Acad. Sci. Fenn. Math. \textbf{40} (2015), no. 1, 235--253.

\bibitem{FS10} R. Frank, R. Seiringer,
\emph{Sharp fractional Hardy inequalities in half-spaces},
In Around the research of Vladimir Maz'ya. I, volume 11 of Int. Math. Ser. (N. Y.), pages 161--167. Springer,
New York, (2010).

\bibitem{FV13}
G. Franzina, E. Valdinoci,
\emph{Geometric analysis of fractional phase transition interfaces},
Geometric properties for parabolic and elliptic PDE's, 117--130, Springer INdAM Ser., 2, Springer, Milan, 2013. 

\bibitem{GG82}
M. Giaquinta, E. Giusti,
\emph{On the regularity of the minima of variational integrals},
Acta Math. \textbf{148} (1982), 31--46.

\bibitem{GiMa}
M. Giaquinta, L. Martinazzi,
\emph{An introduction to the regularity theory for elliptic systems, harmonic maps and minimal graphs},
Second edition. Appunti. Scuola Normale Superiore di Pisa (Nuova Serie) [Lecture Notes. Scuola Normale Superiore di Pisa (New Series)], \textbf{11}. Edizioni della Normale, Pisa, 2012.

\bibitem{G03}
E. Giusti,
\emph{Direct methods in the calculus of variations},
World Scientific Publishing Co., Inc., River Edge, NJ, 2003.


\bibitem{G84}
E. Giusti,
\emph{Minimal surfaces and functions of bounded variation},
Monographs in Mathematics, Vol. 80, Birkh\"auser Verlag, Basel, 1984.

\bibitem{Lin16}
E. Lindgren,
\emph{H\"older estimates for viscosity solutions of equations of fractional $p$-Laplace type},
NoDEA Nonlinear Differential Equations Appl. \textbf{23} (2016), no. 5, Art. 55.


\bibitem{Cyl}
L. Lombardini,
\emph{Approximation of sets of finite fractional perimeter by smooth sets and comparison of local and global $s$-minimal surfaces},
Interfaces Free Bound. \textbf{20} (2018), no. 2, 261--296.

\bibitem{Fract_Lu}
L. Lombardini,
\emph{Fractional perimeters from a fractal perspective}, 
Adv. Nonlinear Stud. \textbf{19} (2019), no. 1, 165--196.

\bibitem{L_phd_th}
L. Lombardini,
\emph{Minimization problems involving nonlocal functionals: nonlocal minimal surfaces and a free boundary problem}, 
PhD thesis (2018), available at arXiv:1811.09746.

\bibitem{MS02}
V. Maz'ya, T. Shaposhnikova,
\emph{On the Bourgain, Brezis, and Mironescu theorem concerning limiting embeddings of fractional Sobolev spaces},
J. Funct. Anal. \textbf{195} (2002), no. 2, 230--238.

\bibitem{V13}
E. Valdinoci,
\emph{A fractional framework for perimeters and phase transitions},
Milan J. Math. \textbf{81} (2013), no. 1, 1--23.

\end{thebibliography}
\end{document}